\newtheorem{thm}{Theorem}[section]
\newtheorem{lemma}[thm]{Lemma}
\newtheorem{prop}[thm]{Proposition}
\newtheorem{cor}[thm]{Corollary}
\theoremstyle{definition}
\newtheorem{defn}[thm]{Definition}
\newtheorem{ex}[thm]{Example}
\theoremstyle{remark}
\newtheorem{rmk}[thm]{Remark}
\def\H{\mathbb{H}}
\def\R{\mathbb{R}}
\def\E{\mathbb{E}}
\newcommand{\TT}{\mathbb{T}}
\def\tri{\mathcal{T}}
\def\kk{\boldsymbol{k}}
\def\BZ{\mathbb Z}
\def\D{\Delta}
\def\S{\Sigma}
\def\b{\beta}
\def\calE{\mathcal E}
\def\calP{\mathcal P}
\newcommand{\CT}{\mathcal{T}}
\def\be{  \begin{equation} }
\def\ee{  \end{equation} }
\def\ID{I_{\Delta}}
\newcommand{\T}{\mathcal{T}}
\def\Z{\mathbb Z}
\def\JD{J_{\Delta}}
\def\bd{\partial}
\def\C{\mathbb C}
\newcommand{\sect}{\textsection}
\def\BTwo{\mathcal{B}_2}
\def\BThree{\mathcal{B}_3}
\DeclareMathOperator{\Ker}{Ker}
\let\Im\relax
\DeclareMathOperator{\Im}{Im}
\DeclareMathOperator{\Hom}{Hom}
\def\quads{\square}  
\def\TT{\mathbb T}   
\def\EE{\mathbb E}   
\newcommand*\patchAmsMathEnvironmentForLineno[1]{
  \expandafter\let\csname old#1\expandafter\endcsname\csname #1\endcsname
  \expandafter\let\csname oldend#1\expandafter\endcsname\csname end#1\endcsname
  \renewenvironment{#1}
     {\linenomath\csname old#1\endcsname}
     {\csname oldend#1\endcsname\endlinenomath}} 
\newcommand*\patchBothAmsMathEnvironmentsForLineno[1]{
  \patchAmsMathEnvironmentForLineno{#1}
  \patchAmsMathEnvironmentForLineno{#1*}}
\begin{document}


\title[The 3D-index and normal surfaces]{The 3D-index and normal surfaces}
\author{Stavros Garoufalidis}
\address{School of Mathematics\\
Georgia Institute of Technology\\
Atlanta, Georgia 30332-0160, USA}
\email{stavros@math.gatech.edu}

\author{Craig D. Hodgson}
\address{School of Mathematics and Statistics \\
         The University of Melbourne \\
         Parkville, VIC, 3010, AUSTRALIA
         }
\email{craigdh@unimelb.edu.au}
\author{Neil R. Hoffman}
\address{School of Mathematics and Statistics \\
         The University of Melbourne \\
         Parkville, VIC, 3010, AUSTRALIA
         }
\email{nhoffman@ms.unimelb.edu.au}

\author{J. Hyam Rubinstein}
\address{School of Mathematics and Statistics \\
         The University of Melbourne \\
         Parkville, VIC, 3010, AUSTRALIA
         }
\email{rubin@ms.unimelb.edu.au}

\begin{abstract}
Dimofte, Gaiotto and Gukov introduced a powerful invariant, the 3D-index,
associated to a suitable ideal triangulation of a 3-manifold with torus
boundary components. The 3D-index is a collection of formal power series
in $q^{1/2}$ with integer coefficients. Our goal is to explain how the 
3D-index is a generating series of normal surfaces associated to the
ideal triangulation. This shows a connection of the 3D-index with
classical normal surface theory, and fulfills a dream of constructing 
topological invariants of 3-manifolds using normal surfaces.
\end{abstract}

\maketitle
\tableofcontents


\section{Introduction}  

Recently, the physicists Dimofte, Gaiotto and Gukov \cite{DGG1,DGG2}
introduced a powerful new invariant for a compact orientable 3-manifold $M$ 
with non-empty boundary $\bd M$ consisting of tori, called the 
\emph{3D-index}. This invariant arises from a gauge theory with $N=2$ 
supersymmetry under a low energy limit, and seems to contain a great deal 
of information about the geometry and topology of the manifold. The 
3D-index is a collection of $q$-series, i.e. formal Laurent series in 
$q^{1/2}$, defined as an infinite sum over integer weights attached to 
the edges of a chosen ideal triangulation $\CT$ of $M$. The $q$-series 
$I_\CT(b)$ are parametrised by a choice of peripheral homology class 
$b \in H_1(\bd M;\Z)$.

 Our goal is to explain how the 3D-index can be viewed as a generating
series of normal surfaces on $\CT$; see Definition \ref{def.Iab} and
Corollary \ref{cor.Izero} below. Normal surfaces depend heavily
on the ideal triangulation whereas the 3D-index should not. Our results
show a connection of the 3D-index with classical normal surface theory, 
and fulfill a `folk dream' of constructing topological invariants of 
3-manifolds using normal surfaces. 

In this paper we first recall how the 3D-index of an ideal triangulation 
is defined, following \cite{GHRS}, and discuss some of its key properties. 
Throughout the paper, by a triangulation of a compact 3-manifold we mean an 
{\em ideal triangulations} in the sense of Thurston \cite{Th} 
(see also \cite{NZ,N}). 
Physics predicts that 
the 3D-index should give a {\em topological invariant} of the underlying 
manifold $M$, but this is not known in general. In fact, the sum defining 
the 3D-index need not even converge (as a formal Laurent series) for all 
ideal triangulations $\tri$.  But we can characterise the good 
triangulations using the {\em normal surface theory} developed by Haken 
\cite{Ha1,Ha2}. (Haken's theory applies to ideal triangulations, when 
restricting to closed normal surfaces.) 
In fact, the work of \cite{GHRS} shows the index sum 
for $I_\CT(b)$ converges for all $b \in H_1(\bd M;\Z)$ if and only
if the triangulation is {\em $1$-efficient}, i.e. contains no embedded 
closed normal surfaces $S$ of Euler characteristic $\chi(S) \ge 0$ 
except peripheral tori. 

By work of Matveev and Piergallini, (see \cite{Ma1, Ma2, Pi, Petronio}), 
any two ideal triangulations $\CT, \CT'$ (with at least 2 tetrahedra) 
of a given compact 3-manifold $M$ with non-empty boundary can be connected 
by a sequence of 2-3 and 3-2 Pachner moves. 
Similarly, we can also consider 0-2 and 2-0 moves on a triangulation as 
shown in Figure \ref{2-3 and 0-2 moves} below.

The work of \cite{Gar,GHRS} shows that the index is invariant under 
2-3/3-2 and 0-2/2-0 moves provided all ideal triangulations involved are 
1-efficient. However it is not currently known whether any two 1-efficient 
triangulations of a given manifold can be connected by 2-3/3-2 and 
0-2/2-0 moves preserving 1-efficiency. 

We then give a new formulation of the definition of index as a sum over 
certain \emph{singular normal surfaces}. The minimum degree of each term 
in this sum then has a simple geometric interpretation involving the 
\emph{Euler characteristic} and the number of \emph{double arcs} of the 
corresponding surface. This leads to new, more direct proofs of results on
the convergence of the series for the 3D-index. We also introduce a class 
of embedded \emph{generalised normal surfaces}, and show how to express the 
index as a sum over terms corresponding to these embedded surfaces.

Next we show that the work of Neumann \cite{N} on combinatorics of ideal 
triangulations can be reinterpreted to give a precise description of the 
set $Q(\T;\Z)$ of all {\em integer solutions} to the $Q$-matching equations 
of Tollefson \cite{To}. Each such solution $S$ represents a 
(possibly singular) \emph{spun normal surface} \cite{KR1,Ti} with 
well-defined homology class $[S]_2  \in H_2(M,\bd M;\Z/2\Z)$ and boundary 
$[\bd S] \in H_1(\bd M;\Z)$ such that the mod $2$ reduction of $[\bd S]$ is
the image of $[S]$ under the boundary map 
$\bd_* : H_1(M;\Z/2\Z) \to H_1(\bd M;\Z/2\Z)$. Further, each pair 
$(a,b) \in H_2(M,\bd M;\Z/2\Z) \times  H_1(\bd M;\Z)$ such that 
$\bd_* a = b \bmod 2$ arises in this way, and the $Q$-normal classes $S$ with 
$[S]_2=0$ and $[\bd S]=0$ are the integer linear combinations of the 
``edge solutions'' and the ``tetrahedral solutions'' constructed by 
Kang-Rubinstein \cite{KR1}.

We then show that the definition of 3D-index can be extended to give a 
$q$-series $I^a_\CT(b)$ for each $(a,b)$ as above; this is a sum over 
$Q$-normal classes $S$ with $[S]=a$ and $[\bd S]=b$ modulo tetrahedral 
solutions. The previous definition of 3D-index only applies to the cases 
where $a=0$ and $b \in H_1(\bd M; 2 \Z)$; we have $I^0_\CT(b) = I_\CT(2b)$ 
in the notation of \cite{GHRS}. Here $I^0_\T(b)$ is also defined for 
$b \in  {\mathcal K} = \Ker\left(H_1(\bd M; \Z) \to H_1(M;\Z/2\Z)\right)$.

We then give some computations of the 3D-index, including an example 
showing that that the series for the 3D-index $I^a_\CT(b)$ can sometimes 
converge for $b \ne 0$ even when the triangulation $\T$ is not 1-efficient 
(so the series for $I_\CT^0(0)$ does not converge).

We conclude with some discussion of experimental results on the 
connectedness of the set of 1-efficient ideal triangulations of a given 
manifold under 2-3/3 and 0-2/2-0 Pachner moves. (Recall that such moves
do not change the 3D-index.) In particular, we find examples of 1-efficient 
triangulations of the solid torus with six tetrahedra that cannot be 
connected by sequences of 2-3/3-2 moves through 1-efficient triangulations. 
These triangulations can, however, be connected via 1-efficient 
triangulations if 0-2 and 2-0 moves are also allowed; hence they have the 
same 3D-index.

Finally, the appendix gives some results on generalised angle structures 
and Euler characteristic for Q-normal surfaces; these extend well-known 
results of Luo-Tillmann \cite{LT} for closed normal surfaces. 

\subsection*{Acknowledgements} 
The first author is partially supported by National Science Foundation 
Grant DMS-14-06419. The last three authors are partially supported by 
Australian Research Council Discovery Grant DP130103694. The authors 
wish to thank Tudor Dimofte and Henry Segerman for useful discussions 
related to this paper.


\section{The tetrahedral index}

The index for a triangulation is built up from the \emph{tetrahedron index}  
$\ID \colon \mathbb{Z}^2\rightarrow \mathbb{Z}[[q^\frac{1}{2}]]$, defined 
for $m,e \in \Z$ by
\be
\ID(m,e)(q)=\sum^{\infty}_{n=\max\{0,-e\}}(-1)^n 
\frac{q^{\frac{1}{2}n(n+1)-(n+\frac{1}{2}e)m}}{(q)_n(q)_{n+e}}
\label{ID_def}
\ee
where $(q)_n=\Pi_{i=1}^{n}(1-q^i)$ is the $q$-Pochhammer symbol 
(by convention $(q)_0=1$). The index coincides with the coefficients 
of $z^e$ in the generating function
\be
I(m,q,z)=\sum_{e \in \Z}  \ID(m,e)(q) z^e 
= \frac{(q^{-\frac{m}{2}+1} 
\, z^{-1};q)_\infty}{(q^{-\frac{m}{2}} \,z;q)_\infty}
\label{ID_gen_func}
\ee
where
\be
(z; q)_\infty = \prod_{n=0}^\infty (1- z q^n) .
\label{zq-Poch}
\ee
(see \cite{Gar}). It follows that for any fixed $q \in \C$ with $0 < |q|<1$,
 the product for $I(m,q,z)$ converges for all $z\in \C$ with $z\ne 0$, 
and defines a holomorphic function of $z$ in the annulus $0<|z|<1$
with Laurent series given  by $\sum_{e \in \Z}  \ID(m,e)(q) z^e$.

\medskip
A more symmetric version of the tetrahedral index was defined in \cite{GHRS}  
 for $a,b,c \in \Z$ by
\be
\JD(a,b,c)= (-q^{\frac{1}{2}})^{-b} \ID(b-c,a-b)
= (-q^{\frac{1}{2}})^{-c} \ID(c-a,b-c) = (-q^{\frac{1}{2}})^{-a} \ID(a-b,c-a).
\ee
Then $\JD(a,b,c)$ is \emph{invariant under all permutations of its 
arguments} $a,b,c$ and satisfies
\be
\JD(a,b,c) = (-q^{\frac{1}{2}})^{s} \JD(a+s,b+s,c+s)  
\text{ for all } s \in \Z.
\label{J_plus_tets}
\ee

In particular, if $a, b\ge 0$ then (\ref{ID_def}) gives 
\be
\JD(a,b,0) =\ID(-a,b) = q^{\frac{1}{2}ab} + \text{ higher order terms},
\label{min_j_deg}
\ee
and (\ref{J_plus_tets}) shows that, in general, the lowest degree term 
in $\JD(a,b,c)$ has $q$-degree 
\be
\deg \JD(a,b,c) = 
\frac{1}{2}\left( (a-m)(b-m) +(b-m)(c-m)+(c-m)(a-m) - m \right) 
\ee
and coefficient $(-1)^m$,  where $m=\min\{a,b,c\}$.

The tetrahedral index also satisfies many other interesting algebraic 
identities including the \emph{quadratic identity} 
\be
\label{eq.IDquadratic}
\sum_{e \in \BZ} \ID(m,e)\ID(m,e+c)q^e =\delta_{c,0},
\ee
the \emph{pentagon identity}
\begin{align}
\sum_{e_0 \in \BZ}
& q^{e_0} \ID(m_1,x_1+e_0)\ID(m_2,x_2+e_0)\ID(m_1+m_2,x_3+e_0)
\label{pent_eq} \\
&=q^{-x_3} \ID(m_1-x_2+x_3,x_1-x_3)\ID(m_2-x_1+x_3,x_2-x_3) ,\nonumber
\end{align}
and some important {\em recurrence relations}. 
(See \cite{Gar} for the details.)


\section{Notation and conventions for triangulated 3-manifolds}

In this paper, we will observe the following conventions and notation. 

Let $M$ be a compact orientable 3-manifold with boundary consisting of 
$r$ tori. Let $\tri$ be an ideal triangulation of $\text{int} M$ with 
$n$ ideal tetrahedra $\sigma_j$, (i.e. tetrahedra with their vertices 
removed). Then there are also $n$ edge classes, since $\chi(M)=0$.  

\subsection{Gluing equations} 
\label{gluing_eqn_sect}

We now briefly describe how to encode the combinatorics of a triangulation 
contained in Thurston's gluing equations (compare  \cite[Chapter 4]{Th}), 
while also including notation for normal disks in a tetrahedron. 
First, in each tetrahedron there are three pairs of opposite edges, which 
are disjoint from the three quadrilateral disks from normal surface theory. 
We label these quadrilaterals by the pair of edges they face, as in 
Regina \cite{regina}; Figure \ref{fig:edgesAndQuad} shows the quads
$q_A=q_{A:01:23}$, $q_A'=q_{A:02:13}$, $q_A''=q_{A:03:12}$ in a tetrahedron 
labelled $A$. As our discussion unfolds the quadrilaterals will be the 
focus, however we also provide a labelling for the  triangular disks for 
completeness: $t_{A:j}$ refers the the triangle in tetrahedron $A$ that 
cuts off vertex $j$.

\begin{figure}[htpb]
\includegraphics[width=0.25\textwidth]{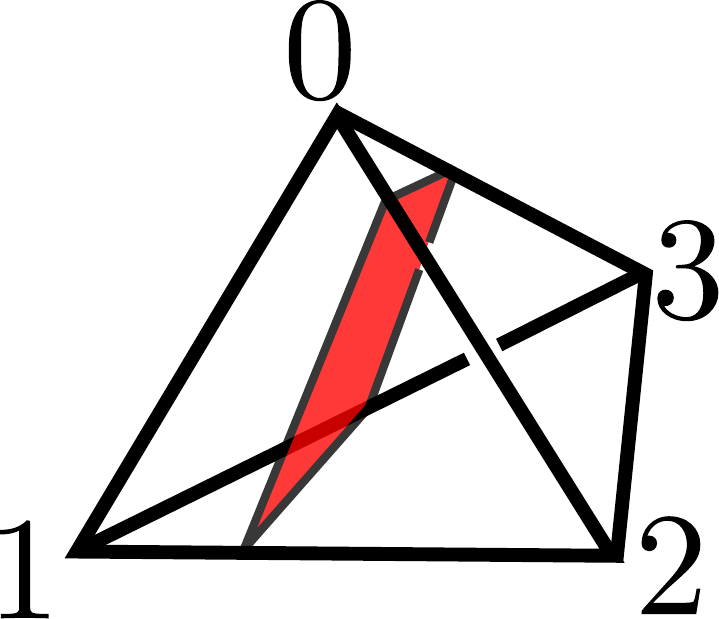}\hspace{0.5cm}
\includegraphics[width=0.25\textwidth]{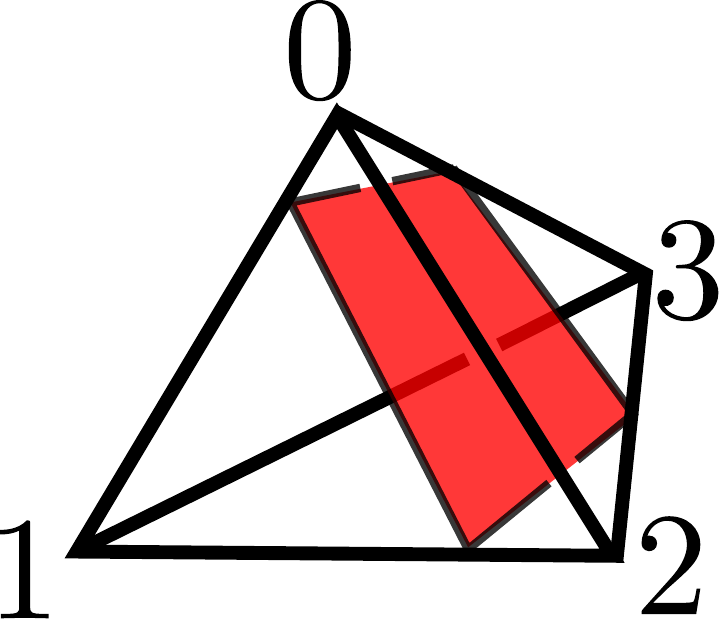}\hspace{0.5cm}
\includegraphics[width=0.25\textwidth]{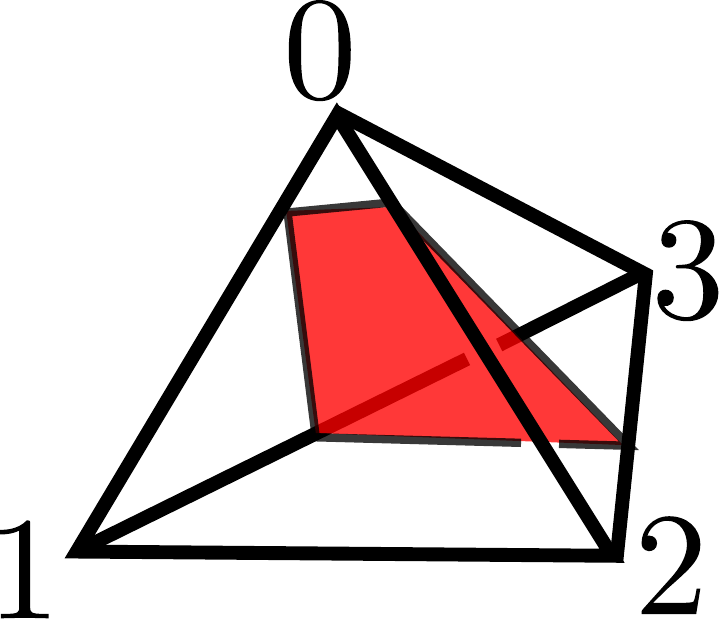}
\label{fig:qa0312}
\caption{The labelling of the 3 quadrilateral disk types of tetrahedron $A$. 
Left: $q_A=q_{A:01:23}$, centre: $q_A'=q_{A:02:13}$, right: $q_A''=q_{A:03:12}$.}
\label{fig:edgesAndQuad}
\end{figure}

Central to the construction of Thurston's gluing equations is the 
observation that edges in each tetrahedron form equivalence classes in 
the triangulation. In Thurston's notes, a set of logarithmic equations 
is associated to the $n$ edge classes of the triangulation. First, we 
associate a complex tetrahedral parameter $z_j$ to the $01,23$ edges in 
tetrahedron $j$, $z_j'=\frac{1}{1-z_j}$ to the $02,13$ edges and 
$z_j''=\frac{z_j-1}{z_j}$ to the $03,12$ edges. 

Then we can build the following {\em edge equations}:
\be
\label{log_edge_eqns}
\sum_j \left(a_{i,j} \log(z_j )+ b_{i,j} \log(z_j' )
+ c_{i,j} \log(z_j'')\right)=0+2\pi \sqrt{-1} 
\mbox{ for } i\in\{1,\ldots, n\},
\ee
where $a_{i,j},b_{i,j},c_{i,j}$ record the number of times the edges 
opposite quads $q_j, q_j', q_j''$ of tetrahedron $j$ appear in the $i$th 
edge class. (Our convention is that 
$\arg(z_j), \arg(z_j'), \arg(z_j'')  \in (-\pi,\pi]$.)

Note that the tetrahedral parameters also satisfy the relation 
\be
\label{log_tet_eqns}
\log(z_j )+\log(z_j') + \log(z_j'')= \pi \sqrt{-1}.
\ee

We can also build $2r$ {\em cusp equations} using the same tetrahedral 
parameters: 
\be
\label{log_cusp_eqns}
\sum_j \left(a_{i,j} \log(z_j )
+ b_{i,j} \log(z_j' )+ c_{i,j} \log(z_j'')\right)
=0+\pi \sqrt{-1} \mbox{ for } i\in\{n+1,\ldots, n+2r\},
\ee
however here $a_{i,j},b_{i,j},c_{i,j}$ record the number of times a 
peripheral curve winds anti-clockwise (or minus the number of clockwise 
times) around the edges opposite the quads  $q_j, q_j', q_j''$ in 
tetrahedron $j$. 

The coefficients in the edge and cusp equations define a 
$(n+2r) \times 3n$ \emph{gluing matrix} $[ a_{i,j} ~ b_{i,j} ~ c_{i,j} ]$, 
which is also given by the {\tt gluing\_equations()} function in 
SnapPy \cite{SnapPy}.

Throughout the paper, we will write $E_i$ for the vector of coefficients 
in the $i$th edge equation, and we will also label the cusp equation 
coefficients coming from the $k$th cusp by $M_k$ and $L_k$ and the 
corresponding curves $\mu_k$ and $\lambda_k$ where it proves convenient.

\subsection{Generalised angle structures} 
\label{angle_struct_sect}

Considering the imaginary parts of above gluing equations allows us to 
define an angle structure on the triangulation. If we define $\quads$ 
to be the set of quad types in a triangulation $\T$, then a 
\emph{generalised angle structure} on $\T$ is a function 
$\alpha : \quads \to \R$ which assigns an ``angle'' 
$\alpha(q) \in \R$ to the pair of edges opposite a quad $q$
and satisfies the set of equations:  
\be 
\label{ang_struct1}
\alpha(q_j) + \alpha(q_j')+ \alpha(q_j'') = 
\pi \mbox{ for } j\in\{1,\ldots, n\} \ee
and
\be 
\label{ang_struct2}
\sum_j \left(a_{i,j} \alpha(q_j)+ b_{i,j} \alpha(q_j')
+ c_{i,j} \alpha(q_j'')\right)=2\pi \mbox{ for } i\in\{1,\ldots, n\} 
\ee
where $a_{i,j},b_{i,j},c_{i,j}$ are defined above. A 
\emph{strict angle structure} satisfies the additional condition that 
$\alpha(q)>0$ for all quads $q$ and a \emph{semi angle structure} 
satisfies $\alpha(q)\ge 0$.

A further refinement of generalised angle structures which considers the 
holonomies of peripheral curves will be defined in Appendix 
\ref{gen_angle_euler}.\\

A central theme of this paper is that the entries of the gluing matrix 
can be interpreted in a number of different and interesting ways. 
By exploiting these relationships, we are able to better understand the 
3D-index of a triangulation, which will be defined in the next section.


\section{The 3D-index of an ideal triangulation}

Using the notation from the previous section, we now give a definition of 
the 3D-index of an ideal triangulation $\tri$ as formulated 
in \cite[section 4.7]{GHRS}. 

Given $\kk =(k_1, \ldots, k_n)\in \Z^n$, we can assign an integer weight 
$k_i$ to the $i$th edge class. Effectively, this also gives a weight on 
each edge of each tetrahedron $\sigma_j$ in $\CT$.
Let $a_j(\kk), b_j(\kk), c_j(\kk)$ be the sums of weights assigned to 
the pair of edges in  tetrahedron $j$ opposite quads $q_j, q_j',q_j''$ 
respectively. Note that these coefficients are precisely the entries in 
the linear combination $\sum_{i=1}^n k_i E_i$ of the rows of the gluing matrix 
corresponding to the edge equations. 

Now we define a tetrahedral index associated to the tetrahedron $\sigma_j$ by
$$
J(\sigma_j;\kk)=\JD(a_j(\kk), b_j(\kk) , c_j(\kk)).
$$
and the \emph{3D-index}  $I_{\CT}({\bf 0})$ of the triangulation $\CT$ is 
defined as 
\be
I_{\CT}({\bf 0})(q)  = \sum_{\kk \in \Z^{n-r}} 
 q^{\sum_i k_i}   \prod_{j=1} ^n
J(\sigma_j;\kk),
\ee
where the summation is over a (suitable) sublattice  
$\Z^{n-r} \subset \Z^{n}$, obtained by setting $r$ of the edge weights 
$k_i$ equal to zero. For $r=1$ we can set any $k_i=0$; but  in general the 
$r$ edge weights set to zero must be chosen carefully (see 
\cite[sections 4.5, 4.6]{GHRS}).

More generally, the 3D-index gives a function 
$I_{\CT} : H_1(\bd M; \Z) \cong \Z^{2r}\to \Z((q^\frac{1}{2}))$ defined 
as follows. Let $\gamma$ be an oriented multicurve on $\bd M$ with no 
contractible components, such that $\gamma$  is in {\em normal position} 
with respect to the induced triangulation $\tri_\bd$ of $\bd M$. 
Then let $\gamma_h$ be the component of $\gamma$ in cusp $h$ and express 
$\gamma_h = p_h \mu_h + q_h \lambda_h$, where $\mu_h$ corresponds to the 
row $M_h$ in the gluing matrix and $\lambda_h$ corresponds to $L_h$.
Now put
$$
J(\sigma_j;\kk,\gamma)=\JD(a_j(\kk,\gamma), b_j(\kk,\gamma) , c_j(\kk,\gamma))
$$
where the coefficients  
$a_j(\kk,\gamma), b_j(\kk,\gamma), c_j(\kk,\gamma)$ are precisely the 
entries in the linear combination 
$\sum_{i =1}^nk_i E_i  +\sum_{h=1}^n (p_h M_h + q_h L_h)$ corresponding 
to tetrahedron $\sigma_j$.

Then the general \emph{3D-index} of the triangulation $\CT$ is defined as 
\be
I_{\CT}([\gamma])(q)  = \sum_{\kk \in \Z^{n-r}} q^{\sum_i k_i}   \prod_j 
J(\sigma_j;\kk,\gamma),
\label{JD_index_eq}
\ee
where $[\gamma] \in H_1(\bd M; \Z)$ is the homology class of $\gamma$ and
the summation is over $\Z^{n-r} \subset \Z^{n}$ as above. (It is shown 
in \cite{GHRS} that the sum only depends on the homology class of $\gamma$.)

\begin{ex}
\label{ex:fig8}
For the figure eight knot complement $M$, with the ideal triangulation 
given by Thurston \cite[\sect4]{Th}, we have the following 
induced triangulation on $\bd M$ (as viewed from the cusp).

\begin{center}
\begin{figure}[htpb]
\includegraphics[width=0.8\textwidth]{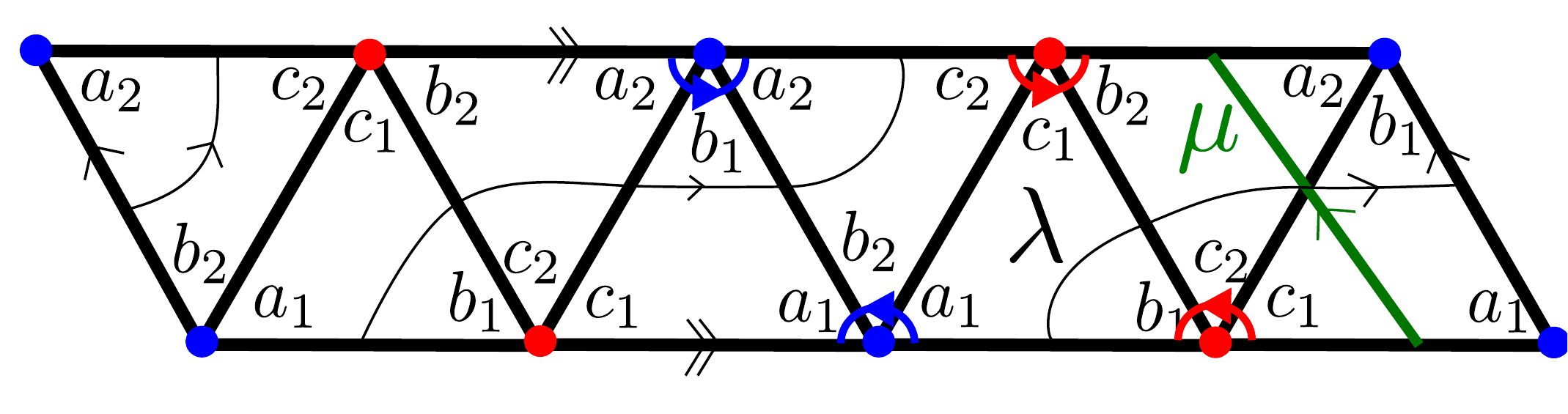}
\caption{\label{fig:figure8cusp} A labelled triangulation of the cusp 
of the figure 8 knot complement.}
\end{figure}
\end{center}

From this we can read off the following gluing data.

\begin{table}[htpb]
\begin{tabular}{|c||c|c|c|c|c|c|}
\hline
edge/peripheral curve
& ~~$a_1$~~ & ~~$b_1$~~ & ~~$c_1$~~ & ~~$a_2$~~ & ~~$b_2$~~ & ~~$c_2$ \\
\hline
\hline
edge 1 (blue) & 2 & 1 & 0 & 2 &1 & 0 \\
\hline
edge 2 (red) & 0& 1 & 2& 0 & 1 & 2 \\ 
\hline
meridian $\mu$ & 0& 0 & 1& -1 & 0 & 0 \\ 
\hline
longitude $\lambda$ & 0& 0 & 0& 2  & 0 & -2\\ 
\hline
\end{tabular}
\end{table}

Choosing generators $\mu,\lambda$ for $H_1(\bd M, \Z) = \Z^2$ corresponding 
to the standard meridian and longitude, and integer weights $k_1=k,k_2=0$ 
on the edges gives

\begin{align*} 
I_{\CT}(x \mu + y\lambda )
&=\sum_{k \in \Z}  q^k \JD(2k,k,x)\JD(2k-x+2y,k,-2y) \\
& = \sum_{k \in \Z}  \ID(k-x,k) \ID(k+2y,k-x+2y).
\end{align*}

For example, up to terms of order $q^{10}$, we have 

\begin{align*}
I_{\CT}({\bf 0} ) &=  
1-2 q-3 q^2+2 q^3+8 q^4+18 q^5+18 q^6+14 q^7-12 q^8-52 q^9-106 q^{10} + \ldots
\\
I_{\CT}(\mu) &= 
2 q-2 q^2+2 q^3+8 q^4+16 q^5+16 q^6+10 q^7-14 q^8-52 q^9-102 q^{10}+ \ldots
\\
I_{\CT}(2\mu) &= 
-q - q^2 + 3 q^3 + 6 q^4 + 12 q^5 + 9 q^6 + 3 q^7 - 19 q^8 - 50 q^9 -  88 q^{10}
+ \ldots \\
I_{\CT}(\lambda ) &=  
q^3+2 q^4+5 q^5+2 q^6-3 q^7-16 q^8-32 q^9-52 q^{10}+ \ldots \\
I_{\CT}(4\mu+\lambda ) &= 
q-q^4-2 q^5-5 q^6-8 q^7-10 q^8-11 q^9-6 q^{10}+ \ldots
\end{align*}
In this example, it is easy to check that 
$I_{\CT}(\pm x \mu  \pm y \lambda)=I_{\CT}(x \mu + y\lambda)$,
and we can also define the index for $x \in \Z, y \in \frac{1}{2} \Z$, 
as done in \cite{DGG1}. For example,

\begin{align*}
I_{\CT}(\frac{1}{2} \lambda) &= -2 q^{3/2}+4 q^{7/2}+10 q^{9/2}
+14 q^{11/2}+10 q^{13/2}-2 q^{15/2}-32 q^{17/2}-68   q^{19/2} + \ldots \\
I_{\CT}(\mu+\frac{1}{2} \lambda) &= -q-q^2+2 q^3+7 q^4+11 q^5+11 q^6
+3 q^7-17 q^8-49 q^9-88 q^{10} + \ldots  \\
I_{\CT}(2\mu+\frac{1}{2} \lambda) &=
-q^{1/2}+q^{5/2}+4 q^{7/2}+7 q^{9/2}+7 q^{11/2}+3 q^{13/2}-12 q^{15/2}
-31 q^{17/2}-62 q^{19/2}  + \ldots 
\end{align*}
\end{ex}


\section{Dependence on the choice of triangulation}

Physics predicts that the 3D-index as defined above should give a 
{\em topological invariant} of the underlying manifold $M$, but this is not 
known in general.  

The first difficulty is that the summation in (\ref{JD_index_eq}) need not 
even converge (as a formal power series) for all ideal triangulations $\tri$. 
But the good triangulations can be characterised using normal surface theory.

Given a triangulation $\tri$ of a 3-manifold $M$, an embedded surface 
$S \subset M$ is a {\em normal surface} if it intersects each tetrahedron 
in a finite collection of disjoint normal quadrilaterals (`quads') and 
triangles as shown in Figure \ref{fig:normal_disk_types}. In each 
tetrahedron there are 4 types of normal triangles and 3 types of normal quads.

\begin{center}
\begin{figure}[htpb]
\includegraphics[width=0.45\textwidth]{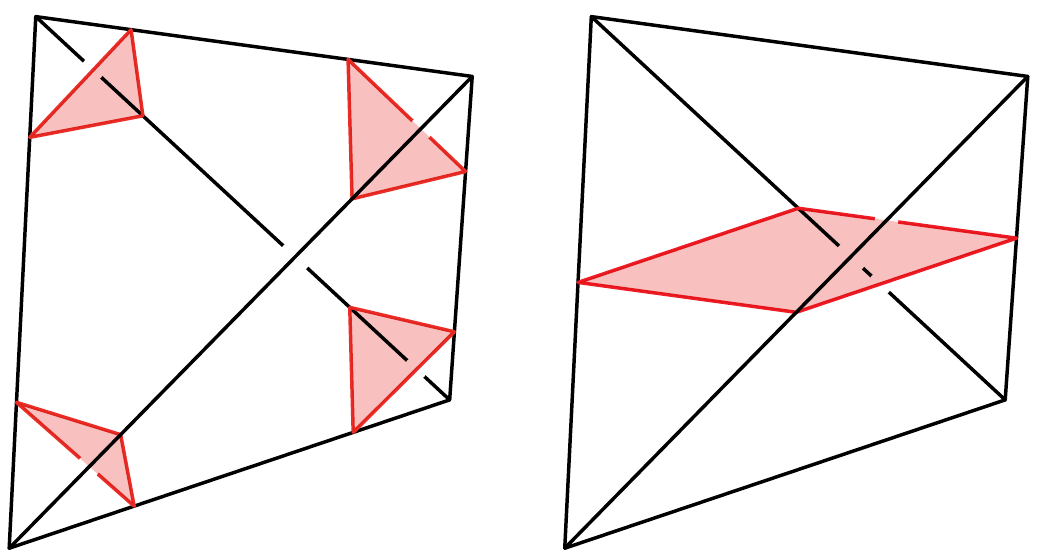}
\caption{\label{fig:normal_disk_types} The four types of triangular disks 
(left) and one of the three types of quadrilateral disks (right).}
\end{figure}
\end{center}

Embedded normal surfaces were introduced by Kneser \cite{Kn} and developed 
by Haken \cite{Ha1,Ha2} to create a normal surface theory which has played 
a key role in the development of algorithmic 3-manifold topology.  Haken 
observed that (not necessarily embedded) closed normal surfaces are 
parametrised by vectors of quad numbers and triangle numbers in $\Z^{7n}$ 
satisfying linear {\em matching equations}. Tollefson \cite{To} 
showed that the quad numbers are enough to determine a closed normal surface 
up to multiples of the boundary tori. These give vectors in $\Z^{3n}$ 
satisfying linear {\em$Q$-matching equations}.

\begin{defn}\cite{JR, KR2}
An ideal triangulation $\tri$ is \emph{1-efficient} if 
\begin{enumerate}
\item[(0)] it contains no embedded normal 2-spheres or projective planes, and
\item[(1)] the only embedded normal tori or Klein bottles are vertex-linking.  
\end{enumerate}
\end{defn}

\begin{thm}\cite[Theorem 1.2]{GHRS}
\label{index_structure_iff_1-efficient}
The sum defining 
$I_{\CT}([\gamma])$ converges for all $[\gamma] \in H_1(\bd M; \Z)$ 
if and only if the sum $I_{\CT}({\bf 0})$ converges 
if and only if the ideal  triangulation $\CT$ is 1-efficient.
\end{thm}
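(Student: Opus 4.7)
The plan is to expand each tetrahedral factor $J(\sigma_j;\kk,\gamma)=\JD(a_j,b_j,c_j)$ using the series $(\ref{ID_def})$ for $\ID$, and then reorganize the defining double sum into a sum indexed by \emph{singular normal surfaces}. Writing each $\JD$ as $\sum_{n\ge 0}(\pm) q^{(\cdots)}/((q)_n(q)_{n+e})$ introduces two non-negative inner indices per tetrahedron; combining them with the outer edge weights $\kk$, one should show that these data are precisely the quad coordinates of a singular normal surface $S$ on $\CT$ with prescribed boundary $[\gamma]$, modulo the kernel in $\Z^{n-r}$ corresponding to peripheral tori.

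Once this reindexing is in place, the next step is to read off the minimum $q$-degree of the term attached to $S$. Tetrahedron by tetrahedron, apply $(\ref{min_j_deg})$ and the shift relation $(\ref{J_plus_tets})$: the lowest $q$-power contributed by $S$ works out, up to a bounded correction depending only on $\gamma$, to be $-\chi(S)+\tfrac{1}{2}d(S)$, where $d(S)$ is the total number of double arcs of $S$ along edges of $\CT$. The appearance of the Euler characteristic here is the reflection at the combinatorial level of $\chi=V-E+F$ rewritten in quad and triangle weights via the $Q$-matching machinery; the factor $q^{\sum_i k_i}$ precisely provides the ``edge contribution'' needed for this formula.

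Convergence of the series as a formal Laurent series in $q^{1/2}$ is then equivalent to the statement that for each real $N$, only finitely many singular normal surfaces satisfy $-\chi(S)+\tfrac{1}{2}d(S)\le N$. If $\CT$ is 1-efficient, the only embedded closed normal surfaces with $\chi(S)\ge 0$ are vertex-linking tori, and these are killed by the restriction of the summation to the chosen sublattice $\Z^{n-r}\subset\Z^n$. Decomposing a singular $S$ into embedded pieces along its double arcs and using additivity of $\chi$, one deduces that $-\chi(S)+\tfrac{1}{2}d(S)\to+\infty$ as the total quad weight of $S$ grows, yielding convergence for every $[\gamma]\in H_1(\bd M;\Z)$. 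Conversely, if $\CT$ fails 1-efficiency, then a non-trivial embedded normal $2$-sphere, projective plane, torus, or Klein bottle $S_0$ with $\chi(S_0)\ge 0$ can be scaled $S_0\mapsto t S_0$ and combined with a compatible family of $\kk$'s to produce infinitely many contributions of bounded $q$-degree in $I_\CT(\mathbf{0})$ alone (and hence also in every $I_\CT([\gamma])$), forcing divergence; this handles both non-trivial implications.

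The main obstacle is the combinatorial first step: identifying the indices of the expansion of $\prod_j \JD(a_j,b_j,c_j)$ with singular normal surfaces and obtaining the clean formula for the minimum $q$-degree in terms of $\chi$ and double arcs. A particular worry is that the alternating signs in $(\ref{ID_def})$ could in principle cause massive cancellations that rescue convergence in a non-1-efficient triangulation; so one must verify that within any fixed $q$-degree the surviving terms remain finite in number, for instance by exhibiting, from a given non-vertex-linking normal surface $S_0$, an explicit family of $\kk$'s whose contributions to a fixed power of $q$ are all of one sign or do not cancel in bulk. After this geometric reformulation, both implications follow from standard facts of normal surface theory.
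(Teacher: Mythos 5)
Your overall strategy — reformulate the index sum as a sum over normal surface data, compute the minimum $q$-degree of each contribution in terms of Euler characteristic and double arcs, then use normal surface theory to decide convergence — is exactly the strategy the paper takes. However, two of your key steps deviate from, or fall short of, the paper's argument in ways that matter.

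First, the reindexing step. You propose expanding each $\JD(a_j,b_j,c_j)$ into its defining series $\sum_{n}(\pm)q^{\cdots}/((q)_n(q)_{n+e})$ and absorbing the inner indices into normal surface data alongside $\kk$. The paper does \emph{not} do this, and the step as you describe it does not parse: the definition of $\ID(m,e)$ has a single summation index $n$, not two, and this index is a bookkeeping variable of a $q$-hypergeometric series, not a quad count. The paper instead keeps each $\JD(a_j,b_j,c_j)$ intact as a power series and identifies the outer sum over $\kk$ (modulo tetrahedral solutions $\TT$) with a sum over cosets $[S]\in Q(\T;\Z)/\TT$. Each such coset contributes $I([S]) = (-q^{1/2})^{-\chi(S^*)}J(S^*)$, where $S^*$ is the minimal non-negative representative of $[S]$, and only the \emph{lowest-degree} term $q^{\tfrac12\delta(S^*)}$ of this power series is tracked. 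The normal surface data are entirely encoded in the $(a_j,b_j,c_j)$, not in any inner indices. Your step 2 (the degree formula) then matches the paper's $d(S^*)=-\chi(S^*)+\delta(S^*)$ in $q^{1/2}$-degree, modulo a factor-of-two notational slip in how you write it in terms of $q$-degree.

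Second, and more seriously, your argument for the direction ``1-efficiency implies convergence'' is a placeholder, not a proof. The paper's argument is genuinely quantitative: choose a Hilbert basis $F_1,\dots,F_m$ for the cone $N(\T;\Z_+)$, prove that $d=-\chi+\delta$ is superadditive on non-negative vectors (Lemma~\ref{subadditive}), and then observe that 1-efficiency forces, for each fundamental solution $F_i$, either $\delta(F_i)\ge 1$ (so $d(xF_i)$ grows quadratically) or $\delta(F_i)=0$ and $-\chi(F_i)\ge 1$ (so $d(xF_i)$ grows linearly). This gives a linear lower bound $d(\sum x_iF_i)\ge \sum (a_ix_i - b_i)$ with $a_i>0$, which bounds the number of terms of degree $\le D$. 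Your proposed substitute — ``decompose a singular $S$ into embedded pieces along its double arcs and use additivity of $\chi$'' — does not obviously produce this bound. Cutting a (generalised) normal class along its double arcs does not canonically yield embedded normal pieces of the \emph{same} triangulation with controlled $\chi$, and additivity of $\chi$ alone does not account for the quadratic growth of $\delta$ that is essential when the class has two nonzero quad coordinates in a tetrahedron. You need something playing the role of the Hilbert basis argument.

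Finally, the worry you raise about sign cancellation is not a genuine obstacle once ``converges'' is interpreted correctly. Convergence of a sum of formal Laurent series in $q^{1/2}$ means that the $q^{1/2}$-degrees are bounded below and only finitely many summands contribute to each fixed power of $q^{1/2}$. In the non-$1$-efficient case, the family $\{kS_0\}_{k\ge 0}$ with $\chi(S_0)=0$ produces infinitely many summands each with a nonzero leading coefficient at degree $0$ (in fact the leading coefficient is $(-1)^{-\chi((kS_0)^*)}=+1$ for all $k$), so the coefficient of $q^0$ is not a finite sum; if $\chi(S_0)>0$ the degrees are unbounded below. Either way the series is undefined; there is nothing to ``cancel.'' Your $(\Rightarrow)$ direction is otherwise the same as the paper's.
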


We will outline a new, more direct proof of this result in Section 
\ref{sect:index_via_normal_surfaces} below, by rewriting the index as a 
sum of contributions from normal surfaces.

\medskip
It is shown in \cite{GHRS}  that 1-efficient triangulations exist for many 
important classes of cusped 3-manifolds including all hyperbolic manifolds 
and small Seifert fibre spaces.  But they cannot exist for 3-manifolds
containing (embedded) essential spheres, projective planes, tori or Klein 
bottles which are not boundary parallel. 

\medskip
It is known by the work of Matveev and Piergallini (see \cite{Ma1, Ma2, Pi}) 
that any two triangulations $\CT, \CT'$ (with at least 2 tetrahedra) of a 
given closed 3-manifold $M$ can be connected by a sequence of 2-3 and 3-2 
Pachner moves. We can also consider 0-2 and 2-0 moves on triangulations 
as shown in Figure \ref{2-3 and 0-2 moves}.

\begin{figure}[htpb]
\centering
\subfloat[The 2-3 and 3-2 moves.]{
\labellist
\small\hair 2pt
\pinlabel 2-3 at 139 84
\pinlabel 3-2 at 139 44
\endlabellist
\includegraphics[width=0.44\textwidth]{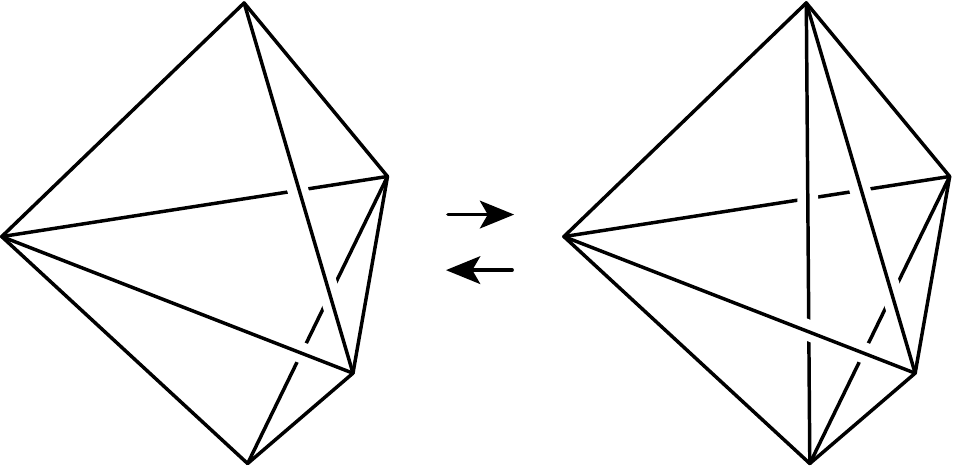}
\label{2-3}}
\qquad
\subfloat[The 0-2 and 2-0 moves.]{
\labellist
\small\hair 2pt
\pinlabel 0-2 at 109 84
\pinlabel 2-0 at 109 44
\endlabellist
\includegraphics[width=0.34\textwidth]{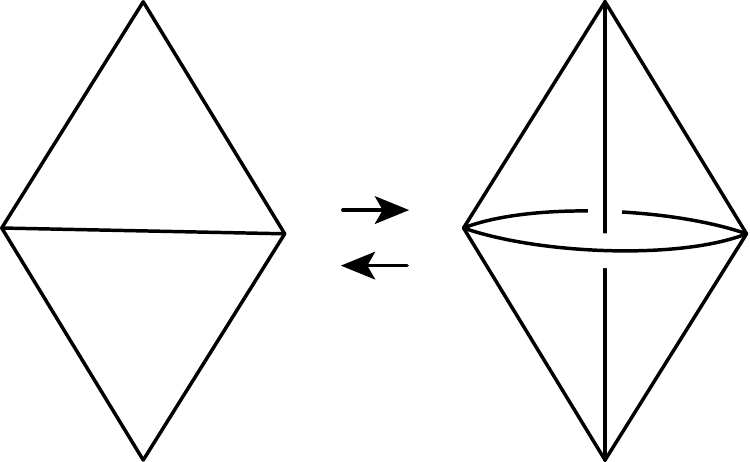}
\label{0-2}}
\caption{Moves on (topological) triangulations.}
\label{2-3 and 0-2 moves}
\end{figure}

The pentagon identity for the tetrahedral index gives the following key 
property of the 3D-index:

\begin{thm} [{\cite[\sect6]{Gar},\cite[Theorem 1.1]{GHRS}}]
\label{thm.00}
If $\CT$ and $\CT'$ are related by a 2-3 move and both are 1-efficient, 
then $I_{\CT}=I_{\CT'}$.
\end{thm}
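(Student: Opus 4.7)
The strategy is to match up the combinatorics of the two sums term-by-term and reduce $I_\CT = I_{\CT'}$ to a single local identity, which follows from the pentagon identity \eqref{pent_eq}.

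First, I would fix compatible coordinates on the two sides of the move. Label the two tetrahedra on the $\CT$ side as $\sigma_A, \sigma_B$, meeting along a triangular face with equator vertices $v_1, v_2, v_3$ and opposite apexes $A, B$. The $\CT'$ side then consists of three tetrahedra $\tau_1, \tau_2, \tau_3$ arranged cyclically around a new central edge $f = AB$, where $\tau_k$ is opposite to the equator vertex $v_k$. The edge set of $\CT'$ is the edge set of $\CT$ together with $f$; all other edges, and in particular a valid choice of $r$ edges to be zeroed out in the index sum, can be taken outside of the local picture and hence identical on both sides. Thus every summation variable $\kk$ appearing in $I_{\CT}([\gamma])$ also appears in $I_{\CT'}([\gamma])$, accompanied by exactly one extra variable $k_f \in \Z$ (the weight on $f$), and the $q^{k_f}$ contribution from the $q^{\sum k_i}$ weighting pulls outside the inner sum.

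Next, for any tetrahedron $\sigma_j$ not involved in the move, the factor $J(\sigma_j; \kk, \gamma)$ depends only on the weights along its six edges, all of which lie in the common edge set; hence these factors are unchanged and pull out of both sums. After this reduction, $I_\CT = I_{\CT'}$ becomes the purely local identity
\[
J(\sigma_A) \, J(\sigma_B) \;=\; \sum_{k_f \in \Z} q^{k_f} \, J(\tau_1) \, J(\tau_2) \, J(\tau_3),
\]
where each $J$ is evaluated at the edge weight data on the bipyramid. A direct inspection of opposite edge pairs in $\tau_k$ shows that $k_f$ enters $\JD(\tau_k)$ only through the edge pair $\{AB, v_iv_j\}$ with $\{i,j,k\}=\{1,2,3\}$, while the other two arguments of $\JD(\tau_k)$ are independent of $k_f$. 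Applying the identity $\JD(a,b,c) = (-q^{1/2})^{-b} \ID(b-c, a-b)$ to each $J(\tau_i)$ and collecting the $k_f$-dependent parts into the form $x_i + k_f$, the inner sum becomes exactly the left hand side of the pentagon identity \eqref{pent_eq}. Its right hand side, after reversing the $\JD \leftrightarrow \ID$ conversion, is precisely $J(\sigma_A) J(\sigma_B)$.

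The main obstacle is the combinatorial bookkeeping in this last step: one must verify that the ``first arguments'' of the three $\ID$ factors obtained from $\tau_1, \tau_2, \tau_3$ satisfy the additive constraint $m_1 + m_2 = m_{12}$ required by \eqref{pent_eq} (this follows from the explicit edge-label expressions on the bipyramid), and that the prefactors $(-q^{1/2})^{-b_i}$ produced by the $\JD \to \ID$ conversion combine with the $q^{-x_3}$ factor appearing on the right hand side of \eqref{pent_eq} to match exactly the prefactors obtained from $J(\sigma_A) J(\sigma_B)$, with no leftover powers of $-q^{1/2}$. Secondarily, one must justify interchanging the $k_f$-summation with the outer $\kk$-summation; this is precisely where the 1-efficiency hypothesis on both $\CT$ and $\CT'$ enters, since by Theorem \ref{index_structure_iff_1-efficient} each side defines a genuine formal Laurent series in $q^{1/2}$ whose coefficients are finite sums, so the interchange is legitimate at the level of each power of $q^{1/2}$.
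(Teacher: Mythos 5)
Your proposal correctly identifies the pentagon-identity mechanism and the role of 1-efficiency in justifying the rearrangement $\sum_{(\kk,k_f)} = \sum_\kk \sum_{k_f}$, which is exactly the strategy of the references \cite[\sect6]{Gar} and \cite[Theorem 1.1]{GHRS} that the paper cites for this theorem (and which the paper itself sketches when proving the extension to $I^a_\T(b)$ in Section 8). The outline---peel off non-local tetrahedra, reduce to a local identity on the bipyramid matching the pentagon equation, and control the double sum via convergence of $I_{\CT'}$---matches the cited proof, so no new route is being taken.
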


The quadratic identity for the tetrahedral index gives:

\begin{thm} \cite[Theorem 5.1]{GHRS}
\label{thm.20move.index}
If $\CT$ and $\CT'$ are ideal triangulations related by a 
0-2 move and both are 1-efficient, then $I_{\CT}=I_{\CT'}$.
\end{thm}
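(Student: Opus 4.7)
The strategy is to use the quadratic identity (\ref{eq.IDquadratic}) for the tetrahedral index to sum out the local variables introduced by the 0-2 move, in direct analogy with the way the pentagon identity (\ref{pent_eq}) underlies Theorem \ref{thm.00} for 2-3 moves. A 0-2 move inserts a \emph{bigon pillow} consisting of two ideal tetrahedra $\sigma_A, \sigma_B$ glued along two triangular faces. Since $\#\text{edges} = \#\text{tets}$ for any ideal triangulation of a compact 3-manifold with toroidal boundary, $\CT'$ has two more tetrahedra \emph{and} two more edge classes than $\CT$. Choosing the $r$ edges whose weights are set to zero to lie among the original edges of $\CT$, the additional summation variables in $I_{\CT'}$ are exactly the two weights $s, t$ assigned to the new edges. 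The 1-efficiency of both $\CT$ and $\CT'$ is used via Theorem \ref{index_structure_iff_1-efficient} to guarantee that both $I_{\CT}([\gamma])$ and $I_{\CT'}([\gamma])$ converge as Laurent series in $q^{1/2}$, justifying the rearrangement of sums below.

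First I would identify which tetrahedra $\sigma_j$ of $\CT$ are incident to the new edge classes (namely those with faces glued to the exterior of the pillow), and isolate the inner sum over $s,t$:
\[
I_{\CT'}([\gamma]) = \sum_{\kk} q^{\sum_i k_i} \Bigl(\prod_{j \text{ far from pillow}} J(\sigma_j;\kk,\gamma)\Bigr) \sum_{s,t \in \Z} q^{s+t} \, J(\sigma_A) \, J(\sigma_B) \prod_{j \text{ near pillow}} J(\sigma_j;\kk,s,t,\gamma).
\]
Next, using the permutation symmetry of $\JD$ and the conversion $\JD(a,b,c) = (-q^{1/2})^{-b}\,\ID(b-c,\,a-b)$, I would rewrite $J(\sigma_A)\,J(\sigma_B)$ as a product of two $\ID$-functions with a common first argument $m$ (matching because $\sigma_A$ and $\sigma_B$ sit back-to-back in the pillow and therefore see the same ``outer'' edge data) and second arguments differing by a quantity $c$ that is linear in $t$ and the remaining data, after a shift of summation variable $s \mapsto e$.

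Then the inner sum over $e$ is an instance of the quadratic identity $\sum_e \ID(m,e)\ID(m,e+c)q^e = \delta_{c,0}$, producing a Kronecker delta $\delta_{c(t,\kk,\gamma),0}$. Since $c$ depends on $t$ with unit leading coefficient, this delta uniquely determines $t$ in terms of $\kk,\gamma$ and collapses the remaining sum over $t$. Careful bookkeeping of the $q$-powers and of the sign factors $(-q^{1/2})^{-b}$ arising from the $\JD \to \ID$ conversion should show that the surviving contributions from the tetrahedra near the pillow reassemble exactly into the arguments of the $J(\sigma_j;\kk,\gamma)$ appearing in the original $I_{\CT}([\gamma])$, yielding $I_{\CT'}([\gamma]) = I_{\CT}([\gamma])$ for all peripheral classes $[\gamma]$.

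The main obstacle I anticipate is Step 2, the combinatorial analysis identifying precisely how $J(\sigma_A)$, $J(\sigma_B)$, and the contributions from the neighbouring tetrahedra combine into the $\ID(m,e)\,\ID(m,e+c)$ form. This requires tracking which edge classes are incident to each pair of opposite edges in the new tetrahedra, accounting for the shifts in the gluing coefficients $(a_{i,j},b_{i,j},c_{i,j})$ for $\sigma_j$ adjacent to the pillow, and verifying that the unit-leading-coefficient dependence of $c$ on $t$ really holds (so that the delta determines $t$ without leftover factors). Once this algebraic matching is in place, the quadratic identity mechanically completes the proof.
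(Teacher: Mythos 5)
The paper does not reprove this result, but cites \cite[Theorem 5.1]{GHRS} and signals in the sentence immediately preceding the statement (``The quadratic identity for the tetrahedral index gives:'') that the quadratic identity is the engine of that proof, exactly as you propose. Your plan --- isolating the two extra summation variables introduced by the 0-2 move (the central degree-two edge and the doubled copy of the old edge), converting $J(\sigma_A)J(\sigma_B)$ to $\ID$-form so the quadratic identity produces a Kronecker delta that eliminates them, reassembling the surviving factors into $I_\CT$, and invoking 1-efficiency via Theorem \ref{index_structure_iff_1-efficient} to justify the rearrangement of sums --- is precisely the cited argument, the $0$-$2$ analogue of the pentagon-identity proof of Theorem \ref{thm.00}.
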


\medskip
It is currently unknown whether any two  1-efficient triangulations of a 
given cusped manifold are connected by 2-3/3-2 and 0-2/2-0 moves preserving 
1-efficiency.  See Section \ref{PachnerGraph} for a discussion of some 
experimental results on this question.   

\medskip
However, using the previous results, we can obtain a  
\emph{topological invariant} $I_M$ of any cusped hyperbolic 3-manifold $M$ 
as follows (see [GHRS] for the details).

\begin{itemize}
\item 
Consider the {\em canonical} or {\em Epstein-Penner} decomposition of $M$ 
into convex ideal hyperbolic polyhedra (using horoball cusp neighbourhoods 
of equal volume if $M$ has more than one cusp).
\item 
If the Epstein-Penner decomposition is a triangulation $\tri$,
then we define $I_M=I_\tri$.
\item 
In general, we can define $I_M=I_\tri$ where $\tri$ is any triangulation
in a class $\mathcal{EP}$ consisting of all \emph{regular triangulations} 
of the Epstein-Penner cells, with all possible layered triangulations of 
the bridge regions between any incompatible triangulations of the faces.
\item 
A \emph{regular triangulation} of a set of points $A= \{ a_i \}$ in $\R^n$, 
is obtained by lifting the points vertically to points $(a_i,t_i)$ in 
$\R^{n+1}$, taking the bottom faces of their convex hull in $\R^{n+1}$, and 
then projecting back to $\R^n$. (Use the projective model to extend 
this definition to $\H^n$.)
\item 
All triangulations in class $\mathcal{EP}$ admit semi-angle structures,
hence are 1-efficient.
\item 
Results of Gelfand, Kapranov and Zelevinsky on {regular triangulations} 
imply that $\mathcal{EP}$ is connected by sequences of 2-3, 3-2, 0-2 and 
2-0 moves, staying within 1-efficient triangulations. But the index is 
invariant under such moves, so $I(\tri)$ is the same for all $\tri$ in 
$\mathcal{EP}$.
\item 
Hence we obtain a well-defined invariant for any cusped hyperbolic 3-manifold.
\end{itemize}


\section{Some Q-normal surface theory}

In this section we discuss surfaces in general position with respect to
an ideal triangulation. This discussion is similar to the one of Haken
who studied and encoded normal surfaces in general position with respect
to a triangulation of a closed 3-manifold. A major difference is that amongst surfaces 
in general position with respect to an ideal triangulation is the class of embedded
spun normal surfaces (a concept introduced by Thurston, \cite{Th,Wa}), 
which intersect each tetrahedron 
in a finite number of quadrilaterals, and a possibly infinite number of triangles.
Also, just as in the closed case, a spun normal surface can be reconstructed from its quadrilateral data and the quadrilaterals satisfy the $Q$-matching equations.

Let $M$ be an orientable 3-manifold with boundary consisting of $r$ tori, 
and let $\T$ be an {\em oriented} ideal triangulation with $n$ tetrahedra.
Let $\calE$ denote the set of edges of $\T$, and recall that $\square$ 
denotes the set of quad types in the tetrahedra of $\T$. 
Of course, $|\calE|=n$ and $|\square|=3n$.

Recall that the {\em $Q$-normal surface solution space} $Q(\T;\R)$ for 
$\T$ is a subset of $\R^\square \cong \R^{3n}$ consisting of real quad 
coordinates assigned to the quad types in $\T$
satisfying the $Q$-matching equations of Tollefson. 
After choosing a cyclic ordering of quad types in each tetrahedron, 
compatible with the orientation on $\T$, 
we can write the quad coordinates of $S \in Q(\T;\R)$ as a vector
$$
S=(a_1, b_1, c_1, a_2, b_2, c_2, \ldots , a_n, b_n, c_n) \in(\R^3)^n= \R^{3n}
$$
where $(a_j,b_j,c_j)$ are the quad coordinates of $S$ in tetrahedron $j$.

Let $Q(\T;\Z) = Q(\T;\R) \cap \Z^{3n}$ be the sublattice of $Q(\T;\R)$ of 
integer solutions, and  let $Q(\T;\Z_+) = Q(\T;\Z) \cap \R_+^{3n} $ denote 
the set of integer solutions with non-negative quad coordinates. Then 
each element $x$ of $Q(\T;\Z_+)$ determines a (possibly singular) 
{\em spun normal surface} in $\T$ obtained by taking $x_A$ disjoint copies 
of the quad $q_A$ together with additional normal triangles. 
The resulting surface  is a (possibly singular) {\em closed normal surface} 
if only finitely many triangles are added, and is {\em embedded} if there 
is at most one non-zero quad coordinate in each tetrahedron.
(See Kang \cite{Kang},  Tillmann \cite{Ti}.)

\subsection{Construction of spun normal surfaces} 
\label{spun_normal_construction}

Next we give a brief exposition of the construction of a spun normal surface
from solutions to the Q-matching equations following \cite{DuGa} and 
\cite{Ti}. (This approach goes back to lectures of W. Thurston and was used 
by J. Weeks in SnapPea.)

For a spun normal surface, there is a fixed pattern of normal arcs in each 
ideal triangle, consisting  of three infinite families of parallel arcs 
(one at each corner). Further there is a well-defined ``middle'' interval 
in each edge of the triangle separating two of these families. The union 
of these three intervals with 3 normal arcs form a hexagon, as described 
in \cite{DuGa}.

\begin{center}
\begin{figure}[htpb]
\includegraphics[width=0.2\textwidth]{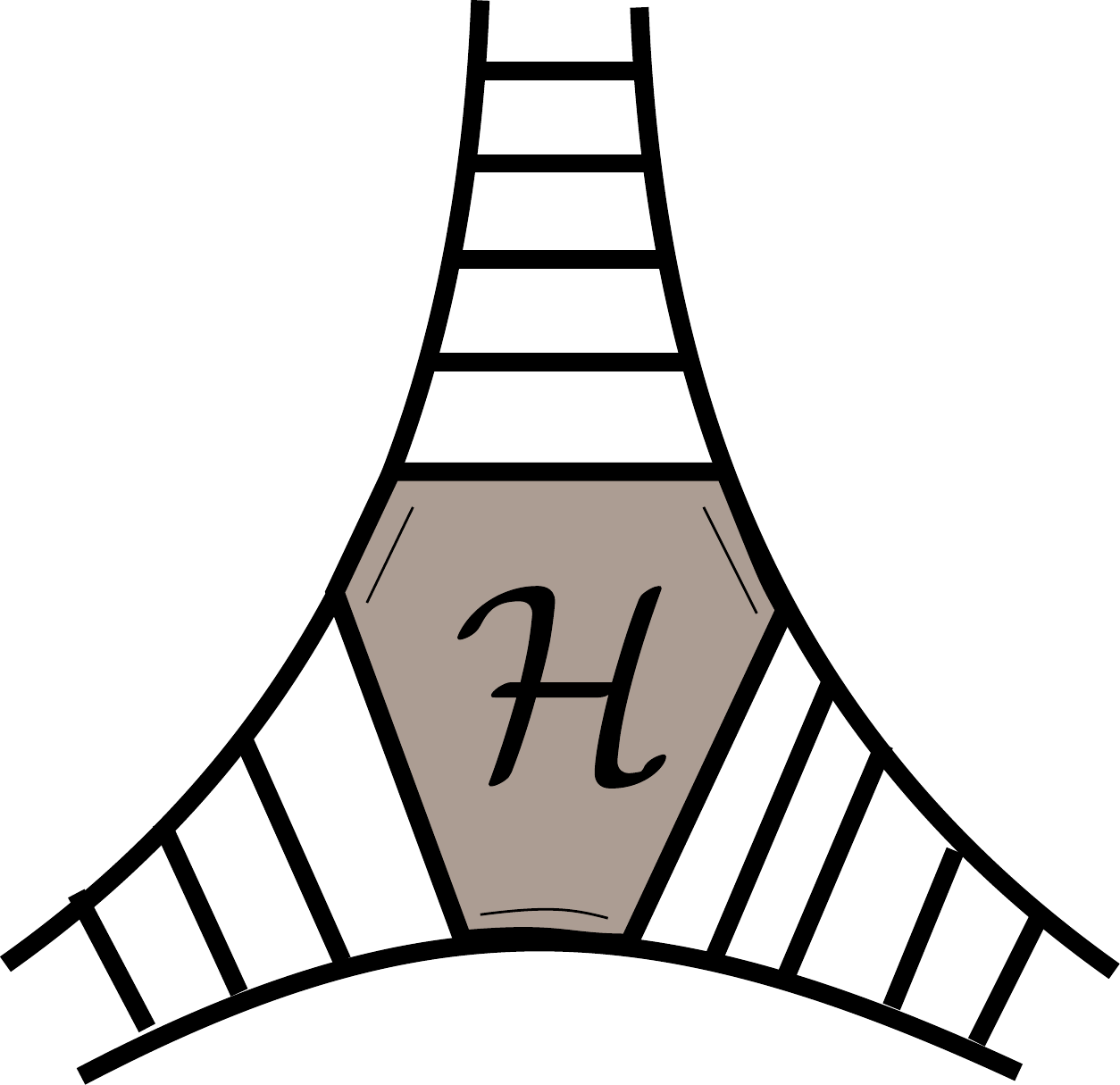}
\caption{\label{fig:hexagon} The hexagon $\mathcal{H}$ has three arcs 
which are normal arcs and three arcs (labelled by double lines) that 
are not normal arcs and represent the "middle" intervals.}
\end{figure}
\end{center}

Next consider the arc pattern on the boundary of a tetrahedron: 
Along each edge,  there is an integer ``shear'' or ``shift'' parameter 
which specifies how the adjacent arc patterns fit together, and we choose 
a sign convention as shown in Figure \ref{fig:shear} below (as viewed 
from outside the tetrahedron).

\begin{center}
\begin{figure}[htpb]
\includegraphics[width=0.35\textwidth]{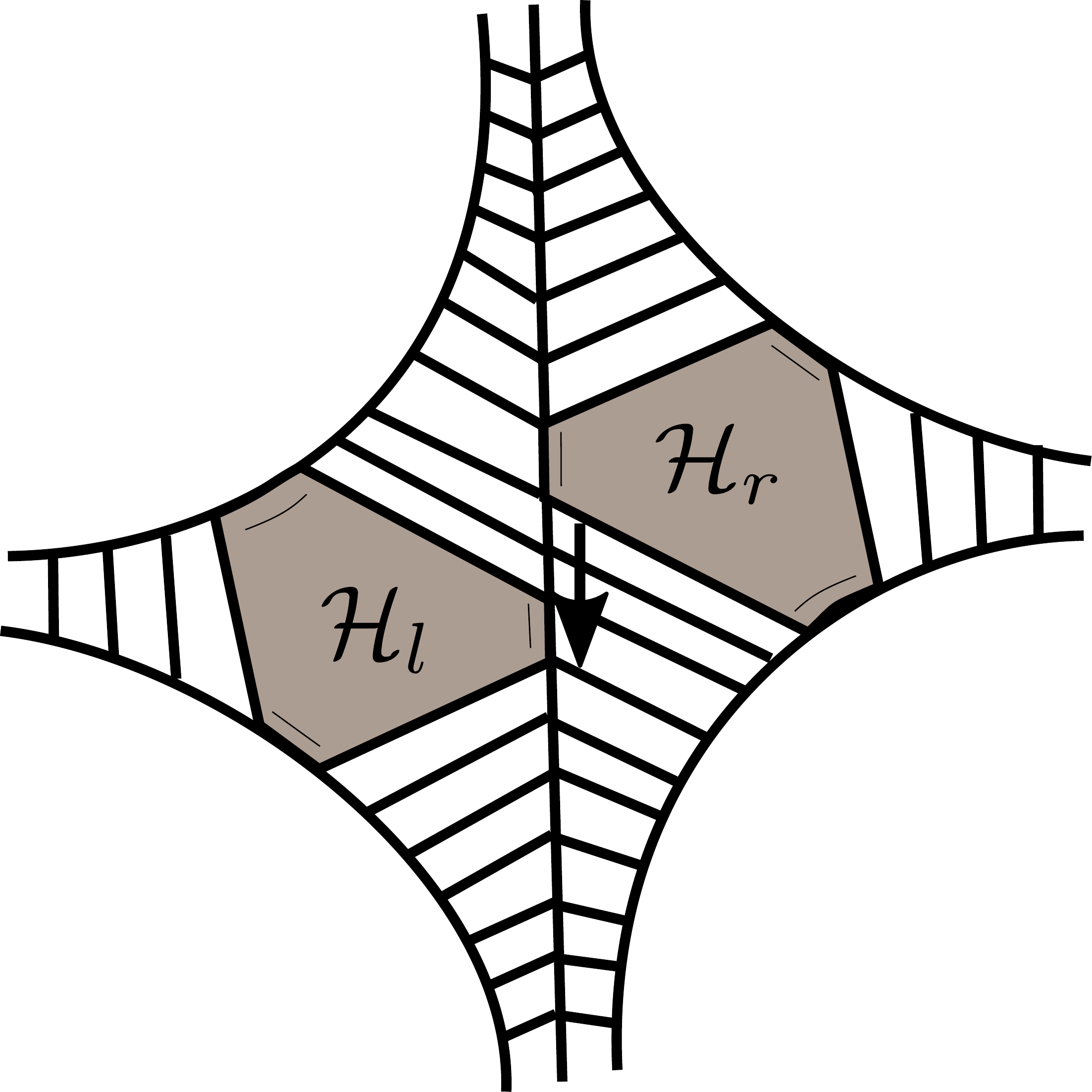}
\caption{\label{fig:shear} A shift of -3 from $\mathcal{H}_r$ to 
$\mathcal{H}_l$. (In general, shearing parameters measure the offset 
from right to left as viewed from outside the tetrahedron.)}
\end{figure}
\end{center}

We require that  the sum of shear parameters along the three edges meeting 
at a vertex is zero. This means that there is an infinite family of 
parallel normal triangles at each tetrahedron vertex, and also implies 
that the shear parameters on opposite edges are equal. With our sign 
convention, quad coordinates $(a,b,c)$ for quad types 
$q_{A:01:23},q_{A:02:13},q_{A:03:12}$ in a tetrahedron correspond to 
shear parameters $(c-b,a-c,b-a)$ along the edges of the tetrahedron facing 
$q_{A:01:23},q_{A:02:13},q_{A:03:12}$. (Figure \ref{fig:shearing_tet} below shows 
the case of two quads of type $q_{A:01:23}$, separating vertices 0, 1 from 
vertices 2, 3.)

\begin{figure}[htpb]
\begin{center}
\includegraphics[width=0.3\textwidth]{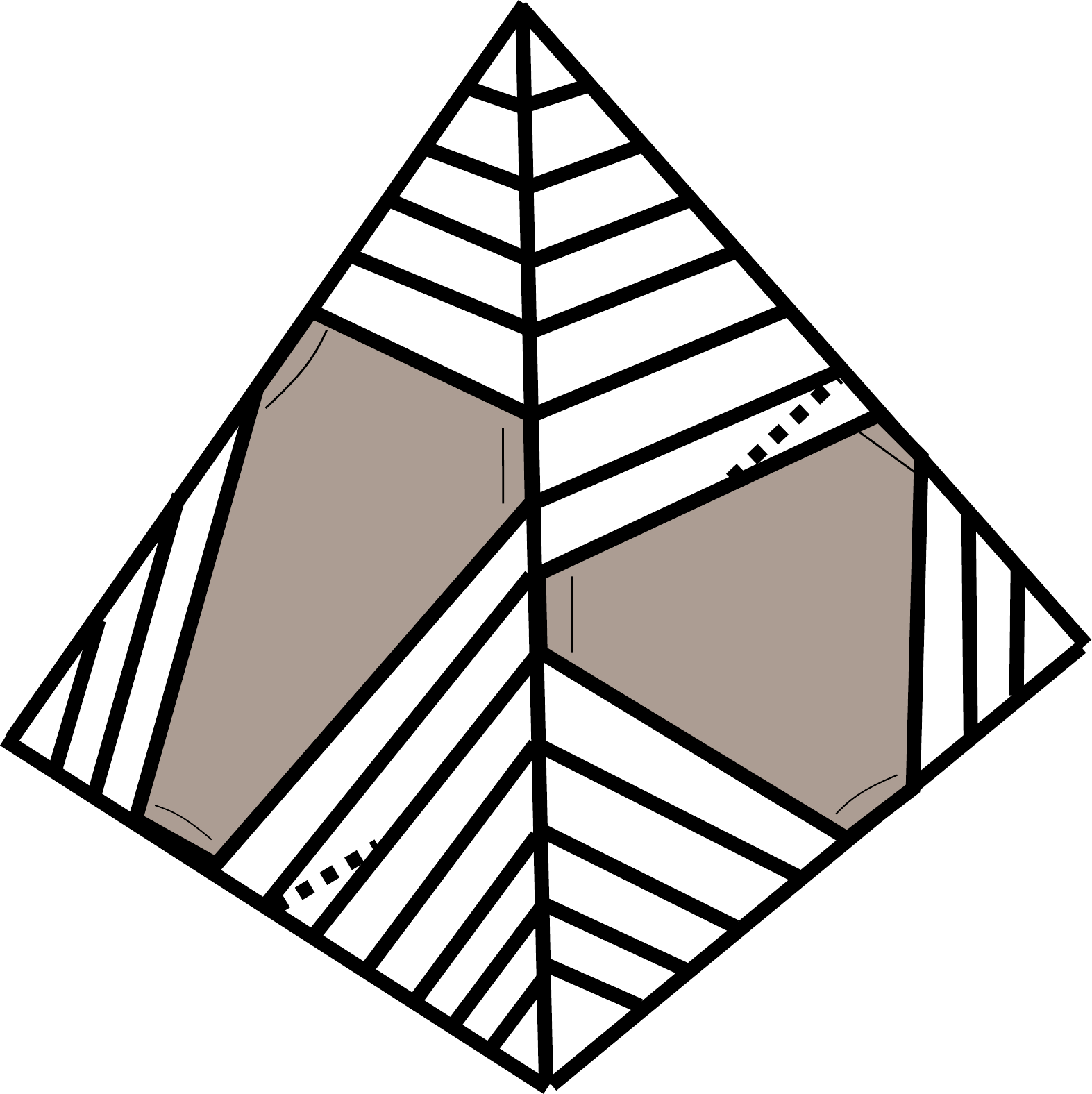} \hspace{1cm}
\includegraphics[width=0.3\textwidth]{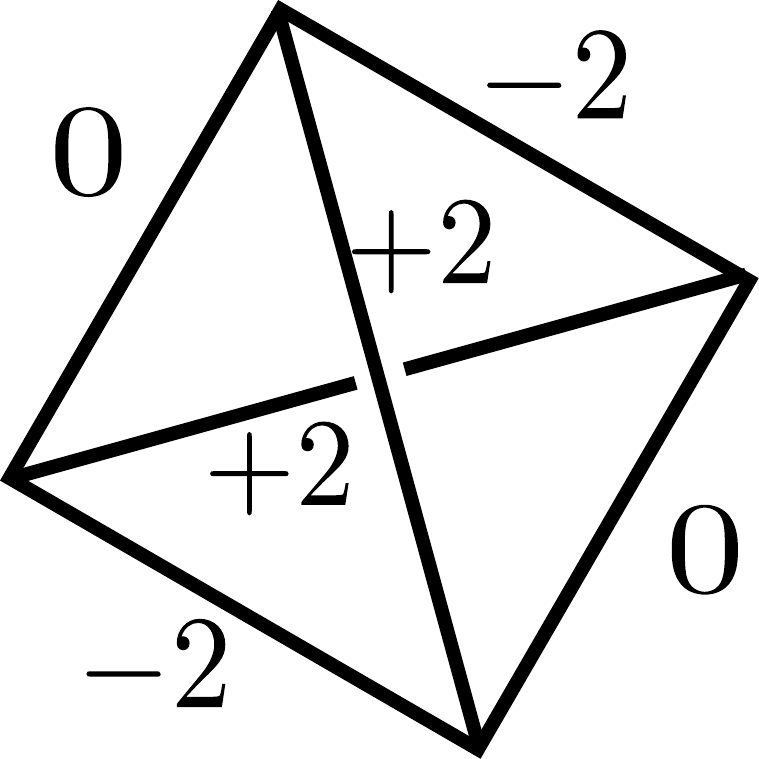}
\end{center}
\caption{\label{fig:shearing_tet} With two quads at the bottom edge of 
the left hexagon and top edge of the right hexagon, we have a shear of +2 
along the edge between. The right figure gives all of the shearing 
coordinates on the tetrahedron.} 
\end{figure}

Now there is a unique way to glue together the arc patterns in two 
tetrahedra meeting along a common face, as the middle intervals in the faces 
must match up. The arc patterns from all the tetrahedra in the triangulation 
then fit together consistently (without any shearing) around an edge class 
if and only the sum of the shear parameters over all edges in the edge 
class vanishes. These conditions are precisely the Q-matching equations.

\subsection{Geometric generators for Q-normal classes}

The work of Kang-Rubinstein \cite[Theorem 2.1]{KR1} (also see 
Luo-Tillmann \cite[\sect2]{LT}) shows that the space 
$\tilde N(\T;\R) \subset \R^{7n}$  of all {\em closed normal classes} 
satisfying Haken's matching equations (including, for example, the 
boundary tori) has a basis given by {\em edge solutions} $\tilde E_i$ and 
{\em tetrahedron solutions} $\tilde T_j$ where $1\le i ,j \le n$. 
These project to {\em edge solutions} $E_i$ and {\em tetrahedron solutions} 
$T_j$ under the map which forgets triangular coordinates, giving the image $N(\T;\R) \subset \R^{3n}$. Define $Q_0(\T;\R)$ as the solutions of the $Q$-matching equations corresponding to closed normal surfaces. It is a result of \cite{To} that the $Q$-coordinates determine a normal surface up to multiples of the peripheral normal tori and so $Q_0(\T;\R)=N(\T;\R)$.

When considering the map from all 
closed normal classes to $Q$-normal solution space, the closed normal 
surface solutions coming from the boundary tori map to the zero vector. 
Thus, such a map introduces one linear relation for each cusp, which in fact 
gives the only relations (see for example the proof of \cite[Theorem 3.1]{KR1}), 
so $\dim Q_0(\T;\R) = 2n-r$,
Explicitly, the quad coordinates for $E_i$ are 
precisely the coefficients in the {\em edge equation} for the $i$th edge 
in $\T$, and the quad coordinates for $T_j$ are $(1,1,1)$ in the $jth$ 
tetrahedron and $(0,0,0)$ in all other tetrahedra. 

The space $Q(\T;\R)$ of all {\em Q-normal classes} is spanned by 
{\em edge solutions}, {\em tetrahedron solutions} and 
{\em peripheral curve solutions} (see \cite[Theorem 3.1]{KR1}).     
The peripheral curve solutions are linearly independent from the
edge and tetrahedron solutions, and there are two linearly independent 
peripheral curve solutions $M_k, L_k$ per cusp corresponding to a choice of 
basis (``meridian'' $\mu_k$, ``longitude'' $\lambda_k$) 
of $H_1(\bd_k M;\R)$ for each component $\bd_k M$ of $\bd M$. 
In fact, a suitable choice of $n-r$ edge solutions together with the $n$ 
tetrahedral solutions and the $2r$ peripheral solutions form a basis for 
the real solution space $Q(\T;\R)$, which therefore has dimension $2n+r$ 
(which also follows from \cite[Theorem 3.1]{KR1}). 
 
In fact the statements above follow from the symplectic relations of 
Neumann-Zagier (see \cite{NZ}, \cite{N}, \cite{Choi}). Let $A$ be the 
$n \times 3n$ gluing equation matrix for $\tri$, with rows given by 
$E_1, \ldots, E_n$. Then the $n \times 3n$ matrix of $Q$-matching equations 
$B$ is given by $B=AC$  where $C$ is the $3n \times 3n$ block diagonal 
matrix made up of $n$ copies of
$$
\begin{bmatrix} 0 & 1 & -1 \\ -1 & 0 & 1 \\ 1 & -1 & 0\end{bmatrix}
$$
(see Tillmann \cite{Ti}). We can also regard $C$ as the linear map 
$C: \R^{3n} \to \R^{3n}$
given by
$$
C(a_1, b_1, c_1, a_2, b_2, c_2, \ldots , a_n, b_n, c_n) 
= (-b_1+c_1, -c_1+a_1,-a_1+b_1, \ldots, -b_n+c_n, -c_n+a_n,-a_n+b_n).
$$
Then the $Q$-matching equations have coefficients $C(E_i)$ for 
$i=1, \ldots , n$. (Compare Section \ref{spun_normal_construction} above.)

Now the skew-symmetric pairing of Neumann-Zagier 
is given by
$$
\omega : \R^{3n} \times \R^{3n} \to \R, \qquad 
\omega(x,x') = Cx \cdot x'
$$
where $\cdot$ denotes the dot product. Explicitly, given vectors 
$$
x=(a_1, b_1, c_1, a_2, b_2, c_2, \ldots , a_n, b_n, c_n), 
x'=(a'_1, b'_1, c'_1, a'_2, b'_2, c'_2, \ldots , a'_n, b'_n, c'_n) 
\in   (\R^3)^n
$$
we have
$$
\omega(x,x') = \sum_{j=1}^n  \left(  
\begin{vmatrix} a_j & a_j' \\ b_j & b_j' \end{vmatrix} +  
\begin{vmatrix} b_j& b_j' \\ c_j & c_j' \end{vmatrix} +   
\begin{vmatrix} c_j& c_j' \\ a_j & a_j' \end{vmatrix}\right) .
$$
(Compare \cite{N,Choi}.)

It follows that $S \in \R^{3n}$ satisfies the $Q$-matching equations if 
and only if 
\be 
\label{Qmatch}
\omega(E_i,S) = C(E_i) \cdot S = 0 \text{ for all } i = 1, \ldots ,n.
\ee

It is immediate that each tetrahedral solution $T_j$ satisfies the 
$Q$-matching equations; in fact,  $C(T_j)=0$ so 
$\omega(x,T_j)= -\omega(T_j,x) = -C(T_j) \cdot x = 0$ for all $x \in \R^{3n}$.
Further, the {\em Neumann-Zagier symplectic relations}  say that all 
symplectic products of $E_i, M_k, L_k$ for $i=1,\ldots, n$, 
$k=1,\ldots,r$ are {\em zero} except that
$$
\omega(L_k,M_k) = 2 = -\omega(M_k,L_k)  \text{ for each }k=1, \ldots , r .
$$
Hence, we can conclude that the edge solutions $E_i$ and peripheral 
solutions $M_k,L_k$ satisfy the $Q$-matching equations.  
In  fact, with some more analysis, it can been seen that the tetrahedral 
solutions together with  a suitable choice of $n-r$ edge solutions  and 
the peripheral solutions form a basis for the real solution space $Q(\T;\R)$,
which has dimension $2n+r$.

As mentioned above, there is a basis for the $Q$-coordinates of spun 
normal surfaces given by appropriately chosen edge solutions 
$\{E_i \}_{i=1}^{n-r}$, all tetrahedral solutions $\{T_j \}_{j=1}^n$, and 
peripheral solutions $\{p_k M_k + q_k L_k\}_{k=1}^r$. Hence for each choice of 
$x_i,y_j,p_k,q_k \in \R$ we obtain a spun normal class
$$
S= \sum_i x_i E_i + \sum_j y_i T_j + \sum_k (p_k M_k +q_k L_k).
$$
A normal surface has two important invariants; its Euler 
characteristic and its boundary slope. These invariants give rise to 
two linear maps on the normal surface  solution space $Q(\T;\R)$.

\begin{defn}
The {\em formal Euler characteristic} is the linear map 
$$
\chi : Q(\T;\R) \to \R, \qquad \chi(S)= \sum_i -2x_i- \sum_j y_j 
$$
giving the usual Euler characteristic for embedded closed and spun normal 
surfaces.
\end{defn}

(See \cite{LT} for a detailed discussion of the closed case, and 
Appendix \ref{gen_angle_euler} for the spun normal case using generalised 
angle structures.)

\begin{defn} 
The {\em boundary} map 
$$
\bd : Q(\T;\R) \to H_1(\bd M;\R), \qquad \bd(S)= 
2 \sum_k (p_k \mu_k +q_k \lambda_k)
$$
gives the boundary slope and the direction of spinning for embedded spun 
normal surfaces. Here $\mu_k$ and $\lambda_k$ are the meridian and longitude of cusp $k$ as defined previously. 
\end{defn}

(See Tillmann \cite{Ti} for the details.\footnote{Our sign convention is 
opposite to that in \cite{Ti} and Regina v.4.96 (\cite{regina}),
but is consistent with the boundary map defined in \cite{N} and discussed 
in \sect \ref{ZNST} below.}) Note that at each cusp, a tail of a spun 
normal surface is an infinite annulus spiralling into the cusp. There are 
two possible directions of spinning for each such a tail.

We can compute the boundary  in terms of the Neumann-Zagier symplectic 
form: since
$$
\omega(S,M_k) = \omega(p_k M_k +q_k L_k,M_k)=-2q_k
$$
and
$$
\omega(S,L_k) = \omega(p_k M_k +q_k L_k,L_k)=2p_k
$$
we have
$$
\bd(S)= \sum_k \left(  -\omega(S,L_k) \mu_k + \omega(S,M_k) \lambda_k \right).
$$
Note that 
\begin{center}
\emph{$\bd(S)=0$ if and only if $S$ defines a closed normal class.}
\end{center}

Hence $N(\T;\R)$ is the solution space of the $Q$-matching equations 
(\ref{Qmatch}) together with the
additional equations 

\be 
\label{Qmatch_closed}
\omega(M_k,S) = \omega(L_k,S)  = 0 \text{ for all } k = 1, \ldots, r.
\ee

We now define a 
quadratic function which will play an important role 
in understanding the degree of terms in the 3D-index sums.

\begin{defn}
We define a {\em double arc} function 
\be
\delta : Q(\T;\R)  \to \R, \qquad 
\delta(S) = \sum_j ( a_j b_j+b_j c_j + c_j a_j),
\ee
for 
$$
S=(a_1, b_1, c_1, a_2, b_2, c_2, \ldots , a_n, b_n, c_n). 
$$
\end{defn}

Note that for $S \in Q(\tri;\Z_+)$, $a_j b_j +b_j c_j + c_j a_j$ counts the 
number of arcs of intersection between the quads of $S$ in tetrahedron $j$.
In particular,
\begin{center}
\emph{a normal surface solution $S \in Q(\T;\Z_+)$ is embedded if and only 
if $\delta(S)=0$.}
\end{center}

There is also an associated symmetric bilinear function 
$$
\delta : Q(\T;\R) \times Q(\T;\R) \to \R
$$
such that 
\be
\delta(S+S')=\delta(S)+\delta(S') + 2 \delta(S,S') 
\text{ for all } S, S' \in Q(\T;\R) .
\label{bilinear_delta}
\ee
Explicitly
$$
\delta(S,S') = \frac{1}{2}  
\sum_j ( a_j b_j' +b_j c_j' + c_j a_j' )+( a'_j b_j +b'_j c_j + c'_j a_j )
$$
if $S$ and $S'$ have quad coordinates in  $\R^{3n}$ given by
$$
[S]=(a_1, b_1, c_1, a_2, b_2, c_2, \ldots , a_n, b_n, c_n)
$$
and
$$
[S']=(a_1', b_1', c_1', a_2', b_2', c_2', \ldots , a_n', b_n', c_n').
$$


\section{Combinatorics of ideal triangulations and integer 
normal surface theory}
\label{ZNST}

The work of Kang-Rubinstein and Luo-Tillmann, described above, shows that 
the edge solutions and tetrahedral solutions 
span the real vector space $N(\T;\R) \subset \R^{3n}$ of closed normal 
classes, and that these together with the 
peripheral curve solutions span the real vector space 
$Q(\T;\R) \subset \R^{3n}$ of all $Q$-normal classes.
However, over the integers the situation is more subtle --- the integer 
linear combinations of edge and tetrahedral solutions generally give only 
a finite index submodule of $N(\T;\Z)$ and the integer linear combinations 
of edge, tetrahedral and peripheral curve solutions  give a finite index 
submodule of $Q(\T;\Z)$. In this section we give a precise 
description of these integer classes. It turns out that this is a 
consequence of the results of Neumann  \cite{N} on combinatorics of ideal 
triangulations.

We regard a $Q$-normal class  as a linear combination of quads in 
$\T$ satisfying the $Q$-matching equations of Tollefson \cite{To}. 
Explicitly, let $\Z E \cong \Z^n$ and $\Z\quads \cong \Z^{3n}$ 
denote the free $\Z$-modules with bases given by the (unoriented) edge 
classes and quad types in $\T$ respectively. Given a quad $q_\sigma$ in an 
oriented tetrahedron $\sigma$ of $\T$ we associate a sign $\pm 1$ to each 
of the edges of $\sigma$ meeting $q_\sigma$ as shown in 
figure \ref{twisted_octagon}. (These signs give the shear parameters 
along the edges as explained in Section \ref{spun_normal_construction}.)
Adding these contributions gives a linear map 
\be
\label{Qmatching_map}
F: \Z\quads \to \Z E
\ee
whose kernel is the submodule $Q(\T;\Z)$ of $Q$-normal classes.

\begin{figure}[htpb]
\begin{center}
\includegraphics[width=0.26\textwidth]{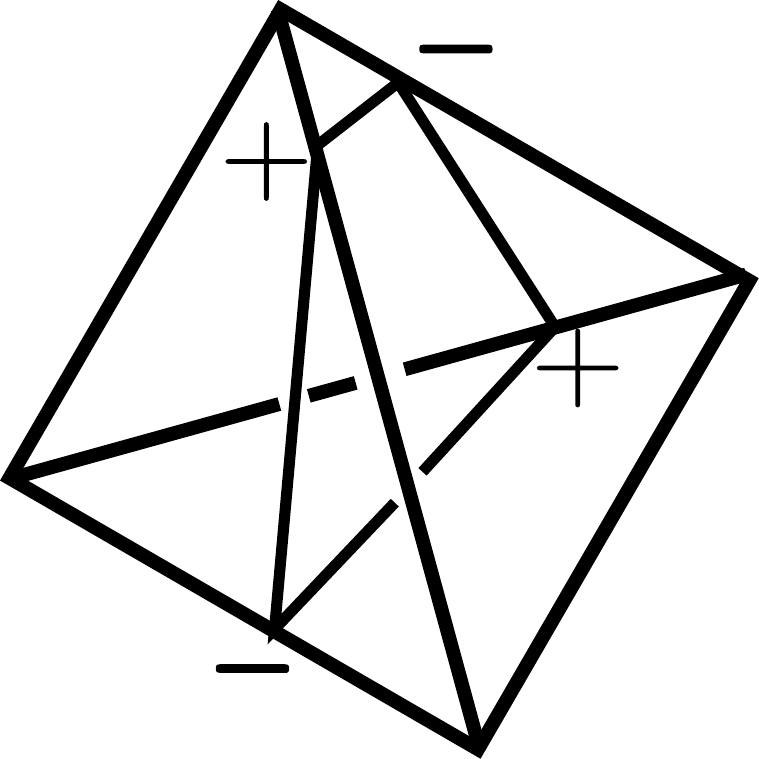} 
\hspace{1cm}
\includegraphics[width=0.26\textwidth]{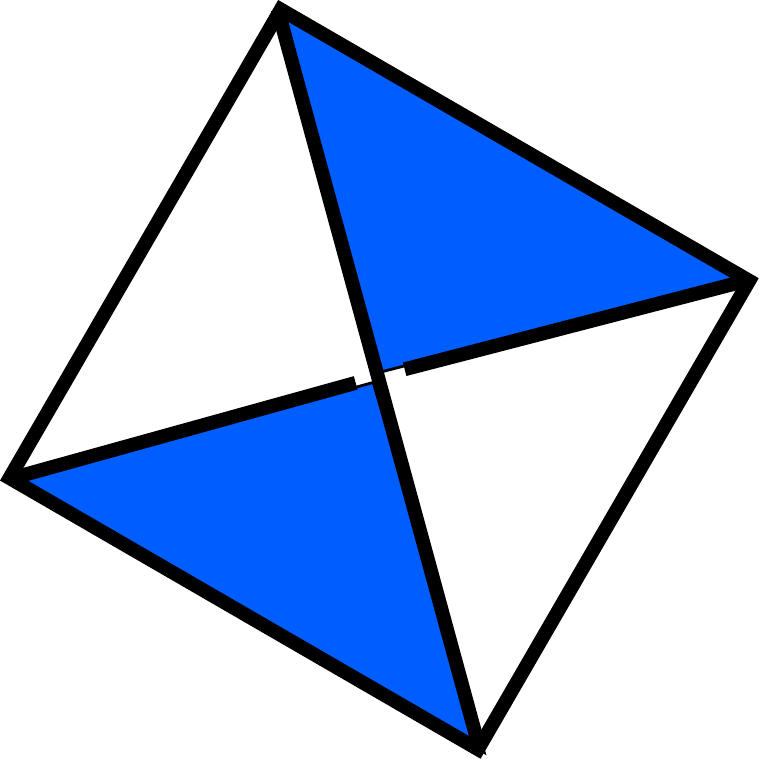} 
\hspace{1cm}
\includegraphics[width=0.26\textwidth]{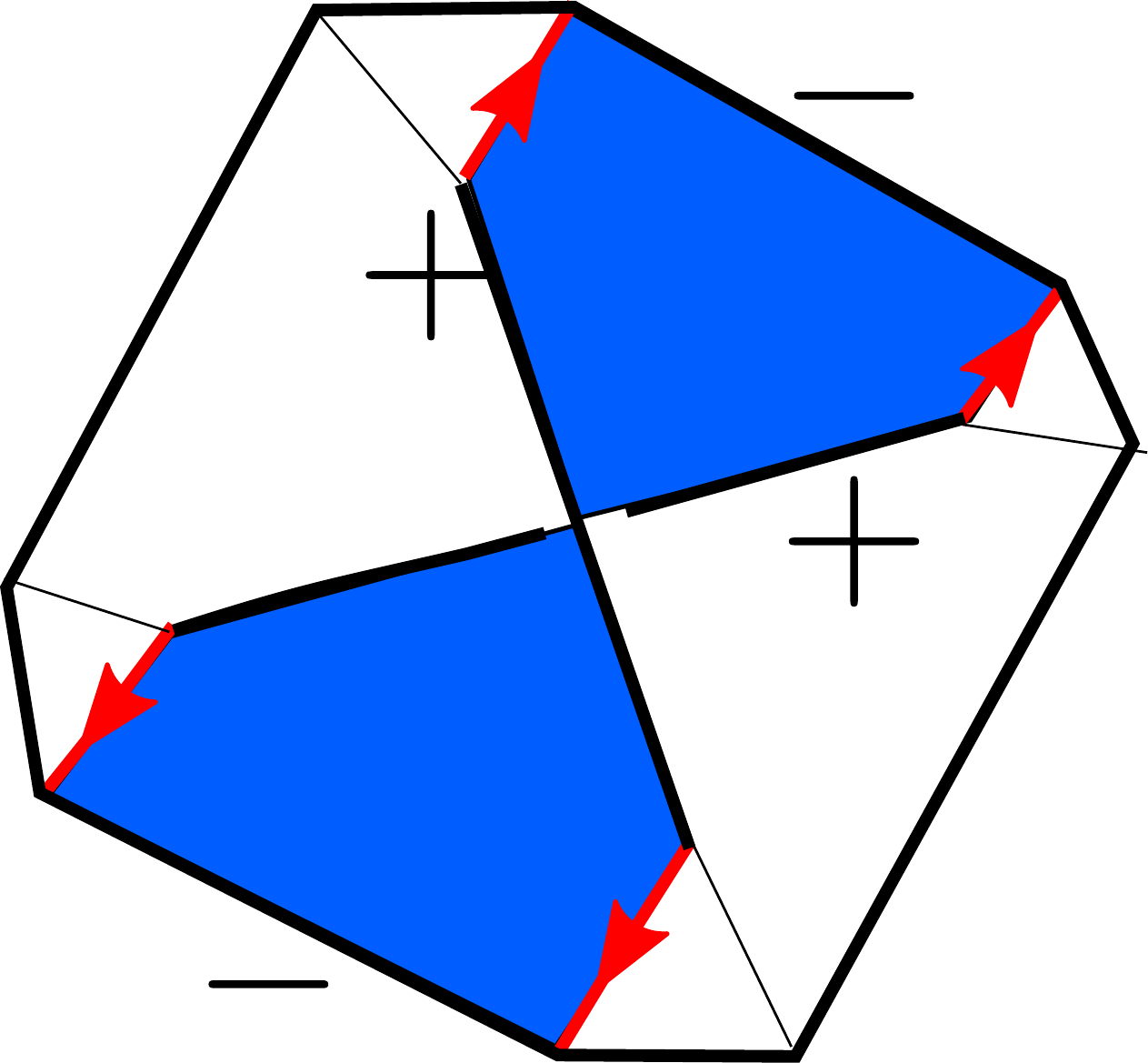}
\end{center}
\caption{Each quad can be replaced by a twisted square or twisted octagon.}
\label{twisted_octagon}
\end{figure}

Kang-Rubinstein \cite{KR1} observed that for each edge class  in $\T$, the 
sum of all quad types facing the edges in this class gives an 
``edge solution''  in $Q(\T;\Z)$; this gives a linear map 

\be 
\label{edge_solution_map}
G: \Z E \to \Z\quads 
\ee 
with image contained in $Q(\T;\Z)$.
Further, the sum of the three quad types in a tetrahedron in $\T$ gives a 
``tetrahedral solution'' in $Q(\T;\Z)$. We let 
$\EE,\TT \subset Q(\T;\Z) \subset \Z\quads$ denote the $\Z$-submodules 
spanned by the edge solutions and tetrahedral solutions respectively, and 
let $Q_0(\T;\Z)$ denote the submodule $\EE+\TT  \subset Q(\T;\Z)$.

Next we associate two important homology (or cohomology) classes with any 
$Q$-normal class. First observe that any quadrilateral in a tetrahedron can 
be replaced by a `twisted square' meeting the same edges, or by a 
`twisted octagon' in the corresponding truncated tetrahedron with its 
four external edges oriented as shown in Figure \ref{twisted_octagon} 
(compare \cite[Figures 9, 10]{N}). Note that the arrow on each external 
edge goes from an internal edge labelled $+1$ to an internal edge labelled $-1$.

Each spun normal class $S \in Q(\T;\Z)$ is a linear combination of quads, so 
we can replace this by a linear combination $S'$ of twisted octagons. It 
follows from the $Q$-matching equations that  $S'$ represents a mod 2 
homology class $[S]_2 \in H_2(M,\bd M; \Z/2\Z)$. Here and throughout the 
paper, we will use $[~]_2$ to indicate a $\Z/2\Z$ homology (or co-homology) 
class. Further, the boundary edges of $S'$, oriented as above, represent 
a homology class $[\bd S] \in H_1(M;\Z)$. In particular, the kernel of the 
boundary map $S \mapsto [\bd S]$ is precisely the  submodule 
$N(\T;\Z) \subset Q(\T;\Z)$ of {\em  closed} normal classes. 

When $S$ gives an embedded spun normal surface,  $[\bd S]$ agrees with 
boundary map defined by Tillmann in \cite{Ti} (up to sign), and describes 
the boundary components of the spun normal surface and the direction of 
spiralling of the spun normal surface around $\bd M$.  (With the 
orientation convention given in Figure \ref{twisted_octagon}, the ends of 
a spun normal surface spiral up into the cusp to the {\em right} of the 
oriented boundary curve, as viewed from the cusp.)

Since $S'$ gives a 2-chain mod 2 whose boundary represents the reduction of 
$[\bd S]$ mod 2, it follows that 
$$
([S]_2,[\bd S]) \in \{ (a,b) \in  H_2(M,\bd M;\Z/2\Z) \times H_1(\bd M;\Z) : 
\bd_* a =   b \bmod 2  \}
$$
where $\bd_*: H_2(M,\bd M;\Z/2\Z) \to H_1(\bd M;\Z/2\Z) $ is the connecting 
homomorphism in the long exact sequence of the pair $(M, \bd M)$.   
Hence, $[\bd S] \in {\mathcal K} =\Ker(H_1(\bd M;\Z) \to H_1(M;\Z/2\Z))$, 
by the long exact sequence of the pair $(M,\bd M)$ with $\Z/2\Z$ coefficients.

\begin{thm} 
\label{ZQNT_struct_thm}
The homomorphism 
$$
H:  Q(\T;\Z) \to H_2(M,\bd M;\Z/2\Z) \times H_1(\bd M;\Z),~ S 
\mapsto ([S]_2,[\bd S])
$$
has image 
$$
\Im H =\{ (a,b) \in H_2(M,\bd M;\Z/2\Z) \times  H_1(\bd M;\Z) :  
\bd_* a =  b \bmod 2 \}
$$
and kernel
$$
\Ker H = Q_0(\T;\Z) = \EE + \TT.
$$
In particular, the homomorphism
$$
H_0:  N(\T;\Z) \to  H_2(M,\bd M;\Z/2\Z),~ S \mapsto [S]_2
$$
has image
$$
\Im H_0 = \Ker( H_2(M,\bd M;\Z/2\Z) \to H_2(\bd M;\Z/2\Z)) 
=  \Im ( H_2(M;\Z/2\Z) \to H_2(M,\bd M;\Z/2\Z) )
$$
and kernel
$$
\Ker H_0 =Q_0(\T;\Z)= \EE + \TT.
$$
\end{thm}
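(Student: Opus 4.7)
The strategy is to interpret each $Q$-normal class through Neumann's twisted octagon construction in the truncated manifold $\overline M$, as described in the text preceding the theorem. Each quad is replaced by a twisted octagon carrying four external edges on $\bd \overline M$ and four internal edges weighted by the shear signs $\pm 1$; the $Q$-matching condition $F(S) = 0$ states precisely that the internal 1-chain of the sum of octagons vanishes, so this sum is a relative 2-cycle modulo $\bd \overline M$. The classes $[S]_2 \in H_2(M,\bd M;\Z/2\Z)$ and $[\bd S] \in H_1(\bd M;\Z)$ are then read off directly from this chain, and the congruence $\bd_*[S]_2 = [\bd S] \bmod 2$ holds tautologically at the chain level; combined with the long exact sequence of $(M,\bd M)$ with $\Z/2\Z$ coefficients this gives $\Im H \subseteq \{(a,b) : \bd_* a = b \bmod 2\}$ and hence $[\bd S] \in {\mathcal K}$.

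For the reverse inclusion (surjectivity), any $a \in H_2(M,\bd M;\Z/2\Z)$ is represented by an embedded surface which we normalise with respect to $\T$, producing a $Q$-class $S_a$ with $[S_a]_2 = a$ and $[\bd S_a] \bmod 2 = \bd_* a$. Given any target $(a,b)$, the difference $b - [\bd S_a]$ lies in $2H_1(\bd M;\Z)$ and is realised as the boundary of an integer linear combination of peripheral curve solutions $M_k, L_k$, producing a $Q$-class mapping to $(a,b)$. The easy containment $\EE + \TT \subseteq \Ker H$ is a direct chain-level check: the three octagons in a single tetrahedron sum to a 2-cycle bounding the truncated solid tetrahedron (so $[T_j]_2 = 0$ and $[\bd T_j] = 0$), and the octagons facing an edge class sum to a 2-cycle bounding a tubular neighbourhood of that edge in $\overline M$ (so $[E_i]_2 = 0$ and $[\bd E_i] = 0$).

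The containment $\Ker H \subseteq \EE + \TT$ is the heart of the theorem and the step I expect to be the main obstacle, since it requires \emph{integer}-level information going beyond the rational span statement of Kang--Rubinstein recalled earlier in the excerpt. The plan is to identify the maps $F$ and $G$ of \eqref{Qmatching_map} and \eqref{edge_solution_map} as consecutive differentials in Neumann's integer chain complex from \cite{N} relating edges, quads and tetrahedra of $\T$; once $H$ is interpreted compatibly with this complex, exactness at $\Z\quads$ combined with the vanishing of $[S]_2$ and $[\bd S]$ will force $S$ into $\Im G + \TT = \EE + \TT$. The statement for $H_0$ then follows formally from that of $H$: $N(\T;\Z) = H^{-1}(H_2(M,\bd M;\Z/2\Z) \times \{0\})$ by the definition of closed $Q$-normal class, so $\Im H_0 = \{a : \bd_* a = 0\}$, which by the long exact sequence with $\Z/2\Z$ coefficients equals both $\Ker(H_2(M,\bd M;\Z/2\Z) \to H_2(\bd M;\Z/2\Z))$ and $\Im(H_2(M;\Z/2\Z) \to H_2(M,\bd M;\Z/2\Z))$, while $\Ker H_0 = \Ker H \cap N(\T;\Z) = \EE + \TT$.
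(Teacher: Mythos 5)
Your proposal correctly identifies the underlying engine (the twisted octagon interpretation of quads, and Neumann's chain complex from~\cite{N} together with Poincar\'e duality), which is the route the paper also takes, and the chain-level containments $\EE + \TT \subseteq \Ker H$ and $\Im H \subseteq \{(a,b) : \bd_* a = b \bmod 2\}$ are set up the same way. But there is a genuine conceptual error in the step you identify as the heart of the argument, and a gap in your alternative surjectivity argument.

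The error: you propose to deduce $\Ker H \subseteq \EE + \TT$ from ``exactness at $\Z\quads$'' of Neumann's complex. Neumann's complex
$0 \to C_0 \xrightarrow{\alpha} C_1 \xrightarrow{\beta} J \xrightarrow{\beta^*} C_1 \xrightarrow{\alpha^*} C_0 \to 0$,
with $J = \Z\quads/\TT$, is emphatically \emph{not} exact at $J$; if it were, the theorem would be vacuous because $Q(\T;\Z)/(\EE+\TT) = \Ker\beta^*/\Im\beta$ would be zero. The whole content of Neumann's Theorem~5.1 is the explicit computation of this nonzero homology group as $\{(f,g) \in H^1(M;\Z/2\Z)\times H^1(\bd M;\Z) : i^*f = g \bmod 2\}$, which after Poincar\'e duality becomes the target set in the statement. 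So the step you need is not exactness but Neumann's calculation of the homology of the complex, followed by the identification of that abstract isomorphism with the geometrically defined map $H$ (this last identification is not automatic and is what the paper flags as requiring ``a careful examination of Neumann's work''). Once Neumann's Theorem~5.1 is invoked correctly, it delivers \emph{both} $\Ker H = \EE + \TT$ and the surjectivity of $H$ onto the constraint set in a single stroke.

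The gap: because you do not use Neumann's isomorphism for the image, you substitute a geometric normalisation argument for surjectivity. For a class $a$ with $\bd_* a \neq 0$, a representing embedded surface has nonempty boundary on $\bd M$; Kneser--Haken normalisation does not apply to such a surface, and the spun normal surface version you would need is not established here (and not clearly homology-preserving in the relevant sense). You would also need to argue, even in the closed case, that normalisation cannot kill a surface representing a nonzero $\Z/2\Z$ class. All of this effort is unnecessary once Neumann's Theorem~5.1 is used as the paper does, since the image comes for free. The reduction of $H_0$ to $H$ via the long exact sequence is fine.
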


\begin{proof} 
This essentially follows from Neumann \cite[Theorem 5.1]{N} together with 
Poincar\'e duality. To match Neumann's notation from sections 4--6 
of \cite{N} we let $K$ denote the oriented pseudo-manifold given by the 
triangulation $\T$ with its ideal vertices included.
Thus $K$ is homeomorphic to the end compactification $\hat M$ of the 
$\text{int} M$, obtained by collapsing each component of  $\bd M$
to a separate point. Truncating the corners of the tetrahedra in $K$ gives 
a cell complex $K_0$ homeomorphic to $M$ with boundary
$\bd K_0$ homeomorphic to $\bd M$. 

Let  $C_0 =\Z V \cong \Z^r , C_1=\Z E \cong \Z^n ,  
\overline{J}=\Z \quads \cong\Z^{3n}$  be the free $\Z$-modules 
with bases given by the cusps, (unoriented) edge classes, and quad types 
in $\T$ respectively, and let $\EE, \TT\subset \Z \quads$ denote the 
$\Z$-submodules spanned by the edge solutions and  tetrahedral solutions
respectively.  Then Neumann defines a chain complex
$$
0 \to C_0  \xrightarrow{\alpha} C_1  \xrightarrow{\beta}  J  
\xrightarrow{\beta^*}  C_1  \xrightarrow{\alpha^*}  C_0 \to 0
$$
where $J= \Z \quads/\TT \cong \Z^{2n}$, $\alpha$ gives the sum of all 
edges incident to a vertex, $\alpha^*$ is a map that associates to an 
edge the sum of its endpoints, and $\beta$ and $\beta^*$ are defined so 
that the following diagram commutes:

\begin{center}
\begin{tabular}{c}
\xymatrix{
& \overline{J}= \Z\quads \ar[dd]_\pi \ar[dr]^F &\\
C_ 1= \Z\EE  \ar[ur]^G \ar[dr]_\beta & & C_ 1= \Z\EE\\
&J=\Z\quads/\TT \ar[ur]_{\beta^*} & 
}
\end{tabular}
\end{center}

Here, $\pi: \Z\quads  \rightarrow  \Z\quads/\TT$ is the quotient map.

To relate this  to integer  $Q$-normal surface theory we observe that 
$\Ker \beta^* = Q(\T;\Z)/\TT$ and $\Im \beta = (\EE + \TT)/\TT$.

Now Neumann's Theorem 5.1 gives an isomorphism from 
$\Ker \beta^*/\Im \beta = Q(\T;\Z)/(\EE + \TT)$ to
$$
\{ (f,g) \in  H^1(M;\Z/2\Z) \times H^1(\bd M;\Z)  :   i^* f =  g \bmod 2 \}
$$
where 
$i^*: H^1(M;\Z/2\Z) \to H^1(\bd M;\Z/2\Z) $ is induced by the inclusion 
map $i: \bd M \to M$. In other words, this is the set of pairs
$(f,g)$ with $f \in \Hom(H_1(M),\Z/2\Z)$, $g \in \Hom(H_1(\bd M),\Z)$, 
such that $f(\gamma) =g(\gamma) \bmod 2$  for all $ \gamma \in H_1(\bd M)$,
where we write  $H_1(\bd M)= H_1(\bd M; \Z)$ and $H_1(M)= H_1(M; \Z)$. 

Now we have a commutative diagram
$$
\begin{CD}
H^1(M,\bd M;\Z/2\Z) @>j^*>> H^1(M;\Z/2\Z) @>i^*>> H^1(\bd M;\Z/2\Z) \\
@VV\cong V                      @VV\cong V     @VV\cong V          \\
H_2(M;\Z/2\Z) @>j_*>> H_2(M,\bd M; \Z/2\Z)  @>\bd_*>>   H_1(\bd M; \Z/2\Z)
\end{CD}
$$
where the vertical maps are the isomorphisms given by Poincar\'e duality.
This gives an isomorphism 
$$
Q(\T;\Z)/(\EE + \TT) \cong \{ (a,b) \in  H_2(M,\bd M;\Z/2\Z) 
\times H_1(\bd M;\Z) : \bd_* a =   b\bmod 2  \}
$$
where 
$\bd_*: H_2(M,\bd M;\Z/2\Z) \to H_1(\bd M;\Z/2\Z) $ is the connecting 
homomorphism in the long exact sequence of the pair $(M, \bd M)$. 
A careful examination of Neumann's work shows that this isomorphism
is given by the homology map $H :S \mapsto ([S]_2,[\bd S])$ defined above.  
\end{proof}

\begin{rmk} 
Using Poincar\'e duality and the Mayer-Vietoris sequence it follows that
\begin{align*}
\Im H_0 &= \Im ( H_2(M;\Z/2\Z) \to H_2(M,\bd M;\Z/2\Z)) \\ 
&\cong \Ker (H^1(M;\Z/2\Z) \to H^1(\bd M; \Z/2\Z)) \cong H^1(\hat M; \Z/2\Z).
\end{align*}
where $\hat M$ is the pseudo-manifold given by the triangulation $\T$ 
with its ideal vertices included; thus $\hat M$ is homeomorphic to $M$ with 
a cone attached over each boundary torus.
\end{rmk}

\begin{rmk}
When $M$ has no non-peripheral $\Z/2\Z$ homology (for example, if $M$ is 
a knot or link exterior in a $\Z/2\Z$ homology sphere), then 
$Q_0(\T;\Z) = N(\T;Z)$  consists of all closed normal classes.
Further, ${\mathcal K} =\Ker(H_1(\bd M;\Z) \to H_1(M;\Z/2\Z))$ is a 
subgroup of index $2^r$ in $H_1(\bd M;\Z)$ where $r$ is the number of 
components of $\bd M$.

In general, $Q_0(\T;\Z)$ is a finite index submodule of $N(\T;Z)$, and for 
any normal surface class $S \in N(\T;Z)$
its ``double'' $2S$ lies in $Q_0(\T;\Z)$.
\end{rmk}

\subsection{Geometric generators for integer Q-normal classes}
\label{subsect:gen_q_normal}

Next we describe explicit generators for $N(\T;\Z)$. First, recall that 
$Q_0(\T;\Z) = \EE +\TT$ is generated by the edge solutions and tetrahedral 
solutions described by Kang-Rubinstein \cite{KR1}. 
It follows from Theorem \ref{ZQNT_struct_thm} and the previous  remark that 
$$
N(\T;\Z)/Q_0(\T;\Z) \cong \Im H_0 \cong H^1(\hat M; \Z/2\Z).
$$

Given a mod 2 class  
$a \in \Im H_0 = \Im ( H_2(M;\Z/2\Z) \to H_2(M,\bd M;\Z/2\Z))$, we can 
construct a closed normal class  $S_0\in N(\T;\Z)$ with $[S_0]_2=a$ and 
$[\bd S_0]=0$ as follows. 
Choose a simplicial 1-cocycle $z \in Z^1(\T;\Z/2\Z)$ representing the dual 
cohomology class in $H^1(\hat M;\Z/2\Z)$.
This assigns an element $\bar 0$ or $\bar 1 \in \Z/2\Z$ to each edge of 
$\T$, and we lift these to integers $0,1 \in \Z$ giving a 1-cochain 
$\tilde z \in C^1(\T;\Z)$.  
Since  $z$ is a mod 2 cocycle, it follows that the sum of integers 
attached to the edges of any triangle in $\T$ is even. This implies that 
the integers attached to edges of any tetrahedra in $\T$ are either:
(i) all 0, (ii) 1 at the three edges of a triangular normal disc, 0 
elsewhere (ii) or 1 at the four edges of a quad, 0 elsewhere. 
Now we can construct an {\em embedded closed normal surface} $S$ by 
taking one triangle in each tetrahedron of type (ii) 
and one quad in tetrahedron of type (iii).   Forgetting the triangle 
coordinates (if any) gives a closed normal class $S_0$ in $N(\T;\Z)$
such that $[S_0]_2=a$ and $[\bd S_0]=0$.

\begin{rmk} 
Changing $z$ by a coboundary changes $S$ by adding vertex linking surfaces. 
But there is a unique closed embedded normal surface of {\em least weight} 
(i.e. having fewest intersections with the edges) obtained by this 
construction.
\end{rmk}

\medskip Given any class $b \in H_1(\bd M;\Z)$, a construction of 
Neumann \cite{N} gives a normal class $S \in Q(\T;\Z)$
with $[\bd S] = 2 b$ as follows. Represent $b$ by an oriented multi-curve 
$\beta$ which is in normal position relative to the induced triangulation 
$\T_\bd$ of $\bd M$.  Each oriented normal arc of $\beta$ lies in a truncated 
tetrahedron $\sigma$ of $\T$ and winds around one of the edges of $\sigma$, 
which faces a quad type $q$ in $\sigma$. Now add up the quads associated 
to all the normal arcs of $\beta$, with signs $+1$ or $-1$ according to 
whether the normal arc winds anticlockwise or clockwise as viewed from the 
cusp. The result is a normal class with $[\bd S] = 2 b$, $[S]_2=0$ and 
$\chi(S)=0$. 

\begin{rmk} 
These ``peripheral curve solutions'' give normal classes in $Q(\T;\Z)$
representing all {\em even classes}  
$b \in H_1(\bd  M; 2\Z) \subset  H_1(\bd  M; \Z) $, i.e. those such that
$ b \bmod 2 = 0$.  Adding these to the above closed normal classes gives 
normal classes representing all $(a,b) \in  H_2(M,\bd M;\Z/2\Z) \times 
H_1(\bd M;\Z)$ such that $\bd_* a =   b \bmod 2  =0$.
We do not know a direct geometric construction for the other (non-even) 
classes $b \in \mathcal K \subset  H_1(\bd  M; \Z)$.
\end{rmk}

\begin{rmk}
\label{half-integer-coefs}
For any $Q$-normal class $S \in Q(\T;\Z)$, its double satisfies 
$[\bd (2S)]\in H_1(\bd M;2\Z)$ and $[2S]_2=0 \in H_2(M,\bd M;\Z/2\Z)$. Hence
$2S$ is an integer linear combination of edge solutions, tetrahedral 
solutions and peripheral curve solutions.
\end{rmk}


\section{The 3D-index via normal surfaces}
\label{sect:index_via_normal_surfaces}

We first extend the tetrahedral index $J_\Delta$ to a function
$J: \Z^{3n} \to \Z((q^{1/2}))$ by defining 
\be
J(a_1, b_1, c_1, a_2, b_2, c_2, \ldots , a_n, b_n, c_n) = 
\prod_{j=1}^n \JD(a_j,b_j,c_j),
\ee
and for each $Q$-normal class $S \in Q(\T;\Z)$ we define
\be
I(S)= (-q^{1/2})^{-\chi(S)}  J(S),
\ee
where $\chi$ is the formal Euler characteristic.
Then  $I(S)$ is unchanged by adding tetrahedral solutions:
if $S^* = S + \sum_j m_j T_j$ with $m_j \in \Z$ then
$J(S)=(-q^{1/2})^{\sum_j m_j} J(S^*)$ and 
$-\chi(S)= -\chi(S^*)-\sum_j m_j ,$
so $I(S)=I(S^*)$.
So there is a well-defined function on the quotient group
\be
I: Q(\T;\Z)/\TT \to \Z((q^{1/2})),
\ee
where $\TT= \sum_j \Z T_j \subset Q(\T;\Z)$ is the subgroup generated by 
the tetrahedral solutions.

Taking homology classes of $Q$-normal classes gives a function 
$S \mapsto ([S]_2,[\bd S])$ which also vanishes  on $ \TT$,
so gives a well-defined homomorphism
\be
h : Q(\T;\Z)/\TT \to H_2(M,\bd M;\Z/2\Z) \times H_1(\bd M;\Z),
\ee
and Theorem \ref{ZQNT_struct_thm} shows that
\be
\Im h =\{ (a,b) \in H_2(M,\bd M;\Z/2\Z) \times  H_1(\bd M;\Z) :  
\bd_* a = b \bmod 2\}
\ee
and 
\be
\Ker h = Q_0(\T;\Z)/\TT = (\EE + \TT)/\TT
\ee
where $\EE = \sum_i \Z E_i\subset Q(\T;\Z)$ is the subgroup generated 
by the edge solutions.

\begin{defn}
\label{def.Iab}
For each $(a,b) \in \Im h$ we define an extended version of the 3D-index by 
\be I_\T^a(b) = \sum_{[S] \in h^{-1}(a,b)}  I([S]).
\label{index_from_normal_surfaces}
\ee
\end{defn}

To compute this, we can choose a normal class $S_0 \in Q(\T;\Z)$ with 
$[S_0]_2=a,[\bd S_0]=b$ and choose a set of $n-r$ edge solutions (as 
explained in \cite{GHRS}), say $E_1, \ldots, E_{n-r}$, whose integer 
linear combinations form a complete set of coset representatives for 
$(\EE+\TT)/\TT$. Then
\be 
I_\T^a(b) = \sum_{\kk=(k_1,\ldots,k_{n-r}) \in \Z^{n-r}}  
I(S_0+ \textstyle{\sum_i k_i E_i}).
\label{explicit_index_computation}
\ee

\begin{cor}
\label{cor.Izero}
In particular, the index $I^0_\T(0)$ is a sum over closed normal classes 
in $Q_0(\T;\Z)$ modulo tetrahedral solutions: 
\be
I_\T^0(0) =  \sum_{[Q] \in Q_0(\T;\Z)/\TT}   I([S]) 
=\sum_{\kk=(k_1,\ldots,k_{n-r}) \in \Z^{n-r}}  I(\textstyle{\sum_i k_i E_i}).
\label{index0_from_normal_surfaces}
\ee
\end{cor}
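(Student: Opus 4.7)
The plan is to derive this corollary essentially by specialising Definition \ref{def.Iab} to the case $(a,b)=(0,0)$ and then invoking the structural results already established in Theorem \ref{ZQNT_struct_thm}.

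First, I would take $a=0 \in H_2(M,\bd M;\Z/2\Z)$ and $b=0 \in H_1(\bd M;\Z)$ in Definition \ref{def.Iab}, which gives
\[
I_\T^0(0) \;=\; \sum_{[S] \in h^{-1}(0,0)} I([S]).
\]
By the identification $\Ker h = Q_0(\T;\Z)/\TT = (\EE+\TT)/\TT$ recalled immediately before the definition (and established via Theorem \ref{ZQNT_struct_thm}), the preimage $h^{-1}(0,0)$ is exactly the kernel of $h$, which is $Q_0(\T;\Z)/\TT$. This proves the first equality.

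For the second equality, I would apply formula (\ref{explicit_index_computation}) with the convenient choice of base class $S_0 = 0$; this is allowed since the zero class satisfies $[0]_2 = 0$ and $[\bd 0]=0$, so it represents the pair $(0,0)$. With this choice,
\[
I_\T^0(0) \;=\; \sum_{\kk=(k_1,\ldots,k_{n-r}) \in \Z^{n-r}} I\Bigl(\textstyle\sum_i k_i E_i\Bigr),
\]
which is the desired formula. The only substantive input here is that the $n-r$ chosen edge solutions $E_1,\ldots,E_{n-r}$ furnish a complete set of coset representatives for $(\EE+\TT)/\TT$; this is precisely the property used to set up (\ref{explicit_index_computation}), and it is well-defined because $I$ descends to $Q(\T;\Z)/\TT$ (so different choices of integer lifts within a coset give the same value of $I$).

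The main conceptual point to check is the well-definedness of the sum, i.e.\ that $I(\sum_i k_i E_i)$ depends only on the coset $[\sum_i k_i E_i] \in Q(\T;\Z)/\TT$, but this was already recorded just before Definition \ref{def.Iab} (the computation $I(S)=I(S+\sum_j m_j T_j)$ using $J(S)=(-q^{1/2})^{\sum_j m_j}J(S^*)$ and $-\chi(S)=-\chi(S^*)-\sum_j m_j$). No further argument is required, so the corollary is essentially immediate from the preceding setup.
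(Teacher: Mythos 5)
Your proposal is correct and takes exactly the intended route: specialising Definition \ref{def.Iab} to $(a,b)=(0,0)$, identifying $h^{-1}(0,0)$ with $\Ker h = Q_0(\T;\Z)/\TT$ via Theorem \ref{ZQNT_struct_thm}, and then applying the explicit formula (\ref{explicit_index_computation}) with the base class $S_0=0$. The paper offers no separate argument for this corollary precisely because it is an immediate specialisation, and you have correctly identified the two inputs (the kernel computation and the fact that the chosen edge solutions give coset representatives for $(\EE+\TT)/\TT$) that make it go through.
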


\begin{rmk} 
It follows immediately from the definition that $I_\T^a(-b)=I_\T^a(b)$ 
for all $a,b$.
\end{rmk}

\begin{rmk}
{\bf Note on Notation.} The previous definition of 3D-index from 
\cite{GHRS} only applies to the cases where $a=0$ and 
$b \in H_1(\bd M; 2 \Z)$. We then have $I^0_\CT(b) = I_\CT(b/2)$ in the 
notation of \cite{GHRS}. For the complement $M$ of a knot in $S^3$ with 
standard meridian $\mu$ and longitude $\lambda$, the index $I_\CT(m,e)$ 
in \cite{DGG1} is denoted  $I_\CT(e \mu -m \lambda/2)$ in \cite{GHRS} and 
$I^0_\CT(2e \mu - m\lambda)$ here.
\end{rmk}

\subsection{Convergence of the index sum}

To understand the convergence of the index sum (as a formal Laurent series) 
we need to examine the lowest degree of the terms in this sum.  Given any 
coset $[S]=S+ \TT$, we can choose its ``minimal non-negative coset 
representative'' $S^*$. Explicitly,
given 
$$
S=(a_1, b_1, c_1, a_2, b_2, c_2, \ldots , a_n, b_n, c_n) \in  Q(\T;\Z)
$$
we have
$$
S^*=(a^*_1, b^*_1, c^*_1, a^*_2, b^*_2, c^*_2, \ldots , a^*_n, b^*_n, c^*_n),
$$
where 
$(a_j^*,b_j^*,c_j^*)=(a_j-m_j,b_j-m_j,c_j-m_j)$ and $m_j=\min\{a_j,b_j,c_j\}$.

Now $J(S^*)$ has leading term $q^{\frac{1}{2} \delta(S^*)}$ of degree 
$\frac{1}{2} \delta(S^*)$. So each surface $[S]$ occurring in the index 
sum (\ref{index_from_normal_surfaces}) contributes a term 
$I([S]) = (-q^{1/2})^{-\chi(S^*)}  J(S^*)$ of lowest $q^{1/2}$-degree 

\be
d([S])=d(S^*) =-\chi(S^*) + \delta(S^*)
\label{index_deg_surface}
\ee
and leading coefficient $(-1)^{-\chi(S^*)}$.

\begin{rmk}
Note that $S^*$ need not give an embedded normal surface, but there are 
{\em at most two} non-zero quad coordinates in each tetrahedron. In the 
next section we will show how to replace such a normal class by a unique 
{\em embedded generalised normal surface} $\tilde S$.
\end{rmk}

From this, it is easy to analyse the convergence of the index sums. First 
we give a new proof that the sum $I_\T^0(0)$ converges (as formal Laurent 
series) if and only if $\T$ contains no embedded normal surfaces of 
Euler characteristic $\ge 0$.

\begin{thm} 
\label{index0_cgce}
The index sum for $I_\T^0(0)$  converges (as a formal Laurent series) if 
and only if the triangulation $\T$ is 1-efficient.
\end{thm}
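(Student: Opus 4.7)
The plan is to reduce the question to a growth estimate on the leading $q^{1/2}$-degree
$$d([S]) \;=\; -\chi(S^*) + \delta(S^*),$$
established in (\ref{index_deg_surface}), where $S^*$ denotes the minimal non-negative coset representative. Since the leading coefficient of $I([S])$ is $(-1)^{-\chi(S^*)} = \pm 1$, the sum $I_\T^0(0)$ converges as a formal Laurent series if and only if, for each integer $N$, the set $\{[S] \in Q_0(\T;\Z)/\TT : d([S]) \le N\}$ is finite; that is, $d$ is a proper function on $Q_0(\T;\Z)/\TT$.

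For the forward implication, suppose $\T$ is not 1-efficient, so there exists a non-vertex-linking embedded closed normal surface $S$ with $\chi(S) \ge 0$. Being non-vertex-linking, $S$ has at least one nonzero quad coordinate, and a direct check (using that elements of $\TT$ have all three quad coordinates equal in each tetrahedron, while embedded surfaces have at most one nonzero per tetrahedron) shows that the multiples $nS$ for $n \ge 1$ give infinitely many distinct cosets in $Q_0(\T;\Z)/\TT$. Since each $nS$ is embedded, $\delta(nS) = 0$, so $d([nS]) = -n\chi(S) \le 0$ for every $n$. This produces infinitely many terms of non-positive leading degree: if $\chi(S) > 0$ these degrees are unbounded below, while if $\chi(S) = 0$ infinitely many terms contribute their nonzero leading coefficient to the $q^0$ coefficient. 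In either case the sum fails to converge as a formal Laurent series.

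For the reverse implication, suppose $\T$ is 1-efficient. I identify each coset $[S] \in Q_0(\T;\Z)/\TT$ with its unique minimal non-negative integer representative $S^*$, characterised by having at least one vanishing quad coordinate in each tetrahedron. Over $\R$, these representatives form a finite union of rational polyhedral cones $C_\sigma$, indexed by the \emph{zero-pattern} $\sigma$ specifying which quad coordinate vanishes in each tetrahedron. On each $C_\sigma$, $-\chi$ is linear and $\delta$ is a non-negative quadratic form, and I would show $d = -\chi + \delta$ is proper on $C_\sigma$ by a homogeneity/compactness argument: for a sequence $S_k^* \in C_\sigma$ with $\|S_k^*\| \to \infty$, pass to a subsequence with $S_k^*/\|S_k^*\|$ converging to a unit vector $v \in C_\sigma$. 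Either $\delta(v) > 0$, so $\delta(S_k^*)$ grows like $\|S_k^*\|^2$, or $\delta(v) = 0$, in which case $v$ lies in the embedded subcone, and the Luo--Tillmann-type estimates in the appendix, extracting a strict (generalised) angle structure from 1-efficiency, provide a strict lower bound $-\chi(v) > 0$, forcing $-\chi(S_k^*)$ to grow linearly. Either alternative contradicts $d(S_k^*)$ being bounded, and summing over the finite collection of cones $C_\sigma$ yields properness of $d$, hence convergence.

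The main obstacle is the reverse direction: one must upgrade the combinatorial statement of 1-efficiency (no integer embedded closed normal surface has $\chi \ge 0$ except vertex-linking tori) to the continuous, cone-theoretic statement that $-\chi$ is strictly positive on every nonzero real point of each embedded subcone of $C_\sigma$. Supplying this strict positivity is precisely the role of the generalised angle structure machinery built in the appendix, which extends Luo--Tillmann from closed to $Q$-normal surfaces.
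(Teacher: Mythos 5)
Your forward direction is essentially the paper's argument, with one omitted detail: the surface $S$ furnished by $1$-inefficiency need not lie in $Q_0(\T;\Z)=\EE+\TT$, which is the sublattice $I_\T^0(0)$ actually sums over, so you should first pass to $2S$ (which always lies in $Q_0(\T;\Z)$ by Remark~\ref{half-integer-coefs}) before taking multiples. Your reverse direction takes a genuinely different route. The paper picks a Hilbert basis $F_1,\ldots,F_m$ for $N(\T;\Z_+)$, notes that $1$-efficiency forces $\delta(F_i)\ge 1$ or $-\chi(F_i)\ge 1$ for each fundamental solution, and uses superadditivity of $d=-\chi+\delta$ on the non-negative orthant (Lemma~\ref{subadditive}) to get an affine lower bound $d(\sum x_iF_i)\ge\sum(a_ix_i-b_i)$ with $a_i>0$. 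Your compactness/homogeneity argument over the finitely many zero-pattern cones $C_\sigma$ is a valid alternative reduction of the same properness statement.

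However, the mechanism you propose for the key positivity step is wrong. You cannot ``extract a strict (generalised) angle structure from $1$-efficiency'': $1$-efficiency is strictly weaker than admitting a strict angle structure (Table~\ref{hypCensus} records, for $6$ tetrahedra, $11512$ $1$-efficient triangulations of hyperbolic manifolds against only $5221$ with strict angle structures), and the strict-angle-structure proposition in the appendix characterises the stronger property that $\chi<0$ on all of $Q(\T;\R_+)$, i.e.\ spun $1$-efficiency, not $1$-efficiency. What you actually need is weaker and requires no angle structures: the embedded locus of $N(\T;\R)\cap\R_+^{3n}$ for a fixed quad pattern is a pointed rational polyhedral cone, so every nonzero $v$ in it is a non-negative combination of its extreme rays; the primitive integer point on each extreme ray is a nonzero embedded closed normal class, which has $-\chi\ge 1$ by $1$-efficiency, and linearity of $-\chi$ then forces $-\chi(v)>0$ for every nonzero $v$ in the cone. (Alternatively, invoke the index-structure reformulation of $1$-efficiency from \cite[Theorem 1.2]{GHRS}, recalled in Section~\ref{PachnerGraph}, which for each quad pattern supplies a generalised angle structure positive precisely on the quads in that pattern, so that $\chi(v)=-\sum_q\alpha(q)x_q/\pi<0$ directly.) With that substitution, the rest of your compactness argument closes.
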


\begin{proof}
($\Rightarrow$) If the triangulation $\T$ is not 1-efficient then it 
contains a closed embedded normal surface  with non-negative Euler 
characteristic. Doubling this, if necessary, gives a closed embedded 
normal surface $S \in Q_0(\T;\Z_+)$ with
$\chi(S)\ge 0$, $\delta(S)=0$ and $S^*=S \ne 0$. 
Then for each integer $k\ge0$, $(kS)^* = k S$ contributes a term to the sum
(\ref{index0_from_normal_surfaces}) for $I_\T^0(0)$
with $q^{1/2}$-degree $d(kS) = -k\chi(S) \le 0$. So the index sum
cannot converge as a formal Laurent series in $q^{1/2}$.
 
($\Leftarrow$) Assume that  $\T$ is a 1-efficient ideal triangulation.  
First we note the following general fact.

\begin{lemma} 
\label{subadditive}
The function $d = -\chi + \delta: Q(\T;\R_+) \to \Z$ is superadditive, i.e.
$$
d(S+S') \ge d(S)+d(S').
$$
Hence $d(\sum_i n_i S_i) \ge \sum_i d(n_i S_i) \ge \sum_i n_i d(S_i)$ for 
all $S_i \in Q(\T;\R_+)$ and all integers $n_i \ge 0$.
\end{lemma} 

\begin{proof}
This follows since $\chi$ is a linear function and $\delta(S+S') 
\ge \delta(S)+\delta(S')$ provided all quad coordinates of $S$ and $S'$ 
are non-negative.
\end{proof}

Next we observe that $N(\T;\Z_+)$ is the set of integer points in a cone 
defined by homogeneous linear inequalities with integer coefficients. 
So we can choose a set of {\em fundamental solutions} 
(or {\em Hilbert basis} in integer linear programming theory) 
$F_1, \ldots,  F_m$  for  $N(\T;\Z_+)$. This means that every 
$S \in N(\T;\Z_+)$ can written (not necessarily uniquely)
as a finite sum $S= \sum_{i=1}^m n_i F_i$ 
where $n_i \in \Z_+$ and each $F_i$ is irreducible, i.e. has no 
decomposition $F_i=x+y$ where $x,y \in N(\T;\Z_+)$ and $x,y \ne 0$.

Now consider $f_i(x) = d(x F_i) = -\chi(F_i)x+\delta(F_i) x^2$, where $x \ge 0$.
Since $\T$ is 1-efficient, for each $i=1,\ldots,m$, we have either
(i) $\delta(F_i)=0$ and $-\chi(F_i) \ge 1$, or (ii) $\delta(F_i) \ge 1$.
Hence $f_i(x) \to \infty$ as $x \to +\infty$,  and in fact there are 
constants $a_i,b_i  \in \R$ with $a_i >0$ such that
$f_i(x) \ge a_i x - b_i$ for all $x \ge 0$.

Then each $S\in N(\T;\Z)$ can be written as a linear combination 
$S= \sum_{i=1}^m x_i F_i$ with $x_i \in \Z_+$
and, using Lemma  \ref{subadditive},  its $q^{1/2}$-degree satisfies 
$$d(S) \ge \sum_{i=1}^m d(x_i F_i) \ge \sum_i (a_i x_i -b_i).$$
Thus, $d(S) \le D$ implies $\sum_i a_i x_i \le D + \sum_i b_i$,
which has only finitely many solutions with $x_i \in \Z_+$. Hence $I_\T^0(0)$  
converges as a formal Laurent series.
\end{proof}

A similar argument gives the following

\begin{thm} 
\label{1-eff_cgce_I_T(b)}
If $\T$ is 1-efficient, then $I_\T^a(b)$ converges for all $a$ and $b$; 
in fact $\sum_a I_\T^a(b)$ is convergent for all $b$.
\end{thm}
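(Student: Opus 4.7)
The plan is to adapt the proof of Theorem~\ref{index0_cgce} by isolating the ``closed part'' of an affine slice of the cone $Q(\T;\R_+)$ and invoking the superadditive degree bound of Lemma~\ref{subadditive} on its recession cone. First I would rewrite each term of $\sum_a I_\T^a(b)$ via its minimal non-negative coset representative: for $[S] \in Q(\T;\Z)/\TT$ with $[\bd S]=b$, $I([S])=I(S^*)$ has leading $q^{1/2}$-degree $d(S^*) = -\chi(S^*)+\delta(S^*)$ by~\eqref{index_deg_surface}. Convergence of $\sum_a I_\T^a(b)$ as a formal Laurent series therefore amounts to showing that for every $D \in \Z$ only finitely many such minimal representatives $S^* \in Q(\T;\Z_+)$ with $[\bd S^*]=b$ satisfy $d(S^*) \le D$.

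Next, I would exploit the fact that all such $S^*$ lie inside the rational polyhedron $P_b := \{S \in Q(\T;\R_+) : \bd S = b\}$, whose recession cone is exactly the closed normal cone $N(\T;\R_+)$. By the classical theorem of Meyer/Schrijver on integer points in rational polyhedra, there is a finite set $\{S_1,\ldots,S_N\} \subset P_b \cap \Z^{3n}$ such that every $S \in P_b \cap \Z^{3n}$ decomposes as $S=S_i+S'$ for some $i$ and some $S' \in N(\T;\Z_+)$. Since $\chi$ is linear and the bilinear form $\delta$ satisfies $\delta(X,Y)\ge 0$ on pairs of classes with non-negative quad coordinates, identity~\eqref{bilinear_delta} yields $d(S) \ge d(S_i)+d(S')$; as $d(S_i)$ is bounded below over the finite index set, a bound $d(S)\le D$ yields an upper bound on $d(S')$.

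At this point the proof of Theorem~\ref{index0_cgce} applies verbatim to $S' \in N(\T;\Z_+)$: under 1-efficiency, the Hilbert basis decomposition of $S'$ combined with Lemma~\ref{subadditive} produces constants $c>0$, $c'\ge 0$ such that $d(S') \ge c\|S'\|_1 - c'$, where $\|\cdot\|_1$ denotes the $\ell^1$-norm in Hilbert basis coordinates. Hence a bound on $d(S')$ forces a bound on $\|S'\|_1$, and combined with the finiteness of $\{S_i\}$ there are only finitely many $S \in P_b \cap \Z^{3n}$ with $d(S)\le D$. This proves convergence of $\sum_a I_\T^a(b)$, and convergence of the individual subsum $I_\T^a(b)$, indexed by cosets with $[S]_2=a$ fixed, is immediate. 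The main obstacle I anticipate is justifying the integer polyhedral decomposition $P_b \cap \Z^{3n} \subseteq \bigcup_i \bigl(S_i + N(\T;\Z_+)\bigr)$; this requires the recession cone $N(\T;\R_+)$ to be pointed, which is automatic since it lies in $\R_+^{3n}$, together with nonemptiness of $P_b$, which holds precisely when there exists $a$ with $(a,b) \in \Im h$ by Theorem~\ref{ZQNT_struct_thm} (otherwise the sum is empty and convergence is vacuous).
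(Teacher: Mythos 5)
Your proof is correct and takes essentially the same approach as the paper. The paper works with the cone $Q_b(\T;\Z_+)$ of classes whose boundary is a non-negative multiple of $b$, takes a Hilbert basis, and splits each class as $S=S_0+S_1$ with $S_1$ ranging over a finite set (those basis elements with $\bd F_i=b$) and $S_0\in N(\T;\Z_+)$; your decomposition of $P_b\cap\Z^{3n}$ as $\bigcup_i(S_i+N(\T;\Z_+))$ via its recession cone is the same Gordan-type finiteness argument in a slightly different wrapper, and the remaining steps (superadditivity of $d$ via Lemma~\ref{subadditive}, linear lower bound on $d$ over $N(\T;\Z_+)$ from $1$-efficiency, finiteness of sub-$D$-degree terms) coincide with the paper's.
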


\begin{proof} 
We may assume that  $b \ne 0$. Now consider  the set $Q_b(\T;\Z_+)$ of 
Q-normal classes $S \in Q(\T;\Z_+)$ whose boundary $\bd S$ is a 
non-negative integer multiple of $b$. This is the set of integer points 
in a cone defined by a set of homogeneous linear equations and inequalities 
with integer coefficients, so we can choose a finite set $F_1, \ldots, F_m$  
of fundamental solutions for $Q_b(\T;\Z_+)$ where $\bd F_i = s_i b$ with 
$s_i \in \Z_+$.

Given $S \in Q_b(\T;\Z_+)$ we can write $S = \sum x_i F_i$ where 
$x_i \in \Z_+$ and  $\bd S = (\sum s_i x_i )b$.
Let $I:=\{1,2,\ldots,m\}$ and write  $I= I_0  \cup I_1$ where 
$I_0 = \{ i \in I : s_i=0 \}$ and $I_1 = \{ i \in I : s_i \ge 1 \}$.
Then $S = S_0 + S_1$ where $S_0 = \sum_{i \in I_0} x_i F_i$ and 
$S_1 = \sum_{i \in I_1} x_i F_i$ and $\bd S = b$ if and only if 
$\sum_{i \in I_1} s_i x_i = 1$. So $S_1$ belongs to the finite set 
$\{ F_i : i \in I_1 \text{ and } s_i=1\}$, and $S_0 \in  N(\T;\Z)$. 

As in the proof of Theorem \ref {index0_cgce} there are constants 
$a_i,b_i  \in \R$ with $a_i >0$ such that $d(x_i F_i) \ge a_i x_i - b_i$ for 
each $i \in I_1$. Now
$$
d(S) \ge d(S_1)+d(S_0) \ge d(S_1) + \sum_{i \in I_1} d(x_i F_i) \ge 
d(S_1) + \sum_{i \in I_1}(a_i x_i - b_i).
$$
Hence given any $D \ge 0$ there are at most finitely many 
$S \in Q_b(\T;\Z_+)$ with $\bd S = b$ and  $d(S) \le D$. 
This implies the result.
\end{proof}

\begin{thm}
If $\T$ is {\em spun 1-efficient}, i.e. contains no embedded spun normal 
surface $S\ne 0$ with $\chi(S)\ge 0$, then the ``total index'' 
$I^{tot}_\T = \sum_{\text{all } (a,b)}  I_\T^a(b)$ converges.
\end{thm}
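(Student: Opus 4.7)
The plan is to mirror the proof of Theorem \ref{index0_cgce}, but replacing $N(\T;\Z_+)$ with the larger cone $Q(\T;\Z_+)$ of all non-negative $Q$-normal classes. Combining Definition \ref{def.Iab} with Theorem \ref{ZQNT_struct_thm}, I would first rewrite
\[
I_\T^{tot} \;=\; \sum_{(a,b) \in \Im h} I_\T^a(b) \;=\; \sum_{[S] \in Q(\T;\Z)/\TT} I([S])
\]
as a single sum over cosets modulo tetrahedral solutions. Each such coset has a unique minimal non-negative representative $S^* \in Q(\T;\Z_+)$ (characterised by $\min\{a_j^*,b_j^*,c_j^*\} = 0$ in each tetrahedron $j$), and the assignment $[S] \mapsto S^*$ is injective. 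By (\ref{index_deg_surface}), the contribution $I([S]) = I(S^*)$ has lowest $q^{1/2}$-degree $d(S^*) = -\chi(S^*) + \delta(S^*)$; so it suffices to prove that for every $D \in \Z$ only finitely many $S \in Q(\T;\Z_+)$ satisfy $d(S) \le D$.

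To establish this, I would pick a Hilbert basis $F_1,\ldots,F_m$ for the rational polyhedral cone $Q(\T;\Z_+)$ and study $d$ along each ray $x \mapsto x F_i$. Each $F_i$ falls into one of two cases. If $\delta(F_i) = 0$, then $F_i$ is embedded — it has at most one nonzero quad coordinate per tetrahedron — and hence, by the construction recalled in \S\ref{spun_normal_construction}, represents a nonzero embedded spun normal surface; by spun 1-efficiency this forces $\chi(F_i) \le -1$, and so $d(xF_i) = -\chi(F_i)\,x \ge x$. Otherwise $\delta(F_i) \ge 1$, and $d(xF_i) = \delta(F_i) x^2 - \chi(F_i) x$ is a quadratic in $x$ with positive leading coefficient. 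In both cases one can find constants $a_i > 0$ and $b_i \in \R$ with $d(xF_i) \ge a_i x - b_i$ for all $x \in \Z_+$.

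Finally, for any $S \in Q(\T;\Z_+)$ expressed as $S = \sum_i x_i F_i$ with $x_i \in \Z_+$, the superadditivity of $d$ on $Q(\T;\R_+)$ (Lemma \ref{subadditive}) gives
\[
d(S) \;\ge\; \sum_i d(x_i F_i) \;\ge\; \sum_i (a_i x_i - b_i),
\]
so $d(S) \le D$ forces $\sum_i a_i x_i \le D + \sum_i b_i$, which has only finitely many solutions $(x_i) \in \Z_+^m$. Thus only finitely many $S \in Q(\T;\Z_+)$ satisfy $d(S) \le D$, and hence only finitely many cosets $[S]$ contribute terms of $q^{1/2}$-degree at most $D$ to $I_\T^{tot}$, giving convergence as a formal Laurent series.

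The main subtlety I expect lies in the embedded-versus-non-embedded dichotomy of the second paragraph: spun 1-efficiency only constrains embedded spun normal surfaces, so one must verify carefully that every Hilbert-basis element $F_i$ with $\delta(F_i) = 0$ really does represent an embedded spun normal surface in the sense of \S\ref{spun_normal_construction}, and that it is \emph{nonzero} (so that the hypothesis of spun 1-efficiency applies and gives $\chi(F_i) \le -1$ rather than merely $\chi(F_i) \le 0$). Once this point is pinned down, the remainder of the argument is a direct generalisation of the proof of Theorem \ref{index0_cgce}.
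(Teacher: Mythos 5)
Your proof is correct and follows essentially the same route as the paper's: choose a Hilbert basis $F_1,\ldots,F_m$ for the cone $Q(\T;\Z_+)$, observe that spun 1-efficiency forces either $\delta(F_i)\ge 1$ or ($\delta(F_i)=0$ and $-\chi(F_i)\ge 1$) for each generator, obtain linear lower bounds $d(xF_i)\ge a_ix-b_i$ with $a_i>0$, and conclude via superadditivity (Lemma \ref{subadditive}) that only finitely many $S\in Q(\T;\Z_+)$ have $d(S)\le D$. The ``subtlety'' you flag is handled exactly as you anticipate: $\delta(F_i)=0$ with non-negative coordinates means at most one nonzero quad coordinate per tetrahedron, hence an embedded spun normal surface, and $F_i\ne 0$ since it is a Hilbert-basis element.
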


\begin{proof} 
Let $F_1, \ldots, F_m$ be a set of fundamental solutions for the set 
$Q(\T;\Z_+)$ of non-negative integer solutions to the Q-matching equations.   

Now consider $f_i(x) = d(x F_i) = -\chi(F_i)x+\delta(F_i) x^2$, where $x \ge 0$.
Since $\T$ is spun 1-efficient, for each $i=1,\ldots,m$, we have either
(i) $\delta(F_i)=0$ and $-\chi(F_i) \ge 1$, or (ii) $\delta(F_i) \ge 1$.
Hence $f_i(x) \to \infty$ as $x \to +\infty$, and in fact there are 
constants $a_i,b_i \in \R$ with $a_i >0$ such that
$f_i(x) \ge a_i x - b_i$ for all $x \ge 0$.

Then each $S\in Q(\T;\Z)$ can be written as a linear combination 
$S= \sum_{i=1}^m x_i F_i$ with $x_i \in \Z_+$
and, using Lemma \ref{subadditive},  its $q^{1/2}$-degree satisfies 
$$
d(S) \ge \sum_{i=1}^m d(x_i F_i) \ge \sum_i (a_i x_i -b_i).
$$
Thus, $d(S) \le D$ implies $\sum_i a_i x_i \le D + \sum_i b_i$,
which has only finitely many solutions with $x_i \in \Z_+$. Hence the total
index sum converges.
\end{proof}

\begin{thm}
Every 1-efficient ideal triangulation of an anannular 3-manifold other 
than the solid torus or solid Klein bottle is spun 1-efficient. 
\end{thm}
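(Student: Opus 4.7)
We argue by contradiction. Suppose $S\in Q(\T;\Z_+)$ is an embedded spun normal surface, non-zero as a $Q$-normal class, with $\chi(S)\ge 0$. Since vertex-linking surfaces contribute neither to quad coordinates nor to Euler characteristic, discarding them from $S$ and passing to a connected component leaves an $S$ that is connected, carries a quad in some tetrahedron, and still satisfies $\chi(S)\ge 0$: otherwise every non-vertex-linking component of the original surface would have $\chi<0$ and the total Euler characteristic would be negative.

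If $[\bd S]=0$ in $H_1(\bd M;\Z)$, then $S$ is a closed normal class and represents a connected closed embedded normal surface with $\chi\ge 0$ and nontrivial quads. But by 1-efficiency, spheres and projective planes do not occur, and any embedded normal torus or Klein bottle must be vertex-linking and hence carry no quads --- contradiction. So $S$ is genuinely spun. Truncating the cusps yields a compact connected properly embedded surface $\bar S\subset M$ with $\bd\bar S\subset\bd M$ a union of essential simple closed curves parallel to the spinning slopes of $S$, and $\chi(\bar S)=\chi(S)\ge 0$. The only such surfaces are the disk, the annulus, and the M\"obius band.

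Next I would invoke the hypotheses on $M$, noting first that 1-efficiency forces $M$ to be irreducible, since any essential $2$-sphere in $M$ could be isotoped to a normal sphere. In the disk case $\bar S$ is a compressing disk for $\bd M$; combined with irreducibility this forces the relevant boundary torus to bound a solid torus in $M$, making $M$ itself a solid torus --- excluded by hypothesis. In the annulus case, if the two components of $\bd\bar S$ lie on a single boundary torus they are parallel with opposite induced orientations, so $[\bd S]=0$ and $S$ is a closed normal class already treated; if they lie on two distinct boundary tori, $\bar S$ is neither boundary parallel (no annulus in $\bd M$ connects distinct components) nor compressible (a compressing disk would yield compressing disks for both boundary tori, leading to the previous solid-torus contradiction), so $\bar S$ is essential, contradicting anannularity.

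The M\"obius band case is the main obstacle, because the one-sidedness of $\bar S$ in orientable $M$ prevents direct compression. I would reduce it to the annulus case via the frontier annulus $A'$ of a regular neighbourhood $N(\bar S)$ of $\bar S$ in $M$: $A'$ is a properly embedded annulus with both boundary components parallel on one boundary torus of $M$. If $A'$ is essential we contradict anannularity. Otherwise $A'$ is boundary parallel, bounding a parallelism region $R\cong A'\times I$ in $M$; then $M$ is the union of $N(\bar S)$ (a twisted $I$-bundle over the M\"obius band $\bar S$) with $R$, glued along $A'$, and one checks that this assembles into the orientable twisted $I$-bundle over the Klein bottle --- the solid Klein bottle --- which is excluded by hypothesis. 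That exclusion in the hypothesis of the theorem is used precisely at this subcase.
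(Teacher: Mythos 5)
Your proof is a genuinely different route from the paper's. The paper handles both the inessential annulus and the M\"obius band cases uniformly with a \emph{barrier argument}: since $M$ is anannular the surface lies in a product region near a cusp, and the union of $S$ with a vertex--linking torus is used as a barrier to normalise a peripheral torus inside a copy of $M$, producing an embedded normal torus or Klein bottle that is not normally peripheral and hence contradicting $1$-efficiency. You instead try to directly classify $M$ in each subcase via compressing disks and parallelism regions. Your disk case and annulus-on-two-tori case are fine (and spell out details the paper compresses), but there is one genuine gap and one outright error.

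The gap is in the annulus subcase with both boundary circles on a single boundary torus. You assert that the two components of $\bd\bar S$ are parallel ``with opposite induced orientations, so $[\bd S]=0$.'' This conflates the homological boundary of the \emph{oriented compact surface} $\bar S$ (which is indeed zero) with the $Q$-normal boundary $[\bd S]$, which records the \emph{direction of spiralling} of each tail and is determined by the quad/shear data, not by a choice of orientation of $S$. In fact for an embedded spun annulus with two tails at the same cusp, both tails must spiral in the same direction: two parallel infinite annuli at the same slope spiralling into the cusp in opposite directions would necessarily intersect, since the shear that drives the spiral is a fixed quantity at that cusp. Hence $[\bd S]\neq 0$ and $S$ is \emph{not} a closed normal class, so the reduction to the closed case fails. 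The remaining possibility --- $\bar S$ incompressible and boundary-parallel, with $S$ carrying nonzero quads --- gives no topological constraint on $M$ (the other side of the parallelism region is just $M$ again). This is exactly the case the paper's barrier/normalisation argument handles and your proof does not.

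The error is in the M\"obius band subcase. You write that $N(\bar S)\cup_{A'}R$ ``assembles into the orientable twisted $I$-bundle over the Klein bottle --- the solid Klein bottle.'' This is wrong twice over. The orientable twisted $I$-bundle over the Klein bottle is not the solid Klein bottle: the former is orientable with torus boundary (it is $SFS[D^2(2,1)(2,1)]$), while the solid Klein bottle is non-orientable with Klein bottle boundary. More importantly, $R$ is a parallelism region of $A'\subset\bd N(\bar S)$, i.e.\ a collar attached to part of $\bd N(\bar S)$, so $M=N(\bar S)\cup_{A'}R$ is homeomorphic to $N(\bar S)$; and the orientable twisted $I$-bundle over a M\"obius band is a solid torus. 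So the correct conclusion is $M\cong S^1\times D^2$, which is still excluded by hypothesis --- the contradiction survives, but your identification of the resulting manifold is wrong, and the ``solid Klein bottle'' part of the hypothesis is really there for the non-orientable (Klein bottle cusp) setting, not this one.
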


\begin{proof} 
This follows by a simple barrier argument. For more details see 
Jaco-Rubinstein \cite{JR} and Section \ref{barriers}. Assume that $\T$ 
is a 1-efficient triangulation of an anannular 3-manifold $M$, which is 
not spun 1-efficient. So there is an embedded spun normal surface $S$ 
with $\chi(S)\ge 0$ in $M$. Clearly then $S$ is a M\"obius band, annulus 
or disk. If $S$ is a disk, $M$ must be a solid torus or solid Klein bottle, 
which are excluded by assumption. So $S$ must be an annulus or M\"obius band. 
On the other hand, if $S$ is an annulus or M\"obius band, since $M$ is 
anannular, $S$ must be topologically boundary parallel, so is contained in 
a collar of a boundary component of $M$. (The M\"obius band case only occurs 
if $M$ is non-orientable and the corresponding cusp is a Klein bottle).
We can then push the appropriate boundary surface across this collar to 
give a torus or Klein bottle $T$ which is boundary parallel. Moreover the 
product region bounded by $T$ and a cusp contains $S$. 

We now wish to construct a barrier using $S$. Choose a normal torus or 
Klein bottle $T_0$ which consists entirely of triangular disks and is 
parallel to the cusp in the previous paragraph. We can assume that $S$ 
and $T_0$ are chosen to intersect transversely.  $S \cup T_0$ separates 
$M$ into various regions, with one such region $R$ homeomorphic to $M$. 
The boundary of $R$ contains a `piecewise normal' torus or Klein bottle 
and this is a barrier for normalisation of surfaces in $R$ in the sense 
of Jaco-Rubinstein \cite{JR}. So we can normalise the essential torus or 
Klein bottle $T_0$ in $R$ to give an embedded normal torus or Klein bottle 
which is not normally peripheral, contradicting  the assumption that $\T$ 
is 1-efficient. 
\end{proof}

\begin{rmk}
A 1-efficient ideal triangulation of an (open) solid torus can contain an 
embedded spun normal disk. In fact this happens in 
Example \ref{solid_torus_ex} below. 
\end{rmk}

A discussion of angle structures and angle structure with rotational 
holonomy zero on each peripheral curve follows in 
Appendix \ref{gen_angle_euler}.

\begin{cor}
If $\T$ admits a strict angle structure with rotational holonomy zero on 
each peripheral curve, then it is spun 1-efficient. In particular, this 
applies to any geometric triangulation of a cusped hyperbolic 3-manifold. 
\end{cor}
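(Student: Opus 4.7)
The plan is to deduce the corollary from an Euler characteristic formula for Q-normal surfaces that upgrades the closed-surface computation of Luo--Tillmann \cite{LT} to the spun setting, as developed in Appendix \ref{gen_angle_euler}. Specifically, for any generalised angle structure $\alpha$ on $\T$ and any $S \in Q(\T;\R)$, one expects an identity of the shape
\[
-\pi\,\chi(S) \;=\; \sum_{q \in \square} S(q)\,\alpha(q) \;-\; \Phi_\alpha(S),
\]
where $\Phi_\alpha(S)$ is a linear correction term built from the rotational (angular) holonomies of $\alpha$ around the peripheral curves, paired with the boundary class $[\bd S]\in H_1(\bd M;\Z)$. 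When $\alpha$ has zero rotational holonomy on each peripheral curve, $\Phi_\alpha$ vanishes for every $S$, and the identity reduces to $-\pi\chi(S)=\sum_q S(q)\alpha(q)$.

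Granting this, suppose $\alpha$ is a strict angle structure with zero rotational holonomy on each peripheral curve, and let $S\ne 0$ be any embedded spun normal surface. Then $S$ lies in $Q(\T;\Z_+)$ with at least one positive quad coordinate, while strictness gives $\alpha(q) > 0$ for every quad type $q$. Hence $\sum_q S(q)\alpha(q) > 0$, and the displayed formula forces $\chi(S) < 0$. No embedded spun normal surface can therefore have $\chi(S)\ge 0$, so $\T$ is spun $1$-efficient.

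For the final sentence, observe that a geometric ideal triangulation of a cusped hyperbolic 3-manifold $M$ realises each tetrahedron as a positively oriented ideal hyperbolic simplex. The hyperbolic dihedral angles sum to $\pi$ in each tetrahedron, and by the consistency of the gluing they sum to $2\pi$ around each edge class, so they form a strict angle structure. Completeness of the hyperbolic metric on $M$ is the statement that each cusp cross-section is a Euclidean torus, which is equivalent to the vanishing of the rotational holonomy of the induced angle structure on every peripheral curve. The first part of the corollary then applies, giving spun $1$-efficiency.

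The main obstacle is the spun Euler characteristic formula itself. The Luo--Tillmann argument for closed normal surfaces uses combinatorial Gauss--Bonnet: at each interior vertex of $S$, lying on an edge $e$ of $\T$, the angle sum is exactly $2\pi$ by the edge equations for $\alpha$, so every interior vertex has zero defect and only the quad contributions to $\chi$ survive. Extending to spun normal surfaces requires truncating $S$ near each cusp, showing that the annular ends contribute zero to $\chi$, and identifying the residual boundary angle defect on the truncation with the rotational holonomy of $\alpha$ paired with $[\bd S]$. This is precisely what Appendix \ref{gen_angle_euler} establishes; once it is in place, the remainder of the proof is just the positivity observation above.
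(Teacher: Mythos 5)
Your proposal matches the paper's proof essentially step for step: both rest on the spun Euler characteristic formula from Appendix \ref{gen_angle_euler} (the corollary of Proposition \ref{prop_euler_from_ang_str}), which for an angle structure with vanishing peripheral rotational holonomy gives $\chi(S) = -\sum_q \alpha(q) x_q/\pi$, and then conclude from strictness that $\chi(S) < 0$ whenever $S \ne 0$. Your expanded explanation of why a geometric triangulation yields a strict angle structure with vanishing rotational holonomy is a correct unpacking of what the paper leaves implicit.
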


\begin{proof}
The Euler characteristic of any embedded spun normal surface $S$ can be 
calculated as $\sum_{q \in \quads} -\alpha(q)/\pi$,  if 
$\alpha: \quads \to (0,\pi)$ is an generalised angle structure 
with rotational holonomy zero on each peripheral curve. If $\alpha$ is a 
strict angle structure then the sum is negative, unless $S=0$.
\end{proof}

\begin{ex}
The 2-tetrahedron triangulation $\T$ of the trefoil knot complement in 
Example \ref{trefoil_ex}, is not spun 1-efficient. Here 
$I^0_\T(x \mu + y \lambda) =  \delta_{0,x+6y}$ for 
$x \in \Z, y \in \frac{1}{2} \Z$ so the total index 
$\sum_{x,y} I^0_\T(x \mu + y \lambda)$ diverges.
\end{ex}

\subsection{Non 1-efficient ideal triangulations}

When the triangulation $\T$ is not 1-efficient, it turns out that the 
series $I_\T^a(b)$ may converge for some $b \ne {\bf 0}$
even when the series $I_\T^0(0)$ diverges.  See Example \ref{toroidal_ex} 
below. The following result gives an obstruction to convergence.

\begin{lemma} 
Assume that an ideal triangulation $\T$ contains a Q-normal class 
$S \in Q(\T;\Z_+)$ with $[S]_2=a$ and $\bd S$ satisfying $[\bd S]=b$,
and there is a closed normal surface $0 \ne S' \in N(\T;\Z_+)$ such that 
$S'$ is embedded and $-\chi(S')+ 2\delta(S,S') \le 0$.
Then the index sum for $I_\T^a(b)$ diverges.
\end{lemma}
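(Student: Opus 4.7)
The plan is to exhibit infinitely many pairwise distinct classes in $h^{-1}(a,b) \subset Q(\T;\Z)/\TT$ whose contributions to the index sum have leading $q^{1/2}$-degree uniformly bounded above, which obstructs convergence of $I_\T^a(b)$ as a formal Laurent series. Consider the family $S_k := S + 2kS' \in Q(\T;\Z_+)$ for $k=0,1,2,\ldots$. Since $S'$ is closed, $[\bd S']=0$, and $[2kS']_2 = 0$ (the coefficient is even), so $h([S_k]) = h([S]) = (a,b)$. The $[S_k]$ are pairwise distinct in $Q(\T;\Z)/\TT$: if $[S_k]=[S_{k'}]$ with $k\ne k'$, then $\ell S' \in \TT$ for some $\ell \ne 0$, so $\ell S' = \sum_j m_j T_j$ has quad coordinates $(m_j,m_j,m_j)$ in each tetrahedron $j$; but $S'$ is embedded, so $\ell S'$ has at most one nonzero quad per tetrahedron, forcing $m_j=0$ for all $j$ and $S'=0$, contradicting $S'\ne 0$.

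For the degree bound, use $d([S_k]) = -\chi(S_k^*)+\delta(S_k^*)$ where $S_k^* = S_k - \sum_j m_j^{(k)}T_j$ with $m_j^{(k)} = \min\{a_j(S_k),b_j(S_k),c_j(S_k)\} \ge 0$. Since $\chi(T_j)=-1$, we get $-\chi(S_k^*) = -\chi(S_k) - \sum_j m_j^{(k)} \le -\chi(S_k)$; and a direct expansion shows
\[
\delta_j(S_k) = \delta_j(S_k^*) + 2m_j^{(k)}\bigl(a_j^{*}+b_j^{*}+c_j^{*}\bigr) + 3\bigl(m_j^{(k)}\bigr)^2 \ge \delta_j(S_k^*),
\]
so $\delta(S_k^*) \le \delta(S_k)$. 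Therefore $d([S_k]) \le -\chi(S_k)+\delta(S_k)$. Using linearity of $\chi$, bilinearity of $\delta$ (equation (\ref{bilinear_delta})), and $\delta(S')=0$ (since $S'$ is embedded),
\[
-\chi(S_k)+\delta(S_k) = \bigl[-\chi(S)+\delta(S)\bigr] + 2k\bigl[-\chi(S')+2\delta(S,S')\bigr] \le -\chi(S)+\delta(S),
\]
by the hypothesis $-\chi(S')+2\delta(S,S') \le 0$. Thus every $d([S_k])$ is bounded above by the constant $D:=-\chi(S)+\delta(S)$.

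Each $I([S_k])$ has a nonzero leading term of coefficient $\pm 1$ at $q^{1/2}$-degree $d([S_k])\le D$ (the factors $\JD(a_j^*,b_j^*,c_j^*)$ each have leading coefficient $\pm 1$ since the minimum in each tetrahedron is $0$). With infinitely many distinct $[S_k] \in h^{-1}(a,b)$ and all leading degrees at most $D$, either some $q^{1/2}$-degree $\le D$ is achieved by infinitely many $S_k$ (making the corresponding coefficient of the formal sum $I_\T^a(b)$ an ill-defined infinite sum of $\pm 1$'s), or the degrees tend to $-\infty$ (so the support of the formal sum is unbounded below). In either case $I_\T^a(b)$ fails to converge as an element of $\Z((q^{1/2}))$. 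The main technical point is the passage from $S_k$ to $S_k^*$ in the degree estimate, which reduces to the quadratic expansion of $\delta_j$ above together with $\chi(T_j)=-1$; once this monotonicity is established the argument runs parallel to the forward direction of Theorem~\ref{index0_cgce}.
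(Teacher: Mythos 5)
Your proof is correct and follows the same strategy as the paper: perturb $S$ by multiples of the (doubled) embedded closed surface $S'$ and show that the resulting $q^{1/2}$-degrees stay bounded above. You also carefully fill in two points the paper leaves implicit — that distinct $k$ give distinct cosets in $Q(\T;\Z)/\TT$, and the monotonicity $d([S_k]) \le -\chi(S_k)+\delta(S_k)$ arising when passing from $S_k$ to its minimal non-negative coset representative $S_k^*$ — both of which are needed to make the degree estimate rigorous.
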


\begin{proof}
By replacing $S'$ by $2 S'$, if necessary, we can assume that 
$S' \in Q_0(\T;\Z_+)$. Then for all $k \in \Z_+$, the normal class 
$S_k=S+k S'$ contributes a term in the index sum 
(\ref{explicit_index_computation}) for $I_\T^a(b)$ with $q^{1/2}$-degree 
\begin{align}
d (S_k) &= -\chi(S+k S') + \delta(S+k S') \nonumber \\
&= -\chi(S)-k\chi(S') +\delta(S)+2k \delta(S,S')+k^2\delta(S') \nonumber \\
& = d(S)+k\left( -\chi(S')+2 \delta(S,S')\right)+k^2\delta(S').
\label{degree_S_k}
\end{align}
Hence $d(S_k)$ remains bounded above as $k \to +\infty$ if $\delta(S')=0$ 
and $-\chi(S')+2 \delta(S,S')\le 0$. Thus the index sum diverges.
\end{proof}

In general, we have the following converse.

\begin{thm} 
\label{cgce_I_T(b)} 
Assume that an ideal triangulation $\T$ does not contain a Q-normal class 
$S \in Q(\T;\Z_+)$ with $\bd S = b$, and a closed normal surface 
$S' \in N(\T;\Z_+)$ such that $S'$ is embedded and 
$-\chi(S')+ 2\delta(S,S') \le 0$. Then the index sum $I_\T^a(b)$ converges.
\end{thm}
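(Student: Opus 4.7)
My plan is to adapt the argument of Theorem \ref{1-eff_cgce_I_T(b)} by exploiting the bilinear interaction term $\delta(\cdot,\cdot)$ to compensate for the failure of 1-efficiency: the contrapositive of the preceding lemma will provide, for each allowable offset in place of $S$, the positive linear coefficient required from every embedded closed fundamental solution.

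Every class $[S]\in h^{-1}(a,b)$ is represented by its minimal non-negative representative $S^*\in Q_b(\T;\Z_+):=\{S\in Q(\T;\Z_+):\bd S=b\}$, and by (\ref{index_deg_surface}) contributes a term of $q^{1/2}$-degree $d(S^*)=-\chi(S^*)+\delta(S^*)$ to $I^a_\T(b)$. Dropping the parity condition $[S]_2=a$ only enlarges the set we must count, so it suffices to show that for each $D\in\Z$ there are only finitely many $S\in Q_b(\T;\Z_+)$ with $d(S)\le D$. Following the proof of Theorem \ref{1-eff_cgce_I_T(b)}, I would choose a Hilbert basis $F_1,\ldots,F_m$ for the rational cone $\{S\in Q(\T;\Z_+):\bd S\in\Z_+\cdot b\}$, with $\bd F_i=s_ib$, $s_i\in\Z_+$. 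Writing $I_0=\{i:s_i=0\}$ and $I_1=\{i:s_i=1\}$, each $S\in Q_b(\T;\Z_+)$ then decomposes as $S=F_{i_1}+\sum_{i\in I_0}x_iF_i$ with $x_i\in\Z_+$, $i_1$ ranging over the finite set $I_1$, and each $F_i$ ($i\in I_0$) lying in $N(\T;\Z_+)$.

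Expanding $d(S)$ via linearity of $\chi$ and the bilinearity (\ref{bilinear_delta}) of $\delta$, and discarding the non-negative cross terms $\delta(F_i,F_j)\ge 0$ for $i\ne j$ in $I_0$, gives
\begin{equation}
d(S)\;\ge\;d(F_{i_1})\;+\;\sum_{i\in I_0}g_i(x_i),
\qquad
g_i(x):=-x\chi(F_i)+x^2\delta(F_i)+2x\,\delta(F_{i_1},F_i).
\label{plan_key_lower_bd}
\end{equation}
For each $i\in I_0$ the behaviour splits: if $F_i$ is not embedded then $\delta(F_i)\ge 1$ and the quadratic term dominates, yielding $g_i(x)\ge a_ix-b_i$ with $a_i>0$; if $F_i$ is embedded then $\delta(F_i)=0$ and the hypothesis, applied to $S=F_{i_1}\in Q_b(\T;\Z_+)$ (which has $\bd F_{i_1}=b$) and $S'=F_i\in N(\T;\Z_+)\setminus\{0\}$, forces the integer $-\chi(F_i)+2\delta(F_{i_1},F_i)$ to be $\ge 1$, so $g_i(x)\ge x$. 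Because $I_1$ is finite, the constants $a_i,b_i$ can be chosen uniformly in $i_1$.

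Substituting back into (\ref{plan_key_lower_bd}), the inequality $d(S)\le D$ forces $\sum_{i\in I_0}a_ix_i$ into a bounded range, leaving only finitely many tuples $(x_i)\in\Z_+^{|I_0|}$; summing over $i_1\in I_1$ produces the desired finiteness, and hence convergence of $I^a_\T(b)$ as a formal Laurent series. The main obstacle is the embedded case: one must verify that the hypothesis truly applies to $(S,S')=(F_{i_1},F_i)$ — which it does because $F_{i_1}\in Q_b(\T;\Z_+)$ by construction and embedded nonzero $F_i\in N(\T;\Z_+)$ is exactly the class the lemma forbids — while simultaneously confirming that non-embedded fundamental solutions of $N(\T;\Z_+)$, which the hypothesis does not directly constrain, are automatically controlled by the $x^2\delta(F_i)$ term.
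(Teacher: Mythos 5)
Your proof is correct and follows essentially the same route as the paper: you reuse the Hilbert basis decomposition from Theorem \ref{1-eff_cgce_I_T(b)}, expand $d(S)$ via bilinearity of $\delta$ and discard the non-negative cross terms, then split the fundamental solutions $F_i$ with $\bd F_i=0$ into the non-embedded case (controlled by $\delta(F_i)\ge 1$) and the embedded case (where the hypothesis gives the positive integer linear coefficient $-\chi(F_i)+2\delta(F_{i_1},F_i)\ge 1$). This is precisely the paper's argument, down to the observation that finiteness of the set of offsets with boundary $b$ lets the constants be chosen uniformly.
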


\begin{proof}
We start by following the proof of Theorem \ref{1-eff_cgce_I_T(b)}. This 
shows that there exist finitely many normal classes
$S_1, \ldots, S_k \in Q(\T;\Z_+)$ with $\bd S_i=b$ and finitely many 
normal classes  $F_1, \ldots, F_m \in Q(\T;\Z_+)$ with $\bd F_i=0$ 
such that every $S \in Q_b(\T;\Z_+)$ with $\bd S = b$  can be written as
$$
S= S_j + \sum_{i=1}^m x_i F_i, \text{ where } 1\le j \le k 
\text{ and } x_i \in \Z_+.
$$

Now 
\begin{align*}
d(S) 
&= d(S_j)+ \sum_{i} (-\chi(F_i)+ 2 \delta(F_i,S_j))x_i 
+ \sum_{i ,j } \delta(F_i,F_j) x_i x_j \\
&\ge  d(S_j)+ \sum_{i} (-\chi(F_i)+ 2 \delta(F_i,S_j))x_i  
+ \delta(F_i) x_i ^2 .
\end{align*}
For each $i \in I_0$ we have, by assumption,  either $\delta(F_i)\ge 1$ 
or $-\chi(F_i)+ 2 \delta(F_i,S_1) \ge 1$ so it follows that 
$$
f_i(x):=(-\chi(F_i)+ 2 \delta(F_i,S_1))x  + \delta(F_i) x^2
$$
is bounded below for $x \ge 0$ and approaches 
$+\infty \text{ as } x\to +\infty.$
Since  there are only finitely possibilities  for $S_1$,
it follows that there are only finitely
many $S \in Q_b(\T;\Z)$ such that $\bd S = b$ and $d(S) \le D$. Hence the 
index sum for $I_\T^a(b)$ converges.
\end{proof}

\medskip

\begin{rmk}
\begin{enumerate}
\item 
When $b=0$ this reduces to Theorem \ref{index0_cgce}, as we can take $S=0$ 
in the above theorem.
\item 
If the triangulation $\T$ is {\em $0$-efficient} then 
Theorem \ref{cgce_I_T(b)} simplifies to the following: \\
$I_\T^a(b)$ converges if and only if $\T$ 
does not contain a Q-normal class $S \in Q(\T;\Z_+)$ with $\bd S = b$,  
and an \emph{embedded} closed normal surface $S'$ disjoint from $S$ such 
that  $\chi(S') =0$.
\end{enumerate}
\end{rmk}


\section{Invariance under 2-3 and 0-2 moves}  

The arguments of \cite{GHRS} extend easily to prove the following. 

\begin{thm} 
Let $\CT$ be an ideal triangulation of $M$ and let $\tilde \CT$ be obtained 
from $\CT$  by a 2-3 move. Then for all $a,b$ as above,
$I_{\CT}^a(b)=I_{\tilde\CT}^a(b)$ provided both sides are defined.
\end{thm}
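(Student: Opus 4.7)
The plan is to reduce the statement to the pentagon identity (\ref{pent_eq}), exactly as in the original invariance proof for $I_\CT(b)$ in \cite[\sect6]{Gar} and \cite[Theorem 1.1]{GHRS}, with the homological bookkeeping of $(a,b)$ tracked through the move.

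First I would observe that a 2-3 Pachner move is a purely local modification: the bipyramid $B$ obtained by gluing two tetrahedra $\sigma_1,\sigma_2$ of $\CT$ along a common face is retriangulated as three tetrahedra $\tilde\sigma_1,\tilde\sigma_2,\tilde\sigma_3$ meeting along a new interior edge $e_0$. Since the move does not change the underlying manifold $M$, the groups $H_2(M,\bd M;\Z/2\Z)$ and $H_1(\bd M;\Z)$ are the same for $\CT$ and $\tilde\CT$, so the constraint $\bd_* a = b\bmod 2$ is well-defined on both sides. Moreover, any $Q$-normal class $S$ on $\CT$ whose quads in $\sigma_1\cup\sigma_2$ form an embedded pattern can be transported across the move to a $Q$-normal class $\tilde S$ on $\tilde\CT$ with the same quads outside $B$ and a uniquely determined replacement inside $B$; by Theorem \ref{ZQNT_struct_thm} this correspondence preserves $([S]_2,[\bd S])$, at least modulo $\TT$, since the replacement only changes things by twisted octagons that fill in the bipyramid as a 3-chain mod 2.

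Next I would parametrise both index sums in the form of (\ref{explicit_index_computation}). Choose a base $Q$-normal class $S_0$ on $\CT$ realising $(a,b)$ and a set of $n-r$ edge solutions $E_1,\dots,E_{n-r}$ representing $(\EE+\TT)/\TT$. Under the 2-3 move one obtains a base class $\tilde S_0$ on $\tilde\CT$ (via the transport above) and a corresponding collection of $n-r$ edge solutions $\tilde E_1,\dots,\tilde E_{n-r}$, together with \emph{one extra} edge solution $\tilde E_0$ coming from the new edge $e_0$. Thus
\[
I_{\tilde\CT}^a(b) \;=\; \sum_{e_0\in\Z}\;\sum_{\kk\in\Z^{n-r}} I\bigl(\tilde S_0 + e_0\tilde E_0 + \textstyle\sum_i k_i\tilde E_i\bigr),
\]
while $I_\CT^a(b)$ is the analogous sum without the $e_0$ summation. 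Outside $B$ the quad coordinates agree with those of the corresponding normal class on $\CT$, so the factors $\JD(a_j,b_j,c_j)$ for tetrahedra outside $B$ are identical on both sides, as are the powers $(-q^{1/2})^{-\chi}$ up to a bookkeeping term from the tetrahedral solutions of $\sigma_1,\sigma_2$ versus $\tilde\sigma_1,\tilde\sigma_2,\tilde\sigma_3$.

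The key step is then local: inside $B$ the contribution to the $\tilde\CT$ sum is a product of three factors $\JD$ summed over $e_0$, and the contribution to the $\CT$ sum is a product of two factors $\JD$. Writing the $\JD$'s in terms of $\ID$ via their defining relations, the pentagon identity (\ref{pent_eq}) shows exactly that
\[
\sum_{e_0\in\Z} q^{e_0}\,\ID(m_1,x_1+e_0)\,\ID(m_2,x_2+e_0)\,\ID(m_1+m_2,x_3+e_0) = q^{-x_3}\,\ID(m_1-x_2+x_3,x_1-x_3)\,\ID(m_2-x_1+x_3,x_2-x_3),
\]
and the parameters $m_i,x_i$ produced by the local edge/cusp data match on the two sides because the combinatorics of the 2-3 move is precisely what the pentagon identity encodes; this is worked out in \cite[\sect6]{Gar}. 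The remaining sum over $\kk$ then matches term-by-term.

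The main obstacle will be the matching of base classes: one must verify that $\tilde S_0$ can be chosen with quad coordinates in $B$ compatible with the pentagon-identity substitution, and that the coset representatives for $(\tilde\EE+\tilde\TT)/\tilde\TT$ can be chosen so that modding out $\tilde E_0$ and the tetrahedral solutions of $\tilde\sigma_1,\tilde\sigma_2,\tilde\sigma_3$ recovers exactly the representatives for $(\EE+\TT)/\TT$ on $\CT$. Granting this (which follows from Theorem \ref{ZQNT_struct_thm} and a direct examination of the $Q$-matching equations at the new edge, exactly as in the $a=0$ case of \cite{GHRS}), the proof concludes via the pentagon identity as above. The hypothesis that both sides are defined ensures the rearrangement and pentagon-identity substitution are legitimate at the level of formal Laurent series.
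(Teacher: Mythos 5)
Your proposal is correct in substance and follows essentially the same route as the paper: reduce to the pentagon identity by parametrising each index sum over edge solutions, transport the base class $S_0$ across the move, and check that summing over the new edge weight $k_0$ on $\tilde E_0$ implements exactly the pentagon substitution, with the matching of coset representatives for $(\EE+\TT)/\TT$ versus $(\tilde\EE+\tilde\TT)/\tilde\TT$ being the delicate point.

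The one detail the paper emphasises that your sketch does not address is the coefficient bookkeeping: to represent a general $Q$-normal class $S_0$ realising $(a,b)$ as a combination of edge, tetrahedral and peripheral solutions, one must in general allow \emph{half-integer} coefficients (Remark \ref{half-integer-coefs}), and correspondingly the associated linear combination $\omega$ of oriented normal arcs in $\T_\bd$ (the representation of \cite[Section 4.4]{GHRS} that the paper transports across the move) has half-integer coefficients and turning numbers. The paper then observes that this causes no trouble because the quad coefficients that feed into each $\JD$ factor are still integers, so the pentagon lemma of \cite{GHRS} applies verbatim. Your proof works directly with the quad coordinates of $S_0\in Q(\T;\Z)$, which are of course integers, so this does not break the argument --- but since the $Q$-matching-equation analysis needed to verify the transport of base classes and coset representatives (your ``main obstacle'') is carried out in \cite{GHRS} precisely in the normal-arc picture, you should flag that the coefficients of the arcs may now be half-integers and note why the tetrahedral index factors remain unaffected. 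Also, a small inaccuracy: the transport of $S_0$ to $\tilde S_0$ (obtained by subdividing the normal disks of $S_0$) preserves $([S]_2,[\bd S])$ exactly, not merely modulo $\TT$, which is what guarantees that $\tilde S_0$ is a valid base class for the same pair $(a,b)$.
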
 

\begin{proof} 
We follow the argument from \cite[Theorem A.1]{GHRS}, indicating the main 
changes needed in the current setting. Consider a 2-3 move from $\CT$ to 
$\tilde \CT$; this occurs in a bipyramid consisting of $2$ tetrahedra in 
$\CT$ which are replaced by 3 tetrahedra sharing an edge of order 3 in 
$\tilde \CT$.

First note that each $Q$-normal class $S \in Q(\CT;\S)$  can be written 
as a linear combination of edge solutions, tetrahedral solutions and 
peripheral curve solutions with coefficients in $\frac{1}{2} \Z$, by the 
results of Section 6 (see Remark \ref{half-integer-coefs}).
As in section 4.4 of \cite{GHRS}, we can represent $S$ by a linear 
combination $\omega$ of oriented normal arcs in the induced triangulation 
$\T_\bd$ of $\bd M$, chosen so that each quad coordinate for $S$ is a sum 
of 4 ``turning numbers'' of normal arcs, coming from the 4 corners of the 
tetrahedron containing the quad (see figure 7 of \cite{GHRS}).

Explicitly, for each edge solution we choose a small normal linking
circle in  $\bd M$ around one end of the edge in $\T$,  we choose oriented 
normal curves representing each peripheral class, and for each tetrahedral 
solution we choose three normal arcs  oriented anticlockwise in a triangle 
coming from one corner of the tetrahedron. (If tetrahedral solutions are 
needed in the bipyramid, we choose the two triangles at the degree 3 
vertices at the top and bottom.) One difference from \cite{GHRS}  is that 
the coefficients of the normal arcs here can be {\em half-integers}; 
but each sum of turning numbers giving a quad coefficient is still an 
integer. 

To compute the index $I_\T^a(b)$, we choose a normal class $S_0$ with 
$[S_0]_2=a$ and $[\bd S_0]=b$, and have a contribution from each normal class
$$
S_{\kk} = S_0 + \sum k_i E_i,
$$ 
where $\kk=(k_i) \in \Z^{n-r}$ is a set of integer weights on a set
of $n-r$ ``basic edges''.  (The excluded edges are taken from a 
``maximal tree with 1- or 3- cycle for $\T$'' as in Theorem 4.3 
of \cite{GHRS}; this ensures that the map 
$\kk=(k_i) \mapsto \sum k_i E_i + \TT$ 
is an isomorphism from $\Z^{n-r}$ onto $(\EE+\TT)/\TT$.)
We can represent $S_0$ by a linear combination $\omega$ of oriented 
normal arcs as described above.
By adding $k_i$ small linking circles around one end of the $i$th edge 
in $\T$ we obtain a linear combination $\omega_{\kk}$ of oriented normal 
arcs which gives the 
quad coordinates of $S_{\kk} = S_0 + \sum k_i E_i$.

The construction of \cite{GHRS} shows how to replace $\omega$ by a linear 
combination $\tilde\omega$ of oriented normal arcs in $\tilde\T_\bd$ 
representing a normal class $\tilde S_0$ in $\tilde \T$ and this satisfies 
$[\tilde S_0]_2=[S_0]_2$ and $[\bd S_0] = [\bd \tilde S_0]$.   
(Essentially, $\tilde S_0$ is obtained by subdividing the normal disks of 
$S_0$, so its homology classes do not change.) 
Then the index sum for $I_{\tilde \T}^a(b)$ has a contribution for each 
normal class 
$$
\tilde S_{\tilde \kk} = \tilde S_0 + k_0 \tilde E_0 + \sum k_i \tilde E_i,
$$
where $k_0$ is the integer weight on the new edge class $\tilde E_0$ in 
$\tilde \T$, and $\kk=(k_i)\in\Z^{n-r}$ as above give the weights
on the edge classes $\tilde E_i$ coming from $\T$.
Adding small linking circles around one end of each  edge in $\T$ gives a 
linear combination of normal arcs  $\tilde\omega_{\tilde \kk}$
which gives the quad coordinates of $\tilde S_{\tilde \kk}$.

The proof of the Pentagon Equality (Lemma A.3 in \cite{GHRS}) now goes 
through verbatim; this gives the result. The only difference in the 
current setting is that coefficients of normal arcs and turning numbers 
can now be half-integers. However since the sums giving quad coefficients 
used in the tetrahedral index functions are integers,
the arguments of \cite{GHRS} go through without change.  
\end{proof}

The arguments from \cite{GHRS} also extend to give the following.

\begin{thm} 
\label{0-2invariance}
Let  $\CT$ be an ideal triangulation of $M$ and let $\tilde \CT$ be 
obtained from $\CT$  by a 0-2 move. Then for all $a,b$ as above,
$I_{\CT}^a(b)=I_{\tilde\CT}^a(b)$ provided both sides are defined.
\end{thm}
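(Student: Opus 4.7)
The plan is to mimic the preceding 2-3 argument and the original proof of 0-2 invariance in \cite[Theorem 5.1]{GHRS}, with the pentagon identity replaced by the quadratic identity
\be
\sum_{e\in\BZ} \ID(m,e)\ID(m,e+c)q^{e}=\delta_{c,0}.
\ee
A 0-2 move inserts a pair of tetrahedra $\sigma_1',\sigma_2'$ in $\tilde\CT$ glued along two common faces to form a bigon region; this introduces a new edge class $\tilde E_0$ of degree $2$ (interior to the bigon, meeting each of $\sigma_1',\sigma_2'$ once), while the remaining edges and tetrahedra of $\tilde\CT$ correspond canonically to those of $\CT$.

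Following \cite[\sect 4.4]{GHRS}, I would fix $S_0\in Q(\CT;\Z)$ with $[S_0]_2=a$ and $[\bd S_0]=b$ and represent it by a linear combination $\omega$ of oriented normal arcs in $\T_\bd$ with coefficients in $\tfrac12\Z$, exactly as in the 2-3 case. Subdividing $\omega$ through the bigon yields a combination $\tilde\omega$ in $\tilde\T_\bd$ realising a class $\tilde S_0\in Q(\tilde\CT;\Z)$ with the same $(a,b)$. Adjoining $k_0$ small linking circles around one end of $\tilde E_0$ and $k_i$ linking circles around the remaining basic edges produces the family
\be
\tilde S_{(k_0,\kk)} \;=\; \tilde S_0 \;+\; k_0\,\tilde E_0 \;+\; \sum_{i=1}^{n-r} k_i\,\tilde E_i
\ee
parametrising $I_{\tilde\CT}^a(b)$; the corresponding family in $\CT$ is $S_{\kk}=S_0+\sum_i k_i E_i$, parametrising $I_\CT^a(b)$.

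The crux is the inner sum over $k_0$. By the bigon geometry only the single quad in each of $\sigma_1',\sigma_2'$ facing $\tilde E_0$ has a coordinate depending on $k_0$; after the passage $\JD\to\ID$ of Section 2 and after absorbing the factor $q^{k_0}$ arising from the Euler-characteristic shift $\chi(\tilde S_{(k_0,\kk)})-\chi(\tilde S_{(0,\kk)})=-2k_0$, the $k_0$-dependent part of $I(\tilde S_{(k_0,\kk)})$ takes the form $\ID(m,e)\ID(m,e+c)q^{e}$ in which $e$ equals $k_0$ (up to sign), the parameter $m$ is independent of $k_0$, and $c$ is an integer depending only on $\kk$ and $\tilde S_0$ --- namely the discrepancy of shear contributions of $\tilde S_0+\sum_i k_i \tilde E_i$ across the two bigon faces. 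Summing over $k_0$ via the quadratic identity produces $\delta_{c,0}$, and $c=0$ is exactly the compatibility condition under which $\tilde S_{(k_0,\kk)}$ descends (modulo tetrahedral solutions) to $S_{\kk}$ in $\CT$; the surviving sum over $\kk$ is then $I_\CT^a(b)$. The main obstacle will be the bookkeeping of this paragraph: verifying that the two $\ID$-factors really share the common parameter $m$, that the shift $c$ vanishes precisely on the image of the 2-0 collapse, and that the half-integer coefficients appearing in $\omega$ do not disrupt the integrality of the quad data fed into the quadratic identity. As in the 2-3 case, this last point is the only genuine novelty beyond \cite[Theorem 5.1]{GHRS}, and is resolved by the observation that the turning-number sums giving quad coefficients remain integer-valued.
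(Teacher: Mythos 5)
Your proposal is correct and takes essentially the same route as the paper: the paper's own proof of this theorem is a one-line appeal to the argument of Theorem~5.1 of \cite{GHRS}, modified exactly as you describe (start from a representative linear combination $\omega$ of oriented normal arcs in $\T_\bd$ with $\tfrac12\Z$ coefficients for $S_0$, observe that the resulting quad coordinates remain integers, and let the quadratic identity do the algebraic work over the new degree-two edge). Your elaboration of the mechanism matches what the cited argument does; the only thing to be a little careful about, which the paper elides by citing \cite{GHRS}, is that a 0-2 move increases the number of tetrahedra (and hence edge classes) by two rather than one, so the inner summation introduced by the move is not literally a single new variable $k_0$ but a pair, one of which is killed by the Kronecker delta produced by the quadratic identity --- consistent with the ``compatibility condition'' you identify.
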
 

\begin{proof}
Here the proof of Theorem 5.1 in  \cite{GHRS} applies, if we start with a 
linear combination $\omega$ of oriented normal arcs in $\T_\bd$
with $\frac{1}{2} \Z$ coefficients  giving the quad coordinates of $S_0$ 
with homology classes $[S_0]_2=a$ and $[\bd S_0]=b$.
\end{proof}


\section{Generalised normal surfaces}
\label{gen_normal_surfaces}

{\em Embedded normal surfaces} were introduced by Kneser \cite{Kn} in 1929. 
In the 1950s and 1960s, Haken \cite{Ha1,Ha2,Ha3} extensively developed the 
theory of normal surfaces (via handle decompositions and also via 
triangulations) and applied this to basic algorithmic questions in topology.  
The solutions to Haken's matching equations include {\em branched} and 
{\em immersed} normal surfaces, which are the result of gluing together 
finitely many normal disks in tetrahedra of a triangulation of a closed 
manifold or  an ideal triangulation of a compact manifold with boundary.
In the 1980s  Thurston suggested the theory of 
{\em spun normal surfaces }(see \cite{Ti,Wa}). These are obtained by 
gluing finitely many quadrilaterals but possibly infinitely many 
triangular disks, as long as there are at most a finite number of such 
disks which are not contained in `tails' which are infinite annuli 
spiralling around a cusp. We will again specify if a spun normal surface 
is embedded --- in general it can be branched or immersed. 
Moreover spun normal surfaces may contain no tails, in which case they are 
(closed) normal surfaces. 

In this section we discuss {\em generalised} normal surfaces and 
normal classes. The former have been called $k$-normal surfaces, 
spinal surfaces and helicoidal surfaces in the literature 
(see \cite{Ma2,FoMa,FrKa,Ba}). 
Our aim is to show the interesting connections both with normal surface 
theory and with the duality between homology and cohomology. 
Moreover the lattices over which the 3D-index is evaluated can be 
represented as collections of embedded generalised normal surfaces, 
lying in a particular $\Z/2\Z$-homology class. Finally generalised normal 
surface theory can be viewed as a projection of normal surface theory, 
with the subspace of solutions spanned by the tetrahedral solutions 
quotiented out. 

\subsection{Generalised normal and spun normal surfaces}

Firstly, we discuss normal curve theory (with real coefficients) 
on the 2-sphere equipped with its four simplex triangulation as the boundary 
of a tetrahedron $\Delta^3$. We just recall the key points 
(see \cite[Section 3.2]{Ma2} for a detailed exposition).  

\begin{itemize}
\item 
There are $12$ normal arc types in $\partial \Delta^3$ and so normal 
curves are defined by non-negative integer vectors in $\R^{12}$ satisfying 
$6$ matching equations. 
It is easy to see that edge weights of normal curves determine the numbers 
of normal arcs and there are six independent edge weights. 
Hence the solution space $W(\R)$ to the matching equations is $6$-dimensional. 
\item 
For the integer solution space $W(\Z)$ to the matching equations, 
the only additional constraints on the edge weights are mod 2 conditions. 
Namely, the three edges weights at a face are either all even or two are 
odd and one even. 
\item 
There are precisely $7$ vertex classes of the projective solution space 
$\calP$, which is a $5$-dimensional polytope. 
These vertices are the classes of the $4$ boundaries of triangular normal 
disks and the $3$ boundaries of quadrilateral normal disks. 
They satisfy precisely one relation: that the sum of the triangular curves 
is the sum of the quadrilateral curves. 
\item 
There are $21$ facets of $\calP$ which are all $4$-simplices. Namely, there 
are $12$ facets containing $3$ quadrilateral curves and $2$ triangular 
curves as vertices, $3$ facets containing $4$ quadrilateral curves and 
$1$ triangular curve, and $6$ facets containing $2$ quadrilateral curves 
and $3$ triangular curves. 
\end{itemize}

Next, we want to analyze the quotient space $N(\T;\Z)/\TT$ of integer 
normal classes modulo tetrahedral solutions, working in quadrilateral space. 

For each tetrahedron, this is equivalent to studying the normal arc 
solution space $W(\Z)$ modulo the $\Z$-submodule spanned by the $4$ 
triangular curves in the boundary of the tetrahedron. This is a 
2-dimensional space generated by the coset representatives of any 2 
of the 3 quadrilateral curves. So elements can be viewed as integer 
linear combinations of any 2 quadrilateral curves.

We can represent the tetrahedron as a square pillowcase, triangulated 
with 2 triangles on the top and 2 triangles on the bottom, and choose 
normal coordinates so that the 3 quadrilateral curves correspond to 
$(1,0), (0,1), (1,1) \in \Z^2$. Now it is easy to see what happens when 
we take a linear combination $p(1,0)+q(0,1)=(p,q)$ where $p,q \in \Z_+$. 
If $p$ and $q$ are relatively prime, then the $(p,q)$ normal class can 
be represented by a simple closed geodesic of slope $q/p$ 
on a square pillowcase with edge weights $p,q,p+q$ as shown in 
Figure \ref{square_pillow} below. In general, if $p,q$ are not relatively 
prime, we get $d$ parallel copies of a simple closed normal curve where 
$d = \gcd(p,q)$. 
 
\begin{figure}[htpb]
\begin{center}
\includegraphics[width=0.3\textwidth]{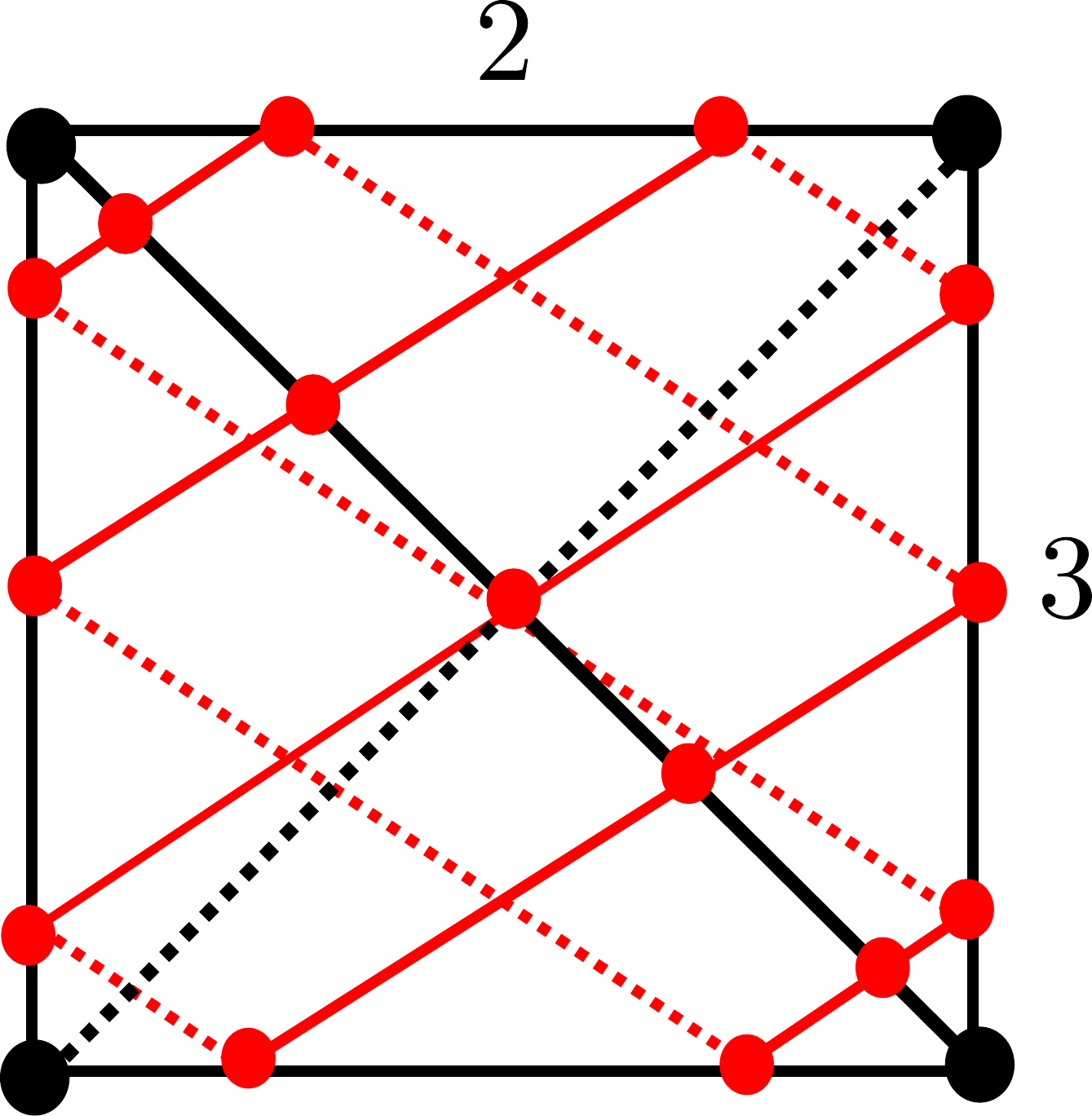}
\end{center}
\caption{A $(3,2)$ normal curve on the square pillowcase, with edge 
weights $3,2$ and $5$ along the tetrahedron edges.}
\label{square_pillow}
\end{figure}

Alternatively, if we lift to the 2-fold branched covering of the 2-sphere 
over the 4 vertices, the result is a 2-torus. Then the $(p,q)$ normal 
curve lifts to simple closed $(p,2q)$ curves on the torus.

\begin{defn}
A {\em generalised normal disk} in a tetrahedron is a properly embedded 
disk whose boundary is an embedded simple closed normal curve.
\end{defn}
  
\begin{rmk} 
These generalised normal disks arise naturally in the theory of Dehn 
filling of tangles, introduced by Montesinos in \cite{Mont}. Namely, 
the generalised normal disks are the disks that separate the two strands 
of a rational tangle. 
\end{rmk}

\begin{defn}
A {\em generalised normal surface} in an ideal triangulation $\T$ of a 
compact 3-manifold is an embedded surface which meets each tetrahedron of 
$\T$ in a finite collection of disjoint generalised normal disks.
\end{defn}
 
\begin{rmk}
Special cases of this notion have been considered previously, for example 
in \cite{Ma2,FoMa,FrKa,Ba}. 
\end{rmk}

\begin{lemma}
\label{gen_closed_class}
Given a closed normal class $S$ in $\tilde N(\T;\Z_+)$ with at most two 
non-zero quadrilateral coordinates in each tetrahedron, there is an 
embedded generalised normal surface $\tilde S$ with the same edge weights, 
which is unique up to normal isotopy.
\end{lemma}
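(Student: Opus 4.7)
The plan is to construct $\tilde S$ tetrahedron-by-tetrahedron using the pillowcase model described earlier in this section, then glue across faces via the matching equations satisfied by $S$. In a tetrahedron $\sigma_j$, after permuting quad types if necessary, assume the quad coordinates of $S$ are $(a_j, b_j, 0)$ and the triangular coordinates are $(t_{j,0}, \ldots, t_{j,3})$. I view $\sigma_j$ as a square pillowcase whose poles are the endpoints of the edges opposite the vanishing quad type. The $a_j + b_j$ quadrilateral disks, taken together with the standard resolution of their transverse intersections, assemble into $d_j = \gcd(a_j, b_j)$ parallel copies of an $(a_j/d_j, b_j/d_j)$ simple closed normal curve on the pillowcase; the $t_{j,k}$ triangles at vertex $v_k$ contribute $t_{j,k}$ concentric triangular loops, all disjoint from the pillowcase slope curves. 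Each simple closed normal curve on the $2$-sphere $\partial \sigma_j$ bounds a properly embedded disk in the ball $\sigma_j$, unique up to ambient isotopy rel boundary, and this disk is by definition a generalised normal disk. This gives a disjoint embedded family of generalised normal disks in $\sigma_j$ whose induced normal arc counts on each face of $\sigma_j$ coincide with those of $S$.

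Next I would glue. Since $S$ satisfies Haken's matching equations, the normal arc counts on each face of $\T$ are consistent between the two incident tetrahedra, so the boundary curves of the locally constructed disks match up across each shared face and can be glued to produce an embedded surface $\tilde S$ meeting each tetrahedron in a disjoint union of generalised normal disks. The intersection of $\tilde S$ with each edge of $\T$ has cardinality equal to the prescribed edge weight of $S$, because edge weights on each face are determined by its arc counts and these match by construction.

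For uniqueness, the edge weights of any candidate $\tilde S'$ determine its normal arc counts on each face (the $3 \times 3$ linear system relating the three arc-type counts to the three edge weights of a face is invertible, and the required parity condition holds automatically for any normal class). Given these arc counts, the isotopy class of the resulting disjoint union of simple closed normal curves on each $\partial \sigma_j$ is uniquely determined by the innermost matching convention, and each such curve bounds a unique disk in the ball $\sigma_j$ up to isotopy rel boundary. Hence any two such candidates are normally isotopic.

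The hard part is the local pillowcase assembly in the first step. The hypothesis that some quad coordinate vanishes in each tetrahedron is essential: if all three were positive, the three families of parallel normal arcs on each face would be forced to cross one another, obstructing any embedded resolution into disjoint simple closed normal curves bounding disks in $\sigma_j$. Once the pillowcase construction is in hand, the remaining gluing and uniqueness arguments amount to combinatorial bookkeeping using the matching equations and the fact that simple closed curves in a $2$-sphere bound unique disks in a ball.
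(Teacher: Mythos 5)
Your construction and uniqueness argument follow the paper's approach: build the normal arc pattern on each $\partial\sigma_j$ (equivalently, in the 2-skeleton of $\T$) from the edge weights, then cap off each simple closed normal curve with a disk in the ball. The paper builds the arc pattern globally by choosing $w(e)$ points on each edge and extending to arcs in faces, whereas you build it tetrahedron-by-tetrahedron and then glue across faces; both orderings work, and your explicit pillowcase analysis fleshes out the paper's one-line ``fill in uniquely by adding generalised normal disks and normal triangles.''

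Your final paragraph, however, contains a false claim. You assert that if all three quad coordinates were positive, the three families of parallel normal arcs on a face would be forced to cross one another. This is not so: on any face, arcs of the three types cut off the three corners and are pairwise disjoint, so the arc pattern on $\partial\sigma_j$ is embedded for \emph{any} non-negative arc counts. The construction in fact produces an embedded generalised normal surface for every $S\in\tilde N(\T;\Z_+)$ regardless of the at-most-two-quads hypothesis --- for example, quad coordinates $(1,1,1)$ with no triangles give the same edge weights as the four vertex-linking triangles, and the construction returns those four triangular disks. The hypothesis appears in the lemma because it is applied to the minimal non-negative coset representative $S^*$, which always has a zero quad coordinate in each tetrahedron, and because the relationship between $d(S^*)$ and $\chi(\tilde S)$ in Lemma \ref{deg_for_gen_norm_surf} depends on it; it is not what makes the present construction go through.
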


\begin{proof} 
This is essentially the same argument as the construction of a closed 
normal surface from its triangle and quad coordinates
(see for example \cite{Ma2}).
Any solution $S \in \tilde N(\T;\Z_+)$  to Haken's matching equations 
gives an embedded collection of 
normal arcs in the 2-skeleton of the triangulation which is unique up to 
normal isotopy: 
first choose $w(e)$ disjoint points in the each edge $e$ where the ``weight'' 
$w(e)$ is the number of normal disks meeting edge $e$, 
and extend this to an embedded collection of normal arcs in each triangle. 
The arc pattern on the boundary of each tetrahedron can then can be filled 
in uniquely (up to normal isotopy) 
by adding generalised normal disks  and normal triangles. The matching 
equations imply that these fit together
to give a generalised normal embedded surface.
\end{proof}

We can also consider {\em generalised spun normal surfaces} in an ideal 
triangulation $\T$, which meet each ideal tetrahedron in a finite 
collection of disjoint generalised normal disks and possibly infinitely 
many triangular normal disks.

\begin{lemma}
\label{gen_spun_class}
Given a Q-normal class $S$ in $Q(\T;\Z_+)$ with at most two non-zero 
quadrilateral coordinates in each tetrahedron, there is a unique embedded 
generalised spun normal surface $\tilde S$ such that the edge weights
of the non-triangular disks in $S$ and $\tilde S$ agree.
\end{lemma}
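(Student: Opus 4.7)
The plan is to combine the construction from Lemma \ref{gen_closed_class} for the closed case with the hexagon/shear-parameter description of spun normal surfaces from Section \ref{spun_normal_construction}. In each tetrahedron $\sigma_j$ the quad coordinates $(a_j,b_j,c_j)$ have at most two nonzero entries, so the square pillowcase analysis at the start of this section gives a collection of embedded generalised normal disks in $\sigma_j$ that is unique up to normal isotopy: if the nonzero entries are $(p,q)$ with $d=\gcd(p,q)$, we take $d$ parallel copies of the simple closed $(p/d,q/d)$ generalised disk. The edge weights these disks contribute to the three pairs of opposite edges of $\sigma_j$ depend only on $(a_j,b_j,c_j)$, and agree with those read off from $S$.

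Next I would interpret $S \in Q(\T;\Z_+)$ through the hexagon/shear picture of Section \ref{spun_normal_construction}. The generalised disks in $\sigma_j$ induce a hexagonal arc pattern on each face of $\sigma_j$, and along an edge of $\sigma_j$ the pattern is offset by the shear parameter coming from the triple $(c_j-b_j, a_j-c_j, b_j-a_j)$ exactly as in Section \ref{spun_normal_construction}. The Q-matching equations for $S$ are precisely the conditions that these shears sum to zero around each edge class of $\T$, so the arc patterns in adjacent tetrahedra match along each common face and extend consistently across the 2-skeleton. Inserting an infinite family of parallel normal triangles at each ideal vertex of each tetrahedron, as prescribed by the spun normal surface construction, completes the arc pattern and produces tails spiralling into each cusp, yielding an embedded generalised spun normal surface $\tilde S$ with the required edge weights.

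For uniqueness, suppose $\tilde S'$ is another such embedded generalised spun normal surface whose non-triangular disks give the same edge weights as $S$. Then in each tetrahedron the non-triangular part of $\tilde S'$ must realise the same quad coordinates $(a_j,b_j,c_j)$, so the pillowcase analysis forces this part to agree with that of $\tilde S$ up to normal isotopy. The infinite triangular tails are then determined up to normal isotopy by the requirement that the arc pattern on each face of $\T$ close up correctly, which is already forced by the shear-balance condition.

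The main obstacle I expect is verifying carefully that the construction from Section \ref{spun_normal_construction}, originally phrased in terms of ordinary quads, goes through in the presence of generalised normal disks: concretely, that with the edge-weight contributions of the generalised $(p,q)$-disks substituted for honest quad-by-quad shifts, the vanishing of the total shear around each edge class is still \emph{equivalent} to $S$ satisfying the Q-matching equations, and that the resulting completion by triangular tails is genuinely embedded (i.e.\ no unwanted self-intersections arise inside a tetrahedron once the generalised disks and the parallel families of triangles are assembled together). Once this combinatorial equivalence is verified, existence and uniqueness follow as in Lemma \ref{gen_closed_class}.
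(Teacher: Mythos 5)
Your proposal is correct and follows essentially the same route as the paper's proof, which builds the arc pattern in the 2-skeleton from the quad coordinates, fills each tetrahedron with generalised disks and infinitely many triangles, and invokes the Q-matching equations for global consistency; you simply make the consistency step explicit via the shear-parameter picture of Section \ref{spun_normal_construction}, which the paper keeps implicit by citing Tillmann's construction. The concern you flag does resolve as you suspect: a shear parameter along a tetrahedron edge is determined by the numbers of normal arcs crossing the two incident faces, i.e.\ by the edge weights, and the pillowcase pattern carries exactly the same edge weights as the pile of ordinary quads with coordinates $(a_j,b_j,c_j)$, so the shear triple $(c_j-b_j,\,a_j-c_j,\,b_j-a_j)$ is unchanged and vanishing total shear around each edge class is still equivalent to the Q-matching equations; embeddedness inside a tetrahedron follows because the generalised disks and the (infinitely many) triangles realise a coherent, nested arc pattern on $\bd\sigma_j$ and can therefore be isotoped to be pairwise disjoint. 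One small omission: the triangular part of $\tilde S$ is determined only up to adding vertex-linking tori, which consist entirely of triangles and so leave the edge weights of the non-triangular disks unchanged; the paper's proof explicitly removes boundary-parallel components at the end, and your uniqueness argument should include this normalisation to pin down a single $\tilde S$.
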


\begin{proof}
This is analogous to the construction of a spun normal surface from a 
solution $S$ to the Q-matching equations, as described in \cite{Ti} 
(see also \cite{DuGa}). We first choose $w(e)$ disjoint points in the 
each edge $e$ where $w(e)$ is the weight of quadrilateral normal disks 
of $S$ meeting edge $e$, and extend this to an embedded collection of 
normal arcs in each triangle. Next add  infinitely many parallel arcs 
at each corner of each triangle.  The arc pattern on the boundary of 
each tetrahedron can then can be filled in uniquely (up to normal isotopy) 
by adding generalised normal disks  and infinitely many normal triangles.
There is a unique way to glue together the faces of tetrahedra in pairs 
in the given combinatorial pattern so the normal arcs match, 
and the $Q$-matching equations show that the generalised normal disks fit 
together consistently (without any ``shearing'' ) around each edge class. 
Removing any boundary parallel normal surface components  gives the 
desired embedded generalised spun normal surface $\tilde S$.
(If $\bd S =0$, the result is an embedded closed normal surface.)
\end{proof}

\begin{rmk}
In the above two lemmas, $\tilde S$ is connected if the class $S$ is 
irreducible.
\end{rmk}

The normal classes needed for the summation to get the 3D-index correspond 
to embedded generalised normal surfaces. The correspondence is unique in 
the sense that the process produces a unique embedded generalised normal 
or spun normal surface with at most two non-zero quadrilateral coordinates 
in each tetrahedron. This gives bijections
$$
[S]=S+\TT \in Q(\T;\Z)/\TT  \leftrightarrow  S^* \in Q(\T;\Z_+) 
\leftrightarrow \tilde S \in 
\{ \text{embedded generalised spun normal surfaces} \}
$$

\subsection{Degrees of index terms}

Recall, from equation (\ref{index_deg_surface}), that the $q^{1/2}$-degree 
of a term $I([S])$  in the $q$-series for the 3D-index is given by the 
expression 
$$
d(S)=d(S^*) =-\chi(S^*) +\delta(S^*),
$$
where $S^*$ is the minimal positive coset representative for $[S]=S+\TT$. 
Then $S^*$ is a normal class with at most two quadrilateral types in each 
tetrahedron, and 
$$
d(S^*)=-\chi(S^*) +\sum_i p_iq_i,
$$
where $p_i,q_i\ge0$ are the numbers of quadrilaterals for $S^*$ in the 
$ith$ tetrahedron, and $\chi(S^*)$ denotes the formal Euler characteristic 
of $S^*$.

\begin{lemma} 
\label{deg_for_gen_norm_surf}
Let $S \in Q(\T;\Z)$ be a Q-normal class 
and let $\tilde S$ be the embedded generalised normal surface 
$\tilde S$ corresponding to $S^*$. Then
$$d(S)= -\chi(\tilde S)+ \sum_i (p_i q_i - p_i - q_i + \gcd(p_i,q_i)),$$
where $p_i,q_i\ge0$ are the numbers of quadrilaterals for $S^*$ in the 
$ith$ tetrahedron.

Hence
$d(S) \ge -\chi(\tilde S)$ with equality if and only if $\tilde S$ is a  
``special'' embedded generalised normal surface, i.e. in each tetrahedron one 
of the coefficients $p_i$ or $q_i$ is either $0$ or $1$. 
\end{lemma}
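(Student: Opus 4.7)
The plan is to express $d(S) = -\chi(S^*) + \delta(S^*)$ and then evaluate each summand in terms of the geometry of the embedded generalised normal surface $\tilde S$ produced by Lemma~\ref{gen_spun_class}. Since $S^*$ is the minimal non-negative coset representative of $[S] = S + \TT$, exactly one of the three quad coordinates in each tetrahedron is zero; calling the other two counts $p_i,q_i\ge 0$, the double arc sum simplifies directly:
\[ \delta(S^*) \;=\; \sum_i \bigl(a_i b_i + b_i c_i + c_i a_i\bigr) \;=\; \sum_i p_i q_i. \]

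The heart of the argument is the Euler characteristic comparison $\chi(S^*)$ versus $\chi(\tilde S)$. By Lemma~\ref{gen_spun_class}, $S^*$ and $\tilde S$ have identical edge weights and hence identical normal arc patterns on every $2$-face of $\T$; they can only differ in how many $2$-cells sit inside each tetrahedron. Formally, $S^*$ contributes $p_i+q_i$ separate quadrilateral disks in tetrahedron $i$, whereas the square-pillowcase analysis preceding Lemma~\ref{gen_spun_class} shows that in $\tilde S$ these assemble into exactly $\gcd(p_i,q_i)$ generalised normal disks. Since the formal Euler characteristic is linear in the quad (and triangle) coordinates and reduces, by the Luo--Tillmann formula, to the CW count $V-E+F$ when evaluated on a genuine embedded normal surface, linearity forces
\[ \chi(S^*) \;-\; \chi(\tilde S) \;=\; F(S^*)-F(\tilde S) \;=\; \sum_i \bigl(p_i+q_i-\gcd(p_i,q_i)\bigr). \]
Combining the two displays with $d(S)=-\chi(S^*)+\delta(S^*)$ yields the claimed formula.

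The inequality then reduces to the elementary pointwise estimate $f(p,q):=pq-p-q+\gcd(p,q)\ge 0$ for $p,q\in\Z_{\ge 0}$, with equality iff $\min\{p,q\}\in\{0,1\}$. The cases $\min\{p,q\}=0$ and $\min\{p,q\}=1$ are immediate substitutions; for $p,q\ge 2$, I would rewrite $f(p,q)=(p-1)(q-1)-1+\gcd(p,q)$ and observe that $(p-1)(q-1)\ge 1$ and $\gcd(p,q)\ge 1$ give $f(p,q)\ge 1>0$. Summing over tetrahedra yields $d(S)\ge-\chi(\tilde S)$, with equality precisely when $\min\{p_i,q_i\}\in\{0,1\}$ in every tetrahedron, i.e.\ when $\tilde S$ is special.

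The main obstacle is the formal-versus-topological Euler characteristic identification in the middle paragraph. The formal $\chi$ is defined as a linear functional on $Q(\T;\R)$ via coefficients in the $E_i,T_j,M_k,L_k$ basis, and one must verify that it coincides with the naive CW count $V-E+F_{\text{formal}}$ in which each quad of the possibly non-embedded $S^*$ is counted with multiplicity one. For embedded closed and spun normal classes this is the content of Luo--Tillmann, and I would bootstrap from there by linearity together with the generalised angle structure framework of the appendix. Once that identification is in hand, the remainder of the proof is a combinatorial count on the pillowcase plus the one-variable inequality above.
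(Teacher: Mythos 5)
Your proposal follows essentially the same route as the paper: you compute $\delta(S^*)=\sum_i p_iq_i$, argue that $\chi(S^*)$ and $\chi(\tilde S)$ differ only in the face count $\sum_i(p_i+q_i)$ versus $\sum_i\gcd(p_i,q_i)$, and close with the pointwise inequality $pq-p-q+\gcd(p,q)\ge 0$ (with equality iff $\min\{p,q\}\in\{0,1\}$), exactly as the paper does. The one imprecision is the appeal to ``linearity'': since $\tilde S$ is not a $Q$-normal class, linearity of the formal $\chi$ on $Q(\T;\R)$ does not by itself relate it to $\chi(\tilde S)$ — what actually makes the vertex and edge contributions match is the combinatorial Gauss--Bonnet bookkeeping of Proposition~\ref{comb_Gauss-Bonnet} applied with a generalised angle structure of vanishing peripheral rotational holonomy (Proposition~\ref{gen_ang_zero_periph_rot_hol}), which is precisely the tool you point to in your final paragraph and the one the paper uses.
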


\begin{proof} 
We can choose a generalised angle structure on $\T$ with vanishing 
rotational peripheral holonomy by 
Proposition \ref{gen_ang_zero_periph_rot_hol},  and use this to compare 
the Euler characteristic of $\tilde S$ 
with the formal Euler characteristic of $S^*$, using 
Proposition \ref{comb_Gauss-Bonnet} and 
Definition \ref{formal_euler_from_ang_str} from Appendix \ref{gen_angle_euler}.

In both cases we get corner terms of $(\text{formal angles})/2\pi$ which 
add to $1$ for each vertex. 
Moreover the number of edges in $\tilde S$ is the same as for $S^*$, but 
the number of disks is reduced from $\sum_i (p_i+q_i)$ to 
$\sum_i \gcd(p_i,p_i)$.  So
$\chi(\tilde S)= \chi(S^*) + \sum ( \gcd(p_i,q_i) - p_i -q_i )$. Hence
$d(S^*) = - \chi(S^*) + \sum_i p_i q_i = -\chi(\tilde S)
+ \sum_i (p_i q_i - p_i - q_i + \gcd(p_i,q_i)).$

\smallskip
The final result now follows from the observation that for any integers 
$p, q \ge 0$ we have
\begin{itemize}
\item $pq - p - q + \gcd(p,q) = 0$ if $p$ or $q$ is $0$, and otherwise
\item $pq - p - q + \gcd(p,q) \ge pq-p-q+1 =(p-1)(q-1) \ge 0$ with equality 
if and only if $p$ or $q$ is $1$.
\end{itemize}
\end{proof}

\begin{rmk}  
A very similar function is studied in \cite[Section 4.1]{FrKa}.
\end{rmk}

\begin{rmk} 
We will be particularly interested in terms in the 3D-index sum which 
have non-positive degrees. 
\end{rmk}

\subsection{Normalisation and barriers}
\label{barriers}

There is a well-known procedure going back to Kneser \cite{Kn} for 
`normalisation' of an embedded closed surface $S$ in a triangulation $\T$ 
of a manifold $M$. We can assume initially that the surface $S$ is 
transverse to the triangulation.There are five basic moves: 

\begin{enumerate}
\item 
Compress $S$ along an embedded disk $D$ in the interior of a tetrahedron 
$\D$, which meets $S$ in $C=\partial D$, where  either $C$ is essential in 
$S$ or a disk bounded by $C$ in $S$ meets $\partial \D$. 
\item 
Isotope a disk $D^\prime$ of intersection of $S$ with a tetrahedron across a 
face of the tetrahedron, where $\partial D^\prime$ lies in a face and is an 
innermost curve in that face. 
\item 
Isotope an innermost arc of intersection of $S$ with a face of the 
triangulation across an edge. Here the arc has both ends on the edge and 
there are no intersections of $S$ with the bigon cut off by the arc in 
the face. 
\item 
Boundary compress the intersection of $S$ with a tetrahedron. This occurs 
along a bigon which intersects $S$ in one arc of its boundary with the 
other arc contained in an edge of the tetrahedron. 
\item
Finally, any component which is a sphere contained in the interior of a 
tetrahedron is removed.
\end{enumerate}

Kneser \cite{Kn} and Haken \cite{Ha1} show that any surface can either be 
{\em normalised}, i.e. either converted to a non-empty normal surface 
by a sequence of such moves,  or else converted to the empty surface.

\begin{rmk}
If an embedded sphere $S$ or an embedded torus $T$ is converted to the 
empty surface when normalised in one of its complementary regions,
then that region is a 3-ball or solid torus in $M$ bounded by $S$ or $T$ 
respectively. 

In fact, if $S$ or $T$ did not bound a 3-ball or solid torus respectively 
then there is no isotopy or a disk compression followed by an isotopy, 
shrinking $S$ or $T$ respectively to a point. 
\end{rmk}

\begin{defn}
An embedded surface $S^\prime$ is a {\it barrier} for the normalisation of 
$S$ if it satisfies the following conditions. Firstly, 
$S^\prime \cap S = \emptyset$. Secondly, $S$ can be normalised by moves 
in the complement of $S^\prime$. 
\end{defn}

\begin{thm} 
Let $\T$ be an ideal triangulation  of compact manifold with boundary 
consisting of tori. Suppose that $S^\prime$ is a ``simple'' embedded 
generalised normal surface in $\T$, i.e. has at most one generalised 
normal disk in each tetrahedron. Then $S^\prime$ is a barrier for 
normalisation of any surface in its complement. 
\end{thm}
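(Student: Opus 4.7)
The plan is to verify that each of the five normalisation moves (1)--(5) applied to a surface $S$ with $S \cap S' = \emptyset$ can be performed inside $M \setminus S'$; this is precisely the definition of $S'$ being a barrier. The structural input from the simplicity hypothesis is that in each tetrahedron $\Delta$ of $\T$, $S' \cap \Delta$ is either empty or a single properly embedded generalised normal disk $D'_\Delta$, and hence separates $\Delta$ into two $3$-balls. In each face $F$, $S' \cap F$ is a disjoint union of normal arcs, and $\partial D'_\Delta$ meets each edge transversely in finitely many points.

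I would first dispose of the combinatorially trivial cases. Move (5), removing a spherical component of $S$ lying in $\mathrm{int}(\Delta)$, occurs in $M \setminus S'$ automatically since the sphere is disjoint from $S'$ by hypothesis. For move (3), an isotopy of an innermost arc of $S \cap F$ across an edge of $F$, I would refine the choice to an arc innermost with respect to $(S \cup S') \cap F$ that still lies in $S$; the accompanying bigon in $F$ is then disjoint from $S'$ and the isotopy occurs in $F \setminus S'$. Move (2) is the three-dimensional analogue: with an analogous innermost choice of $\partial D'$ in $F$, the disk $D'$ together with the innermost subdisk $D'' \subset F$ bounds a ball $B \subset \Delta$ lying on the side of $D'$ opposite $D'_\Delta$, and the isotopy of $D'$ across $F$ can be performed inside $B$ together with a collar of $D''$, which is disjoint from $S'$.

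The substantive steps are the compression and boundary compression, moves (1) and (4), both handled by innermost-disk and innermost-arc surgery. For move (1), the compressing disk $D \subset \mathrm{int}(\Delta)$ has $\partial D \subset S$ disjoint from $\partial D'_\Delta \subset \partial \Delta$, so $D \cap D'_\Delta$ is a disjoint union of simple closed curves in $\mathrm{int}(D)$. Cutting $D$ along an innermost such curve $c$ bounding subdisks $E_D \subset D$ and $E_{D'} \subset D'_\Delta$, and pasting in a pushoff of $E_{D'}$, reduces $|D \cap D'_\Delta|$; iterating yields an embedded compressing disk $D_0$ disjoint from $S'$ with $\partial D_0 = \partial D$, realising the same compression of $S$. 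Move (4) is analogous for the boundary-compressing bigon $B \subset \Delta$, one of whose boundary arcs lies in $S$ and the other in an edge of $\Delta$: closed curves of $B \cap D'_\Delta$ are removed by innermost-disk surgery, while arcs of $B \cap D'_\Delta$ must have both endpoints on the edge-arc portion of $\partial B$ (the $S$-arc is disjoint from $\partial D'_\Delta$), so an innermost such arc cuts off a half-disk of $B$ which can be surgered away, preserving the bigon structure.

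The main obstacle is the bookkeeping for move (4): one must verify that each half-disk surgery genuinely produces a new bigon with one arc in $S$ and the other in an edge of $\Delta$, rather than some more complicated configuration. This is exactly where the simplicity hypothesis enters essentially, since it controls the endpoint structure of $B \cap D'_\Delta$ on $\partial B$ and rules out accumulation of arcs from multiple disks of $S'$ inside $\Delta$. Once the surgery is shown to terminate in a bigon disjoint from $S'$, combining the verifications for the five moves shows that every normalisation step on $S$ can be chosen to take place in $M \setminus S'$, so $S'$ is a barrier.
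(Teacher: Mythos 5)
Your strategy---show that each of the five Kneser--Haken moves can be replaced by an equivalent move disjoint from $S'$---is a genuinely different route from the paper's, and your treatments of moves (1), (2), (3), (5) via innermost-curve and innermost-arc refinements are essentially sound. But the step you flagged for move (4) is a real gap, not bookkeeping. If $\alpha$ is an innermost arc of $B\cap D'_\Delta$ (with $D'_\Delta = S'\cap\Delta$) cutting off a half-disk $H\subset B$ with $\partial H=\alpha\cup\beta$ and $\beta\subset b$, then $\overline{B\setminus H}$ has $\alpha$ as a boundary arc lying on $D'_\Delta$, and $b\setminus\beta$ may consist of two arcs, so $\overline{B\setminus H}$ is not a bigon with one arc on $S$ and the other on an edge of $\Delta$; neither pushing $H$ across a subdisk of $D'_\Delta$ cut off by $\alpha$ nor replacing $H$ by a piece of $\partial\Delta$ restores the required structure. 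A telling symptom is that your argument as written never actually invokes the simplicity of $S'$: the innermost-disk and innermost-arc surgeries in your moves (1)--(4) would apply word-for-word if $S'\cap\Delta$ were several disjoint generalised normal disks, yet Remark \ref{surface_dies} shows simplicity is essential. Concretely, if $S'\cap\Delta$ contained two parallel generalised normal disks bounding a product region, a disk of $S$ trapped between them as a third parallel copy admits boundary-compressing bigons only with edge-arcs exiting the product region, and these necessarily cross $S'$. So move (4) cannot always be realised in the complement of a non-simple surface, which means your surgery argument must be failing somewhere.

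The paper's proof sidesteps this difficulty by being constructive rather than corrective. It cuts each tetrahedron $\Delta$ along the \emph{single} disk $D'_\Delta$ into two 3-balls $R,R'$ (this is precisely where simplicity is used), notes that the complementary pieces of $\partial\Delta$ are triangles, quadrilaterals or pentagons, fixes a maximal family of disjoint bigon compressing disks for $R$ and $R'$, and verifies that compressing along all of them leaves only tetrahedra and 3-balls bounded by a pair of disks. The normalisation moves for $S$ are then read off from the intersections of $S$ with this fixed family of auxiliary disks, so they are automatically disjoint from $S'$, and the simple shape of the resulting pieces forces $S\cap\Delta$ to become triangular normal disks plus interior spheres. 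Recasting your move (4) step in this constructive form is what closes the gap.
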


\begin{proof}
Consider the two regions $R, R^\prime$ obtained by splitting a tetrahedron 
$\D$ open along a generalised normal disk $D$ in $S'$. 
The faces of these regions consist of a copy of $D$ and the result of 
splitting the faces of $\D$ open along $\partial D$. 
The latter produce faces which are either triangles, quadrilaterals or 
pentagons. (Hexagons cannot arise since a generalised normal disk has at 
most two different arc types in any triangular face of $\D$.)

Choose a maximal family of disjoint non parallel properly embedded bigon 
compressing disks for each of the regions $R,R'$, as in the fourth 
normalisation move.  Thus, a quadrilateral face  of $R$ or  $R^\prime$ 
has two associated bigon disks, one along each edge of $\D$ in its 
boundary, whereas a pentagon has one such bigon. (Figure \ref{fig:12gon} 
exhibits the case where $D$ is a 12-gon, with $\bd \D$ stereographically 
projected to the plane.)

\begin{center}
\begin{figure}
\includegraphics[width=0.3\textwidth]{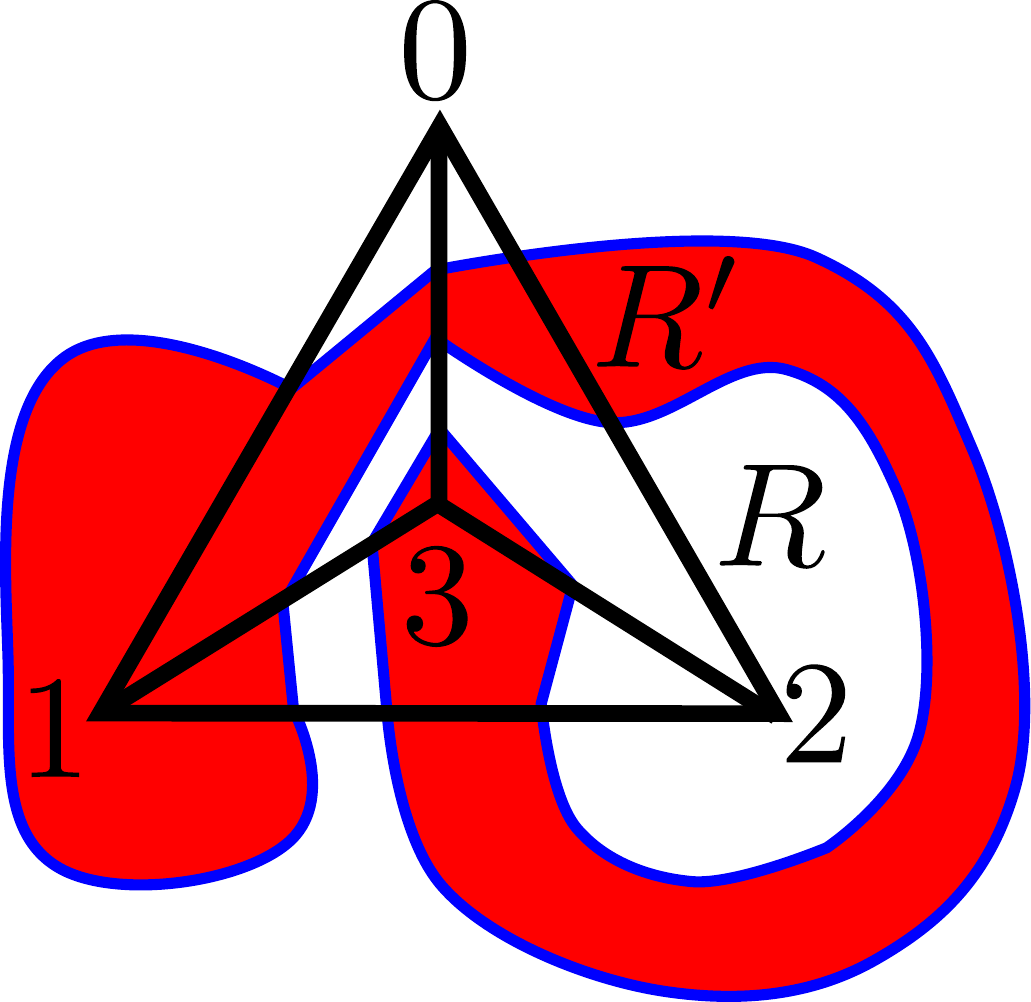} \qquad\qquad
\includegraphics[width=0.3\textwidth]{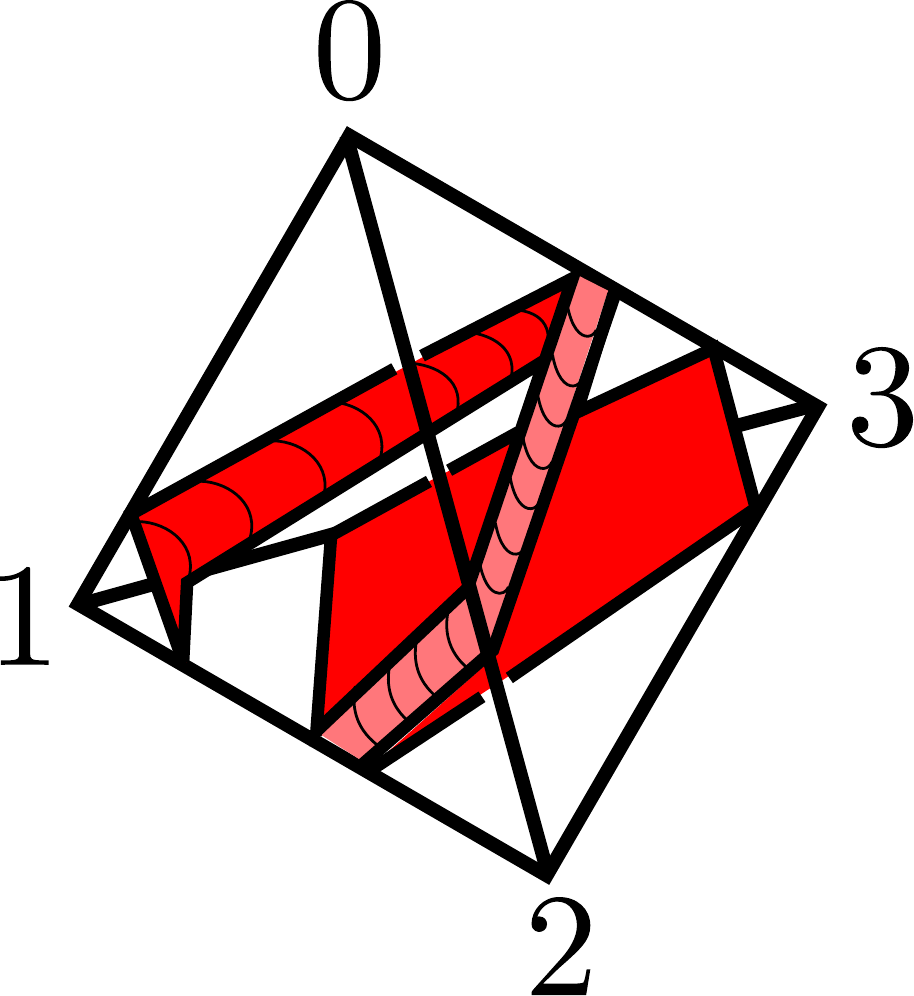}
\caption{\label{fig:12gon} (left) A schematic diagram of the 12-gon 
determined by coordinates $(2,1,0)$. (right) An embedding of this 12-gon. 
The region $R$ faces the dark (red) shaded side, while $R'$ faces the 
lighter side.}
\end{figure}
\end{center}

If either region is compressed along all these disks, each of the 
resulting components is either a tetrahedron, where the compression of $D$ 
gives a face of the tetrahedron, or a 3-ball with boundary a pair of disks, 
where one disk is the result of compressing $D$ and the other disk is in 
a face of $\D$. 

Suppose that $S$ is a surface disjoint from $S^\prime$. If $S$ meets either 
region $R, R^\prime$, then notice that the intersection of $S$ with the 
above families of bigon disks can be used to perform normalisation moves 
on $S$ in the complement of $S^\prime$. For if there is a loop $C$ in 
$S \cap  D_i$ where $D_i$ is one of the bigon disks and either $C$ is 
essential on $S$ or a disk bounded by $C$ in $S$ meets $\partial \D$, 
then this gives a first normalisation move.  On the other hand, if there 
are arcs in $S \cap  D_i$, these give third or fourth normalisation moves. 

After performing all such normalisation moves, the resulting surface 
again denoted by $S$ is still disjoint from $S^\prime$ 
but also from all the bigon disks in  $R, R^\prime$. It is easy to see that 
we can perform additional first or second normalisation 
moves  so that $S \cap  \D$ consists entirely of triangular normal disks 
and spheres in interior $\D$ and after discarding any 
sphere components  we have normalised $S$ in $\D$. 

This argument can clearly be performed in all tetrahedra containing a 
generalised normal disk, and in tetrahedra containing only normal disks, 
the argument in \cite{JR} applies. This completes the proof. 
\end{proof}

\begin{cor}
\label{barrier}
If $S^*$ is a two-sided embedded ``simple'' generalised normal surface 
(i.e. has at most one generalised normal disk in each tetrahedron), then 
it can be pushed off itself to either side and normalised in its complement.
\end{cor}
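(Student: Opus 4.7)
The plan is to deduce this corollary directly from the preceding barrier theorem, once we have produced a surface parallel to $S^*$ that lies in its complement.

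First I would use the hypothesis that $S^*$ is two-sided to construct a small product neighbourhood $S^* \times [-1,1]$ in $M$, with $S^*$ identified with $S^* \times \{0\}$. The existence of such a regular neighbourhood is standard for two-sided properly embedded surfaces, and because $S^*$ meets each tetrahedron in at most one generalised normal disk, we can take this neighbourhood thin enough that $S_\varepsilon := S^* \times \{\varepsilon\}$ (for any $\varepsilon \in (0,1]$) is a properly embedded surface isotopic to $S^*$, sitting entirely in $M \setminus S^*$. The two possible choices of sign of $\varepsilon$ give the two ``sides'' referred to in the statement.

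Next I would verify that $S_\varepsilon$ is a legitimate input for the normalisation process: it is embedded, properly embedded in $M$ (since $S^*$ is), and disjoint from $S^*$ by construction. Applying the preceding theorem with $S^*$ playing the role of the barrier and $S = S_\varepsilon$, we conclude that $S_\varepsilon$ may be normalised by a sequence of the five moves listed at the start of Section \ref{barriers}, with all intermediate surfaces remaining in the complement of $S^*$. This is exactly the statement of the corollary.

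The only subtlety — which is really a matter of bookkeeping rather than a genuine obstacle — is ensuring that the parallel copy $S_\varepsilon$ can indeed be taken to lie in the complement of $S^*$ given that $S^*$ may meet boundary tori and edges of $\T$; but since $S^*$ is properly embedded and two-sided, and since the barrier theorem was proved for arbitrary surfaces disjoint from $S^*$ (including those meeting the tetrahedra in arbitrary ways), the product neighbourhood construction suffices. No new combinatorial input is required beyond what the previous theorem already supplies.
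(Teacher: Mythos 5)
Your proof is correct and follows essentially the same route the paper intends: two-sidedness supplies a product (collar) neighbourhood so that a parallel copy $S_\varepsilon$ of $S^*$ lies in the complement of $S^*$, and the preceding barrier theorem then normalises $S_\varepsilon$ by moves disjoint from $S^*$. The paper leaves this corollary as an immediate consequence of the barrier theorem and does not spell it out, but your argument is exactly the intended one.
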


\begin{rmk}
\label{surface_dies}
If we consider an arbitrary generalised normal surface $S^*$ which is not 
necessarily simple, the same idea shows that any surface $S$ in the 
complement of $S^*$ can be transformed into a {\em generalised} normal 
surface, by moves similar to the ones above for normalisation. 
The difference in this case is that if $S^*$ has parallel generalised 
normal disks bounding a product region $R$ in a tetrahedron and $S$ meets 
$R$,  then $S$ might be transformed to intersect $R$ in 
parallel copies of these generalised normal disks. 
\end{rmk}

\begin{thm}
\label{euler_for_gen_norm_surf}
Let $\T$ be a 1-efficient ideal triangulation of a cusped, orientable 
3-manifold $M$, and assume that $M$ is not a solid torus or $T^2 \times I$.
Then $\T$ contains no closed embedded simple generalised normal surface 
$\tilde S$ with $\chi(\tilde S) \ge 0$.
\end{thm}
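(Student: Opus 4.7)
The approach is to use $\tilde S$ as a barrier (Corollary~\ref{barrier}) and exploit the hypothesis of 1-efficiency, paralleling the proof of the earlier spun 1-efficiency theorem for embedded normal surfaces. Since $\chi(\tilde S) \ge 0$ and $\tilde S$ embeds in an orientable 3-manifold, it must be a sphere, a torus, or a Klein bottle, as projective planes do not embed in orientable 3-manifolds. Spheres and tori are two-sided in $M$, so Corollary~\ref{barrier} applies directly; the Klein bottle case, where $\tilde S$ is one-sided, is handled by a parallel argument using the two-sided boundary torus of a regular neighbourhood of $\tilde S$.

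For $\tilde S \cong S^2$: being two-sided and separating, $\tilde S$ divides $\text{int}\,M$ into regions $R_1, R_2$. Push $\tilde S$ into each $R_i$ and normalise the push-off within $R_i$. All normalisation moves applied to a sphere produce only disjoint unions of spheres, since compressions split spheres into smaller spheres and the remaining moves preserve topological type. By 1-efficiency, $\T$ contains no embedded normal 2-spheres, so each normalisation must yield the empty surface; the remark following the definition of barrier then forces each $R_i$ to be a 3-ball bounded by $\tilde S$. But this gives $\text{int}\,M \cong B^3 \cup_{\tilde S} B^3 \cong S^3$, contradicting the presence of a non-empty torus boundary in $M$.

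For $\tilde S \cong T^2$: push $\tilde S$ into each complementary region and normalise (the non-separating case, if it occurs, is handled analogously). Normalisation of a torus produces only tori and spheres, and by 1-efficiency spheres cannot survive while surviving tori must be vertex-linking. A complementary region containing no cusp admits no vertex-linking tori, so its normalisation is empty, and the remark makes that region a solid torus. A case analysis of the two complementary regions then leaves three possibilities: both are solid tori, in which case $M$ is closed (a lens space or $S^3$), contradicting the cusped hypothesis; one is a solid torus and the other normalises to vertex-linking tori, in which case $\tilde S$ is boundary-parallel and $M$ is a solid torus; or both regions normalise to non-empty vertex-linking tori, in which case $\tilde S$ cobounds a $T^2 \times I$ with a cusp torus on each side and $M \cong T^2 \times I$. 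All outcomes either contradict the hypothesis or match the excluded manifolds.

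The main obstacle is the torus case, specifically extracting the product structure from the information that a push-off of $\tilde S$ normalises to a collection of vertex-linking tori inside a region containing a cusp. One must argue that every intermediate 2-sphere arising during normalisation bounds a ball (again using 1-efficiency together with the remark), so that the cumulative topological effect is an isotopy of $\tilde S$ across a $T^2 \times I$ region onto a vertex-linking torus; this is the point where the normalisation book-keeping has to be done carefully. The Klein bottle case is handled analogously, since the boundary of a tubular neighbourhood of a one-sided Klein bottle in an orientable 3-manifold is a two-sided torus to which the torus argument applies after arranging a generalised normal representative.
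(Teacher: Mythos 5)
Your proposal follows the same route as the paper: use the barrier result (Corollary~\ref{barrier}) to push $\tilde S$ off to each side and normalise, then exploit 1-efficiency together with the facts that 1-efficiency forces $M$ to be irreducible and atoroidal. The organisation differs slightly---you branch first on the topological type of $\tilde S$ while the paper branches on the outcome of the normalisation (sphere, torus, or empty)---but the underlying mechanism is identical.

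The torus case is where you yourself flag a gap, and it is a real one. Knowing that a push-off of $\tilde S$ normalises to a vertex-linking torus does not by itself give the $T^2 \times I$ product region: the normalisation procedure includes compressions, which can change topology, so one must rule out the possibility that spheres produced along the way fail to bound balls, and one must rule out $\tilde S$ bounding a cube-with-knotted-hole. The paper's proof does exactly this extra work: in the case where the push-off becomes empty, it separately treats $\tilde S$ compressible versus incompressible, and in the compressible case distinguishes the solid-torus side from a cube-with-knotted-hole side (using irreducibility to manufacture a normal sphere in the latter). Your account omits the cube-with-knotted-hole possibility entirely; the assertion that a cuspless complementary region with empty normalisation is a solid torus is quoted from the remark after Kneser--Haken normalisation, but that remark itself needs irreducibility, and closing the argument requires the case split the paper performs. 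Also, the ``both regions are solid tori $\Rightarrow M$ closed'' and ``non-separating case handled analogously'' steps should be justified: they are consequences of irreducibility and atoroidality rather than automatic, and in the non-separating case the two-region picture collapses.

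For the Klein bottle you propose running the torus argument on $\partial N(\tilde S)$. This is not what the paper does and it introduces a further obstacle: $\partial N(\tilde S)$ is not itself a generalised normal surface, so one would have to invoke the barrier theorem with $\tilde S$ as the barrier and $\partial N(\tilde S)$ as the surface to be normalised, and then one inherits the unresolved torus book-keeping. The paper sidesteps this entirely by appealing to the atoroidality and irreducibility of $M$: the neighbourhood boundary torus is either incompressible (contradicting atoroidality) or compressible (forcing $M$ to be a closed prism manifold, contradicting the existence of a cusp). That shortcut is worth adopting. Your observation that $\mathbb{RP}^2$ cannot embed in an orientable 3-manifold is a perfectly good, and arguably cleaner, replacement for the paper's appeal to irreducibility in that subcase.
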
 

\begin{proof} 
Recall that $\T$ 1-efficient implies that $M$ is irreducible and atoroidal. 
Suppose first that $\tilde S$ is two-sided in $M$. Now by the barrier 
argument in Corollary \ref{barrier}, we can push the simple generalised 
surface $\tilde S$ off itself on either side, and normalise it. The result 
is either (i) a normal sphere, or (ii) a normal torus, or else
(iii) the surface becomes empty. 
 
The first case clearly contradicts the assumption of 1-efficiency. 

In the second case, note that the surface may normalise to a peripheral 
torus at a cusp when it is pushed off itself to a side containing a cusp. 
Since we are assuming that $M$ is not $T^2 \times I$, it follows that 
normalisation to a peripheral torus cannot occur on both sides of a 
generalised normal torus. But then, pushing off to at least one side, we 
would get a normal torus which is topologically but not normally parallel 
into a cusp, again contradicting 1-efficiency.  

So we are left with the case that the surface becomes empty. If $\tilde S$ 
is a sphere, it cannot bound a ball on both sides 
and hence yields a normal sphere on one side or the other by 
Remark \ref{surface_dies}, contrary to assumption. 

If $\tilde S$ is a torus, it could be compressible. In this case, the 
torus bounds a solid torus or cube-with-knotted-hole. 
In the latter case, the sphere resulting from compressing $\tilde S$ 
does not bound a ball in the complement of $\tilde S$ 
so can be normalised giving a normal sphere, contrary to assumption. 
So we are left with the final case, where $\tilde S$  is a torus bounding 
a solid torus. The other side of $\tilde S$ must contain a cusp of $M$. 
But then if $\tilde S$ is compressible on this side, 
the result is a sphere which can be normalised, contradicting the 
assumption of 1-efficiency.  On the other hand, if $\tilde S$ is 
incompressible on the side containing a cusp, either $\tilde S$ normalises 
to give a non-peripheral normal torus, contrary to 1-efficiency, 
or $M$ is a solid torus, which is excluded by our hypothesis. So the case 
where $\tilde S$ is two-sided is complete. 

If the generalised normal surface $\tilde S$ is one-sided it is a 
projective plane or Klein bottle. The first case contradicts 
irreducibility. If  $\tilde S$ is a Klein bottle, the boundary of a small 
regular neighbourhood of $\tilde S$ is either an  incompressible torus, 
contradicting the atoroidal assumption, or compressible and $M$ is a 
closed prism manifold, contradicting the hypothesis that $M$ has a cusp. 
This completes the proof. 
\end{proof}

\begin{lemma} 
\label{solid_torus_gen_norm}
Let $\T$ be a 1-efficient ideal triangulation of $M$, which is either a 
solid torus or $T^2 \times I$. Then $\T$ contains no closed embedded 
simple generalised normal surface $\tilde S$ with $\chi(\tilde S) > 0$,
and no closed embedded normal surface $\tilde S$ with $\chi(\tilde S) = 0$ 
which is non-peripheral. However, $\T$ always contains a closed embedded 
generalised normal $\tilde S$ with $\chi(\tilde S) = 0$.
In fact, there is an almost normal torus $\tilde S$ containing one 
octagonal disk.
\end{lemma}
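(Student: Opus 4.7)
The plan is to handle the three claims in turn, with the third being the substantive one.

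For the first claim, since $M$ is orientable, a simple generalised normal surface $\tilde S$ with $\chi(\tilde S) > 0$ must be a $2$-sphere. I would push $\tilde S$ off itself on either side via Corollary~\ref{barrier} and normalise in each complementary region, exactly as in the proof of Theorem~\ref{euler_for_gen_norm_surf}. Because $M$ is irreducible and not $S^3$, $\tilde S$ cannot bound balls on both sides; the non-ball side then normalises (by Remark~\ref{surface_dies}) to a nonempty normal $2$-sphere, contradicting $1$-efficiency. The second claim is immediate from condition~(1) in the definition of $1$-efficiency, which requires every embedded closed normal torus or Klein bottle to be vertex-linking, hence peripheral.

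For the third claim, the plan is a sweepout argument in the spirit of Rubinstein's and Stocking's thin-position proofs of the existence of almost normal surfaces. I would choose a Morse function $\phi\colon M \to [0,1]$ whose generic level sets are tori parallel to $\bd M$: in the $T^2\times I$ case take $\phi$ with no interior critical points and with the two boundary tori as the extreme level sets; in the solid torus case take $\phi$ whose only critical locus is the core circle. After perturbing $\phi$ into general position with $\T$, every generic level surface $\phi^{-1}(t)$ is an embedded torus which, by $1$-efficiency together with the second claim, normalises to a vertex-linking torus. The combinatorial normal position cannot remain constant throughout the sweepout: in the $T^2\times I$ case the normal positions must agree with the two distinct vertex-linking tori at the two cusps near the two ends of the interval, while in the solid torus case the level tori must eventually collapse onto the core. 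Consequently at some critical time $t_0$ the level surface $\phi^{-1}(t_0)$ transitions between normal positions via a saddle move across an edge of $\T$, and by genericity this transition is realised by a single octagonal disk. Applying Lemma~\ref{gen_spun_class} to the underlying $Q$-normal class produces the desired embedded generalised normal torus $\tilde S$ carrying exactly one octagon; since $\tilde S$ is isotopic to a generic level torus of the sweepout, $\chi(\tilde S) = 0$.

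The main obstacle will be the genericity argument isolating a single octagonal transition at $t_0$ rather than several simultaneous ones; this is the standard but nontrivial ingredient of Rubinstein--Thompson-style thin-position arguments, which has to be adapted here to the ideal triangulation setting. As a concrete alternative in the solid torus case, starting from the spun normal meridian disk of Example~\ref{solid_torus_ex}, one may directly modify the vertex-linking torus by a single octagonal move across an edge that is crossed by the meridian spiral; the $Q$-matching equations then prescribe the compensating triangular adjustments in neighbouring tetrahedra, and a direct check using Lemma~\ref{deg_for_gen_norm_surf} with $(p,q)=(1,1)$ in the sole octagonal tetrahedron (for which $pq-p-q+\gcd(p,q)=0$) confirms that the resulting surface carries one octagon and remains a torus.
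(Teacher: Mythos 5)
Your treatment of the first two claims is fine and matches the paper's; for the second you invoke the definition of $1$-efficiency directly, which is slightly cleaner than the paper's ``for the same reason.'' (A small imprecision: orientability of $M$ alone does not preclude an embedded $\R P^2$ — that requires irreducibility and the fact that neither $S^1\times D^2$ nor $T^2\times I$ has an $\R P^3$ summand. The paper explicitly handles the projective-plane case.)

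For the third claim your overall route — a Rubinstein--Stocking sweepout from a vertex-linking torus to the core circle or other cusp — is the same as the paper's. The genuine gap is that you assert ``by genericity this transition is realised by a single octagonal disk,'' which skips the essential dichotomy in almost-normal-surface theory: the intermediate almost normal surface carries either a single octagon \emph{or} a single tube parallel to an edge of $\T$. The sweepout itself cannot distinguish the two; you must rule out the tubed case separately. Since the almost normal surface is a torus ($\chi = 0$) and a tube is an annulus ($\chi = 0$, but contributing two boundary circles to each side), removing the tube leaves a normal surface (or pair of normal surfaces) of total Euler characteristic $2$, hence a normal sphere, which contradicts $1$-efficiency. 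This is precisely the step the paper supplies (``If $\tilde S$ was obtained by attaching a tube to a normal sphere, this would contradict 1-efficiency.'') and you omit. Additionally, the appeal to Lemma~\ref{gen_spun_class} is unnecessary — an almost normal torus with a single octagon \emph{is} already an embedded (generalised) normal surface in the required sense, with quad coordinates having two nonzero types in exactly one tetrahedron — and the ``concrete alternative'' at the end (modifying the vertex-linking torus by a single octagonal move, with ``the $Q$-matching equations prescribing the compensating triangular adjustments'') is not an argument: there is no reason offered why the resulting surface would close up to an embedded torus, and the appeal to $pq-p-q+\gcd(p,q)=0$ with $(p,q)=(1,1)$ only controls the $q^{1/2}$-degree of an index term, not the topology of the surface.
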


\begin{proof} 
If there was a simple generalised normal surface $\tilde S$ with 
$\chi(\tilde S) > 0$, then $\tilde S$ would be a sphere or projective 
plane. A projective plane can be normalised and in the case of a sphere, 
$\tilde S$ could be normalised to the side which was not a ball. Both cases 
contradict 1-efficiency. Similarly there cannot be a non-peripheral normal 
surface $\tilde S$ with $\chi(\tilde S) = 0$ for the same reason. 

By a standard sweepout argument, starting with a peripheral normal torus 
at a cusp and sweeping to a core circle or the other cusp, there must be 
an intermediate almost normal torus $\tilde S$ - see Rubinstein \cite{Ru} 
or Stocking \cite{St}. If $\tilde S$ was obtained by attaching a tube to 
a normal sphere, this would contradict 1-efficiency. Hence $\tilde S$ must 
have one octagonal disk as claimed. 
\end{proof}

\begin{rmk} 
The proof of Theorem \ref{euler_for_gen_norm_surf} shows that a 1-efficient 
triangulation $\T$ of a closed orientable 3-manifold $M$ 
{\em other than $S^3$ or a lens space} contains no closed embedded simple 
generalised normal surface $\tilde S$ with $\chi(\tilde S) \ge 0$. 
(Here 1-efficient means there are no embedded normal spheres, projective 
planes, Klein bottles or tori, except for vertex linking spheres and 
edge linking tori. The former are boundaries of a small regular 
neighbourhood of a vertex, and the latter are obtained similarly from an 
edge which is a loop.) 
\end{rmk}

\begin{cor}
\label{1-efficient_degrees}
Let $\T$ be a 1-efficient ideal triangulation of a manifold $M$ other 
than the solid torus or $T^2 \times I$, and let $0\ne S \in N(\T;\Z)$ be 
an irreducible closed normal class. Then the contribution of $S$ to the 
3D-index sum has $q^{1/2}$-degree $d(S) \ge 1$.
\end{cor}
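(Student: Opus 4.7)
The plan is to apply Lemma~\ref{deg_for_gen_norm_surf} to the embedded generalised normal surface $\tilde S$ associated with $S$, and then bound $d(S)$ from below case by case. Because $S\in N(\T;\Z_+)$ is irreducible, it must equal its own minimal non-negative coset representative $S^* = S - \sum_j m_j T_j$ (with $m_j = \min\{a_j,b_j,c_j\}$): otherwise $S = S^* + \sum_j m_j T_j$ would be a nontrivial decomposition of $S$ in $N(\T;\Z_+)$. Hence every tetrahedron carries at most two nonzero quad coordinates $(p_i,q_i)$, and Lemma~\ref{gen_closed_class} produces a closed embedded generalised normal surface $\tilde S$, which is connected by the remark following Lemma~\ref{gen_spun_class}. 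Lemma~\ref{deg_for_gen_norm_surf} then gives
\[
d(S) \;=\; -\chi(\tilde S) \;+\; \sum_i \bigl(p_i q_i - p_i - q_i + \gcd(p_i,q_i)\bigr),
\]
with each summand non-negative.

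If $\tilde S$ is \emph{simple}, i.e.\ $\gcd(p_i,q_i)\le 1$ for every $i$, then $\tilde S$ is a closed embedded simple generalised normal surface in a 1-efficient triangulation of $M$. Since $M$ is neither a solid torus nor $T^2\times I$, Theorem~\ref{euler_for_gen_norm_surf} forces $\chi(\tilde S)<0$, and as $\chi(\tilde S)$ is an integer we get $\chi(\tilde S)\le -1$, whence $d(S)\ge 1$ immediately from the Euler-characteristic term.

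If $\tilde S$ is not simple, some tetrahedron $\D$ satisfies $p_\D,q_\D\ge 2$ and $\gcd(p_\D,q_\D)\ge 2$, so its summand is at least $(p_\D-1)(q_\D-1)+\gcd(p_\D,q_\D)-1\ge 2$. Thus $d(S)\ge -\chi(\tilde S)+2$, which is $\ge 1$ unless $\chi(\tilde S)=2$, i.e.\ $\tilde S$ is a $2$-sphere. To rule out this last case, note that $1$-efficiency makes $M$ irreducible, so $\tilde S$ bounds a $3$-ball $B\subset M$. Choose a parallel pair $D_1,D_2$ of generalised disks of $\tilde S$ in $\D$ cobounding a product region $P\subset\D$; then the embedded torus $T_0 := (\tilde S\setminus(D_1\cup D_2))\cup A_P$, where $A_P\subset\bd\D$ is the annulus between $\bd D_1$ and $\bd D_2$, bounds the solid torus $B\setminus P$. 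A small pushoff of $T_0$ to the side opposite this solid torus can then be normalised by the generalised procedure of Remark~\ref{surface_dies}; by $1$-efficiency any resulting normal torus must be peripheral and any resulting normal sphere must bound a ball, and together with the solid-torus structure of $B\setminus P$ this forces $M$ to be either a solid torus or $T^2\times I$, contradicting the hypothesis.

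The principal obstacle is this last barrier/normalisation step. Corollary~\ref{barrier} is formulated only for simple generalised normal surfaces, so one must combine the barrier philosophy from the proof of Theorem~\ref{euler_for_gen_norm_surf} with the weaker generalised normalisation of Remark~\ref{surface_dies}, and carefully track the topology of both complementary regions of $T_0$ to rule out the possibility that the normalised torus is vertex-linking in an exotic way. The exclusion of the solid torus and $T^2 \times I$ is exactly what prevents $T_0$ from normalising to a peripheral torus or empty surface on both sides.
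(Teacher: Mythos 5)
Your argument follows the same strategy as the paper: irreducibility forces $S=S^*$, pass to the embedded generalised normal surface $\tilde S$, and split into the cases $\tilde S$ simple and $\tilde S$ not simple, using Theorem~\ref{euler_for_gen_norm_surf} for the former and Lemma~\ref{deg_for_gen_norm_surf} for the latter. The simple case is handled correctly and identically to the paper. You are actually \emph{more} careful than the paper in the non-simple case: you explicitly notice that the bound $d(S)\ge -\chi(\tilde S)+2$ is inconclusive when $\chi(\tilde S)=2$ and attempt to rule out that scenario via a tubing-and-barrier argument (the paper simply asserts $d(S)\ge 2$ without addressing the sphere possibility).

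However, your treatment of the non-simple case contains a genuine gap. You assert that ``$\tilde S$ not simple'' implies ``some tetrahedron $\D$ satisfies $p_\D,q_\D\ge 2$ and $\gcd(p_\D,q_\D)\ge 2$.'' This is false. By your own (correct) characterisation, non-simple means $\gcd(p_i,q_i)\ge 2$ for some $i$, but this also happens when one of the coordinates is zero: $(p_i,q_i)=(p,0)$ with $p\ge 2$ gives $\gcd(p,0)=p\ge 2$ while $q_i=0$. In that case the summand $p_iq_i-p_i-q_i+\gcd(p_i,q_i)=0$, so the supposed $+2$ term vanishes and your estimate collapses to $d(S)\ge-\chi(\tilde S)$, with no way to invoke Theorem~\ref{euler_for_gen_norm_surf} since $\tilde S$ is not simple. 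Concretely, this is the case where $\tilde S$ is ``special'' (in the sense of Lemma~\ref{deg_for_gen_norm_surf}) but not simple --- e.g.\ an embedded normal surface with $\ge 2$ parallel quads of a single type in some tetrahedron --- and it must be handled separately, presumably by a further barrier argument exploiting the product region between adjacent parallel quads. (The paper's proof contains the same imprecise deduction from ``not simple''; but since you re-derive the case split from scratch, the gap is yours to close. A cleaner organisation is to split on ``special'' vs.\ ``not special,'' as Lemma~\ref{deg_for_gen_norm_surf} suggests, and then treat ``special but not simple'' as its own case.)
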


\begin{proof} 
Since $S$ is irreducible, it contains no tetrahedral solution summand, 
so $S=S^*$ is  the minimal non-negative coset representative of $[S]=S+\TT$. 
Then $S^*$  contains at most two quad types in each tetrahedron,
so can be replaced by an embedded generalised normal surface $\tilde S$ 
by Lemma \ref{gen_closed_class}. Now $\tilde S$ is connected since $S$ 
is irreducible, and $d(S) \ge - \chi(\tilde S)$ by 
Lemma \ref{deg_for_gen_norm_surf}.
Assume first that $\tilde S$ is not simple, so there are at least two of 
two different quadrilateral types in some tetrahedron for $S^*$. 
But then Lemma \ref{deg_for_gen_norm_surf} implies that $d(S) \ge 2$.
On the other hand,  if $\tilde S$ is simple and $M$ is not a solid torus, 
then $\chi(\tilde S)<0$ by Theorem \ref{euler_for_gen_norm_surf}, hence 
$d(S) \ge 1$. This completes the proof. 
\end{proof}

\begin{rmk}
\label{solid_torus_T2xI_degrees}
This result fails for 1-efficient triangulations of the solid torus and 
$T^2 \times I$. Here every normal class $S$ contributing to the index sum 
has degree $d(S)\ge 0$, but an almost normal torus $S$ gives a 
contribution $d(S)=0$. (Compare Example \ref{solid_torus_ex} below.)
\end{rmk}

One consequence of Corollary \ref{1-efficient_degrees} and 
Remark \ref{solid_torus_T2xI_degrees} is the following basic fact about 
the 3D-index.

\begin{cor}
If $\T$ is a 1-efficient ideal triangulation then the 3D-index $I_\T^0(0)$
is a formal power series in $q^{1/2}$, not just a formal Laurent series 
in $q^{1/2}$.
\end{cor}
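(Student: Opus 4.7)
The plan is to show that every term $I([S])$ appearing in the sum for $I_\T^0(0)$ furnished by Corollary~\ref{cor.Izero} contributes only non-negative powers of $q^{1/2}$. The trivial class $[S]=0$ contributes the constant $1$; by Theorem~\ref{index0_cgce} the sum converges as a formal Laurent series because $\T$ is 1-efficient; what remains is to verify that every nonzero class $[S]\in Q_0(\T;\Z)/\TT$ has lowest $q^{1/2}$-degree $d([S]) = -\chi(S^*)+\delta(S^*) \ge 0$. Together these give a formal power series in $q^{1/2}$.

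The case when $M$ is neither the solid torus nor $T^2\times I$ is immediate from Corollary~\ref{1-efficient_degrees} combined with superadditivity. Fix a Hilbert basis $F_1,\ldots,F_m$ of fundamental (irreducible) solutions for the cone $N(\T;\Z_+)$ and write any $S^*\in N(\T;\Z_+)$ as $S^* = \sum_i n_i F_i$ with $n_i \in \Z_{\ge 0}$. Lemma~\ref{subadditive} gives $d(S^*) \ge \sum_i n_i\, d(F_i)$, and Corollary~\ref{1-efficient_degrees} gives $d(F_i) \ge 1$ for each irreducible $F_i \ne 0$. Hence $d([S]) \ge \sum_i n_i \ge 1$ for every nonzero $[S]$, a fortiori $\ge 0$.

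For $M$ the solid torus or $T^2\times I$ the bound $d(F_i)\ge 1$ can fail (an almost normal torus from Lemma~\ref{solid_torus_gen_norm} realises $d=0$), and I would instead establish $d(F_i)\ge 0$ for every irreducible $F_i\ne 0$ via Lemma~\ref{deg_for_gen_norm_surf} applied to the associated embedded generalised normal surface $\tilde F_i$ produced by Lemma~\ref{gen_closed_class}. If $\tilde F_i$ is simple, Lemma~\ref{solid_torus_gen_norm} excludes $\chi(\tilde F_i)>0$, so $d(F_i) = -\chi(\tilde F_i)\ge 0$. If $\tilde F_i$ is not simple, some tetrahedron has $(p,q)$ data with $p,q\ge 2$, contributing at least $1$ to the extra sum $\sum(p_iq_i - p_i - q_i + \gcd(p_i,q_i))$ in Lemma~\ref{deg_for_gen_norm_surf}, and a barrier-and-normalisation argument paralleling Theorem~\ref{euler_for_gen_norm_surf} (applied to a simple generalised normal subsurface carved out of $\tilde F_i$) forbids $\chi(\tilde F_i)\ge 2$, so again $d(F_i)\ge 0$. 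The same superadditivity step propagates $d(F_i)\ge 0$ to $d([S])\ge 0$ for all nonzero classes.

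The main obstacle is the last step of the exceptional case: the barrier machinery as currently written (Theorem~\ref{euler_for_gen_norm_surf}, Corollary~\ref{barrier}) is formulated for \emph{simple} generalised normal surfaces, and must be extended to rule out generalised normal spheres among the non-simple irreducibles in a 1-efficient triangulation of the solid torus or $T^2\times I$. Once this extension is in place, assembling the pieces via Corollary~\ref{cor.Izero} and Lemma~\ref{subadditive} completes the proof that $I_\T^0(0)$ is a formal power series in $q^{1/2}$.
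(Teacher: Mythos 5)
Your proposal follows the paper's own route: the paper derives this corollary by citing Corollary~\ref{1-efficient_degrees} for manifolds other than the solid torus or $T^2\times I$, and Remark~\ref{solid_torus_T2xI_degrees} for those two exceptional cases. You reconstruct that split and, in the general case, make explicit the Hilbert-basis decomposition and the superadditivity step (Lemma~\ref{subadditive}) that the paper leaves implicit when passing from irreducible classes to all classes. That part is correct and is, in spirit, exactly the paper's proof.

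In the exceptional-case sketch there are two slips worth noting. First, you write $d(F_i) = -\chi(\tilde F_i)$ when $\tilde F_i$ is simple; by Lemma~\ref{deg_for_gen_norm_surf} equality holds precisely for \emph{special} surfaces ($p_i$ or $q_i \in \{0,1\}$ in each tetrahedron), not \emph{simple} ones ($\gcd(p_i,q_i)\le 1$). For simple-but-not-special $\tilde F_i$ you only get $d(F_i) \ge -\chi(\tilde F_i)$, which still suffices, so this is cosmetic. The more substantive slip is the claim that ``not simple $\Rightarrow$ some tetrahedron has $p,q\ge 2$.'' That is false: a tetrahedron with quad data $(p,0)$, $p\ge 2$, violates simplicity ($\gcd(p,0)=p\ge 2$ parallel quads) yet contributes $0$ to the extra sum $\sum (p_i q_i - p_i - q_i + \gcd(p_i,q_i))$. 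So the dichotomy ``simple / else extra $\ge 1$'' has a hole --- the special-but-not-simple case --- where neither Lemma~\ref{solid_torus_gen_norm} nor the extra-term bound applies directly. That said, the gap you flag at the end is genuine and is also present in the paper, which asserts Remark~\ref{solid_torus_T2xI_degrees} without a detailed argument; Lemma~\ref{solid_torus_gen_norm} and Corollary~\ref{barrier} are indeed formulated only for simple generalised normal surfaces, and closing the gap requires extending the barrier argument (Remark~\ref{surface_dies}) or giving a separate estimate for the special-but-not-simple and non-special irreducibles. Your honesty in identifying this as the unresolved step is well placed.
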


In general, we expect that the minimum $q^{1/2}$-degree $d_{min}$ of 
$I^0_\T(kb)$ grows quadratically as $k \to +\infty$. However this is 
no longer true if $b$ is the boundary of an embedded spun normal surface.

\begin{lemma}
\label{linear_growth}
Assume $\T$ is a 1-efficient triangulation of a 3-manifold other than 
the solid torus or $T^2 \times I$. Assume that $\T$  contains an embedded 
spun normal surface $S$ with $\bd S=b \ne 0$, and 
that $S$ is the only irreducible spun normal class in $Q(\T;\Z_+)$ whose 
boundary is a positive integer multiple of $b$.
Then the minimum $q^{1/2}$-degree $d_{min}$ of $I^0_\T(kb)$ grows at 
mostly linearly as $k \to +\infty$.
In fact, $d_{min} I^0_\T(kb) \le -\chi(S)k$ for all even $k \ge 0$.   
\end{lemma}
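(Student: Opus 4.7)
The plan is to identify $[kS]$ as the unique minimum-degree contribution to $I^0_\T(kb)$ and show that no other coset in $h^{-1}(0,kb)$ can cancel it, thereby establishing the bound. For even $k \ge 0$, the class $kS \in Q(\T;\Z_+)$ satisfies $\bd(kS) = kb$ and $[kS]_2 = k[S]_2 = 0$, so $[kS] \in h^{-1}(0,kb)$ contributes to $I^0_\T(kb)$. Since $S$ is embedded it has at most one non-zero quad coordinate per tetrahedron, and so does $kS$; hence $kS$ is its own minimum non-negative coset representative, $\delta(kS) = 0$, and by (\ref{index_deg_surface}),
$$
d([kS]) = -\chi(kS) + \delta(kS) = -k\chi(S),
$$
with leading coefficient $(-1)^{-k\chi(S)} = \pm 1$.

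Next I would show that every other coset $[T] \in h^{-1}(0,kb)$ contributes at strictly greater degree. Let $T^* \in Q(\T;\Z_+)$ be its minimum non-negative representative. Under the hypothesis that $S$ is the unique irreducible element of $Q(\T;\Z_+)$ with boundary a positive integer multiple of $b$, the Hilbert basis of the submonoid $Q_b(\T;\Z_+) = \{R \in Q(\T;\Z_+) : \bd R \in \Z_{\ge 0} b\}$ consists of $S$ together with fundamental generators of $N(\T;\Z_+)$, yielding a decomposition $T^* = kS + W$ with $W \in N(\T;\Z_+) \setminus \{0\}$. Applying the superadditivity of $d$ on $Q(\T;\R_+)$ (Lemma \ref{subadditive}),
$$
d(T^*) \ge d(kS) + d(W) = -k\chi(S) + d(W),
$$
and decomposing $W$ into irreducible summands, Corollary \ref{1-efficient_degrees} together with superadditivity then gives $d(W) \ge 1$ for every nonzero $W \in N(\T;\Z_+)$; hence $d(T^*) \ge -k\chi(S) + 1$.

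Combining these two steps, the only contribution to $I^0_\T(kb)$ of $q^{1/2}$-degree at most $-k\chi(S)$ comes from $[kS]$ and has coefficient $\pm 1$, so $d_{\min} I^0_\T(kb) \le -k\chi(S)$ as claimed.

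The main obstacle lies in justifying the Hilbert basis decomposition $T^* = kS + W$ in the second step. The hypothesis rules out irreducibles of $Q(\T;\Z_+)$ with boundary in $\Z_{>0} b$, but not irreducibles whose boundaries lie off the ray $\R_{>0} b$. One must either verify that no non-trivial positive combination of such off-ray irreducibles can sum to a multiple of $b$ without $S$ entering with coefficient exactly $k$, or absorb these configurations into the degree estimate by showing that any such combination introduces enough additional quad-quad intersections to push $d$ strictly above $-k\chi(S)$; this can be attacked via a direct polyhedral analysis of the cone $Q_b(\T;\R_+)$ together with superadditivity of $d$.
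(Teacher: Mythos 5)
Your proof takes essentially the same route as the paper's: identify the contribution of the coset $[kS]$ as having $q^{1/2}$-degree $-k\chi(S)$, then argue that every other coset in $h^{-1}(0,kb)$ has a minimal non-negative representative of the form $kS + W$ with $0 \ne W \in N(\T;\Z_+)$, and conclude via superadditivity (Lemma~\ref{subadditive}) and Corollary~\ref{1-efficient_degrees} that all such contributions have strictly larger degree, so no cancellation occurs. One tiny slip: you assert $W \ne 0$ in your decomposition, but $W$ could be zero (giving $T^* = kS$), which is harmless since that is exactly the coset you already accounted for.

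The gap you flag at the end is, however, genuine and worth noting. The hypothesis is that $S$ is the only irreducible element of $Q(\T;\Z_+)$ whose boundary is a positive multiple of $b$; that does not immediately control the Hilbert basis of the subcone $Q_b(\T;\Z_+)$, because a $Q_b$-irreducible $F$ with $\bd F = sb$, $s \ge 1$, could still be $Q$-reducible into pieces with off-ray boundaries summing back to a multiple of $b$. The paper's proof simply asserts the decomposition ``by our assumption'' (with a typo, $\bd S' = b$ for $\bd S' = kb$) and does not address this point either, so you have correctly reproduced the published argument and, on top of that, correctly isolated the one step where a stronger reading of the hypothesis (irreducibility in $Q_b(\T;\Z_+)$ rather than $Q(\T;\Z_+)$), or the supplementary degree estimate you sketch, is actually needed to make the proof airtight.
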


\begin{proof} 
For each even $k\ge 0$, $k S$ has $\bd(k S) = k b$ and $[k S]=0$ so 
contributes a term to  $I^0_\T(kb)$
with $d(kS)^*)= d(kS) = -\chi(S)k+\delta(S)k^2 = -\chi(S)k.$ 
Now, by our assumption, any $S'$ with $\bd S'=b$ can be written as 
$S'=kS + S_0$ where $\bd S_0=0$. Hence $d(S') \ge d(kS) + d(S_0) > d(kS)$ 
unless $S_0=0$ by Corollary \ref{1-efficient_degrees}. 
\end{proof}

\begin{ex}
For the figure eight knot complement, with its canonical triangulation 
$\T$ given in Example \ref{ex:fig8}, there are embedded, essential, 
spun normal once-punctured Klein bottles with boundary
$b = \pm 4 \mu \pm \lambda$ and Euler characteristic $\chi= -1$ giving 
a term in $I^0_\T(k b)$ of degree $-\chi k =k$ for each $k \ge 0$. Further 
this is the {\em only} irreducible spun  normal class
with boundary a positive multiple of $b$, so by Lemma \ref{linear_growth}
this is the minimum $q^{1/2}$-degree of $I^0_\T(k b)$.
(Compare the calculations for 
$I_\T^0(8\mu+2\lambda) = I_{\CT}(4\mu+\lambda )$ and 
$I_\T^0(4\mu+\lambda )= I_{\CT}(2\mu+\frac{1}{2} \lambda)$
given in Example \ref{ex:fig8}.)

For the trefoil knot complement, with its 2 tetrahedra triangulation $\T$ 
given in Example \ref{trefoil_ex}, 
we have $I^0_\T((x \mu + y \lambda)) =  \delta_{0,x+6y}$.
Here there is an embedded, essential, spun normal M\"obius strip $S$ with 
boundary $b= \pm(6 \mu - \lambda)$
giving a contribution to  $I^0_\T(k b)$ of degree $0 = -\chi(S)k$ for all 
$k \in \Z_+$. Further, this is the {\em only} irreducible spun  normal class
with boundary a positive multiple of $b$, so  Lemma \ref{linear_growth} 
again applies.
\end{ex}


\section{Some examples}

In this section we give some examples of index computations for ideal 
triangulations with 2 tetrahedra, taken from the census
of triangulations described in Section \ref{PachnerGraph} below.

\subsection{Solid torus}
\label{solid_torus_ex}

The (open) solid torus has a 1-efficient triangulation $\T$  by 2 
tetrahedra with isomorphism signature `cMcabbgds', 
and gluing equation/holonomy coefficient matrix (from SnapPy) given by:
$$\begin{bmatrix} 2 & 1 & 2 & 2 & 2 & 2 \\ 0 & 1 & 0 & 0&0&0\\
0 & 1 & 0 & 0 & -1 & 0\\ 0 & 1 & 0 & 0 & 0& 2
\end{bmatrix}$$
(Here the peripheral curves $\mu,\lambda$ correspond to the meridian and 
longitude for the unknot in $S^3$.)
Summing over integer weights $k$ on edge 2 gives the 3D-index 
\begin{align*}
I_\T(x \mu + y \lambda) & =\sum_{k \in \Z} q^k \JD(0,k+x+y,0)\JD(0,-x,2y) \\
& =J(0,-x,2y) q^{-x-y} \sum_{k\in\Z} q^kJ(0,k,0)=0
\end{align*}
since
$$
\sum_{k\in\Z} q^k \JD(0,k,0) = 
\sum_{k\in\Z} \ID(0,k) q^k = I(0,q,q)= \frac{(q q^{-1};q)_\infty}{(q)_\infty;q}=0
$$
using equation (\ref{ID_gen_func}). 

It follows that the 3D-index {\em vanishes identically} for
all 118753 1-efficient ideal triangulations of the solid torus with 
at most 6 tetrahedra, since we have checked that these are all connected to 
$\T$ by 2-3, 3-2, 0-2 and 2-0 moves preserving 1-efficiency.

\subsection{Trefoil complement}
\label{trefoil_ex}

The right-handed trefoil complement (`L103001' in SnapPy) has a 
2-tetrahedron triangulation $\T$ with gluing equation/holonomy coefficients 
given by:
$$
\begin{bmatrix} 1 & 2 & 2 & 2 & 1 & 2 \\ 1 & 0 & 0 & 0&1&0\\
0 & 0 & -1 & 1 & 0 & 0\\ 1 & 0 & -4 & 4 & -1 & 0
\end{bmatrix}
$$

Hence, summing over edge 2, the 3D-index is
\begin{align*}
I_\T(x \mu + y \lambda) &= \sum_{k \in \Z} \JD(k+y,0,-x-4y)\JD(x+4y,k-y,0) \\
&=\sum_k q^k \ID(x+4y,k+y)(-q^{1/2})^{-x-4y}\ID(x+4y,k-x-5y) \\
&= \sum_{k'} q^{k'} (-q^{1/2})^{-x-6y} \ID(m,k') \ID(m,k'-x-6y) \\
&= \sum (-q^{1/2})^{-x-6y} \delta_{0,-x-6y} = \delta_{0,x+6y}.
\end{align*}
where we put $k'=k+y$ and $m=x+4y$ and used the quadratic identity.

Note that $I_\T(\gamma)$ is non-trivial exactly when $\gamma$ is a multiple of
the boundary curve $-6\mu+\lambda$ of the essential annulus 
(and M\"obius strip) in the trefoil knot exterior.

\subsection{$\bf{T^2 \times I}$}  
\label{t2xI_ex}

There is a unique triangulation of $T^2 \times I$ with 3 tetrahedra. 
Regina's isomorphism signature of the triangulation is `dLQacccbjkg'. This 
triangulation turns out to be 1-efficient and has the following 
gluing equation/holonomy coefficients (from SnapPy):

$$
\begin{bmatrix} 
1 &   2 &   2 &   1 &   1 &   1 &   1 &   1 &   1\\
1 &   0 &   0 &   0 &   0 &   0 &   0 &   0 &   0\\
 0 &   0 &   0 &   1 &   1 &   1 &   1 &   1 &   1\\
0 &   0 &   0 &   0 &   0 &   1 &   1 &   0 &   0\\
-1 &   0 &   0 &   0 &   1 &   0 &   0 &   0 &   1\\
0 &   0 &   0 &   1 &   0 &   0 &   0 &  -1 &   0\\
 0 &   0 &   0 &   0 &   0 &   1 &  -1 &   0 &   0
\end{bmatrix}
$$

Because both this triangulation and the solid torus triangulation have a 
degree one edge, the computation is very similar to that of 
Example \ref{solid_torus_ex}.

Let $\omega = (x_1 \mu_1+ y_1 \lambda_1, x_2 \mu_2+ y_2 \lambda_2)$. Then
\begin{align*}
I_{\CT}({\omega})(q) &= \sum_{k_2 \in \Z} 
q^{k_2} \JD(k_2-y_1,0,0) \JD(x_2, y_1,x_1+y_2) \JD(x_1-y_2,-x_2,y_1)\\
&=\JD(k_2-y_1,0,0) \JD(x_2, y_1,x_1+y_2) q^{y_1} 
\sum_{k_2 \in \Z} q^{k_2-y_1} \JD(k_2-y_1,0,0)=0,
\end{align*}
since
$$
\sum_{\ell\in\Z} q^\ell \JD(\ell,0,0) = 
\sum_{\ell\in\Z} \ID(-\ell,0) q^\ell= \sum_{\ell\in\Z} \ID(0,\ell) q^\ell
= I(0,q,q)= \frac{(q q^{-1};q)_\infty}{(q)_\infty;q}=0.
$$
 
\subsection{A toroidal example}  
\label{toroidal_ex}

The 2-tetrahedron ideal triangulation $\T$ with isomorphism signature
`cPcbbbdei' is not 1-efficient.  In fact it gives a manifold containing an 
incompressible torus which splits the manifold in 
two Seifert fibres pieces $SFS[D^2(2,1)(3,1)]$ and $SFS[A^2(2,1)]$. The 
gluing equation/holonomy coefficients (from SnapPy) are given by:
$$
\begin{bmatrix} 1 & 1 & 0 & 1 & 0 & 1 \\ 1 & 1 & 2 & 1&2&1\\
0 & -1 & 0 & 1 & 0 & 0\\ 0 & 0 & 2 & 0 & 0& 0
\end{bmatrix}
$$

Hence, summing over edge 1,  the 3D-index is given by
$$
I_\T(x \mu + y\lambda ) =\sum_{k \in \Z} q^k \JD(k,k-x,2y) \JD(k+x,0,k)
= \sum_{k \in \Z} \ID(x,2y-k) \ID(-x,-k).
$$

If $x=0,y=0$ then the above sum $\sum_k \ID(0,-k)^2$ 
is divergent, as expected by Theorem \ref{index_structure_iff_1-efficient}, 
since each term with $k \le 0$ has degree $0$.

However for $x \ne 0$ the sum converges, and experimentation suggests the sum 
simplifies to the following geometric series:
$$
I_\T(x \mu + y \lambda) = \frac{ (-1)^x q^{|x|(|y + x/2| + 1/2)}}{(1 - q^{|x|})}, 
\text{ for } x\ne 0.
$$

Putting 
$$
x \mu + y \lambda = a \mu +(b-a/2)\lambda = a(\mu-\lambda/2)+b\lambda =
a \bar \mu + \b \bar \lambda
$$
we have symmetries
$I_\T(a \bar \mu + b \bar\lambda) = 
I_\T( \pm a \bar \mu + \pm b \bar\lambda)$ for $a \ne 0$ 
(by the duality and triality identities) and experimentally it seems that
$$
I_\T(a \bar \mu + b \bar\lambda) = \frac{(-1)^a q^{a(b+1/2)}} {1-q^a},  
\text{ for }a>0, b \ge 0.
$$

\subsection{m009}

The manifold $X=m009$ is the first orientable cusped manifold in the 
SnapPea census with non-peripheral $\Z/2\Z$ homology. It has a 
triangulation using 3 tetrahedra with
gluing equation/holonomy coefficients $E_0,E_1,E_2,M,L$ (from SnapPy) 
given by:
$$
\begin{bmatrix} 
2 & 0 & 0 & 2 & 1 & 0 & 2 & 1 & 0\\
0 & 2 & 0 & 0 & 0 & 2 &  0 & 0 & 2\\
0 & 0 & 2 & 0 & 1 & 0 & 0 & 1 & 0\\
0 & -1& 1 & 0 & 1 & -1& 0 & 0 & -1\\
-1 & 0 & 0 & 1 & 0 & 0 & -2 & 0 & 1
\end{bmatrix}
$$

and the tetrahedral solutions $T_0,T_1,T_2$ have coefficients
$$
\begin{bmatrix} 
1 & 1 & 1 & 0 & 0 & 0 & 0 & 0 & 0\\
0 & 0 & 0 & 1 & 1 & 1 &  0 & 0 & 0\\
0 & 0 & 0 & 0 & 0 & 0 & 1 & 1 & 1\\
\end{bmatrix}
$$

Here,
\begin{itemize}
\item 
$H_1(X;\Z) \cong \Z \times \Z/2\Z$
\item 
$H_2(X,\bd X;\Z/2\Z) \times H_1(\bd X;\Z)  \cong \Z/2\Z \times \Z^2$ with 
$\Z^2$ generated by the 
homology classes $\mu,\lambda$ of the ``meridian'' and ``longitude'' chosen 
by SnapPy.
\item 
${\mathcal K} = \Ker\left(H_1(\bd X; \Z) \to H_1(X;\Z/2\Z)\right)$ is 
spanned by $\mu+\lambda,\mu-\lambda$
\item 
The taut angle structure with angles $\alpha=(0,\pi,0,0,\pi,0,0,\pi,0)$ has 
vanishing peripheral rotational holonomy, 
so can be used to compute Euler characteristics via 
$\chi = \sum -\alpha(q)/\pi$. 
\item 
$S_1= \frac{1}{2} E_1 = 
\begin{bmatrix} 0 & 1 & 0 & 0 & 0 & 1 &  0 & 0 & 1\end{bmatrix}$ has
$([S_1]_2,[\bd S_1]) = (\bar 1, 0 ) \in \Z/2\Z \times \Z^2$ and $\chi(S_1)=-1$
\item 
$N(\T;\Z)$ is spanned over $\Z$ by $E_0,E_1,E_2,T_0,T_1,T_2$ and $S_1$.
\item
$M$ has $([M]_2,[\bd M]) = (0, 2\mu ) \in \Z/2\Z \times \Z^2$ and $\chi(M)=0$
\item
$L$ has $([L]_2,[\bd L]) = (0, 2\lambda ) \in \Z/2\Z \times \Z^2$ and 
$\chi(L)=0$
\item
$S_2 = \frac{1}{2}(M+L+T_0-T_1) = 
\begin{bmatrix} 0 & 0 & 1 & 0 & 0 & -1 &  -1 & 0 & 0\end{bmatrix}$ has
$([S_2]_2,[\bd S_2]) = (0, \mu+\lambda ) \in \Z/2\Z \times \Z^2$ and 
$\chi(S_2)= 0$
\item
$Q(\T;\Z)$ is spanned over $\Z$ by $E_0,E_1,E_2,T_0,T_1,T_2$, $S_1$, $M,L$ 
and $S_2$.
\end{itemize}

To compute the index for the `even' class 
$(\bar 0,2x,2y) \in \Z/2\Z \times \Z^2$ we sum over elements of $(\E+\TT)/\TT$ 
by taking edge coefficients $(0,k_1,k_2)$ with  $k_1,k_2 \in \Z$ giving
\begin{align*}
I^{even}(2x\mu+2y\lambda) &= \sum_{k_1,k_2} q^{k_1+k_2} \JD(k_1E_1+k_2E_2+xM+yL) \\
& = \sum_{k_1,k_2}  q^{k_1+k_2}
\JD(-y,2k_1-x,2k_2)\JD(y,k_2+x,2k_1-x)\JD(-2y,k_2,2k_1-x+y)
\end{align*}
e.g
$$
I^{even}(0,0)=1-q-q^2+6 q^3+9 q^4+12 q^5-5 q^6-34 q^7-79 q^8-118 q^9
-118 q^{10}+ \ldots
$$

To compute the index for the `odd' class 
$(\bar 1,2x,2y) \in \Z/2\Z \times \Z^2$ we sum over elements of the coset 
$(S_1+\E+\TT)/\TT$ by taking edge coefficients $(0,\frac{1}{2}+k_1,k_2)$, 
$k_1,k_2 \in \Z$ giving
\begin{align*}
I^{odd}(2x\mu+2y\lambda) &= \sum_{k_1,k_2} (-q^{1/2})q^{k_1+k_2} 
J(S_1+k_1E_1+k_2E_2+xM+yL) \\
&= \sum_{k_1,k_2}  -q^{1/2+k_1+k_2}\JD(-y,2k_1+1-x,2k_2)  \JD(y,k_2+x,2k_1+1-x)\\
&\hspace{4cm} \times\JD(-2y,k_2,2k_1+1-x+y)
\end{align*} 
e.g.
$$
I^{odd}(0,0)=-q^{1/2}-2 q^{3/2}+2 q^{5/2}+8 q^{7/2}+11 q^{9/2}+6 q^{11/2}
-17 q^{13/2}-57 q^{15/2}-100 q^{17/2}-124 q^{19/2}+\ldots
$$

For the class $(\bar 0, 1,1) \in \Z/2\Z \times \Z^2$ we take edge 
coefficients $(0,k_1,k_2)$ with  $k_1,k_2 \in \Z$  giving
\begin{align*} 
I^{even}(\mu+\lambda) &= \sum_{k_1,k_2} q^{k_1+k_2} J(S_2+k_1E_1+k_2E_2) \\
&= \sum_{k_1,k_2} q^{k_1+k_2}  
\JD(0,2k_1,2k_2+1)\JD(0,k_2,2k_1-1)\JD(-1,k_2,2k_1) \\
&= -q+4 q^3+7 q^4+6 q^5-7 q^6-32 q^7-65 q^8-89 q^9-81 q^{10} + \ldots
\end{align*}


\section{Notes on connectedness of the 1-efficient Pachner Graph}
\label{PachnerGraph}

In this section we study the \emph{Pachner graph} for a compact orientable 
3-manifold $M$ with non-empty boundary consisting of tori. 
This is a graph where each vertex corresponds to an ideal triangulation 
(up to the equivalence of relabelling) of the manifold $M$,
 with an edge connecting two vertices if their corresponding triangulations 
can be obtained from one another via 2-3 or 3-2 
 Pachner moves (as shown in Figures \ref{fig:twoThreeBipyramid} and 
\ref{fig:twoThreeBoundary}.)
 We give some general results, then describe a census of all ideal 
triangulations with at most 6 tetrahedra which provides 
a number of examples with interesting properties. 
 
\begin{figure}[htpb]
\centering
\includegraphics[width=0.6\textwidth]{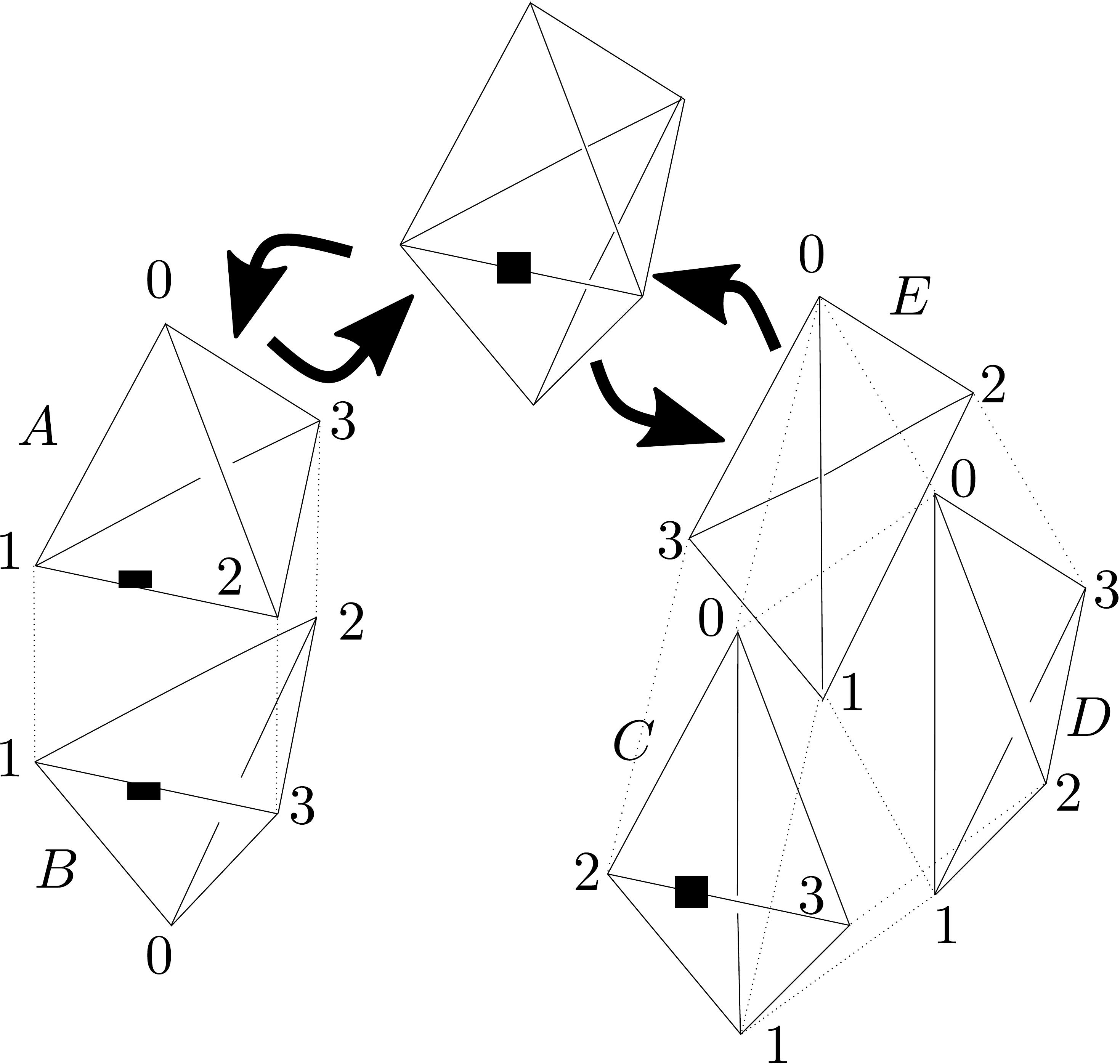}
\caption{\label{fig:twoThreeBipyramid} A 2-3 move on labelled tetrahedra. 
As the black square indicates, edges $A12$ and $B13$ are identified with $C23$}
\end{figure}

\begin{figure}[htpb]
\centering
\includegraphics[width=0.8\textwidth]{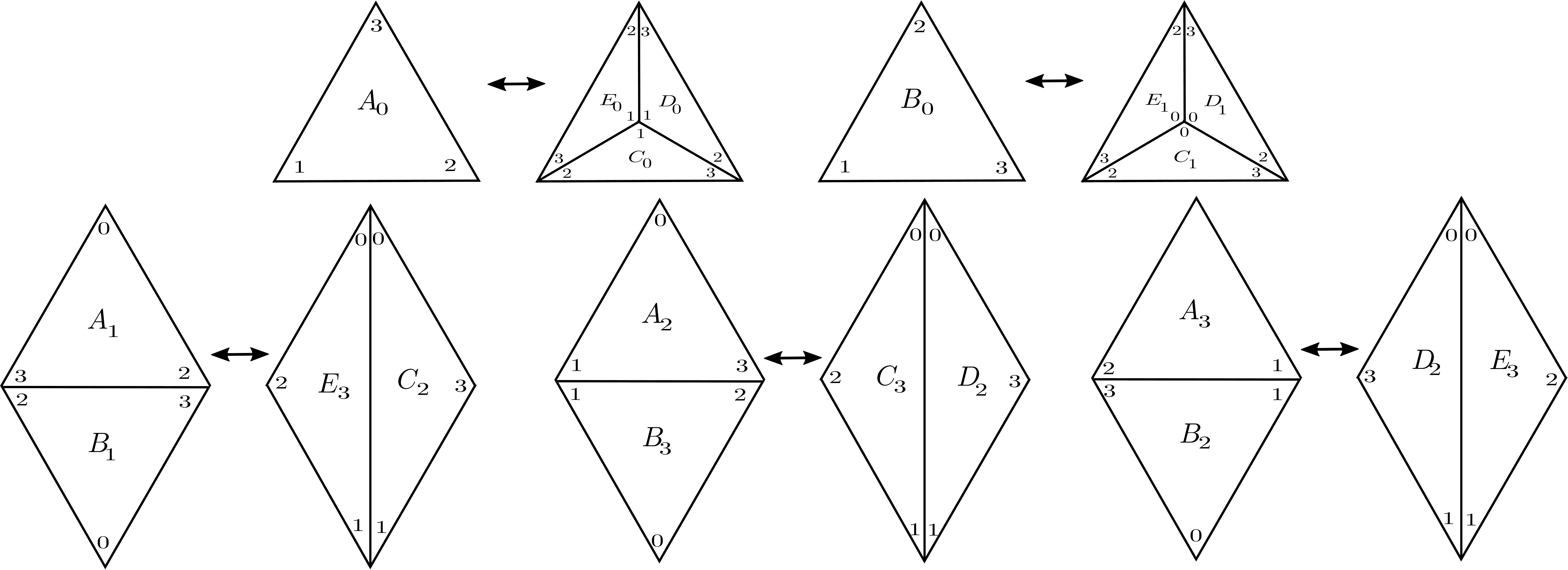}
\caption{\label{fig:twoThreeBoundary}The effect of the 2-3 move on the 
boundary triangles.}
\end{figure}

Throughout the section, we will use Burton's isomorphism signature 
notation \cite{BurtonIsoSig} to identify triangulations. 
Interested readers can replicate the results of this section by inputing 
these signatures into Regina to construct the relevant triangulations. 
Readers new to this concept will find it helpful to know that if the first 
letter of the signature (of those used in this paper) is the 
$n$-th letter of the alphabet, then the corresponding triangulation is 
comprised of $n-1$ tetrahedra. 

As mentioned in the introduction, the 
the Pachner graph of a $3$-manifold with non-empty boundary is connected, and so it is 
interesting to consider basic properties of subgraphs of this graph: 
number of vertices, connectedness, etc. For example,  we say the 
\emph{geometric Pachner graph} is the subgraph of ``geometric'' 
triangulations such that Thurston's gluing equations have a solution where each 
tetrahedral shape has positive imaginary part.

It has been recently shown that the figure eight knot complement admits 
infinitely many geometric triangulations, however the geometric subgraph 
of the Pachner graph is disconnected 
(see \cite[Theorem 1.1 and Remark 3.3]{DaddDuan}). 

One can also define the \emph{1-efficient Pachner graph} as the subgraph 
of the Pachner graph containing only 1-efficient triangulations and the 
edges between them. 

Before discussing the 1-efficient Pachner graph, we mention that our 
notation (used here and in the accompanying code) for Seifert fibred 
spaces with non-empty boundary, is consistent with Hatcher \cite{Hat}. 
Namely, a Seifert fibre space over the surface $F$ with exceptional fibres 
given by the parameters $\{(a_i,b_i)\}_{i=1} ^k$ (with $a_i>b_i>0$ if $F$ is 
punctured) will be denoted by $SFS[F(a_1,b_1)...(a_k,b_k)]$. 

First, we point out that for many 3-manifolds 
the 1-efficient Pachner graph is infinite. Although this is presumably 
known to the experts, we include it for the sake of completeness. 

\begin{prop}
Let $M$ be an cusped irreducible, atoroidal orientable 3-manifold other 
then the solid torus  $S^1\times D^2$. Then $M$ admits infinitely many 
1-efficient triangulations.
\end{prop}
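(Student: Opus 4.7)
My plan is to exhibit an infinite sequence of 1-efficient triangulations of $M$ by starting from one and iteratively applying a layering operation at a cusp that provably preserves 1-efficiency. A single layering attaches one new ideal tetrahedron $\Delta$ to a pair of cusp-boundary triangles sharing an edge $e$: two faces of $\Delta$ are identified with the boundary triangles (along an internal edge of $\Delta$ that is identified with $e$), the other two faces become new boundary triangles, and the edge of $\Delta$ opposite $e$ becomes a new cusp edge $e'$. This increases the tetrahedron count by one and changes the induced cusp triangulation by a diagonal flip across $e$.

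For the base case, the hypothesis on $M$ (cusped, irreducible, atoroidal, and not $S^1\times D^2$) implies via geometrization that $M$ is either cusped hyperbolic or a small Seifert fibred space with toroidal boundary; in both cases $M$ admits a 1-efficient ideal triangulation $\CT_0$ (for instance, any triangulation in the class $\mathcal{EP}$ in the hyperbolic case, or an explicit construction as cited in \cite{GHRS} in the Seifert case). For the inductive step, if $\CT$ is 1-efficient and $\CT'$ is obtained from $\CT$ by a layering as above, I would show $\CT'$ is again 1-efficient via an angle-structure argument: assuming $\CT$ admits a strict generalised angle structure with vanishing peripheral rotational holonomy (which is automatic for geometric triangulations in the hyperbolic case and can be engineered in the Seifert case), assign $\Delta$ angles near $(\varepsilon,\varepsilon,\pi-2\varepsilon)$ with the large angle at $e'$, and perturb the angles on $\CT$ to restore the edge equations at $e$ and $e'$. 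The Corollary following Theorem \ref{1-eff_cgce_I_T(b)} then forces $\CT'$ to be spun 1-efficient, hence 1-efficient. Iterating this construction yields a sequence $\CT_0,\CT_1,\CT_2,\ldots$ of 1-efficient triangulations with strictly increasing tetrahedron count, hence pairwise nonisomorphic.

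The main obstacle is showing that the perturbation of the angle structure is always feasible. The layering changes the degree of $e$ and introduces the new edge $e'$, so the edge-equation system on $\CT'$ has one more equation than on $\CT$; one must verify that after assigning small angles to $\Delta$ there is enough slack in $\CT$'s strict angle structure to rebalance all $n+1$ edge sums while keeping every angle positive. A fallback, in case the angle perturbation fails for a particular layering, is a direct barrier normalisation argument in the style of Section \ref{barriers}: any non-vertex-linking closed normal sphere, projective plane, torus, or Klein bottle in $\CT'$ must meet $\Delta$ in a quadrilateral, and normalising in the complement of this quadrilateral (using that $M$ is irreducible, atoroidal, and not $S^1\times D^2$ to avoid the degenerate behaviour of Lemma \ref{solid_torus_gen_norm}) produces a forbidden embedded normal surface in $\CT$, contradicting its 1-efficiency.
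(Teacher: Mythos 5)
Your overall strategy — start from a 1-efficient ideal triangulation and iterate a local move that strictly increases the tetrahedron count while preserving 1-efficiency — is the same strategy the paper uses, but the specific move you build your induction on does not exist, and this is a fatal gap. An ideal triangulation in Thurston's sense has \emph{no unglued boundary faces}: every face of every tetrahedron is identified with another face, and the cusp cross-section triangles are vertex links inside tetrahedra, not faces of the simplicial complex. Consequently the step ``two faces of $\Delta$ are identified with the boundary triangles, the other two faces become new boundary triangles'' is not a valid operation: there is nothing for the first two faces to be glued to that would then be freed up, and the ``other two faces'' cannot be left unglued. Layering onto an edge is a real move for triangulations of manifolds \emph{with} triangulated boundary (e.g.\ layering onto the fibre in a fibred manifold, which is what the paper means by ``layered triangulation''), but it has no analogue that adds a single ideal tetrahedron at a cusp. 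The diagonal flip on the cusp triangulation that you want is realised by a $2$-$3$ or $0$-$2$ move in the interior of $M$, not by your ``layering.''

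Even if you substitute a valid move, the inductive step does not close as written. Your perturbation argument presupposes a \emph{strict} angle structure with vanishing peripheral rotational holonomy on $\CT$, but the base case only delivers a 1-efficient triangulation, and there is no reason a 1-efficient triangulation should carry a strict angle structure (you flag this yourself as ``the main obstacle''). The paper sidesteps this entirely: it starts from Lackenby's \emph{taut} ideal triangulation (a semi-angle structure with all angles $0$ or $\pi$), performs a $0$-$2$ move through the two faces separating the two $\pi$-angles at an edge, and writes down an explicit taut angle structure on the result by putting $\pi$ on the new degree-two edge and on the opposite edges and $0$ elsewhere; then $[\text{KR2, Theorem 2.6}]$ (semi-angle structure plus atoroidal implies 1-efficient) closes the induction with no perturbation needed. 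This is cleaner precisely because taut structures propagate verbatim through the move, whereas strict angle structures need not survive a general Pachner move. Your barrier-argument fallback is plausible in spirit (and resembles the paper's Propositions on $2$-$3$ moves and 1-efficiency), but it is stated for a move that is not defined, so it cannot rescue the argument as it stands. To repair the proof you should (i) replace ``layering'' with a genuine move on ideal triangulations such as the $0$-$2$ move, and (ii) replace the strict-angle-perturbation with an invariant that demonstrably survives that move, e.g.\ a taut angle structure as in the paper.
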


The key idea for this proof is to exhibit ideal triangulations that 
support a {\em taut angle structure}, i.e.
a semi-angle structure where all angles are either 0 or $\pi$. 
(A semi-angle structure is an assignment of non-negative angles to the 
edges in the ideal tetrahedra, so that the angle sum at each ideal vertex 
in such a tetrahedron is $\pi$.)
Also, in \cite[Theorem 1]{La1}, Lackenby shows that an-annular cusped 
manifolds  admit taut ideal triangulations,
 which give such a  taut angle structure.
Given the hypotheses of the Proposition, we point out that an-annular is 
shorthand for Seifert fibre spaces over the disk 
over than $S^1\times D^2$ or $SFS[D(2,1)(2,1)]$, and hyperbolic manifolds. 

\begin{proof}
If $M$ is hyperbolic or a Seifert fibre space over the disk (other than 
$S^1\times D^2$ or $SFS[D(2,1)(2,1)]$), then by \cite[Theorem 1]{La1}, 
$M$ admits a taut ideal triangulation, say with $n$ tetrahedra. 
In this triangulation, about any edge we can find two faces of the 
triangulation incident 
to that edge which (in an open neighbourhood of the edge) separate the 
two angles labelled by $\pi$. Performing a 0-2 move along these two 
faces produces a new triangulation with $n+2$ tetrahedra. To see that 
this triangulation supports a taut angle structure, label each edge in the 
new edge class of degree 2 by $\pi$, the opposite edges in the tetrahedron 
by $\pi$, and the remaining edges $0$. As this is a semi-angle 
structure, this new triangulation is 1-efficient since $M$ is atoroidal 
(see \cite[Theorem 2.6]{KR2}). 

If $M$ is Seifert fibred over an annulus with exactly one exceptional 
fibre, then $M$ is fibred over the circle with fibre a surface of negative 
Euler characteristic. Therefore, $M$ admits a layered triangulation 
obtained from expressing the monodromy of its fibration in terms of edge 
flips on an ideal triangulation of the fibre (see for example 
\cite[Section 2]{La1}). Using this ideal layered triangulation, we again 
obtain a taut angle structure and complete the proof in the same manner 
as above. 
\end{proof}

The following remark uses a dual notion to 1-efficiency, introduced by 
Garoufalidis \cite{Gar}, to obtain a similar result. Let $\T$ be an ideal 
triangulation with $n$ tetrahedra and $\Lambda=\{\lambda_i\}_{i=1}^{3n}$ a 
set of $3^n$ generalised angle structures. Then $\Lambda$ is a 
\emph{index structure} on $\T$ if for each $Q=(Q_1, ..., Q_n)$ where $Q_j$ 
is a choice of quadrilateral type in tetrahedron $j$,  there exist 
$\lambda_i$ such that for all $j$ the edges opposite $Q_j$ in $\T$ have 
positive angles in $\lambda_i$. This is an obstruction to having embedded 
surfaces of non-negative Euler characteristic, and in fact it is known to 
be equivalent to $\T$ being 1-efficient by \cite[Theorem 1.2]{GHRS}. 

\begin{rmk}
An ideal triangulation of a solid torus will not admit a taut angle 
structure (by \cite{KR2}, the existence of an embedded generalised 
normal torus in a solid torus is an obstruction to existence of a taut 
angle structure). Nevertheless, an analogous method can be used to 
obtain infinitely many 1-efficient triangulations of the solid torus. For 
example, the triangulation `dLQacccbnbb' has a gluing matrix: 
$$
\begin{pmatrix} 
1& 0& 0& 0& 0& 0& 0& 0& 0\\
1& 2& 2& 1& 2& 2& 1& 2& 2\\
0& 0& 0& 1& 0& 0& 1& 0& 0\\
\end{pmatrix}
$$
and so $\{(2\pi, \beta_0, -\pi- \beta_0, \pi, \beta_1, 
-\beta_1, \pi, \beta_2, -\beta_2)\mid \beta_0,\beta_1,\beta_2\in \R \}$ 
is a subset of the generalised angle structures on `dLQacccbnbb'. 
Furthermore, one has the freedom to adjust these parameters to find an 
index structure. In fact, after performing a 0-2 move along the two faces 
incident to the degree two edge, we see that a new index structure is 
obtained since we can obtain new generalised angle structures by just 
introducing two new tetrahedra subject to same constraint as the last two 
tetrahedra above, i.e. the angles are $(\pi, \beta_j, -\beta_j)$. 
Furthermore, one can then perform 0-2 moves along the faces incident to 
any degree 2 edge in the resulting triangulations to obtain more 
1-efficient triangulations, and so by an inductive argument there are 
infinitely many 1-efficient triangulations of the solid torus.
\end{rmk}

\subsection{A census of ideal triangulations}

These examples were obtained by first compiling a list of all ideal 
triangulations with at most six tetrahedra using Regina's {\tt tricensus} 
function. Then it was determined if these triangulations were solid tori. 
Of the remaining triangulations, we then (i) determined their irreducibility 
and (ii) checked for incompressible tori and Klein bottles. The first 
condition was checked using a light adaptation of Regina's 
{\tt isThreeSphere} function and the second determined using Regina's 
{\tt isSolidTorus} and a new function {\tt isT2xI} (see \cite{regina}). 
The latter function is similar to Haraway's 
algorithm \cite[Corollary 15]{haraway2014determining}, but instead of 
using an analysis of spun normal  annuli, it relies on the fact that if 
$M$ is a 2-cusped manifold with three $S^1\times D^2$ surgeries along 
slopes $\{\gamma_1,\gamma_2,\gamma_3\}$, then $M$ is homeomorphic to 
$T^2 \times I$ the Berge manifold \cite{berge1991knots,gabai1989surgery}, 
or is Seifert fibred over the annulus with one exceptional fibre. However, 
all of the $S^1\times D^2$ fillings of the Berge manifold are 
pairwise distance one from each other, and so if a manifold admits 
$S^1\times D^2$ fillings along slopes 
$\{\frac{1}{0}, -\frac{1}{1}, \frac{2}{1}\}$ 
then it must be $T^2\times I$ or Seifert fibred over the annulus. 
The latter can be ruled out since at least one of the fillings 
$\{\frac{1}{0}, -\frac{1}{1}, \frac{2}{1}\}$ will result in a Seifert 
fibred space over the disk with two (non-trivial) exceptional fibres.

This allows for a decomposition into prime and atoroidal pieces. Those 
pieces were then classified by searching the Pachner graph of the 
corresponding triangulations and simplifying the triangulation to that 
of either a Snappy {\tt OrientableCuspCensus} triangulation (using 
SnapPy's {\tt identify} function) or until it was equivalent via Pachner 
moves to a known triangulation of a lens space (in the case of some prime 
summands) or a cusped Seifert fibered space (using a dictionary of 
triangulations also included with the code). While a larger library of 
closed triangulations would be needed for more complicated prime and 
JSJ decompositions, this was sufficient for our purposes. 

After this coarse classification, the triangulations were then analysed 
for 1-efficiency. The results of this census are summarised in 
Tables \ref{table:smalltetcensus} and \ref{hypCensus} below;
the code used is available as an ancillary file to the arxiv version 
of this paper. 

\begin{table}[h]
\begin{tabular}{|c|c|c|c|c|c|c|c|c||c|c|}
\hline
n &	total& 	$S^1\times D^2$ &	$T^2 \times I$	& $P \times S^1$ &	Red.	& Tor. &	SFS & Hyp. &Taut	&1-efficient\\
\hline
2&	10&	   3&	    0&	  0&	0&	1&	 4&	2&	7&	9\\
\hline
3&	129&	   65	&    1&	  0&	0&	15&	 36&	12&		53&	102\\
\hline
4&	1852& 917& 11& 0&	107&	188& 	491&	138&		441&	1082\\
\hline
5 &26909&	14324&	197&	1&	2533& 	2164&	6344&	1346& 3310 &	12130\\
\hline
6&414946  &219080 &2981 & 32& 58508& 29451& 89933 &14961 &134538 & 28405  \\
\hline
\end{tabular}

\caption{\label{table:smalltetcensus} This table provides information on 
the various types of (orientable) ideal triangulations that can be built 
from $n=2,3,4,5,6$ tetrahedra. Total is the total number of triangulations 
for each value of $n$, $S^1\times D^2$ is the number of solid tori, 
similarly for $T^2\times I$ and $P\times S^1$. Red.~counts the number of 
reducible manifolds and Tor.~counts the number of toroidal manifolds which 
are not homeomorphic to $SFS[D(2,1)(2,1)]$. Instead $SFS[D(2,1)(2,1)]$ is 
counted in SFS which record the number of SFS over the disk or annulus 
with two or one exceptional fibres respectively. Finally, Hyp.~counts the 
number of hyperbolic manifolds observed. Note that in each case, this 
computation is rigorous, in the sense that the triangulation was connected 
to a known triangulation via Pachner moves. Finally, Taut records the number 
of triangulations admitting a taut angle structure and 1-efficient records 
the number of 1-efficient triangulations.}
\end{table}

\begin{table}[h]
\begin{tabular}{|c|c|c|c|c|c|c|c|}
\hline
n & Hyp. &Geom. &Semi-Geom. & Strict Ang. Struct. & Semi-Ang.&  Taut & 1-efficient\\
\hline
2&	2&	  2&	    2&	  2 &2 &2 &2\\
\hline
3&	12& 12 & 12 & 12& 12& 12&12\\
\hline
4&	138& 110& 121 & 119 & 134 & 134 & 134\\
\hline
5&	1346&670&798&801&1165 & 1118&1190\\
\hline
6&	14961&3857&4923&5221&10908&9927&11512\\
\hline
\end{tabular}

\caption{\label{hypCensus} This table provides data on the ideal 
triangulations of hyperbolic manifolds in the census. Hyp.~gives the 
total number of hyperbolic manifolds. Geom.~gives the number of 
triangulations which SnapPy found an approximate hyperbolic structure 
and Semi-Geom.~records the number of triangulations which SnapPy found 
an approximate hyperbolic structure possibly with flat (and non-degenerate) 
tetrahedra. Strict.~Ang.~Struct.~records the number of triangulations 
admitting a strict angle structure, while Semi-Ang.~records the number of 
triangulations admitting a semi-angle structure. Finally, Taut records the 
number of triangulations admitting a Taut structure and 1-efficient records 
the number of 1-efficient triangulations of hyperbolic manifolds.}
\end{table}

\subsection{Two tetrahedron ideal triangulations} 

Up to relabelling, there are 10 triangulations of orientable cusped 
3-manifolds that decompose into two tetrahedra, and these 10 triangulations 
correspond to seven manifolds up to homeomorphism. The relevant data are 
presented in Table \ref{tab:twoTetCensus}.

\begin{table}[ht]
\begin{tabular}{|c|c|c|}
\hline
signature & name & 1-efficient\\
\hline
\hline
cMcabbgds & {\bf $S^1\times D^2$} & Yes\\
\hline
cMcabbgij &$S^1\times D^2$ & Yes\\
\hline
cMcabbgik &$S^1\times D^2$ & Yes\\
\hline
\hline
cPcbbbalm &{\bf $SFS[D^2(3,1)(3,1)]$} & Yes\\
\hline
\hline
cPcbbbali &{\bf $SFS[D^2(3,1)(3,2)]$} & Yes\\
\hline
\hline

cPcbbbadh &{\bf $SFS[D^2(2,1)(3,1)]$} & Yes\\
\hline
cPcbbbadu &{\bf $SFS[D^2(2,1)(3,1)]$} & Yes\\
\hline
\hline
cPcbbbdxm& {\bf Figure 8 sister (m003)} & Yes\\
\hline
\hline
cPcbbbiht &{\bf Figure 8 (m004)} & Yes\\
\hline
\hline
cPcbbbdei &{\bf $SFS[D^2(2,1)(3,1)] \cup SFS[A^2(2,1)]$} & No \\
\hline

\end{tabular}
\caption{\label{tab:twoTetCensus} The complete census of ideal 
triangulations with two tetrahedra. The homeomorphism descriptions come 
from an appeal to the surgery description given by Martelli and Petronio 
\cite{MP},  (which the diligent reader can verify using a tangle computation). 
}
\end{table}

\break
\noindent
{\bf Some Pachner Paths.} 

To provide a certificate of a path in the Pachner graph we list the 
vertices and also provide the face consumed by a 2-3 move or the edge 
consumed by a 3-2 move. In the first case, we take the index of the face 
in the labelled triangulation determined by the isomorphism signature. 
In the second, we take $-index -1$ of the edge (the minus sign indicates 
that an edge is being consumed and shifting by $-1$ removes the ambiguity 
of $-0$ and $+0$ that would arise if we were required to perform a 3-2 
along edge 0). 

\begin{ex} 
{\bf Solid Torus.} 
The (open) solid torus has 3 minimal ideal triangulations with 2 tetrahedra. 
The paths shown in Table \ref{solid_torus_paths}
are 1-efficient paths in the Pachner graph
connecting these minimal triangulations. 

\begin{table}[ht]
\begin{tabular}{|c|c||c|c||c|c|}
\hline
Path 1 & & Path 2 & & Path 3& \\
\hline
isoSig & 2-3 move &isoSig & 2-3 move&isoSig & 2-3 move\\
\hline
`cMcabbgds' & 0 &`cMcabbgds'& 0&`cMcabbgij' & 0\\
 \hline
 `dLQbcccaego' & 0 & `dLQbcccaego' & 4 &`dLQbcccahgc'& 5\\
\hline
`eLPkbcdddhgcgj'& -3 & `eLPkbcdddhcgcf' & -2 &`eLAkbccddaegtr' & -3\\
\hline
`dLQbcccahgc' & -3 & `dLQacccjrgr' & 2 & `dLQbcbcaekv'& 2\\
\hline
`cMcabbgij' & `end' & `eLPkbcdddhcgbf'& -1 & `eLAkbccddaegtn' & -4\\
\hline
- & - & `dLQbcccahgo' & -3 & `dLQbcccahgo'& -3\\
\hline
- & - & `cMcabbgik' & `end' & `cMcabbgik' & `end'\\
\hline
\end{tabular}
\caption{\label{solid_torus_paths}Some 1-efficient paths for 
triangulations of the solid torus}
\end{table}
\end{ex}

\begin{rmk} 
This analysis shows that the 1-efficient Pachner graph (of ideal 
triangulations) of the solid torus is not connected. Specifically, we can 
say the following: Both `gLLAQbecefefaaopaaj' and `gLLAQbecefefaaopaan' are 
six tetrahedral triangulations of the solid torus that 
1) have no degree 3 edges and 2) each 2-3 move along a face in either 
triangulation results in a not 1-efficient triangulation. 
However, we can still relate the 3D-index to the 3D-index of the 
triangulations of the solid torus in Table \ref{tab:twoTetCensus}. 
In fact, both of these six tetrahedral triangulations have (three) degree 
two edges. After performing a 2-0 move on any degree two edge of one of 
these triangulations, the resulting triangulation is 1-efficient 
and connected to the triangulation  `cMcabbgij' via a path of 1-efficient 
triangulations. Therefore, the 3D-index is consistent with the two 
tetrahedral triangulation of an ideal solid torus `cMcabbgij' 
by \cite[Theorem 5.1]{GHRS}. 
\end{rmk} 

\begin{ex}
{\bf Trefoil Complement.} The trefoil complement ($SFS[D^2(2,1)(3,1)]$) 
has two minimal triangulations that are connected along a path of length 
6 (see Figure \ref{fig:chartPPTrefoil} and Table \ref{tab:ppTrefoil}) and 
a 1-efficient path of length 12 (see Figure \ref{fig:chartPPTrefoil} and 
Table \ref{tab:ppOETrefoil}).

\begin{figure}[htpb!]
\resizebox{16 cm}{!}{\xymatrix{
g\ar@{..}[rrrrr]    &&&& & O_4\ar@{-}@[red][dr]\ar@{..}[rrrr]  &&&& 
O_8\ar@{-}@[red][dr]\ar@{..}[rrrrr]  &&&&& 6\\
f\ar@{..}[rrrr]    &&&& O_3\ar@{-}@[red][ur]\ar@{..}[rr]  & 
&O_5\ar@{-}@[red][dr]\ar@{..}[r]& T_3\ar@{-}@[green][drr]\ar@{..}[r] 
& O_7\ar@{-}@[red][ur]\ar@{..}[rr] & &O_9\ar@{-}@[red][dr]\ar@{..}[rrrr] 
&&&&5\\
e\ar@{..}[rrr]  & & & O_2\ar@{-}@[red][ur]\ar@{..}[rr] & & 
T_2\ar@{-}@[green][urr]\ar@{..}[rr]  & &
O_6\ar@{-}@[red][ur]\ar@{..}[rr]\ar@{--}@[blue][ddrrrrrr]&&
T_4\ar@{-}@[green][drr]\ar@{..}[rr] &&O_{10}\ar@{-}@[red][dr]\ar@{..}[rrr] 
&&& 4\\
d\ar@{..}[rr] & & O_1 \ar@{-}@[red][ur]\ar@{..}[r] & 
T_1 \ar@{-}@[green][urr] \ar@{..}[rrrrrrrr] & &&&&&&&
T_5\ar@{-}@[green][drr]\ar@{..}[r] & O_{11}\ar@{-}@[red][dr]\ar@{..}[rr] 
&&  3\\
c\ar@{..}[r] &T_s  \ar@{-}@[red][ur]\ar@{.}[rrrrrrrrrrrr] 
\ar@{-}@[green][urr] \ar@{--}@[blue][uurrrrrr]  & &&&&&&&&&&& 
T_f\ar@{..}[r]  & 2
}
}
\caption{\label{fig:chartPPTrefoil} A shortest Pachner path between 
minimal triangulations of the trefoil (green) and a longer 1-efficient 
path (red). Note, the 1-efficient path is a shortest path through 
1-efficient triangulations using 2-3/3-2 moves. If 0-2/2-0, moves are 
allowed then the red and green paths shorten to $\{T_s, O_6, T_f\}$ (blue 
dashed). Also, a computation in Regina shows all triangulations on the 
green path except $T_s$ and $T_f$ are not 1-efficient.
} 
\end{figure}
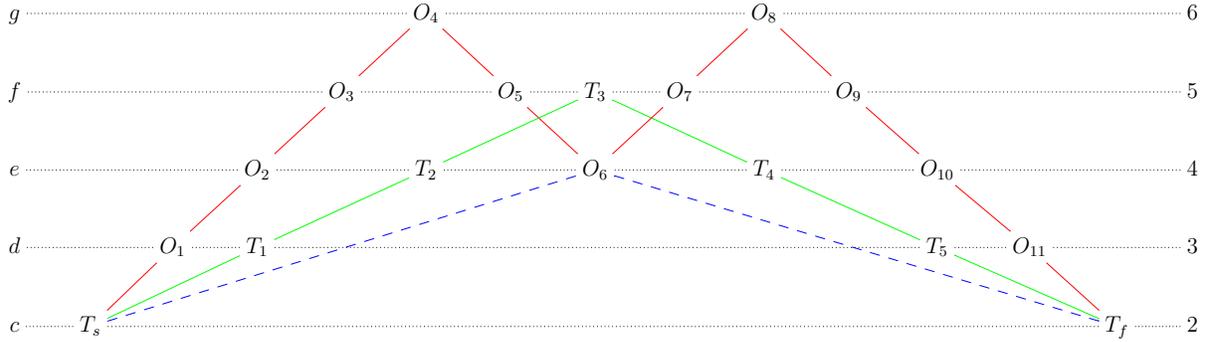

\begin{table}
\begin{tabular}{|c|c|c|}
\hline
Figure name & isoSig & move \\
\hline
$T_s$ & `cPcbbbadh' & 2\\ 
\hline
$T_1$ & `dLQacccbgjs' & 1\\
\hline
$T_2$ & `eLPkbcdddacrnn' & 4\\
\hline
$T_3$ & `fvPQccdedeeccvbfb' &  -1\\ 
\hline
$T_4$ & `eLPkbcdddackjj' &  -2\\ 
\hline
$T_5$ & `dLQacccbgbk' & -3 \\
\hline
$T_f$ & `cPcbbbadu' & `end'\\
\hline
\end{tabular}
\caption{\label{tab:ppTrefoil} A shortest Pachner path between the two 
minimal triangulations of the trefoil complement.}
\end{table}

\begin{table}
\begin{tabular}{|c|c|c|}
\hline
Figure name & isoSig & move \\
\hline
$T_s$ & `cPcbbbadh' & 1\\ 
\hline
$O_1$ & `dLQacccjgjb' & 5\\ 
\hline
$O_2$ & `eLAkbbcdddugaj' & 2\\
\hline
$O_3$ & `fLLQcccddeeabvnln'& 5\\ 
\hline
$O_4$ & `gLvQQadfeeffjatxcfj' & -5\\ 
\hline
$O_5$ & `fLAPcacceeejgjffc' & -3 \\
\hline
$O_6$ & `eLMkbbdddadiih' & 0\\
\hline
$O_7$ & `fLAPcacceeejgjcrc' & 6\\
\hline
$O_8$ & `gLvQQadfeeffjaaxcfj'& -4\\  
\hline
$O_9$ &`fLLQcccddeeabvrln'& -4\\ 
\hline
$O_{10}$ & `eLAkbbcdddurar'& -2 \\
\hline
$O_{11}$ & `dLQacccjgjs'&-3 \\
\hline
$T_f$ & `cPcbbbadu' & `end'\\
\hline
\end{tabular}
\caption{\label{tab:ppOETrefoil} A shortest one-efficient Pachner path 
between the two minimal triangulations of the trefoil complement.}
\end{table}
\end{ex}

\subsection{Angle Structures, 1-efficiency and Pachner moves}

A natural question is how are various properties of triangulations related 
by Pachner moves. Throughout this section, $\T_n$ will be a triangulation 
with $n$ tetrahedra, $f$ will be a face in this triangulation that 
identifies two distinct tetrahedra $t_{f,1}$ and $t_{f,2}$, and $\T_{n+1,f}$ 
will be the result of performing a 2-3 move along the face $f$. If we call 
the pairs of edges incident to $f$ in the bi-pyramid defined by $t_{f,1}$ 
and $t_{f,2}$, the \emph{belt of $f$}, then we can make the following 
observation that is well-known to the experts. We include it because it 
promotes a useful mentality and compares favourably to our discussion of 
1-efficiency and Pachner moves.

\begin{prop}
Let $\T_n$ be a triangulation with $n$ tetrahedra. With all notation as 
above, $\T_{n+1,f}$ admits a strict angle structure if and only if the set 
of angle structures contains a strict angle structure (resp. semi-angle 
structure) such that each of the three pairs of edges along the belt of 
$f$ have a sum in $(0,\pi)$ (resp. $[0,\pi]$).
\end{prop}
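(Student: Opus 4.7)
My plan is to reduce the problem to a local linear algebra computation in the bipyramid $B$ formed by $t_{f,1}$ and $t_{f,2}$, since the angle structure outside $B$ is unaffected by the 2-3 move. First I would label the five vertices of $B$ as $N,S$ (apex vertices) and $a,b,c$ (equator), so that $t_{f,1}=Nabc$, $t_{f,2}=Sabc$ meet along $f=abc$, and the three tetrahedra produced by the 2-3 move are $NSab,NSbc,NSac$ sharing the new central edge $NS$. Let $\alpha_i$ for $i\in\{a,b,c\}$ denote the angle of $t_{f,1}$ at the pair of opposite edges containing the edge of $f$ opposite vertex $i$, and let $\beta_i$ denote the corresponding angle in $t_{f,2}$, so $\sum_i\alpha_i=\sum_i\beta_i=\pi$. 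The belt of $f$ is indexed by $i\in\{a,b,c\}$, with the sum along the $i$-th belt edge equal to $x_i:=\alpha_i+\beta_i$; the belt condition reads $x_i\in(0,\pi)$ (resp.\ $[0,\pi]$), and $\sum x_i=2\pi$ is automatic.

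\textbf{The ($\Leftarrow$) direction.} Starting from a strict (resp.\ semi) angle structure on $\T_n$ satisfying the belt condition, I would assign to the pair $(NS,e_i)$ in the new tetrahedron containing both the angle $x_i$, where $e_i$ is the edge of $f$ opposite vertex $i$. Since $\sum x_i=2\pi$, the edge relation around $NS$ is automatic. The six ``outer'' edges of $B$ (namely $Nj$ and $Sj$ for $j\in\{a,b,c\}$) each lie in exactly one of $t_{f,1},t_{f,2}$ before the move and in exactly two new tetrahedra afterwards; matching the edge sums gives six equations on the six remaining non-central new angles, together with three in-tetrahedron sum constraints. I would show this $9\times 6$ linear system has a one-parameter family of solutions (parameterised, say, by the angle $t$ at the pair $(Na,Sc)$ in $NSac$), and that the six positivity conditions cut out an interval $t\in(L,U)$ (resp.\ $[L,U]$). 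The resulting $L<U$ (resp.\ $L\le U$) inequalities each reduce by direct computation to either strict positivity of some $\alpha_i$ or $\beta_i$ (given) or to a belt inequality $\alpha_j+\beta_j<\pi$ (resp.\ $\le\pi$); hence the belt condition is exactly what makes the interval non-empty, and any admissible $t$ produces the required angle structure on $\T_{n+1,f}$.

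\textbf{The ($\Rightarrow$) direction.} Conversely, given a strict (resp.\ semi) angle structure on $\T_{n+1,f}$, let $x_i$ be its angle at the pair $(NS,e_i)$ in the unique new tetrahedron containing both. I would define $\alpha_i,\beta_i$ on $t_{f,1},t_{f,2}$ by the sums of non-central new angles dictated by the six outer-edge relations (the inverse formulas of the previous paragraph). Each $\alpha_i,\beta_i$ is then a sum of positive (resp.\ non-negative) new angles, hence positive (resp.\ non-negative); the identity $\alpha_a+\alpha_b+\alpha_c=\pi$ follows from the three in-tetrahedron angle sums combined with $\sum x_i=2\pi$; and $\alpha_i+\beta_i=x_i\in(0,\pi)$ (resp.\ $[0,\pi]$) is the belt condition. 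Since edge relations outside $B$ are unchanged, this yields a valid angle structure on $\T_n$ with the belt condition.

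\textbf{Main obstacle.} The substantive step is the forward direction: one must verify that the nine bipyramid relations admit a strictly positive (resp.\ non-negative) solution and that the non-emptiness of the solution interval is characterised precisely by the belt condition together with positivity of the individual $\alpha_i,\beta_i$. The computation is linear and elementary, but the bookkeeping is combinatorially delicate; conceptually it is the angle-structure shadow of the pentagon identity for Thurston's gluing equations under a 2-3 move, and the semi-angle case follows from exactly the same argument upon replacing every strict inequality by its weak counterpart.
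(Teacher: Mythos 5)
Your proof is correct, and it takes a genuinely different route from the paper's. The paper argues geometrically: passing to the cusp cross-sections, the 2-3 move subdivides the two Euclidean triangles linking the apex vertices $N$ and $S$ by introducing an interior vertex (the link of the new edge $NS$), and the proposition reduces to the observation that such a subdivision with all angles in $(0,\pi)$ (resp.\ $[0,\pi]$) exists precisely when the angles $x_i$ around the new vertex lie in $(0,\pi)$ (resp.\ $[0,\pi]$) and the vertex can be placed in the interior (resp.\ closed) triangle. Your version makes this explicit as linear algebra in the bipyramid: you set up the nine local relations (three tetrahedral sums plus six outer-edge budgets), solve them to exhibit the one-parameter family $t\mapsto(p_{ab},q_{ab},p_{bc},q_{bc},p_{ac},q_{ac})$, and verify that the nonemptiness of the feasibility interval $(L,U)$ is equivalent to strictness of $\alpha_i,\beta_i$ together with the belt conditions $\alpha_i+\beta_i<\pi$. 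That computation is correct: with $t=p_{ab}$, one finds $p_{ac}=\alpha_a-t$, $q_{bc}=\beta_b-t$, $q_{ac}=\alpha_c-\beta_b+t$, $p_{bc}=\beta_c-\alpha_a+t$, $q_{ab}=\alpha_a+\alpha_b-\beta_c-t$, and each of the inequalities $L<U$ reduces to $\alpha_i>0$, $\beta_i>0$, or a belt inequality, exactly as you claim. Your $t$-parameter is precisely the paper's freedom in placing the subdivision point, so the two proofs are the same argument expressed in two languages: the paper's Euclidean realisation buys brevity and visual transparency, while your explicit system makes the ``iff'' bookkeeping airtight without any appeal to plane geometry, and in particular makes the semi-angle case (replacing $<$ by $\le$) completely automatic. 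One small remark on the paper's phrasing: the embeddedness of the new vertex in the interior actually requires $x_i\in(\max\{\alpha_i,\beta_i\},\pi)$ rather than merely $x_i\in(0,\pi)$; the lower bound is supplied automatically by strictness of the given angle structure on $\T_n$, and your computation exhibits exactly these hidden inequalities.
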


\begin{proof}
We only need to consider the tetrahedra in the bi-pyramid as the other 
angles will be unaffected by a 2-3 move. 

Considering Figure \ref{fig:twoThreeBoundary}, this problem reduces to a 
problem in the Euclidean plane: namely showing that the triangles incident 
to the vertices at the top and bottom of the bi-pyramid each split into 3 
triangles with angles in the desired range. However, the angles
around the vertex are all in $(0,\pi)$ (resp. $[0,\pi]$), so the vertex 
is embedded in the interior (resp. interior or a side). 

The other direction is obvious as an angle structure on $\T_{n+1,f}$ implies 
that all angles along the edges of the bi-pyramid are positive (resp. 
non-negative). 
\end{proof}

\subsection{Connected components of normal surfaces}

In Figure \ref{fig:twoThreeBipyramid}, there are two bipyramids related by 
a 2-3 move, one with two tetrahedra which we call $\BTwo$ and one with 
three tetrahedra which we call $\BThree$. We will also think of these as 
subsets of two triangulations related by a 2-3 move, $\T_n$ and $\T_{n+1,f}$ 
where $f$ is face identified in $\BTwo$. In this discussion, we assume that 
the exterior faces of $\BTwo$ are not identified. However, if the exterior 
faces of $\BTwo$ are identified (and therefore $\BThree$), then some of 
these pieces might not appear as subsets of embedded normal surfaces.

If $S_n$ is a normal surface of $\T_n$ then each piece will have at most 
one normal arc in face $f$. Using the notation from 
Section \ref{gluing_eqn_sect} and Figure \ref{fig:edgesAndQuad}, 
we thus obtain a census of connected components $S_n \cap \BTwo$ as follows: 

\begin{enumerate}
\item  
$t_{A0},t_{B1}$;
\item  
$(t_{A1}+t_{B1})$, $(t_{A2}+t_{B3})$, $(t_{A3}+t_{B2})$;
\item  
$(q_{A01:23}+t_{B1})$, $(q_{A02:13}+t_{B3})$, $(q_{A03:12}+t_{B2})$, 
$(q_{B01:23}+t_{A1})$, $(q_{B02:13}+t_{A3})$, and $(q_{B03:12}+t_{A2})$;
\item  
$(q_{A01:23}+q_{B01:23})$, $(q_{A02:13}+q_{B03:12})$,  $(q_{A03:12}+q_{B02:13})$.
\end{enumerate}

If $S_{n+1}$ is a normal surface of $\T_{n+1,f}$, we can obtain an analogous 
result. Namely, the observation that each connected component of 
$S_{n+1}\cap \BThree$ intersects an interior face of $\BThree$ in at most 
one normal arc, yields the following list:

\begin{enumerate}
\item  
$(t_{C0}+t_{D0}+t_{E0})$, $(t_{C1}+t_{D1}+t_{E1})$;
\item  
$(t_{C2}+t_{E3})$, $(t_{D2}+t_{C3})$,  $(t_{D3}+t_{E2})$;
\item  
$(t_{D0}+q_{C02:13}+q_{E03:12})$, $(t_{E0}+q_{D02:13}+q_{C03:12})$, 
$(t_{C0}+q_{E02:13}+q_{D03:12})$, $(q_{D01:23}+t_{C3}+t_{E2})$, 
$(t_{E1}+q_{D03:12}+q_{C02:13})$,  $(t_{C1}+q_{E03:12}+q_{D02:13})$;
\item  
$(q_{D01:23}+t_{C3}+t_{E2})$, $(q_{E01:23}+t_{D3}+t_{C2})$, 
$(q_{C01:23}+t_{E3}+t_{D2})$;
\item  
$(q_{C01:23}+q_{D01:23}+t_{E2}+t_{E3})$, $(q_{D01:23}+q_{E01:23}+t_{C2}+t_{C3})$, 
$(q_{E01:23}+q_{C01:23}+t_{D2}+t_{D3})$;  
\item 
$(q_{C01:23}+q_{D01:23}+q_{E01:23})$.
\end{enumerate}

Examples of the normal surface pieces of type 3) and 4) are given in 
Figures \ref{fig:twoThreeQuadTri} and \ref{fig:twoThreeQuadQuad}.

\begin{figure}[htpb]
\centering
\includegraphics[width=0.5\textwidth]{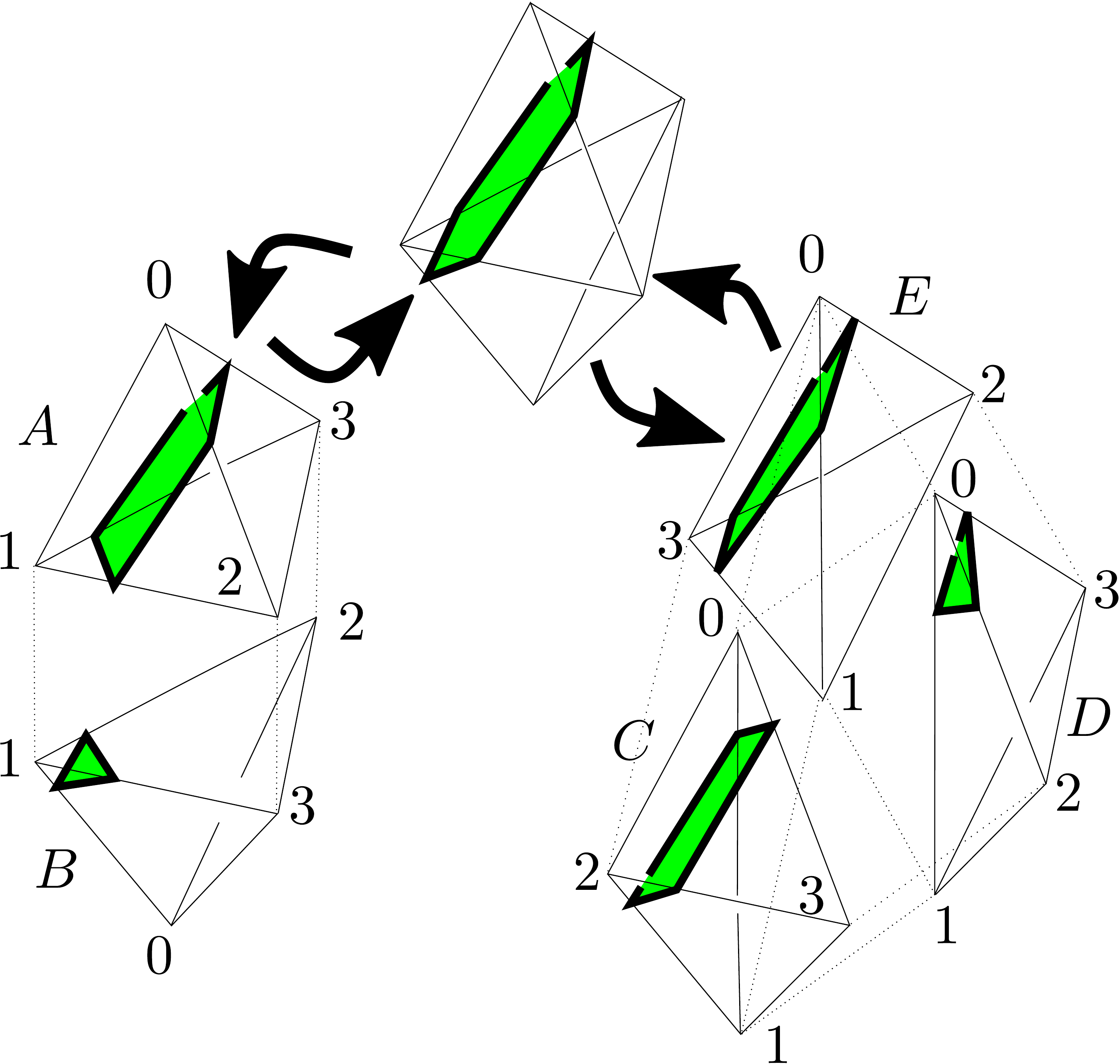}
\caption{\label{fig:twoThreeQuadTri} The effect of a 2-3 move on a 
normal disk comprised of a triangle and quad.}
\end{figure}

\begin{figure}[htpb]
\centering
\includegraphics[width=0.5\textwidth]{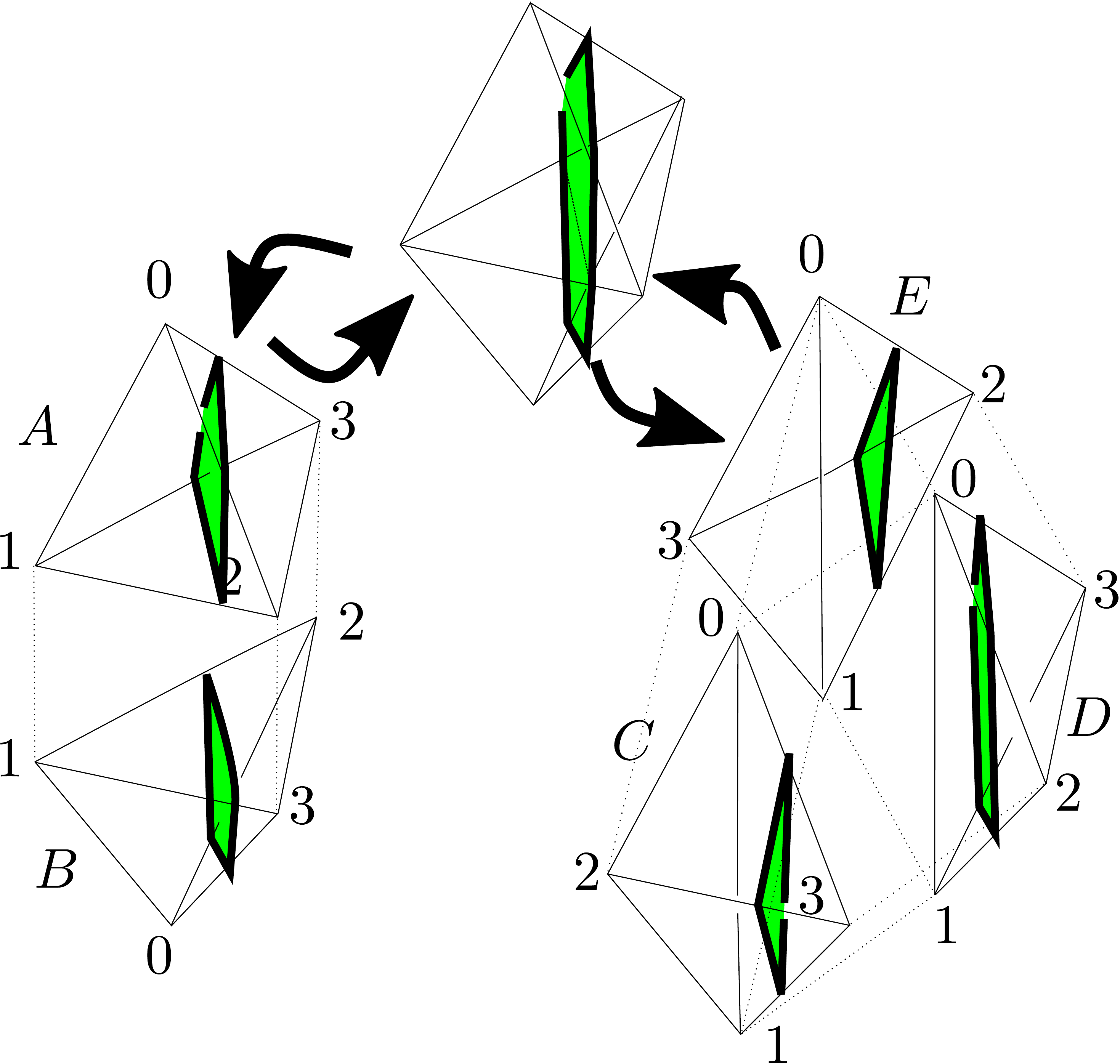}
\caption{\label{fig:twoThreeQuadQuad} The effect of a 2-3 move on a 
normal disk comprised of two quads.}
\end{figure}

Examples of the normal surface pieces of type 5) and 6) are given in 
Figures \ref{fig:folder} and \ref{fig:tube}.

\begin{figure}[htpb]
\centering
\includegraphics[width=0.5\textwidth]{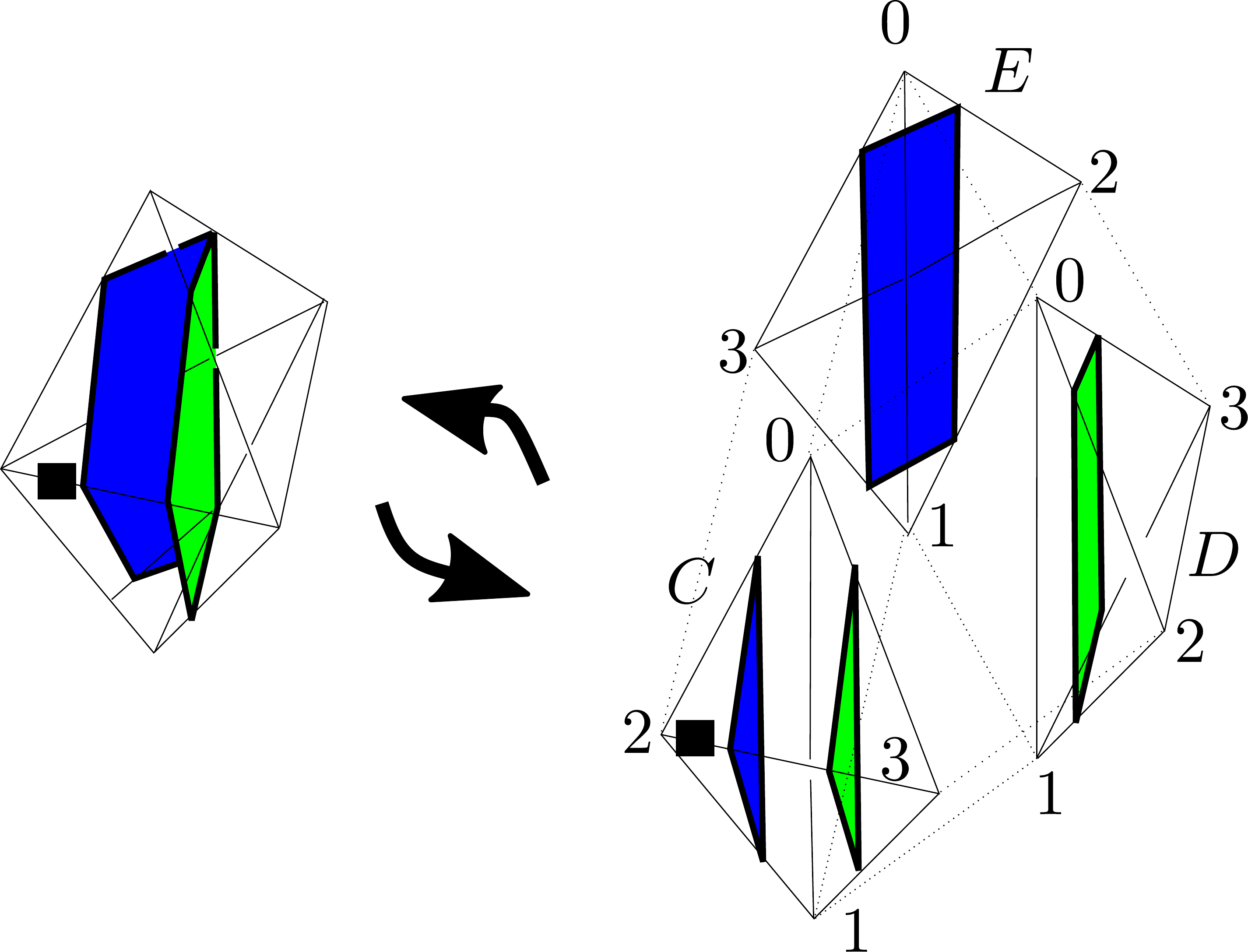}
\caption{\label{fig:folder} An example of a normal surface piece of $\BThree$.}
\end{figure}

\begin{figure}[htpb]
\centering
\includegraphics[width=0.5\textwidth]{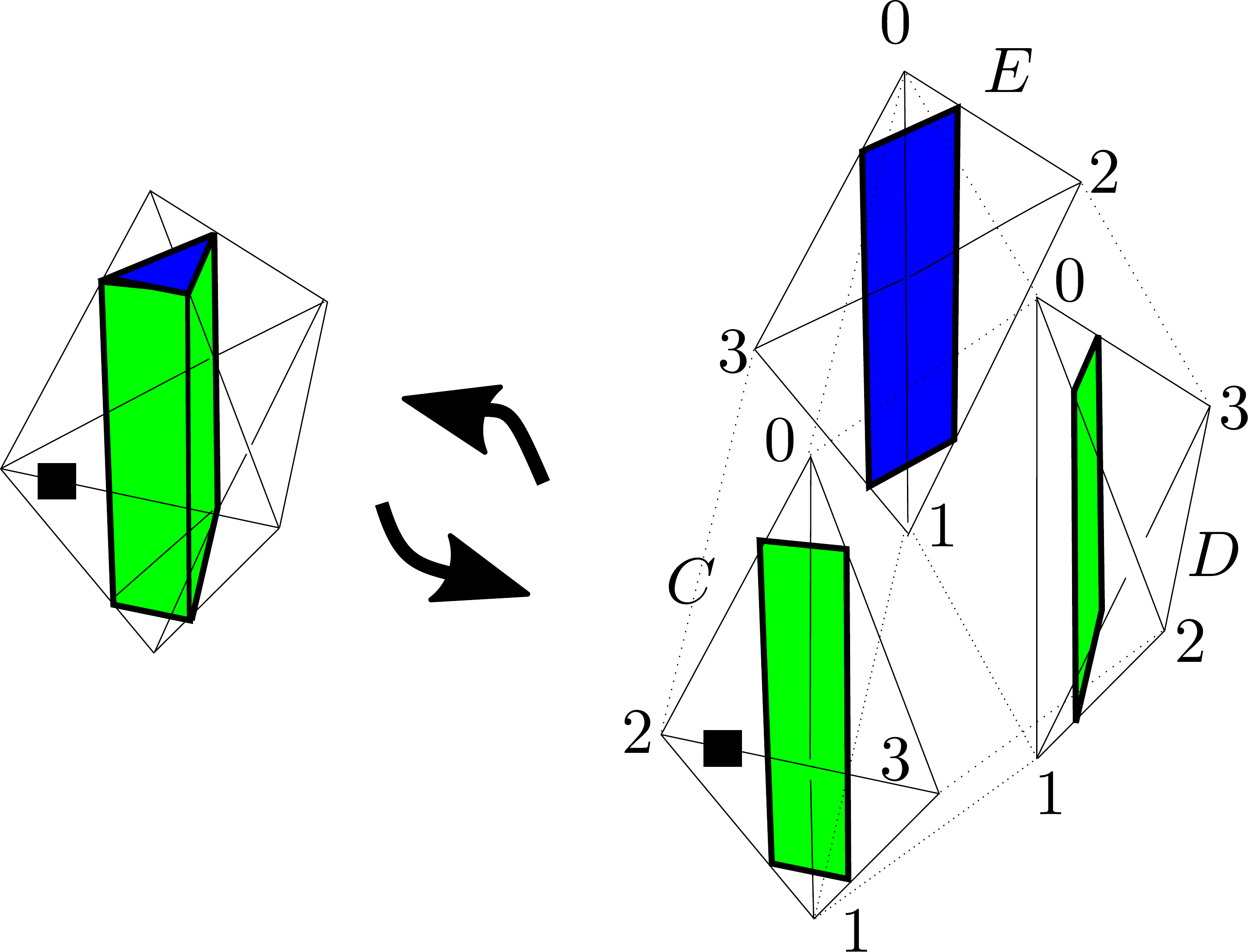}
\caption{\label{fig:tube} A second example of a normal surface piece of 
$\BThree$.}
\end{figure}

The following proposition allows us relate the pieces in both list above via 
the 2-3 move that transforms $\BTwo$ to $\BThree$. 

\begin{prop}
\label{cor:oneEfficientDescends}
Let $\T_n$ and $\T_{n+1,f}$ be the triangulations defined above.
For each normal surface $S_n$ of $\T_n$ there exists a normal surface 
$S_{n+1}$ of $\T_{n+1,f}$ obtained by dividing up the normal discs of $S_n$ 
which intersect $\BTwo$. In particular, if $\T_{n+1,f}$ is 1-efficient, then 
$\T_n $ is 1-efficient.
\end{prop}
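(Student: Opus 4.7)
The plan is a purely local substitution inside the bipyramid: outside $\BTwo$ the triangulations $\T_n$ and $\T_{n+1,f}$ agree, so I would define $S_{n+1}$ to equal $S_n$ there, and inside $\BTwo \cong \BThree$ (the same underlying $3$-ball with two different triangulations) I would replace each connected component of $S_n \cap \BTwo$ by a union of normal discs of $\BThree$ whose trace on the four exterior triangular faces of the bipyramid coincides with that of the original component.

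First I would set up a type-by-type correspondence using the two censuses of pieces listed just before the statement. Each piece of $\BTwo$ (and of $\BThree$) is characterised by its boundary arc pattern on $\bd\BTwo = \bd\BThree$, and a direct inspection shows that each of the four piece types of $\BTwo$ has a counterpart in the six-term list for $\BThree$ with the same boundary trace. For instance, the apex triangle $t_{A0}$ is matched with $(t_{C0}+t_{D0}+t_{E0})$, the quad pair $(q_{A01:23}+q_{B01:23})$ with $(q_{C01:23}+q_{D01:23}+q_{E01:23})$, and a mixed piece such as $(q_{A01:23}+t_{B1})$ with the unique $\BThree$-piece from family (3) carrying the same three exterior arcs. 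The remaining cases are analogous and are forced by the combinatorics of the new degree-$3$ edge of $\T_{n+1,f}$, which runs between the two apex vertices of the bipyramid.

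Next I would check that the resulting collection of discs is a genuine embedded normal surface $S_{n+1}$ of $\T_{n+1,f}$. Matching equations hold across faces outside the bipyramid because they did for $S_n$; they hold across the four exterior faces of $\BThree$ because the normal arc patterns there have been preserved by construction; and they hold across the three new interior faces of $\BThree$ by the choice of replacement piece. Embeddedness is verified piece by piece from the explicit models, and globally from the fact that distinct components of $S_n \cap \BTwo$ are pairwise disjoint, with parallel copies of a single piece type handled by parallel copies of its $\BThree$-counterpart.

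Finally, each replacement piece in $\BThree$ is a properly embedded planar region in the $3$-ball $\BThree$ with the same boundary curves as the corresponding piece in $\BTwo$, so the two are ambient isotopic rel boundary; performing these isotopies simultaneously shows $S_{n+1}$ is ambient isotopic to $S_n$ in $M$. Hence $S_n$ and $S_{n+1}$ are homeomorphic, have equal Euler characteristic, and share peripheral and vertex-linking status. The 1-efficiency implication then follows by contraposition: an embedded normal $2$-sphere, projective plane, or non-vertex-linking torus or Klein bottle in $\T_n$ transports to one of the same topological type and the same peripheral status in $\T_{n+1,f}$, so if $\T_{n+1,f}$ is 1-efficient then so is $\T_n$. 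The main obstacle I expect is the careful bookkeeping in the type correspondence and verifying that the chosen substitution is canonical enough to preserve global embeddedness when many parallel pieces are present.
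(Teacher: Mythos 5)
Your overall strategy is the same as the paper's: replace each piece of $S_n \cap \BTwo$ by a collection of normal discs in $\BThree$ with the same trace on $\partial\BTwo = \partial\BThree$, checking the matching equations locally and observing that the resulting surface is topologically unchanged. However, one of your explicit piece-matchings is wrong, and in a way that breaks the argument.

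You claim that the quad pair $q_{A01:23}+q_{B01:23}$ corresponds to $q_{C01:23}+q_{D01:23}+q_{E01:23}$. But the latter is the \emph{annulus} that wraps around the new degree-$3$ edge of $\T_{n+1,f}$ (it is item (6) in the list for $\BThree$, the ``tube'' of Figure~\ref{fig:tube}), whereas $q_{A01:23}+q_{B01:23}$ is a disc. These have different Euler characteristics ($0$ versus $1$), so they cannot have the same boundary trace, and substituting one for the other is not an isotopy rel $\partial\BThree$; your final step asserting that $S_{n+1}$ is ambient isotopic to $S_n$ would therefore fail, and $\chi$ would not be preserved. The correct matching, which the paper reads off from Figure~\ref{fig:twoThreeQuadQuad}, is
$q_{A01:23} + q_{B01:23} = q_{D01:23}+t_{C3}+t_{E2}$,
i.e.\ a single quad plus two triangles (a disc, from family (4), not (6)). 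This non-obvious substitution is the crux of the lemma: it is exactly what makes the map $S_n \mapsto S_{n+1}$ well defined on embedded surfaces without changing their topological type, and it explains why the three type-(5) pieces and the type-(6) annulus are precisely the $\BThree$ pieces that have no preimage in $\BTwo$ (a fact used later in Propositions~\ref{prop:oneEfficientFails} and~\ref{prop:strictAngToOneEff}). Once that matching is corrected, the rest of your argument (local replacement, verification of matching equations, preservation of embeddedness and peripheral status, and the contrapositive for 1-efficiency) goes through as in the paper.
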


\begin{proof}
Following Figure \ref{fig:twoThreeBipyramid}, we can consider the images 
of $S_n \cap \BTwo$ under the 2-3 move: 
\begin{enumerate}
\item 
$t_{A0} = t_{C0}+t_{D0}+t_{E0}$, $t_{B0} = t_{C1}+t_{D1}+t_{E1}$
\item 
$t_{A1}+t_{B1} = t_{C2}+t_{E3}$, $t_{A2}+t_{B3} = t_{D2}+t_{C3}$, 
$t_{A3}+t_{B2} = t_{E2}+t_{D3}$
\item 
$q_{A01:23} + t_{B1}= t_{D0}+q_{C02:13}+q_{E03:12}$, 
$q_{A02:13} + t_{B3}= t_{E0}+q_{D02:13}+q_{C03:12}$, 
$q_{A03:12} + t_{B2}= t_{C0}+q_{E02:13}+q_{D03:12}$, 
$q_{B01:23} + t_{A1}= t_{D1}+q_{C03:12}+q_{E02:13}$, 
$q_{B02:13} + t_{A3}= t_{E1}+q_{D03:12}+q_{C02:13}$, 
$q_{B03:12} + t_{A2}= t_{C1}+q_{E03:12}+q_{D02:13}$, 
\item 
$q_{A01:23} + q_{B01:23}= q_{D01:23}+t_{C3}+t_{E2}$, 
$q_{A02:13} + q_{B03:12}= q_{E01:23}+t_{D3}+t_{C2}$, 
$q_{A03:12} + q_{B02:13}= q_{C01:23}+t_{E3}+t_{D2}$.
\end{enumerate}

Relations (1) and (2) follow from Figure \ref{fig:twoThreeBoundary}.

Note that $q_{A01:23} + t_{B1}= t_{D0}+q_{C02:13}+q_{E03:12}$ follows from 
Figure \ref{fig:twoThreeQuadTri}, and the remaining two relations can be 
obtained by an appropriate rotation (and relabelling) of that figure.

Finally, $q_{A01:23} + q_{B01:23}= q_{D01:23}+t_{C3}+t_{E2}$ follows from 
Figure \ref{fig:twoThreeQuadQuad}, and again the remaining two relations 
can be obtained by an appropriate rotation (and relabelling) of that figure.

Thus, for any normal surface of $S_n$, we can map the normal surface 
pieces locally to build a normal surface with the same Euler 
characteristic in $\T_{n+1,f}$. Restricting this observation to the case 
of embedded normal surfaces with non-negative Euler characteristics shows 
if $\T_{n+1,f}$ is 1-efficient, then $\T_n$ must be as well.  
\end{proof}

We point out that the Luo-Tillmann's definition of Euler characteristic 
on quadrilateral discs in the presence of a generalised angle structure 
(compare  \cite[Lemma 15]{LT} in the case that the curvature is 0) provides 
a useful mnemonic for keeping track of relations of type (3) and (4). 
Namely, these relations are just the angle sums which occur after a 
2-3 move, e.g. edge $A01$ decomposes into edges $C02$ and $E03$ and 
edges $A23$ and $B23$ form $D23$.

\begin{prop}
\label{prop:oneEfficientFails}
Let $\T_n$ and $\T_{n+1,f}$ be the triangulations defined above, such that 
the exterior faces of $\BTwo$ are not identified.
If $\T_n$ is 1-efficient and $\T_{n+1,f}$ is not 1-efficient, then 
$\T_{n+1,f}$ exhibits an embedded normal surface with exactly two of the 
three quad types parallel to the degree three edge created by the 2-3 move.  
\end{prop}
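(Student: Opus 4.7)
The plan is to argue contrapositively: assume $\T_n$ is 1-efficient and $\T_{n+1,f}$ is not 1-efficient, and fix an arbitrary witness $S$ to non-1-efficiency of $\T_{n+1,f}$ (an embedded closed normal surface with $\chi(S)\ge 0$, not vertex-linking or peripheral). Write $c_1, d_1, e_1$ for the coordinates in $S$ of the parallel quads $q_{C01:23}, q_{D01:23}, q_{E01:23}$. The goal is to produce a witness in which exactly two of $c_1, d_1, e_1$ are nonzero, modifying $S$ if necessary.

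The structural input is the piece classification from the previous subsection: among the six types (1)--(6) of connected components of $S \cap \BThree$, only types (5) and (6) use two or three of the parallel quads, whereas types (1)--(4) are in piece-wise bijection with the four piece types of $\BTwo$ via the identifications in the proof of Proposition \ref{cor:oneEfficientDescends}. Thus if $S$ has no piece of type (5) or (6), equivalently uses at most one parallel quad, the reverse correspondence produces a normal class $S_n$ in $\T_n$ with $\chi(S_n)=\chi(S)\ge 0$ and representing the same class in $H_2(M,\bd M;\Z/2\Z)\oplus H_1(\bd M;\Z)$ as $S$. A short case analysis over the type-(3) and type-(4) reversals shows that embeddedness of $S$ in each of $C, D, E$ forces embeddedness of $S_n$ in each of $A, B$ in the generic configurations: any conflict in a tetrahedron of $\BTwo$ would require two non-parallel quad types to appear in one of $C, D, E$. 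For the remaining non-embedded configurations, passing to Haken's fundamental surface decomposition $S_n=\sum n_i F_i$ and combining $\chi(S_n)\ge 0$ with the preserved (non-peripheral) homology class of $S$ yields a non-peripheral embedded fundamental summand with $\chi(F_i)\ge 0$. Either way, one extracts a witness to non-1-efficiency of $\T_n$, contradicting the hypothesis. So every witness has at least two parallel quads.

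Now assume $c_1,d_1,e_1\ge 1$. Embeddedness forces the six non-parallel coordinates in $C, D, E$ to vanish, and the matching of ``diagonal'' normal arcs on the three internal faces of $\BThree$---where only the parallel quads contribute---forces $c_1=d_1=e_1=k$. The $k$ hexagonal disks the parallel quads form around the new degree-$3$ edge $e$, completed with triangle caps at the endpoints of $e$, assemble into an embedded edge-linking subsurface $L\subset S$: a disjoint union of $k$ spheres if the endpoints of $e$ are distinct, or $k$ peripheral tori if $e$ is a loop. The torus case contradicts the witness hypothesis, so $L$ is a union of spheres, and subtracting $k-1$ of its components reduces to $k=1$, in which $S$ is a single edge-linking sphere. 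A distinct embedded normal sphere carrying exactly two parallel quads is then constructed by replacing the type (6) piece by a type (5) piece $q_{C01:23}+q_{D01:23}+t_{E2}+t_{E3}$ together with a non-parallel quad in $E$ chosen to balance the shear around $e$; the 1-efficiency of $\T_n$ and the hypothesis that the exterior faces of $\BTwo$ are not identified rule out the degenerate cases.

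The main obstacle is the descent in the case where the reversed class $S_n$ is not embedded: the Haken-decomposition-plus-homology argument must produce a single embedded fundamental component that is simultaneously non-peripheral and of non-negative Euler characteristic, and this may require some further Euler-characteristic accounting that I have glossed over here. The explicit construction of the alternative witness in the $k=1$ subcase is also delicate and depends on the exterior-face hypothesis.
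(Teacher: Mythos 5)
Your overall strategy --- classify the intersections of a witness $S_{n+1}$ with $\BThree$ by how many of the three parallel quad types $q_{C01:23},q_{D01:23},q_{E01:23}$ appear, descend to $\T_n$ when none of the exceptional pieces occur, and handle the ``all three present'' case separately --- is the same as the paper's. But several of your key steps do not hold up. First, the claim that arc-matching on the interior faces forces $c_1=d_1=e_1$ is unsubstantiated: the arcs cutting off the equatorial corner of an interior face come not only from the parallel quads but also from normal triangles cutting off that vertex, so the matching equation involves triangle coordinates and does not by itself equalise $c_1,d_1,e_1$. Second, the ``edge-linking subsurface $L\subset S$'' is not a subsurface of the witness: what lies inside $S$ is the hexagonal annulus $q_{C01:23}+q_{D01:23}+q_{E01:23}$; the spheres or tori you describe only appear \emph{after} you excise that annulus and cap the two resulting boundary circles with triangle disks, which produces a different normal surface, not a subsurface of $S$. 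In particular ``subtracting $k-1$ of its components'' and the conclusion that ``$S$ is a single edge-linking sphere'' do not follow. Third, the closing replacement step --- swapping a type~(6) piece for a type~(5) piece plus ``a non-parallel quad in $E$ chosen to balance the shear'' --- is not justified: you do not verify that the result satisfies the $Q$-matching and arc-matching conditions, nor that it remains embedded, nor why it should be a sphere.

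The paper's handling of the type~(6) case is simpler and avoids all of this. Instead of trying to force $c_1=d_1=e_1$ or modify the piece in place, it removes \emph{all} type~(6) annuli from $S_{n+1}$, caps the boundary circles with the two triangle disks $t_{C0}+t_{D0}+t_{E0}$ and $t_{C1}+t_{D1}+t_{E1}$, and observes that the resulting embedded normal surfaces have no type~(6) pieces by construction. If they also have no type~(5) pieces then each of their $\BThree$-pieces descends to an embedded piece of $\BTwo$, yielding an embedded normal sphere in $\T_n$ and contradicting 1-efficiency of $\T_n$; otherwise a type~(5) piece survives, which has exactly two parallel quads. Finally, your worry about non-embeddedness of the descended surface (and the ensuing appeal to Haken's fundamental decomposition) is unnecessary: two different quad types in $A$ or $B$ would force two different quad types in one of $C,D,E$ upstairs, contradicting embeddedness of $S_{n+1}$, so the descent automatically preserves embeddedness.
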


\begin{proof}
If $\T_n$ is 1-efficient and $\T_{n+1,f}$ is not 1-efficient, then there 
must be an embedded normal surface $S_{n+1}$ in $\T_{n+1,f}$ which 
intersects  $\BThree$. If this intersection is only triangular discs then 
$S_{n+1}$ would descend to an embedded normal surface in $\T_n$, and so 
$S_{n+1}\cap \BThree$ must contain at least one connected component with at 
least one quad.  However, the case analysis in 
Proposition~\ref{cor:oneEfficientDescends} of the normal discs in 
$\BThree$ shows that each connected component $S_{n+1}\cap \BThree$ is in 
the image of an embedded normal disk of $\BTwo$ save four: 
$q_{C01:23}+q_{D01:23}+t_{E2}+t_{E3}$,
$q_{D01:23}+q_{E01:23}+t_{C2}+t_{C3}$, 
$q_{E01:23}+q_{C01:23}+t_{D2}+t_{D3}$,
and $(q_{C01:23}+q_{D01:23}+q_{E01:23})$.
 
However, if the last piece is part of $S_{n+1}\cap \BThree$, then there 
exists a collection of embedded normal spheres $\hat{S}_{n+1}$ of 
$\T_{n+1,f}$ formed by cutting out all copies of this annulus and capping 
off with triangles. If none of these spheres have pieces of the form:
$q_{C01:23}+q_{D01:23}+t_{E2}+t_{E3}$,
$q_{D01:23}+q_{E01:23}+t_{C2}+t_{C3}$, and 
$q_{E01:23}+q_{C01:23}+t_{D2}+t_{D3}$,
then there exists an embedded normal sphere in $\T_n$, a contradiction. 
If no annuli of the form $(q_{C01:23}+q_{D01:23}+q_{E01:23})$ exist in 
$S_{n+1}\cap \BThree$, then again there must be at least one piece of the form: 
$q_{C01:23}+q_{D01:23}+t_{E2}+t_{E3}$,
$q_{D01:23}+q_{E01:23}+t_{C2}+t_{C3}$, and 
$q_{E01:23}+q_{C01:23}+t_{D2}+t_{D3}$, which completes the proof.
\end{proof}

Although $q_{C01:23}+q_{D01:23}+t_{E2}+t_{E3}$, 
$q_{D01:23}+q_{E01:23}+t_{C2}+t_{C3}$, and $q_{E01:23}+q_{C01:23}+t_{D2}+t_{D3}$ 
are not in the image of embedded normal discs of $\T_n$, they are in the 
image of a pair of immersed normal discs. For example,
$q_{A03:12} + q_{B02:13}+q_{A01:23} + q_{B01:23}=
q_{C01:23}+q_{D01:23}+t_{E2}+t_{E3}+t_{C3}+t_{D2}$. This is the key observation 
in the proof of the following lemma.

\begin{prop}
\label{prop:strictAngToOneEff}
Let $\T_n$ and $\T_{n+1,f}$ be the triangulations defined above, such that 
the exterior faces of $\BTwo$ are not identified.
If $\T_n$ admits a strict angle structure, then $\T_{n+1,f}$ is 1-efficient. 
\end{prop}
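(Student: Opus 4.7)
The plan is a proof by contradiction: suppose $\T_{n+1,f}$ is not $1$-efficient and derive a contradiction from the hypothesised strict angle structure $\alpha$ on $\T_n$. A strict angle structure on $\T_n$ forces $\T_n$ to be $1$-efficient by the combinatorial Gauss--Bonnet formula (as used in the proof of the earlier corollary relating strict angle structures to spun $1$-efficiency): for every nonzero embedded closed normal surface $S'$ in $\T_n$ one has
\begin{align*}
\chi(S') = -\frac{1}{\pi}\sum_{q\in\quads}\alpha(q)\,x_q(S') < 0.
\end{align*}
Proposition \ref{prop:oneEfficientFails} then supplies an embedded normal surface $S$ in $\T_{n+1,f}$ with $\chi(S)\ge 0$ whose intersection with $\BThree$ uses exactly two of the three quad types parallel to the new degree-$3$ edge $e$; the exceptional pieces of $S\cap\BThree$ (those not coming from a normal disc in $\BTwo$ via the relations in the proof of Proposition \ref{cor:oneEfficientDescends}) are therefore of the item-(5) type in the list preceding that proposition.

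The central step is to transport $S$ back across the $2$-$3$ move to a (possibly immersed) $Q$-normal class $\tilde S\in Q(\T_n;\Z_+)$ with $\chi(\tilde S)=\chi(S)\ge 0$. Outside $\BThree$ the transport is the identity; non-exceptional pieces in $\BThree$ (items (1)--(4)) descend via the relations in the proof of Proposition \ref{cor:oneEfficientDescends} read in reverse. Exceptional item-(5) pieces descend via the identity
\begin{align*}
q_{A03:12}+q_{B02:13}+q_{A01:23}+q_{B01:23}=q_{C01:23}+q_{D01:23}+t_{E2}+t_{E3}+t_{C3}+t_{D2}
\end{align*}
(and its two analogues obtained by permuting $C,D,E$) from the proof of Proposition \ref{prop:oneEfficientFails}, which converts each such piece---together with the matching type-(2) triangular piece $(t_{D2}+t_{C3})$ that must appear in $S$ by arc-matching on $\partial\BThree$---into four quads in $\BTwo$. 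The resulting $\tilde S$ has non-negative quad coordinates, and is nonzero because $S\cap\BThree$ carries quads.

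Since each local relation used amounts to subdividing a single underlying (possibly immersed) normal piece, the descent preserves the combinatorial Euler characteristic, giving $\chi(\tilde S)=\chi(S)\ge 0$. On the other hand, the Gauss--Bonnet formula applied to $\tilde S$ with the strict angle structure $\alpha$ on $\T_n$ yields
\begin{align*}
\chi(\tilde S)=-\frac{1}{\pi}\sum_{q\in\quads}\alpha(q)\,\tilde x_q<0,
\end{align*}
since every $\alpha(q)>0$ and $\tilde S$ has at least one positive quad coordinate. This is the desired contradiction. The main technical hurdle is to verify the consistency of the exceptional-piece descent: one must check that $S$ always contains the matching type-(2) triangular pieces demanded by the identity above (which should follow from arc-matching on $\partial\BThree$), that the resulting $Q$-coordinate assignment on $\T_n$ satisfies the $Q$-matching equations there, and that the formal Euler characteristic is genuinely preserved through this bookkeeping.
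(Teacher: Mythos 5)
Your overall strategy is the same as the paper's: assume $\T_{n+1,f}$ is not $1$-efficient, use the structure theorem (Proposition~\ref{prop:oneEfficientFails} and the classification of pieces in $\BThree$) to show $S_{n+1}\cap\BThree$ contains exceptional pieces of the form $q_{C01:23}+q_{D01:23}+t_{E2}+t_{E3}$, transport back across the $2$-$3$ move to an \emph{immersed} normal surface in $\T_n$ using the identity $q_{A03:12}+q_{B02:13}+q_{A01:23}+q_{B01:23}=q_{C01:23}+q_{D01:23}+t_{E2}+t_{E3}+(t_{C3}+t_{D2})$, and then derive a contradiction from the Euler-characteristic-via-angle-structure formula.

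The one genuine gap is exactly the point you flag at the end: you assert that $S_{n+1}$ must already contain the matching type-(2) triangular pieces $(t_{C3}+t_{D2})$ ``by arc-matching on $\partial\BThree$.'' This is not true in general, and is not what the paper does. There is no reason that the normal surface $S_{n+1}$, as given, carries enough copies of $(t_{C3}+t_{D2})$ to pair with every exceptional quad piece --- the arcs those two kinds of pieces cut on $\partial\BThree$ need not balance. The paper resolves this by \emph{modifying the surface}: it replaces $S_{n+1}$ by $S'_{n+1}=S_{n+1}\sqcup(\text{copies of a vertex-linking boundary torus})$, and a boundary torus contributes many triangular pieces --- in particular $(t_{C3}+t_{D2})$ --- without changing Euler characteristic, since $\chi(T^2)=0$. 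Adding enough such tori makes the counts match, and only then can the union descend to a well-defined immersed normal surface in $\T_n$ with $\chi=\chi(S_{n+1})\ge 0$. Without this step (or some substitute for it) your ``transport'' $\tilde S$ is not guaranteed to exist, so the contradiction isn't reached. Everything else in your write-up, including the Gauss--Bonnet bookkeeping that the descent preserves Euler characteristic (quads carry $-\alpha(q)/\pi$, triangles carry $0$), matches the paper's argument.
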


\begin{proof}
Suppose $\T_{n+1,f}$ is not 1-efficient. Then there is a closed embedded 
normal surface $S_{n+1}$ of $\T_{n+1,f}$ with non-negative Euler characteristic. 

Just as above, $S_{n+1} \cap \BThree$ must contain pieces of the form below, 
otherwise there is a closed embedded surface $S_n$ of $\T_n$ with 
non-negative Euler characteristic:

$q_{C01:23}+q_{D01:23}+t_{E2}+t_{E3}$,
$q_{D01:23}+q_{E01:23}+t_{C2}+t_{C3}$, 
$q_{E01:23}+q_{C01:23}+t_{D2}+t_{D3}$,
and $(q_{C01:23}+q_{D01:23}+q_{E01:23})$.

By an identical argument to the one used in 
Proposition \ref{prop:oneEfficientFails}, we can rule out the annuli of 
the form  $(q_{C01:23}+q_{D01:23}+q_{E01:23})$. Thus, $S_{n+1} \cap \BThree$ 
contains exactly one of the following pairs of quads:
$q_{C01:23}+q_{D01:23}+t_{E2}+t_{E3}$,
$q_{D01:23}+q_{E01:23}+t_{C2}+t_{C3}$, or 
$q_{E01:23}+q_{C01:23}+t_{D2}+t_{D3}$.
 
Without loss of generality, assume it is $q_{C01:23}+q_{D01:23}+t_{E2}+t_{E3}$. 
Create a new surface $S'_{n+1}$, by adding a boundary torus to $S_{n+1}$ until 
$S_{n+1} \cap \BThree$ contains the same number of copies of $(t_{C3}+t_{D2})$ 
as $q_{C01:23}+q_{D01:23}+t_{E2}+t_{E3}$. Note that $\chi(S_{n+1})=\chi(S'_{n+1})$. 
The image of $S'_{n+1}$ under the 3-2 move, is an immersed normal surface 
$S_n$ with the same boundary arcs on 
$\BTwo$ as $\partial \BThree \cap S'_{n+1}$, with the property that for 
each copy of $q_{C01:23}+q_{D01:23}+t_{E2}+t_{E3} + (t_{C3}+t_{D2})$ in 
$\BThree \cap S'_{n+1}$ there is a copy of 
$q_{A03:12} + q_{B02:13}+q_{A01:23} + q_{B01:23}$. In the presence of any 
generalised angle structure on $T_n$, each quad is assigned an Euler 
characteristic, while the Euler characteristic of the triangles vanishes. 
Moreover, this Euler characteristic agrees with the Euler characteristic 
of $S'_{n+1}$. However, if $\T_n$ admits a strict angle structure, then 
each quad has a negative contribution to Euler characteristic while 
each triangle does not contribute to Euler characteristic. This contradicts 
$\chi(S_{n+1})\geq 0.$
\end{proof}

\begin{rmk}
The analogous statement to Proposition \ref{prop:strictAngToOneEff} for 
0-2 moves does not hold. 
In fact, `eLAkbbcdddhjac' has a degree 2 edge and is not 1-efficient. 
However, performing a 2-0 move along this edge results 
in the  1-efficient (in fact, geometric) triangulation of the figure 8 
sister manifold, `cPcbbbdxm'.
\end{rmk}

\begin{ex}
We next provide an example of how a geometric triangulation can `degenerate' 
to a triangulation which is not 1-efficient in two 2-3 moves:  
The triangulation `eLAkbccddhhnqw' is geometric, the 2-3 move along face 
5 gives `fLAMcbccdeemejman' which does not admit a strict angle structure, 
and the 2-3 move along face 8 gives `gLALQaccefffbgfgmqt' which is not 
1-efficient. 

However, perhaps the simplest example of a non 1-efficient triangulation 
arising from a 2-3 move on a 1-efficient triangulation has already 
been discussed in Table \ref{tab:ppTrefoil}. In fact the first Pachner move 
in that path a 2-3 move along face 2 of `cPcbbbadh' resulting 
in `dLQacccbgjs' breaks 1-efficiency. 
We again point out that there is an immersed normal surface in `cPcbbbadh' 
determined by the quads $q_{0:02:13}$, $q_{0:03:12}$, $q_{1:02:13}$, 
and $q_{1:02:13}$ (face 2 is $0(023)=1(023)$), which maps to an embedded normal 
torus in `dLQacccbgjs'. (Note: Regina's labelling routine is different 
from that of the lists in this section.)
\end{ex}


\appendix

\section{Generalised angle structures and the Euler characteristic}
\label{gen_angle_euler}

The relationship between angle structures and geometric structures is often 
described by saying that a generalised angle structure is a solution to the 
``linear part'' of Thurston's gluing equations (see \cite[\sect 4]{Th}). 
However, often this simplification involves only considering the edge 
equations, as in \cite{LT}. 
For the discussion below, we will pay special attention to the 
completeness conditions coming from the 
holonomies of the peripheral curves. 

Let $\T$ be an ideal triangulation with $n$ tetrahedra of a compact, 
orientable manifold 3-manifold $M$ with boundary consisting of $r$ tori, 
and let $\quads$ denote the set of all quad types in $\T$.

Recall, from Section \ref{angle_struct_sect}, that a 
{\em generalised angle structure} on $\T$ is a function 
$\alpha: \quads \to \R$ satisfying the equations (\ref{ang_struct1}) and 
(\ref{ang_struct2}), which represent the imaginary part of Thurston's 
logarithmic tetrahedral and edge equations (\ref{log_tet_eqns}), 
(\ref{log_edge_eqns}).

As described in \cite[\sect 4.3.2]{Th}, the logarithm of the derivative 
of the holonomy $H'(\gamma)$ for each peripheral curve $\gamma$ must also 
vanish for a {\em complete} hyperbolic structure. Writing down these 
conditions for a pair of simple closed peripheral curves generating 
the fundamental group of each boundary component gives the logarithmic 
cusp equations (\ref{log_cusp_eqns}).

The analogous condition for  angle structures is defined as follows.

\begin{defn}  
Given a generalised angle structure $\alpha$ on an ideal triangulation $\T$,
the {\em rotational holonomy} of a peripheral curve $\gamma$ is the 
imaginary part of the logarithm of the derivative of its holonomy:
$\rho_\alpha(\gamma) = \Im \log H'(\gamma)$.

If a generalised angle structure satisfies $\rho_\alpha(\gamma)=0$ for 
each peripheral curve $\gamma$, we say it has 
\emph{vanishing peripheral rotational holonomy}. 
\end{defn}

More concretely,  this means the vector 
$x =[ \alpha(q_j) ~ \alpha(q_j')~ \alpha(q_j'') ] \in \R^{3n}$ 
satisfies the linear equations given by the imaginary parts  of Thurston's 
logarithmic edge, tetrahedral and completeness equations 
(\ref{log_edge_eqns}),  (\ref{log_tet_eqns}), (\ref{log_cusp_eqns}).

In the next section, we will discuss how to assign Euler characteristics 
to quad disks in the presence of a generalised angle structure. 

\begin{prop}
\label{gen_ang_zero_periph_rot_hol}
Let $\T$ be an ideal triangulation of a compact manifold $M$ with boundary 
consisting of tori. Then there exist generalised angle structures on $\T$ with 
vanishing peripheral rotational holonomy.
\end{prop}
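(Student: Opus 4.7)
The plan is to recognise the existence of a generalised angle structure with vanishing peripheral rotational holonomy as a linear-algebra solvability question on $\R^{\quads} \cong \R^{3n}$, then verify the necessary compatibility condition using the Neumann-Zagier/Kang-Rubinstein/Luo-Tillmann package recalled in Sections 5--6. Concretely, such an $\alpha$ must satisfy $n$ tetrahedral equations $t_j(\alpha) = \pi$, $n$ edge equations $e_i(\alpha) = 2\pi$, and $2r$ cusp equations of the form $c_k^{\mu}(\alpha) = 0$, $c_k^{\lambda}(\alpha) = 0$ coming from $\Im \log H'(\gamma) = 0$ on a chosen basis of peripheral classes. The key structural observation is that the coefficient vectors of these $2n+2r$ linear functionals, viewed as vectors in $\R^{3n}$, are precisely the tetrahedral solutions $T_j$, edge solutions $E_i$, and peripheral solutions $M_k$, $L_k$ introduced in Section 6.

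By elementary linear algebra (dually, $M x = b$ is solvable iff $b \perp \Ker M^T$), the system is consistent iff for every tuple $(y, z, w^{\mu}, w^{\lambda}) \in \R^n \oplus \R^n \oplus \R^r \oplus \R^r$ with
\[
\sum_j y_j T_j + \sum_i z_i E_i + \sum_k (w_k^{\mu} M_k + w_k^{\lambda} L_k) = 0 \quad \text{in } \R^{3n},
\]
the corresponding weighted sum of right-hand sides also vanishes. The first step is to invoke the linear independence of $\{M_k, L_k\}_{k=1}^r$ from $\mathrm{span}\{E_i, T_j\}$ recorded in the paper following \cite[Theorem 3.1]{KR1}: this forces $w_k^{\mu} = w_k^{\lambda} = 0$ in any such relation, so the cusp right-hand sides drop out entirely, and the compatibility condition reduces to the claim that every relation $\sum_j y_j T_j + \sum_i z_i E_i = 0$ in $\R^{3n}$ forces $\sum_j y_j + 2\sum_i z_i = 0$.

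I would prove this remaining identity by lifting to triangular coordinates. Let $\tilde E_i, \tilde T_j \in \tilde N(\T;\R) \subset \R^{7n}$ be the Kang-Rubinstein generators of the full closed-normal solution space. Then $\tilde S := \sum_j y_j \tilde T_j + \sum_i z_i \tilde E_i$ lies in the kernel of the forgetful projection $\tilde N(\T;\R) \to N(\T;\R)$. By Tollefson's theorem \cite{To} (recalled in the introduction and Section 6), this kernel is the $\R$-span of the peripheral tori $T^{\partial}_k$, one per cusp, so $\tilde S = \sum_k c_k T^{\partial}_k$ for some $c_k \in \R$. Applying the Luo-Tillmann formula $\chi(\tilde S) = -2\sum_i z_i - \sum_j y_j$ \cite{LT}, which agrees with the topological Euler characteristic on closed normal classes, yields
\[
-2\sum_i z_i - \sum_j y_j \;=\; \sum_k c_k \, \chi(T^{\partial}_k) \;=\; 0,
\]
since each peripheral torus is an actual torus with $\chi = 0$.

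The main obstacle will be the last step: one must verify cleanly that the topological Euler characteristic really is given by the Luo-Tillmann linear formula on the full $\R$-span of $\{\tilde E_i, \tilde T_j\}$ (not just on integer points), and that Tollefson's theorem descends to a statement about kernels of a real linear map. Both are essentially built in to the linear extension of \cite{To,LT}, but worth citing carefully; modulo these, the argument is routine linear algebra over $\R$.
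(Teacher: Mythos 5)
Your proof is correct and follows the same overall structure as the paper's: recast the existence question as solvability of the linear system $Ax=b$ with rows $E_i, T_j, M_k, L_k$, use the basic duality $b \perp \Ker(A^T)$, kill the peripheral coefficients via linear independence of $\{M_k,L_k\}$ from $\mathrm{span}\{E_i,T_j\}$, and reduce to the Euler-characteristic identity $\sum_j y_j + 2\sum_i z_i = 0$. The only difference is one of detail: where the paper at that point simply cites the well-definedness of the formal Euler characteristic (deferring to \cite{LT}), you unpack that step explicitly by lifting to $\R^{7n}$, invoking Kang--Rubinstein linear independence of $\{\tilde E_i, \tilde T_j\}$ and Tollefson's identification of the kernel of the forgetful map with the span of the peripheral tori $T^{\partial}_k$ (each of which has $\chi = 0$), which is a careful rendering of the same ingredient.
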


\begin{proof} 
We closely follow the argument for Lemma 10 in \cite{LT}. 
Assume that $\T$ has $n$ tetrahedra and $\bd M$ consists of $r$ tori. 
A vector $x \in \R^{3n}$ defines a generalised angle structure with vanishing 
peripheral rotational holonomy if and only if 
$$
Ax=b,
$$ 
where
$A$ is the $(2n+2r) \times 3n$ matrix with rows $E_i, T_j, M_k, L_k$
and $b \in \R^{2n+2r}$ is the vector with $n$ entries $2\pi$, followed by $n$ 
entries $\pi$, then $2r$ zero entries.

From linear algebra, the equation $Ax=b$ has a solution if and only if 
$b \in \Im(A)=\Ker(A^T)^\perp$
if and only if 
$$
z \in \R^{2n+2r} \text{ and } A^Tz=0 \Rightarrow z^T b = 0.
$$
Now
$z= \begin{bmatrix} x_i & y_j &p_k &q_k\end{bmatrix}^T \in \Ker(A^T)$
if and only if
$S= \sum_i x_i E_i + \sum_j y_j T_j + \sum_k p_k M_k + \sum_k q_k L_k =0$,
i.e. $S$ gives the trivial normal class in $Q(\T;\R)$. But this implies that 
$\bd S=0$ so $p_k=q_k=0$ for all $k$, hence 
$S= \sum_i x_i E_i + \sum_j y_j T_j=0$.
Further $-\chi(S)=\sum_i 2 x_i + \sum_j  y_j =0$, hence  $z^T b = 0$.  
This proves the result.
\end{proof}

\subsection{Euler characteristic from generalised angle structures}

The following result shows how to compute the Euler characteristic of an 
embedded spun normal surface using any generalised angle structure. 
This result is well-known for closed normal surfaces; see for 
example \cite[Lemma 15]{LT}.

\begin{prop}
\label{prop_euler_from_ang_str}
Let $\T$ be an ideal triangulation of a cusped 3-manifold $M$, and 
let $\alpha : \quads \to \R$ be a generalised angle structure on $\T$. 
Then the Euler characteristic of any embedded spun normal surface 
$S \in Q(\T;\R)$ is given by
\be 
\label{euler_from_gen_ang_struct}
\chi(S) = 
- \sum_q \frac{\alpha(q) x_q}{\pi} + \frac{ \rho_\alpha(\bd S)}{2\pi},
\ee
where the sum is over all normal classes $q$ in $\T$, $x_q$ is the normal 
coordinate of $q$, $\alpha(q) \in \R$ denotes the angle assigned to the 
two edges facing $q$, and $\rho_\alpha(\bd S)$ denotes the sum of the 
rotational holonomies of the boundary components of $S$, 
oriented as in $[\bd S]$.
\end{prop}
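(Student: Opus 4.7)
The plan is to exploit linearity in $S$ of both sides of (\ref{euler_from_gen_ang_struct}) and reduce the proof to a verification on a spanning set of $Q(\T;\R)$. Since for embedded closed and spun normal surfaces the formal Euler characteristic agrees with the topological one, it is enough to prove the stated identity as a formal identity of linear functionals on $Q(\T;\R)$: $\chi$ is linear by definition; the quad-coordinate functional $S \mapsto \sum_q \alpha(q) x_q$ is manifestly linear; and $\rho_\alpha \circ \bd$ is linear, as the composition of the linear boundary map $\bd : Q(\T;\R) \to H_1(\bd M;\R)$ with the linear rotational holonomy functional on $H_1(\bd M;\R)$. By the results of Section \ref{ZNST}, the space $Q(\T;\R)$ is spanned by the edge solutions $E_i$, tetrahedral solutions $T_j$, and peripheral curve solutions $M_k, L_k$, so it suffices to check the formula on each of these generators.

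For edge and tetrahedral solutions the boundary vanishes and the identity reduces to the Luo--Tillmann closed-surface formula \cite[Lemma 15]{LT}, which I would verify in two one-line computations. For $E_i$: the edge angle equation (\ref{ang_struct2}) gives $\sum_q \alpha(q)(E_i)_q = 2\pi$, so both sides of (\ref{euler_from_gen_ang_struct}) equal $\chi(E_i) = -2$. For $T_j$: the tetrahedral angle sum (\ref{ang_struct1}) gives $\sum_q \alpha(q)(T_j)_q = \pi$, so both sides equal $\chi(T_j) = -1$.

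The new content is the case of a peripheral curve solution $M_k$, for which $\chi(M_k) = 0$ and $\bd M_k = 2\mu_k$. The identity then collapses to
\[
\sum_q \alpha(q)(M_k)_q \;=\; \rho_\alpha(\mu_k),
\]
i.e.\ the cusp-equation sum evaluated on the quad coordinates of $M_k$ equals the rotational holonomy of $\mu_k$. I would establish this by appealing to Neumann's construction of $M_k$ recalled in Section \ref{subsect:gen_q_normal}: each oriented normal arc of a normal representative of $\mu_k$ on $T_h$ winds around one edge of a tetrahedron facing a quad type $q$, and contributes $\pm 1$ to the coordinate $(M_k)_q$, with sign given by the winding direction as viewed from the cusp. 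Developing the angle structure around the cusp along $\mu_k$, each such normal arc contributes $\pm \log z^{(q)}$ to $\log H'(\mu_k)$ with exactly the same signs, so taking imaginary parts produces the displayed identity. The argument for $L_k$ is identical.

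The main delicate point will be reconciling three sets of sign and orientation conventions: Neumann's anticlockwise/clockwise convention for normal arc windings on $T_h$; the orientation convention in Thurston's logarithmic cusp equations that define $\log H'(\mu_k)$; and the factor of $2$ in the boundary map $\bd M_k = 2\mu_k$. Once these have been reconciled --- following the treatment in \cite{N} and \cite{Ti} for the hyperbolic case, and extending without change to generalised angle structures since only imaginary parts of Thurston's equations are used --- the verification on generators completes the proof.
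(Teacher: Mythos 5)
Your proposal takes a genuinely different route from the paper, but it contains a circularity that I believe is fatal in this context.

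The paper's proof is a direct geometric computation: one truncates the spun normal surface $S$ to a compact subsurface $S_0$ whose tails have been cut off along normal-triangle boundary, observes $\chi(S)=\chi(S_0)$, and applies a combinatorial Gauss--Bonnet theorem (Proposition \ref{comb_Gauss-Bonnet}) to the induced cell decomposition, with the generalised angle structure supplying a formal angle structure. Each quad contributes $-\alpha(q)/\pi$, each triangle contributes $0$, and the boundary term $\frac{1}{2\pi}\sum_i(\pi-\theta_i)$ is identified with $\rho_\alpha(\bd S)/2\pi$ by projecting the boundary collar of triangles to a horosphere. This argument computes the \emph{topological} Euler characteristic of $S$ from scratch; it does not presuppose any coordinate formula for $\chi$.

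Your proof instead asserts at the outset that ``for embedded closed and spun normal surfaces the formal Euler characteristic agrees with the topological one'' and then reduces everything to an identity of linear functionals on $Q(\T;\R)$, checked on the generators $E_i$, $T_j$, $M_k$, $L_k$. But the assertion you invoke is, for spun normal surfaces, exactly what this proposition is the paper's proof of: the parenthetical immediately following the Definition of the formal Euler characteristic in Section 6 explicitly defers the spun normal case to Appendix \ref{gen_angle_euler}, and Proposition \ref{prop_euler_from_ang_str} is the entire content of that appeal. So you are assuming the result you are trying to prove. Unless you supply an independent argument that the coordinate expression $-\sum_i 2x_i - \sum_j y_j$ equals the topological Euler characteristic of an embedded spun normal surface (some such fact may be extractable from Tillmann's paper, but neither this paper nor your proposal does that work), the linearity reduction has no base case to stand on. Put differently: your argument proves that the right-hand side of \eqref{euler_from_gen_ang_struct} equals the coordinate linear functional on all of $Q(\T;\R)$, which is a correct and clean derivation of the \emph{Corollary} and of \eqref{euler_from_basic_solutions}, but it does not prove that either expression computes $\chi_{\mathrm{top}}(S)$ for an actual embedded spun normal surface.

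The remaining parts of your proposal are sound and closely parallel what the paper does \emph{after} this proposition. Your verifications on $E_i$ and $T_j$ via \eqref{ang_struct2} and \eqref{ang_struct1} are correct. Your treatment of the $M_k$ case — that $\sum_q \alpha(q)(M_k)_q = \rho_\alpha(\mu_k)$ by unwinding the definitions of the cusp-equation coefficients and the peripheral curve solution — is exactly the computation the paper performs in the Corollary to show $\chi(M_k)=0$, and the sign/orientation caveats you flag are real but do work out as in Neumann and Tillmann. What is missing is the one genuinely geometric step: the combinatorial Gauss--Bonnet computation that produces the formula for the topological Euler characteristic of an embedded spun normal surface in the first place. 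Without that, the rest is an identity between two linear functionals that might, a priori, both fail to compute $\chi_{\mathrm{top}}$.
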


This result follows from a combinatorial version of the Gauss-Bonnet 
theorem, as used by Thurston in  \cite[Section 13.7]{Th}.
Let $S$ be a compact surface, possibly with boundary and let $\mathcal C$ 
be a cell decomposition of  $S$ into finitely many polygons.
Then a {\em formal angle structure} (or {\em combinatorial angle structure})
on $\mathcal C$ is an assignment of real numbers (``angles'')  to the 
corners of the polygons such that the sum of angles is $2\pi$ at each 
internal vertex.  

We compute the Euler characteristic of $S$ as 
$\chi(S) = v-e+f$, where $v,e,f$ are the numbers of vertices, edges and 
faces in the cell decomposition.
Now each $n$-gon $P$ contributes $\frac{1}{2\pi} 
\text{(sum of corner angles)}$ to $v$, 
$n \times \frac{1}{2}=n/2$ to $e$ and $1$ to $f$,
so contributes a {\em local Euler characteristic} 
\be
\label{chi_polygon}
\chi_P = \frac{1}{2\pi} \text{(sum of corner angles in $P$)} - \frac{n}{2}+1
\ee
to $\chi(S) = v-e+f$.

Adding up the terms $\chi_P$ over all polygons $P$ gives a contribution to 
$\chi(S)$ of $+1$ for each internal vertex, $-1$ for each internal edge
and $+1$ for each face.   Assume that there are also $k$ boundary vertices 
and, hence, $k$ boundary edges. Then to obtain $\chi(S)$ we need to add 
additional contributions of $\frac{1}{2\pi}(2\pi-\theta_i)$ where 
$\theta_i$ is the sum of internal angles at the $i$th boundary vertex 
and $-1/2$ for each boundary edge. This  gives
$$
\chi(S)= \sum_P \chi_P +  \frac{1}{2\pi} 
\sum_{i=1}^k (2\pi-\theta_i) - \frac{k}{2} =  \sum_P \chi_P + \frac{1}{2\pi} 
\sum_{i=1}^k (\pi-\theta_i),
$$
and last term 
\be 
\label{geod_curb_bd} 
k_g(\bd S) = \sum_{i=1}^k (\pi-\theta_i)
\ee
represents the {\em total geodesic curvature} of the boundary of $S$. Hence

\begin{prop}[Combinatorial Gauss-Bonnet]
\label{comb_Gauss-Bonnet}
Let $S$ be a compact surface with a finite cell decomposition 
$\mathcal C$ equipped with a formal angle structure. 
Then the Euler characteristic of $S$ is given by
\be
\chi(S) = \sum_P \chi_P + \frac{k_g(\bd S)}{2\pi},
\ee
summed over all polygons $P$, where $\chi_P$ is defined by 
(\ref{chi_polygon}) and where $k_g(\bd S)$ denotes the total geodesic 
curvature of $\bd S$. 
\end{prop}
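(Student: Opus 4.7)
The plan is to perform the bookkeeping already sketched in the paragraphs immediately preceding the proposition, and then verify that the correction terms accumulated at the boundary package themselves as exactly the total geodesic curvature $k_g(\bd S)$. Concretely, I will start from the naive Euler count $\chi(S)=v-e+f$, split vertices and edges of $\mathcal C$ into interior and boundary contributions, and compare this with the polygon-by-polygon sum $\sum_P \chi_P$.

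First, expand $\chi_P$ from (\ref{chi_polygon}) and sum over all polygons. The face contribution $+1$ for each $P$ immediately produces the term $f$. The $-n_P/2$ term, summed over $P$, counts each interior edge twice (contributing $-1$ each) and each boundary edge once (contributing $-1/2$ each), yielding $-e_{\mathrm{int}}-\tfrac12 e_{\bd}$. The corner angle term, summed over $P$ and grouped by vertex, gives $\tfrac1{2\pi}$ times the total angle sum at each vertex; at an interior vertex this equals $1$ by the defining condition of a formal angle structure, producing $v_{\mathrm{int}}$, while at the $i$th boundary vertex it gives $\theta_i/(2\pi)$ where $\theta_i$ is the sum of interior corner angles there. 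Thus
\[
\sum_P \chi_P \;=\; v_{\mathrm{int}} + \sum_{i=1}^{k}\frac{\theta_i}{2\pi} \;-\; e_{\mathrm{int}} \;-\; \tfrac12 e_{\bd} \;+\; f.
\]

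The next step is to subtract this from $\chi(S)=v_{\mathrm{int}}+v_{\bd}-e_{\mathrm{int}}-e_{\bd}+f$, which yields
\[
\chi(S)-\sum_P \chi_P \;=\; v_{\bd} - \sum_{i=1}^k \frac{\theta_i}{2\pi} - \tfrac12 e_{\bd}.
\]
Now I use the basic fact that on each boundary circle of the cell decomposition the number of boundary vertices equals the number of boundary edges, so $v_{\bd}=e_{\bd}=k$. Substituting $v_{\bd}=\tfrac12 e_{\bd}+\tfrac12 k$ (or equivalently writing $v_{\bd}-\tfrac12 e_{\bd}=k/2$) turns the right hand side into $\tfrac{1}{2\pi}\sum_{i=1}^k(\pi-\theta_i)$, which is precisely $k_g(\bd S)/(2\pi)$ by the definition (\ref{geod_curb_bd}). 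This gives the identity asserted.

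There is no real obstacle here; the argument is a direct accounting exercise. The one point to be slightly careful about is the statement $v_{\bd}=e_{\bd}$ on $\bd S$: this uses that $\bd S$ is a finite disjoint union of circles, each of which is cellulated as a cyclic sequence of vertices and edges. One should also be precise that $\theta_i$ means the sum of \emph{interior} corner angles of $\mathcal C$ at the $i$th boundary vertex (corners of polygons whose vertex is that boundary vertex), which is what is used both in the definition of $\chi_P$ and in the geodesic curvature formula.
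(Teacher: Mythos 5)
Your proof is correct and follows essentially the same direct bookkeeping argument as the paper: expand $\sum_P \chi_P$, recognise that it recovers the interior contributions to $v-e+f$ exactly, and identify the boundary defect $v_{\bd} - \tfrac12 e_{\bd} - \tfrac{1}{2\pi}\sum_i\theta_i$ (with $v_{\bd}=e_{\bd}=k$) as $k_g(\bd S)/2\pi$. The only cosmetic difference is that you explicitly split $v$ and $e$ into interior and boundary parts before subtracting, whereas the paper adds the boundary correction terms directly to $\sum_P\chi_P$; the content is identical.
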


\begin{proof}[Proof of Proposition \ref{prop_euler_from_ang_str}]
Let $S$ be a (non-compact) embedded spun normal surface in $\T$ and choose 
a compact subsurface $S_0$ consisting of all quads and a finite number 
of normal triangles such that $S \setminus S_0$ is a collection of annuli 
spiralling out to the cusps of $M$. By adding an additional layer of 
triangles, if necessary,  we can assume that there is a neighbourhood of 
$\bd S_0$ consisting entirely of triangles. Projecting these normal 
triangles to a horosphere gives a layer of Euclidean triangles to the 
left of the boundary, and the rotational holonomy of $\bd S_0$ is the sum 
$$
\rho_\alpha(\bd S) = \rho_\alpha(\bd S_0) = \sum_{i=1}^k (\pi-\theta_i),
$$
where $\theta_i$ is the sum of triangle angles at the $i$th vertex on the 
boundary. 

Now $S_0$ has a cell decomposition into quads and triangles, and a 
generalised angle structure $\alpha$ on $\T$ assigns an ``angle'' (in $\R$) 
to each corner of each 2-cell in this decomposition, giving a formal angle 
structure on this cell decomposition. Now each 2-cell gives a contribution 
to the Euler characteristic as follows:
\begin{itemize}
\item
Each quad $q$ contributes 
$\chi(q) = \frac{1}{2\pi}\text{(angle sum)} - 1$. 
But the sum of angles in the quad is $2\pi$ minus twice the angle 
$\alpha(q)$ on the two edges facing $q$,
so we have $\chi(q) = - \alpha(q)/\pi$.
\item 
Each triangle $t$ 
contributes $\chi(t) = \frac{1}{2\pi}\text{(angle sum)} 2-3/2+1= 0$.  
\end{itemize}

Hence the combinatorial Gauss-Bonnet theorem gives
$$
\chi(S) =\chi(S_0) 
 = 
-\sum_q \frac{\alpha(q)x_q}{\pi}  + \frac{1}{2\pi} \sum_{i=1}^k (\pi-\theta_i) \\
= -\sum_q \frac{\alpha(q)x_q}{\pi} + \frac{\rho_\alpha(\bd S)}{2\pi}.
$$
\end{proof}

Since the boundary term vanishes for any angle structure with vanishing 
peripheral rotational holonomy, this gives the following
convenient way of computing the Euler characteristic:

\begin{cor} 
Let $\T$ be an ideal triangulation of a cusped 3-manifold $M$, and 
let $\alpha : \quads \to \R$ be a generalised angle structure on $\T$ with 
vanishing peripheral rotational holonomy. Then the Euler characteristic 
of any embedded spun normal surface $S \in Q(\T;\R)$ is given by
\be 
\chi(S) = - \sum_q \frac{\alpha(q)x_q}{\pi} .
\ee
\end{cor}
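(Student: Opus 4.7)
The plan is to derive this as an immediate consequence of Proposition \ref{prop_euler_from_ang_str}, which already expresses the Euler characteristic of an embedded spun normal surface $S$ in the presence of any generalised angle structure as
\[
\chi(S) = -\sum_q \frac{\alpha(q)\,x_q}{\pi} + \frac{\rho_\alpha(\bd S)}{2\pi}.
\]
So the only thing to verify is that the correction term $\rho_\alpha(\bd S)/(2\pi)$ vanishes under the hypothesis that $\alpha$ has vanishing peripheral rotational holonomy. Once this is established, the desired formula drops out by direct substitution.

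To handle the vanishing of the boundary term, I would first recall that the boundary of an embedded spun normal surface $S$ consists of a finite collection of closed curves running inside the boundary tori of $M$; each such component spirals around a cusp along a peripheral curve representing some class in $H_1(\bd M;\Z)$. Tracking the rotational holonomy, as in the proof of Proposition \ref{prop_euler_from_ang_str}, the quantity $\rho_\alpha(\bd S)$ is additive over these boundary components and depends only on their homology classes in $H_1(\bd M;\R)$ (this additivity/homological invariance is built into the definition: on any disc bounded by a curve the edge and tetrahedron equations force the angle defect to equal the sum of $(\pi-\theta_i)$ terms around the boundary, so $\rho_\alpha$ descends to a linear functional on $H_1(\bd M;\R)$).

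Since $\alpha$ has vanishing peripheral rotational holonomy, by definition $\rho_\alpha(\gamma)=0$ for any peripheral curve $\gamma$, in particular for each component of $\bd S$. Summing over components yields $\rho_\alpha(\bd S)=0$, and plugging this into the formula of Proposition \ref{prop_euler_from_ang_str} gives the stated identity.

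The main subtlety --- and really the only non-trivial point --- is to make sure the boundary term computed from the combinatorial Gauss--Bonnet argument in the proof of Proposition \ref{prop_euler_from_ang_str} genuinely coincides with the rotational holonomy $\rho_\alpha$ defined from the logarithmic cusp equations, and that this rotational holonomy is well-defined on the homology class $[\bd S]\in H_1(\bd M;\Z)$ (not just on the specific realisation of $\bd S$ as a union of normal arcs on horospherical cross sections). This is precisely the content of the geometric identification already used in the proof of Proposition \ref{prop_euler_from_ang_str}, namely that the horospherical picture of a tail of $S$ spiralling into a cusp computes $\sum_i(\pi-\theta_i)$ as the imaginary part of $\log H'(\gamma)$. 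Once this identification is taken as given, the corollary is immediate.
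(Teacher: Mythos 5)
Your proposal is correct and matches the paper's own reasoning: the paper proves this corollary in a single sentence, observing that the boundary term in Proposition \ref{prop_euler_from_ang_str} vanishes when $\alpha$ has vanishing peripheral rotational holonomy, exactly as you argue. The extra discussion you add (additivity of $\rho_\alpha$ over boundary components, homological invariance) is sound but not strictly needed, since the hypothesis $\rho_\alpha(\gamma)=0$ for every peripheral curve $\gamma$ applies directly to each component of $\bd S$, and the identification of the combinatorial boundary term with $\rho_\alpha(\bd S)$ is already established inside the proof of Proposition \ref{prop_euler_from_ang_str}.
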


\begin{defn}
\label{formal_euler_from_ang_str}
For a general Q-normal class $S \in Q(\T;\R)$ we take the formula 
(\ref{euler_from_gen_ang_struct}) as the definition of the 
{\em formal Euler characteristic} of $S$. This is a linear function 
$\chi: Q(\T;\R) \to \R$.
\end{defn}

\begin{rmk}
The independence of choice of angle structure follows directly from 
\cite[Lemma 15]{LT}. For the angle structures in this paper, 
$(A,\kappa) =(0,0)$ and our $\alpha(q)$ is equal to one half of $ A(q)$ 
as in \cite{LT}. 
\end{rmk}

Another consequence of these results is 

\begin{cor}
Each peripheral curve solution has vanishing formal Euler characteristic.
Hence 
\be
\label{euler_from_basic_solutions}
\chi\left(\sum_i x_i E_i + \sum_i y_j T_j 
+ \sum_k (p_k M_k  + q_k L_k) \right) = - \sum_i 2 x_i - \sum_j y_j,
\ee
since $\chi(E_i)=-2$ and $\chi(T_j)=-1$.
\end{cor}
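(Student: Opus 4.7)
The plan is to apply the angle-structure formula of Definition \ref{formal_euler_from_ang_str} directly to each peripheral basis element $M_k$ and $L_k$, then combine $\chi(M_k)=\chi(L_k)=0$ with linearity and the known values $\chi(E_i)=-2$, $\chi(T_j)=-1$ to obtain the displayed identity \eqref{euler_from_basic_solutions}.

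For a fixed generalised angle structure $\alpha$, the formula \eqref{euler_from_gen_ang_struct} gives
\[
\chi(M_k) \;=\; -\sum_{q\in\quads}\frac{\alpha(q)\,(M_k)_q}{\pi}\;+\;\frac{\rho_\alpha(\partial M_k)}{2\pi}.
\]
Since $\partial M_k = 2\mu_k$, the boundary term simplifies to $\rho_\alpha(\mu_k)/\pi$, and the heart of the argument is the identification
\[
\sum_{q\in\quads}\alpha(q)\,(M_k)_q \;=\; \rho_\alpha(\mu_k),
\]
after which the two contributions cancel exactly; the same computation applied to $L_k$ gives $\chi(L_k)=0$. To establish this identification, I would compare two combinatorial descriptions of the same signed count. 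By Neumann's construction of peripheral solutions recalled in Subsection \ref{subsect:gen_q_normal}, the quad coordinate $(M_k)_q$ is the signed count of normal arcs of an oriented normal representative of $\mu_k$ winding around the edge of a tetrahedron opposite the quad type $q$, with sign determined by whether the winding is anticlockwise or clockwise as viewed from the cusp. By the defining cusp equation \eqref{log_cusp_eqns}, the rotational holonomy $\rho_\alpha(\mu_k)=\Im\log H'(\mu_k)$ is precisely the $\alpha$-weighted sum of exactly those same signed winding numbers, which is the identification sought. Linearity of $\chi$ on $Q(\T;\R)$ then yields \eqref{euler_from_basic_solutions}.

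The main obstacle is bookkeeping around orientation and normalisation conventions: one must verify that Neumann's ``anticlockwise $=+1$'' sign agrees with the convention implicit in the logarithmic cusp equation, and that the boundary factor $2$ in $\partial M_k=2\mu_k$ is not accompanied by a compensating factor on the quad-coordinate side, so that the cancellation truly is exact. If a sign or factor discrepancy arose, the safer fallback would be to fix $\alpha$ with vanishing peripheral rotational holonomy via Proposition \ref{gen_ang_zero_periph_rot_hol}: the boundary term then disappears outright, and the same combinatorial identification forces $\sum_q\alpha(q)(M_k)_q=0$ independently, still yielding $\chi(M_k)=0$.
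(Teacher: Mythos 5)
Your proposal is correct and follows essentially the same route as the paper's proof: plug the peripheral solution into formula \eqref{euler_from_gen_ang_struct}, use $\bd M_k = 2\mu_k$, and observe that the angle-weighted sum of quad coordinates of $M_k$ is exactly $\rho_\alpha(\mu_k)$ (the paper treats this as immediate ``by definition'' since both sides are read off the same row of the gluing matrix), so the two terms cancel. Your more detailed bookkeeping of the sign convention is a fair point to raise, but the paper's definitions are already set up so that no discrepancy arises, and the final reduction to linearity plus $\chi(E_i)=-2$, $\chi(T_j)=-1$ is identical.
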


\begin{proof} 
Let $\alpha$ be a generalised angle structure on $\T$, and
let $S$ be the  peripheral curve solution given by a curve with homology 
class $c \in H_1(\bd M;\Z)$.
Then we have $\bd S = 2 c$ (see Section \ref{subsect:gen_q_normal}).
and, by definition, $\rho_\alpha(c)= \sum_{q} \alpha(q) x_q$ where $x_q$ are 
the quad coordinates of $S$.
Hence
$$
\chi(S) = - \sum_{q} \frac{\alpha(q) x_q}{\pi} + \frac{ \rho_\alpha(\bd S)}{2\pi} 
= -  \frac{\rho_\alpha(c)}{\pi} + \frac{ \rho_\alpha(\bd S)}{2\pi} =0.
$$
\end{proof}

\begin{rmk}
We could also use  the formula (\ref{euler_from_basic_solutions}) to define 
the formal Euler characteristic of a general $Q$-normal class $S \in Q(\T;\R)$.
\end{rmk}

\begin{ex} 
For the figure eight knot complement with its triangulation given in 
Example \ref{ex:fig8}, the angle structure equations have general 
solution of the form 
$$
\alpha= (\alpha_0+t_1 \alpha_1 + t_2 \alpha_2 + t_3 \alpha_3)\pi, 
\text{ with } t_i \in \R
$$
where $\alpha_0=(-1, 2, 0, 1, 0, 0), \alpha_1=(2, -2, 0, -1, 0, 1), 
\alpha_2=(1, -1, 0, -1, 1, 0),\alpha_3=(1, -2, 1, 0, 0, 0)$.
This gives rotational holonomies for the meridian and longitude
$$
\rho_\alpha(M)= (-1 + t_1 + t_2 + t_3)\pi  \text{ and }  \rho_\alpha(L)
= (2 - 4 t_1 - 2 t_2)\pi.
$$

Now there is an embedded spun normal once punctured Klein bottle with quad 
coordinates 
$K=(0,0,2,0,1,0)$ with boundary $\bd K = 4 \mu + 1 \lambda$.  
The general formula (\ref{euler_from_gen_ang_struct}) for the Euler 
characteristic gives
$$
\chi(K) = - \sum_q \frac{\alpha(q)}{\pi} + \frac{ \rho_\alpha(\bd S)}{2\pi}
=-\frac{1}{\pi} \alpha\cdot K + \frac{1}{2\pi} \alpha \cdot(4M+L) 
= (-t_2-2t_3)+(-1+t_2+2t_3) = -1
$$
as expected.

Alternatively we can write $K= 2M + \frac{1}{2} L  + 1 T_2$, hence 
$\chi(K)=-1$ by formula (\ref{euler_from_basic_solutions}).
\end{ex}

In fact, finding angle structures can be seen as a dual problem to the 
existence of surfaces of non-negative Euler characteristic. To make this 
precise, we first remind the reader of a key tool, Farkas' Lemma. 

\begin{thm}[Farkas' Lemma]
If $A$ is a real $m\times n$ matrix and $b \in \R^m$, and $\cdot$ the 
standard Euclidean inner product on $\R^m$, then the following holds:

$\{x \in \R^n \mid Ax=b, x>0 \}$ is non-empty if and only if for all 
$z\in \R^m$ such that $A^Tz \neq 0$ and $A^Tz\geq0$, $z\cdot b>0$.

Here $x \geq 0$ (respectively $x > 0$) indicates that all coordinates 
of $x$ are non-negative (respectively positive). 
\end{thm}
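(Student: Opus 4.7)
My plan is to handle the two directions separately: the forward direction is immediate from a one-line computation, while the reverse direction requires the classical Farkas Lemma together with linear programming duality.

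For the forward ($\Rightarrow$) direction, I would start from a strictly positive $x \in \R^n$ with $Ax = b$, and for any $z$ with $A^T z \geq 0$ and $A^T z \neq 0$, compute
$$z \cdot b \;=\; z^T(Ax) \;=\; (A^T z)^T x \;=\; \sum_{j=1}^n (A^T z)_j\, x_j.$$
Since $x_j > 0$ for every $j$ and every $(A^T z)_j \geq 0$ with at least one coordinate strictly positive (because $A^T z$ is semipositive and nonzero), this sum is strictly positive.

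For the reverse ($\Leftarrow$) direction, I would argue by contrapositive and then invoke LP duality. The key preliminary observation is that a strictly positive solution to $Ax = b$ exists if and only if for every index $i$ there is a non-negative solution $x^{(i)}$ with $x^{(i)}_i > 0$: the averaged vector $\frac{1}{n}\sum_i x^{(i)}$ is then componentwise positive and still satisfies the linear system. Thus, failure of strict feasibility splits into two cases: either the cone system $\{Ax = b,\, x \geq 0\}$ is itself infeasible, or there exists an index $i^\ast$ such that every non-negative solution has $x_{i^\ast} = 0$. In the first case, the classical Farkas Lemma, obtained by separating $b$ from the closed convex cone $\{Ax : x \geq 0\} \subset \R^m$, produces $z$ with $A^T z \geq 0$ and $z \cdot b < 0$. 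In the second case, I would apply LP duality to the linear program that maximises $x_{i^\ast}$ subject to $Ax = b$, $x \geq 0$: the primal is feasible with optimum $0$, so strong duality provides a dual $z$ with $A^T z \geq e_{i^\ast}$ and $b \cdot z \leq 0$; this $z$ has $A^T z \geq 0$, $A^T z \neq 0$ (its $i^\ast$-coordinate is at least $1$), and $z \cdot b \leq 0$, which is precisely the required certificate violating the right-hand side.

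The main obstacle is ensuring in the first case that the separating vector furnished by Farkas also satisfies $A^T z \neq 0$, since a priori the classical Farkas certificate can have $A^T z = 0$ (this happens exactly when $b$ fails to lie in the column space of $A$). My plan for handling this is to pass to the enlarged cone $\{(Ax,\, \mathbf{1}^T x) : x \geq 0\} \subset \R^{m+1}$, so that a separating hyperplane necessarily has nontrivial component on the extra coordinate; equivalently, one can first reduce to the case $b \in \mathrm{image}(A)$, which is automatic in the intended applications to angle structures (cf.\ the kernel analysis in Proposition \ref{gen_ang_zero_periph_rot_hol}). Once $A^T z \neq 0$ is secured in both cases, the two sub-arguments combine to complete the contrapositive.
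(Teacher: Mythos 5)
The paper states this as a classical lemma and supplies no proof of its own --- it is invoked as a reminder of a standard tool for the subsequent Proposition on strict angle structures --- so there is no paper proof to compare against.

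Your argument is a correct and standard textbook proof once one adds the implicit hypothesis $b \in \mathrm{image}(A)$. The forward direction is exactly right. In the reverse direction, the averaging observation that reduces strict feasibility of $\{Ax=b,\,x\ge 0\}$ to coordinatewise feasibility is correct, and your two-case split (affine Farkas certificate when the nonnegative system is infeasible; LP strong duality applied to $\max x_{i^*}$ when some coordinate is forced to zero) is sound. In Case 2 strong duality actually gives $b\cdot z = 0$, which is even better than the $\le 0$ you need.

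Your worry that the Case 1 certificate might have $A^Tz=0$ is well-founded, and the statement as printed is in fact false without the hypothesis $b\in\mathrm{image}(A)$. Take $A = \begin{pmatrix} 1 & -1 \\ 0 & 0 \end{pmatrix}$ and $b=(0,1)^T$: then $A^Tz = (z_1,-z_1)^T$, so there is no $z$ with $A^Tz\ge 0$ and $A^Tz\ne 0$, making the right-hand condition vacuously true, while $\{x : Ax=b,\ x>0\}$ is empty. This also shows that no cone-enlargement trick such as your $\{(Ax,\mathbf{1}^Tx):x\ge 0\}$ proposal can rescue the unrestricted statement --- the equivalence simply fails --- so the two ideas you present as ``equivalently'' are not in fact interchangeable. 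The honest fix is the reduction you mention: assume $b\in\mathrm{image}(A)$, which makes the Case~1 certificate automatically nonzero, since $A^Tz=0$ and $b=Ax_0$ would force $z\cdot b = (A^Tz)^Tx_0 = 0$, contradicting $z\cdot b<0$. This hypothesis does hold in the paper's application, since Proposition \ref{gen_ang_zero_periph_rot_hol} first establishes that $Ax=b$ is solvable for the relevant gluing matrix. With that caveat made explicit, your proof is complete.
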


By using Farkas' Lemma, we can mimic the above argument for 
Theorem \ref{gen_ang_zero_periph_rot_hol} to get necessary and 
sufficient conditions for the existence of strict angle structures with 
vanishing peripheral rotational holonomy. 

\begin{prop}
Let $\T$ be an ideal triangulation of a compact manifold $M$ with 
boundary consisting of tori. Then there exist a strict angle structure 
on $\T$ with vanishing peripheral rotational holonomy if and only if 
there exists no Q-normal class $0 \ne Q \in Q(\T;\R_+)$ with $\chi(S)\ge 0$ 
if and only if $0 \ne Q \in Q(\T;\R_+)$ implies $\chi(S)< 0$.
\end{prop}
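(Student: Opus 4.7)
The plan is to apply Farkas' Lemma to the linear system already set up in Proposition \ref{gen_ang_zero_periph_rot_hol}, but now with a strict positivity requirement on $x$. A strict angle structure on $\T$ with vanishing peripheral rotational holonomy is exactly a vector $x \in \R^{3n}$ satisfying $Ax = b$ and $x > 0$, where $A$ is the $(2n+2r) \times 3n$ matrix with rows $E_1,\dots,E_n,T_1,\dots,T_n,M_1,L_1,\dots,M_r,L_r$ and $b \in \R^{2n+2r}$ has its first $n$ entries equal to $2\pi$, next $n$ entries equal to $\pi$, and last $2r$ entries equal to $0$. Farkas' Lemma asserts that such an $x$ exists if and only if every $z \in \R^{2n+2r}$ with $A^\top z \ge 0$ and $A^\top z \ne 0$ also satisfies $z\cdot b > 0$.

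The heart of the proof is translating this dual condition into the language of $Q$-normal surfaces. Writing $z = (x_i,y_j,p_k,q_k)^\top$, the vector $A^\top z \in \R^{3n}$ is precisely the quad-coordinate vector of
\[
S \;=\; \sum_i x_i E_i + \sum_j y_j T_j + \sum_k (p_k M_k + q_k L_k).
\]
Since $\{E_i,T_j,M_k,L_k\}$ spans $Q(\T;\R)$, every class in $Q(\T;\R)$ arises in this way, and the Farkas hypotheses $A^\top z \ge 0$, $A^\top z \ne 0$ read exactly as $0 \ne S \in Q(\T;\R_+)$. Using $\chi(E_i)=-2$, $\chi(T_j)=-1$, $\chi(M_k)=\chi(L_k)=0$ from equation (\ref{euler_from_basic_solutions}), a direct computation gives
\[
z \cdot b \;=\; 2\pi \sum_i x_i + \pi \sum_j y_j \;=\; -\pi\,\chi(S),
\]
so $z\cdot b > 0 \Leftrightarrow \chi(S) < 0$. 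Assembling these observations, Farkas' Lemma yields: a strict angle structure with vanishing peripheral rotational holonomy exists if and only if every $0 \ne S \in Q(\T;\R_+)$ has $\chi(S) < 0$, which is the proposition (the second ``iff'' is a trivial contrapositive).

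The one delicate point to watch is that $\{E_i,T_j,M_k,L_k\}$ is a spanning set of size $2n+2r$ but $\dim Q(\T;\R) = 2n+r$, so there are $r$ linear relations and the map $z \mapsto S$ has nontrivial kernel. This is harmless because all conditions appearing on the Farkas side depend only on the image $S$: non-negativity and non-triviality hold of $S$ by construction, and $z\cdot b = -\pi\chi(S)$ is unambiguous since $\chi$ is a well-defined linear function on $Q(\T;\R)$. Concretely, the $r$ relations among the generators correspond to the boundary tori, which have $\chi = 0$, matching the fact that peripheral solutions contribute $0$ to the formula in (\ref{euler_from_basic_solutions}). Once this well-definedness is checked, the argument reduces to bookkeeping.
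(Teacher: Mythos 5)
Your proof is correct and follows essentially the same Farkas'-Lemma argument as the paper, translating the dual condition ``$A^Tz\ge 0$, $A^Tz\ne 0\Rightarrow z\cdot b>0$'' into ``every $0\ne S\in Q(\T;\R_+)$ has $\chi(S)<0$'' via $S=A^Tz$ and $z\cdot b=-\pi\chi(S)$. The extra paragraph verifying that $z\cdot b$ depends only on $S$ (and not on the choice of preimage $z$) is a welcome clarification that the paper leaves implicit, relying on the analogous $\Ker A^T$ computation already carried out in the proof of Proposition \ref{gen_ang_zero_periph_rot_hol}; you also correctly retain the factor of $\pi$ that the paper's text drops when it writes $z\cdot b=\sum_i 2x_i+\sum_j y_j$.
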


\begin{proof}
We closely follow the argument of Theorem \ref{gen_ang_zero_periph_rot_hol} 
(and Lemma 10 in \cite{LT}). Using the notation from the proof of 
Theorem \ref{gen_ang_zero_periph_rot_hol},  
a vector $x \in \R^{3n}$ defines a strict angle structure with vanishing 
peripheral rotational holonomy if and only if 
$$Ax=b \text{ and } x>0.$$ 
By Farkas' Lemma, this is equivalent to the condition that
\begin{center}
for all $z\in \R^{2n+2r}$ such that $A^Tz \neq 0$ and $A^Tz\geq0$, $z\cdot b>0$. 
\end{center}
Now if
$z= \begin{bmatrix} x_i & y_j &p_k &q_k\end{bmatrix}$ 
then $S=A^Tz = \sum_i x_i E_i + \sum_j y_j T_j + \sum_k p_k M_k 
+ \sum_k q_k L_k$ represents a Q-normal class in  $Q(\T;\R)$. 
But $A^Tz \neq 0$ and $A^Tz\geq0$ iff $S \ne 0$ and $S$ has all quad 
coordinates $\ge 0$, i.e. $S \in Q(\T;\R_+)$.
Further, $z\cdot b = \sum_i 2 x_i + \sum_j  y_j = -\chi(S)$. So the 
result follows.
\end{proof}


\bibliographystyle{hamsalpha}
\bibliography{biblio}

\providecommand{\bysame}{\leavevmode\hbox to3em{\hrulefill}\thinspace}
\providecommand{\href}[2]{#2}
\providecommand{\eprint}{\begingroup \urlstyle{rm}\Url}
\begin{thebibliography}{GHRS15}

\bibitem[Bac]{Ba}
David Bachman, \emph{Normalizing topologically minimal surfaces ii: Disks},
  \eprint{arXiv:1210.4574}, Preprint 2012.

\bibitem[Ber91]{berge1991knots}
John Berge, \emph{The knots in {$D^2\times S^1$} which have nontrivial {D}ehn
  surgeries that yield {$D^2\times S^1$}}, Topology Appl. \textbf{38} (1991),
  no.~1, 1--19.

\bibitem[BP97]{Petronio}
Riccardo Benedetti and Carlo Petronio, \emph{Branched standard spines of
  {$3$}-manifolds}, Lecture Notes in Mathematics, vol. 1653, Springer-Verlag,
  Berlin, 1997.

\bibitem[Bur]{regina}
Benjamin Burton, \emph{{R}egina, software for $3$-manifold topology and normal
  surface theory}, \url{http://regina.sourceforge.net} (30/01/2015).

\bibitem[Bur11]{BurtonIsoSig}
Benjamin~A. Burton, \emph{The {P}achner graph and the simplification of
  3-sphere triangulations}, Computational geometry ({SCG}'11), ACM, New York,
  2011, pp.~153--162.

\bibitem[CDW]{SnapPy}
Marc Culler, Nathan~M. Dunfield, and Jeffrey~R. Weeks, \emph{Snap{P}y, a
  computer program for studying the topology of $3$-manifolds}, Available at
  \url{http://snappy.computop.org} (30/01/2015).

\bibitem[Cho06]{Choi}
Young-Eun Choi, \emph{Neumann and {Z}agier's symplectic relations}, Expo. Math.
  \textbf{24} (2006), no.~1, 39--51.

\bibitem[DD]{DaddDuan}
Blake Dadd and Aochen Duan, \emph{Constructing infinitely many geometric
  triangulations of the figure eight knot complement},
  \eprint{arXiv:1508.04942}, Preprint 2015.

\bibitem[DG12]{DuGa}
Nathan~M. Dunfield and Stavros Garoufalidis, \emph{Incompressibility criteria
  for spun-normal surfaces}, Trans. Amer. Math. Soc. \textbf{364} (2012),
  no.~11, 6109--6137.

\bibitem[DGG13]{DGG1}
Tudor Dimofte, Davide Gaiotto, and Sergei Gukov, \emph{3-manifolds and 3d
  indices}, Adv. Theor. Math. Phys. \textbf{17} (2013), no.~5, 975--1076.

\bibitem[DGG14]{DGG2}
\bysame, \emph{Gauge theories labelled by three-manifolds}, Comm. Math. Phys.
  \textbf{325} (2014), no.~2, 367--419.

\bibitem[FKB08]{FrKa}
Charles Frohman and Joanna Kania-Bartoszynska, \emph{The quantum content of the
  normal surfaces in a three-manifold}, J. Knot Theory Ramifications
  \textbf{17} (2008), no.~8, 1005--1033.

\bibitem[FM09]{FoMa}
Evgeny Fominykh and Bruno Martelli, \emph{{$k$}-normal surfaces}, J.
  Differential Geom. \textbf{82} (2009), no.~1, 101--114.

\bibitem[Gab89]{gabai1989surgery}
David Gabai, \emph{Surgery on knots in solid tori}, Topology \textbf{28}
  (1989), no.~1, 1--6.

\bibitem[Gar15]{Gar}
Stavros Garoufalidis, \emph{The {3D} index of an ideal triangulation and angle
  structures}, 2015, \eprint{arXiv:1208.1663}.

\bibitem[GHRS15]{GHRS}
Stavros Garoufalidis, Craig~D. Hodgson, J.~Hyam Rubinstein, and Henry Segerman,
  \emph{1-efficient triangulations and the index of a cusped hyperbolic
  3-manifold}, Geom. Topol. \textbf{19} (2015), no.~5, 2619--2689.

\bibitem[Hak61a]{Ha2}
Wolfgang Haken, \emph{Ein {V}erfahren zur {A}ufspaltung einer
  {$3$}-{M}annigfaltigkeit in irreduzible {$3$}-{M}annigfaltigkeiten}, Math. Z.
  \textbf{76} (1961), 427--467.

\bibitem[Hak61b]{Ha1}
\bysame, \emph{Theorie der {N}ormalfl\"achen}, Acta Math. \textbf{105} (1961),
  245--375.

\bibitem[Hak68]{Ha3}
\bysame, \emph{Some results on surfaces in {$3$}-manifolds}, Studies in
  {M}odern {T}opology, Math. Assoc. Amer. (distributed by Prentice-Hall,
  Englewood Cliffs, N.J.), 1968, pp.~39--98.

\bibitem[Har]{haraway2014determining}
Robert Haraway, \emph{Determining hyperbolicity of compact orientable
  3-manifolds with torus boundary}, \eprint{arXiv:1410.7115}, Preprint 2014.

\bibitem[Hat]{Hat}
Alan Hatcher, \emph{Notes on basic 3-manifold topology},
  \eprint{url{https://www.math.cornell.edu/~hatcher/3M/3Mdownloads.html}},
  2000.

\bibitem[JR03]{JR}
William Jaco and J.~Hyam Rubinstein, \emph{{$0$}-efficient triangulations of
  3-manifolds}, J. Differential Geom. \textbf{65} (2003), no.~1, 61--168.

\bibitem[Kan05]{Kang}
Ensil Kang, \emph{Normal surfaces in non-compact 3-manifolds}, J. Aust. Math.
  Soc. \textbf{78} (2005), no.~3, 305--321.

\bibitem[Kne29]{Kn}
Hellmuth Kneser, \emph{Geschlossene fl\"achen in dreidimensionalen
  mannigfaltigkeiten}, Jahresber. Deutsch. Math.-Verein. \textbf{38} (1929),
  248--260.

\bibitem[KR04]{KR1}
Ensil Kang and J.~Hyam Rubinstein, \emph{Ideal triangulations of 3-manifolds.
  {I}. {S}pun normal surface theory}, Proceedings of the {C}asson {F}est, Geom.
  Topol. Monogr., vol.~7, Geom. Topol. Publ., Coventry, 2004, pp.~235--265.

\bibitem[KR05]{KR2}
\bysame, \emph{Ideal triangulations of 3-manifolds. {II}. {T}aut and angle
  structures}, Algebr. Geom. Topol. \textbf{5} (2005), 1505--1533.

\bibitem[Lac00]{La1}
Marc Lackenby, \emph{Taut ideal triangulations of 3-manifolds}, Geom. Topol.
  \textbf{4} (2000), 369--395 (electronic).

\bibitem[LT08]{LT}
Feng Luo and Stephan Tillmann, \emph{Angle structures and normal surfaces},
  Trans. Amer. Math. Soc. \textbf{360} (2008), no.~6, 2849--2866.

\bibitem[Mat87]{Ma1}
S.~V. Matveev, \emph{Transformations of special spines, and the {Z}eeman
  conjecture}, Izv. Akad. Nauk SSSR Ser. Mat. \textbf{51} (1987), no.~5,
  1104--1116, 1119.

\bibitem[Mat03]{Ma2}
Sergei Matveev, \emph{Algorithmic topology and classification of 3-manifolds},
  Algorithms and Computation in Mathematics, vol.~9, Springer-Verlag, Berlin,
  2003.

\bibitem[Mon73]{Mont}
Jos{\'e}~M. Montesinos, \emph{Seifert manifolds that are ramified two-sheeted
  cyclic coverings}, Bol. Soc. Mat. Mexicana (2) \textbf{18} (1973), 1--32.

\bibitem[MP06]{MP}
Bruno Martelli and Carlo Petronio, \emph{Dehn filling of the ``magic''
  3-manifold}, Comm. Anal. Geom. \textbf{14} (2006), no.~5, 969--1026.

\bibitem[Neu92]{N}
Walter~D. Neumann, \emph{Combinatorics of triangulations and the
  {C}hern-{S}imons invariant for hyperbolic {$3$}-manifolds}, Topology '90
  ({C}olumbus, {OH}, 1990), Ohio State Univ. Math. Res. Inst. Publ., vol.~1, de
  Gruyter, Berlin, 1992, pp.~243--271.

\bibitem[NZ85]{NZ}
Walter~D. Neumann and Don Zagier, \emph{Volumes of hyperbolic three-manifolds},
  Topology \textbf{24} (1985), no.~3, 307--332.

\bibitem[Pie88]{Pi}
Riccardo Piergallini, \emph{Standard moves for standard polyhedra and spines},
  Rend. Circ. Mat. Palermo (2) Suppl. (1988), no.~18, 391--414, Third National
  Conference on Topology (Italian) (Trieste, 1986).

\bibitem[Rub97]{Ru}
J.~H. Rubinstein, \emph{Polyhedral minimal surfaces, {H}eegaard splittings and
  decision problems for {$3$}-dimensional manifolds}, Geometric topology
  ({A}thens, {GA}, 1993), AMS/IP Stud. Adv. Math., vol.~2, Amer. Math. Soc.,
  Providence, RI, 1997, pp.~1--20.

\bibitem[Sto00]{St}
Michelle Stocking, \emph{Almost normal surfaces in {$3$}-manifolds}, Trans.
  Amer. Math. Soc. \textbf{352} (2000), no.~1, 171--207.

\bibitem[Thu77]{Th}
William Thurston, \emph{The geometry and topology of 3-manifolds},
  Universitext, Springer-Verlag, Berlin, 1977,
  \url{http://msri.org/publications/books/gt3m}.

\bibitem[Til08]{Ti}
Stephan Tillmann, \emph{Normal surfaces in topologically finite 3-manifolds},
  Enseign. Math. (2) \textbf{54} (2008), no.~3-4, 329--380.

\bibitem[Tol98]{To}
Jeffrey~L. Tollefson, \emph{Normal surface {$Q$}-theory}, Pacific J. Math.
  \textbf{183} (1998), no.~2, 359--374.

\bibitem[Wal11]{Wa}
Genevieve~S. Walsh, \emph{Incompressible surfaces and spunnormal form}, Geom.
  Dedicata \textbf{151} (2011), 221--231.

\end{thebibliography}
\end{document}